\newtheorem{defn}{Definition}%
\newtheorem{prop}{Proposition}%
\newtheorem{lem}{Lemma}%
\newtheorem{thm}{Theorem}%
\newtheorem{cor}{Corollary}%
\def \blank{\phantom{o}}%
\def \con#1{\setbox13\hbox{$#1$}\ifdim%
\wd13<1em\breve{#1}\else{(#1)}\breve{\ }\fi}%
\def \conv#1{\setbox13\hbox{$#1$}\ifdim%
\wd13<1.5em{#1}^{-1}\else{\(#1\)}^{-1}\fi}%
\def \cona{\con\a}%
\def \conb{\con\b}%
\def \halfthinspace{\relax\ifmmode\mskip.5%
\thinmuskip\relax\else\kern.8888em\fi}%
\let \hts=\halfthinspace%
\def \rp{{\hts;\hts}}%
\def \id{{1\kern-.08em\raise1.3ex\hbox{\rm,}\kern.08em}}%
\def \di{{0\kern-.04em\raise1.3ex\hbox{\rm,}\kern.04em}}%
\def \min#1{\overline{#1}}%
\def \Sb#1{\mathfrak{Sb}\hts\(#1\)}%
\def \pow#1{\wp\hts\(#1\)}%
\def \fk#1#2{#1\mathop\nabla#2}%
\def \ot#1#2{#1\otimes#2}%
\def \fkc#1#2{#1\mathop{\con\nabla}#2}%
\def \({\left(}%
\def \){\right)}%
\def \<{\left<}%
\def \>{\right>}%
\def \gc#1{\mathfrak{#1}}%
\def \Re#1{\mathfrak{Re}({#1})}%
\let \bp=\cdot%
\def \a{a}%
\def \b{b}%
\def \c{c}%
\def \d{d}%
\def \e{e}%
\def \f{f}%
\def \g{g}%
\def \h{h}%
\def \i{i}%
\def \j{j}%
\def \k{k}%
\def \m{m}%
\def \n{n}%
\def \p{p}%
\def \q{q}%
\def \r{r}%
\def \s{s}%
\def \t{t}%
\def \u{u}%
\def \v{v}%
\def \w{w}%
\def \x{x}%
\def \y{y}%
\def \z{z}%
\def \A{A}%
\def \AA{\mathfrak{A}}%
\def \B{B}%
\def \BB{\mathfrak{B}}%
\def \C{C}%
\def \D{D}%
\def \E{E}%
\def \EE{\mathcal{S}}%
\def \F{\mathcal{F}}%
\def \II{\mathcal{I}}%
\def \J{J}%
\def \K{K}%
\def \L{L}%
\def \M{\mathcal{M}}%
\def \P{P}%
\def \LL{\mathcal{L}}%
\def \R{R}%
\let \SS=\S%
\def \S{S}%
\def \T{\mathcal{T}}%
\def \U{U}%
\def \UU{\mathcal{U}}%
\def \UUU{\mathfrak{U}}%
\def \VV{\mathcal{V}}
\def \X{X}%
\def \Y{Y}%
\def\0{\mathbf0}%
\def\1{\mathbf1}%
\def\2{\mathbf2}%
\def\3{\mathbf3}%
\def \Fn{\mathsf{Fn}\,}%
\def \Pm{\mathsf{Pm}\,}%
\def \eu{\widehat f}%
\def \ev{\widehat g}%
\def \hx{h^X}%
\def \ie{{\it i.e.}}%
\def \eg{{\it e.g.}}%
\def \etc{{\it etc}}%
\def \RA{\mathsf{RA}}%
\def \JT{\mathsf{JT}}%
\def \JA{\mathsf{JA}}%
\def \RRA{\mathsf{RRA}}%
\def \QRA{\mathsf{QRA}}%
\def \mand{\mathrel{\text{\,and\,}}}%
\def \mor{\mathrel{\text{\,or\,}}}%
\def \KAB{\mathbf{K}_{AB}}%
\def\rrr#1{\setbox13\hbox{$\;#1\;$}{\,\overset{\textstyle#1}%
    {\vrule height2.6pt width\wd13 depth-1.6pt}%
    \kern-3pt\scriptstyle\blacktriangleright\,}}%
\title[Relation algebras containing Thompson groups] {Relation
  algebras containing Thompson groups} \author{Roger D. Maddux}
\address{Department of Mathematics \\ 396 Carver Hall \\ Iowa State
  University \\ Ames, Iowa 50011-2066} \email{maddux@iastate.edu}
\date{\today} \keywords{relation algebras, Q-relation algebras,
  Thompson groups, J\'onsson-Tarski algebras, tabular relation
  algebras, J-algebras, fork algebras, representable relation
  algebras} \subjclass{03G15, 20E08, 20F05, 20F10}
\begin{document}
\allowdisplaybreaks
\begin{abstract}
  The connections between Tarski's relation algebras and Thompson's
  groups $\F$, $\T$, $\VV$, and his monoid $\M$ are reviewed here,
  along with J\'onsson-Tarski algebras, fork algebras, true pairing
  algebras, and tabular relation algebras. All of these algebras are
  related to the finitization problem and to Tarski's formalization of
  set theory without variables.

  Most of the technical details occur in the variety of J-algebras,
  which is obtained from relation algebras by omitting union and
  complementation and adopting a set of axioms created by J\'onsson.
  Every relation algebra or J-algebra that contains a pair of
  conjugated quasiprojections satisfying the Domain and Unicity
  conditions, such as those that arise from J\'onsson-Tarski algebras
  or fork algebras, will also contain homomorphic images of $\F$,
  $\T$, $\VV$, and $\M$.

  The representability of tabular relation algebras is extended here
  to J-algebras, using a notion of tabularity that is equivalent among
  relation algebras to the original definition.
\end{abstract}
\maketitle
\begin{quote}\small
  Dedicated to the memory of Alfred Tarski (1901--1983) and his
  students Bjarni J\'onsson (1920--2016) and George McNulty
  (1945--2023).
\end{quote}

\tableofcontents

\section*{{\bf Part I.}}

\section{Introduction}
\label{s0}
Suppose a relation algebra $\AA$ has elements $\a$ and $\b$ such that
\begin{align}
  \label{Q}
  \cona\rp\a\leq\id, \quad\conb\rp\b\leq\id, \quad1=\cona\rp\b.
\end{align}
Tarski called such elements a pair of conjugated quasiprojections.
Suppose that $\a$ and $\b$ also satisfy the Domain Condition
\begin{align}
  \label{D}
  1=\a\rp1=\b\rp1,
\end{align}
and the Unicity Condition
\begin{align}
  \label{U}
  \a\rp\cona\bp\b\rp\conb\leq\id.
\end{align}
Then $\AA$ contains homomorphic images of Thompson's monoid $\M$ and
Thompson's groups $\F$, $\T$, and $\VV$.  In fact, if $\AA$ is freely
generated by $\a$ and $\b$ subject to the relations \eqref{Q},
\eqref{D}, and \eqref{U} then it actually contains \emph{copies} of
$\M$, $\F$, $\T$, and $\VV$.  This result is easy to prove by applying
a theorem due to Tarski, who defined a Q-relation algebra as a
relation algebra that has a pair of conjugated quasiprojections and
proved that every Q-relation algebra is representable, \ie, every
Q-relation algebra is isomorphic to a relation algebra in which the
operations are union, intersection, complementation, composition of
binary relations, and converse, with the empty relation, a universal
relation, and an identity relation as constants.

The result stated above is easy to prove because of Tarski's theorem,
but it can be extended to a much wider class of algebras lacking the
abstract algebraic operations that correspond to union and
complementation.  These algebras are called J-algebras and their
axioms are due to J\'onsson. The main result of this paper is that
every J-algebra with elements $\a$ and $\b$ satisfying \eqref{Q},
\eqref{D}, and \eqref{U} contains homomorphic images of $\M$, $\F$,
$\T$ and $\VV$.

Tarski's theorem that Q-relation algebras are representable has a
substantial generalization to tabular relation algebras, one in which
the unit element is the join of elements of the form $\cona\rp\b$,
where $\cona\rp\a\leq\id$ and $\conb\rp\b\leq\id$.  The use of join
means the definition of tabularity does not apply directly to
J-algebras, but a suitably modified definition of tabularity (that is
equivalent to the original definition when applied to relation
algebras) does turn out to be sufficient for representability. This
fact is the second major result of the paper.

The paper is divided into Part I (\SS\ref{s0}--\SS\ref{s14}), Part II
(\SS\ref{s16}--\SS\ref{s24}), and Part III
(\SS\ref{s25}--\SS\ref{s28}). Part I contains an exposition of all the
topics discussed in this introduction. The result about the occurrence
of the Thompson monoid and groups in J-algebras is stated in
\SS\ref{s14} at the end of Part I and is proved in Part II. The
representability of tabular J-algebras is proved in Part III.

Here are details about the contents of each section.  \SS\ref{s1}
tells how Thompson's monoid and groups grew out of his work on the
`finitization problem' that was originally posed by J.\ Donald Monk.
The concepts needed from universal algebra and group theory are listed
in \SS\ref{s2}.  Relation algebras are defined by their axioms in
\SS\ref{s3} and J-algebras are defined by their axioms in \SS\ref{s5}.

Part II may be inserted between \SS\ref{s5} and \SS\ref{s4}.
Everything in Part II, specifically Definitions \ref{d13}--\ref{FnPm}
and Props.\ \ref{p1}--\ref{presM}, applies to an arbitrary J-algebra.
All that is needed for Part II are the axioms
\eqref{bassoc}--\eqref{norm} in \SS\ref{s5}.  Consequently, everything
in Part II can be used in \SS\ref{s4}--\SS\ref{s14} of Part I.  There
is no dependence in Part II on anything in \SS\ref{s4}--\SS\ref{s14}.
This arrangement keeps the sometimes long and complicated derivations
out of the way of the discussion in Part I.

Representability for relation algebras and J-algebras is defined in
\SS\ref{s4}.  The way that monoids and groups can occur in relation
algebras and in J-algebras is discussed in \SS\ref{s6}.  Q-relation
algebras are treated in \SS\ref{s7}, including Tarski's theorem, his
formalization of set theory without variables, his `Main Mapping
Theorem', the `Translation Mappings', and their history.

The definition and representability of tabular relation algebras are
stated in \SS\ref{s8}.  Qu-algebras are defined in \SS\ref{s9} as
relation algebras with elements $\a,\b$ that satisfy \eqref{Q},
\eqref{D}, and \eqref{U}.  Fork algebras are presented in
\SS\ref{s10}, including their axioms and their connection with
Qu-algebras.  The pairing identity is introduced in \SS\ref{s10} as an
axiom for fork algebras. Its significance and history is treated in
\SS\ref{s10a}, including its use by Tarski and Givant in their proof
of the Main Mapping Theorem, a key ingredient in Tarski's original
proof that Q-relation algebras are representable. Tarski's proof
employed metamathematical methods and was `more complicated than one
would expect' (Tarski's words).  The historically first purely
relation algebraic proof of Tarski's theorem is outlined in
\SS\ref{origQRAth}.

\SS\ref{s18} presents an abstract algebraic formulation of the concept
of direct product, Gunther Schmidt's conjecture, its resolution by an
example, extensions of the pairing identity, and the relation of the
pairing identity to certain identities that are true in representable
relation algebras but can't be deduced from the axioms of relation
algebras.

In \SS\ref{s11}, J\'onsson-Tarski algebras are introduced and are
shown to be essentially equivalent to certain Qu-algebras and to
bijections between a set and its Cartesian square.  \SS\ref{s12} shows
how to construct functions on the universes of J\'onsson-Tarski
algebras that generate Thompson's monoid and groups. This provides a
precise link between Thompson's original parenthetical notation for
operations on trees and the algebraic notation for elements in a
J-algebra. This link is used in \SS\ref{s12a} to create generators for
$\M$, $\F$, $\T$, and $\VV$.  Thompson's groups and monoid are defined
by their presentations in \SS\ref{s13} and \SS\ref{s13a}.  The main
result concerning the occurrence of $\F$, $\T$, $\VV$, and $\M$ in
J-algebras is stated in \SS\ref{s14}. It is proved by appeals to
various propositions in Part II.

In Part II, \SS\ref{s16} deals with the most elementary consequences
of the J-algebra axioms and specifies notational conventions for
making or omitting references to those consequences.  \SS\ref{s17}
presents standard properties of functional and permutational elements
familiar from the theory of relation algebras. This section also
presents (the fairly complicated) equational derivations of the
pairing identity and its variations from \eqref{Q}.  Two elements are
chosen and fixed as parameters in \SS\ref{s19} in order to define
three binary operations that are important in the theory of fork
algebras.  Several closure properties and useful lemmas are proved
there.

The ten generators of $\F$, $\T$, $\VV$, and $\M$ are listed in
algebraic notation in \SS\ref{s20} and the single proposition there
tells which ones are functional and which ones are permutational.
There are two propositions in \SS\ref{s22}: first a lemma about the
generator $\A$ and then a proposition that provides proofs of the two
relations in the presentation of $\F$.  \SS\ref{s23} contains a single
proposition that proves all the relations in the presentation of $\T$.
The relations defining $\VV$ are not proved since they would be very
similar to the ones already presented and would lengthen the paper
without providing much further insight. Their absence provides
exercises for a reader who may be interested in constructing proofs of
the type well illustrated by the previous two propositions.
\SS\ref{s24} shows that $\M$ is generated by two different sets of
four generators and that one of those two sets of generators satisfies
all the relations in the infinite presentation of $\M$.

Part III shows that tabular J-algebras are representable. Tabular
J-algebras and the notion of partial representation are defined in
\SS\ref{s25}. There are two lemmas in \SS\ref{s26} about the extension
of partial representations.  The key lemma in \SS\ref{s27} shows how
to assemble partial representations into a single function for each
proper two-element chain.  The main result is then stated and proved
by constructing a representation that is essentially an ultraproduct
of these functions.  The concluding \SS\ref{s28} considers some
prospects for further work.

\section{The finitization problem}
\label{s1}
The Q-relation algebras of Alfred Tarski and the groups of Richard
J.\ Thompson are very intimately related. The close connections
between these two types of algebras will be explored in this paper.
As far as I know, this connection has not been previously noted. The
closest approach I have seen is Graham Higman's description of one of
Thompson's groups as the automorphism group of a free J\'onsson-Tarski
algebra on one generator.

It is easy to see from the definitions of Q-relation algebra and
J\'onsson-Tarski algebra that these two types of algebras arose from
the same mathematical considerations that occupied Tarski for much of
his professional life, starting with an unpublished monograph begun by
him at the Institute for Advanced Study in the summer of 1942, and
culminating in the 1987 book that emerged from this monograph, \emph{A
Formalization of Set Theory without Variables} \cite{MR920815}.

Depending on the criteria used for counting, the number of papers that
involve Thompson's groups has three or four digits, while the number
that deal with Q-relation algebras has at most two digits. Thompson's
groups have had a significant impact on group theory that is
independent of their origins, but the most that papers in group theory
say about their origin is that they come from Thompson's ``work in
logic'' or his ``study of the $\lambda$-calculus.''  Even McKenzie and
Thompson \cite{MR0396769} say only that ``Thompson discovered the
groups $\mathfrak{C}'$ and $\mathfrak{P}'$ in connection with his
research in logic about 1965.''

In fact, Thompson's research was directed toward solving what is now
known as ``the finitization problem'', that is, to find a ``finitary
algebraic logic'', a finitely based variety of algebras of finite
similarity type that is equivalent to first-order logic. The variety
of $\omega$-dimensional cylindric algebras does the job, but requires
an infinite set of unary operations called cylindrifications. Each
cylindrification mimics the action of existentially quantifying a
formula with respect to some variable. Furthermore, the representable
algebras in varieties that capture even a small fraction of
first-order logic are not finitely axiomatizable. Monk conjectured
that this might be unavoidable, having shown it for relation algebras
and finite-dimensional cylindric algebras.  To get a negative solution
to Monk's conjecture one must use only finitely many operations and
finitely many equational axioms to show that all such algebras are
representable.

In 1975, when Richard Thompson and I were both graduate students at
U.\ C.\ Berkeley, Thompson gave a report to a seminar on algebraic
logic organized by me and another graduate student of Tarski, Ulf
Wostner. At that seminar, Thompson made a proposal for a finitary
algebraic logic.  This would have been a negative solution to the
conjecture by Monk.  Thompson talked about his finite presentation of
a certain semigroup of operators that act on binary trees, or,
equivalently, on sequences of 0's and 1's. By that time I had worked
for almost a year as Tarski's research assistant on the manuscript for
\cite{MR920815} and was familiar with Q-relation algebras.  The
obvious connection between Thompson's semigroup and Q-relation
algebras led me to propose ``true pairing algebras'' as a finitary
algebraic logic \cite{MR1020505, MR1270402, MR3400602, MR1776151}.
Neither Thompson's semigroup nor true pairing algebras solves the
finitization problem because later, more precise, formulations of the
problem include the requirement that the algebraic operations be
``logical'' in Tarski's sense.  See Tarski-Givant
\cite[3.5(i)(ii)]{MR920815} for Tarski's definition of logical.  One
of the problems Tarski and Givant raise in \cite[\SS3.5]{MR920815} is
a precise version of Monk's problem.  The finitization problem was
also formulated by Henkin and Monk as \cite[Problem 1]{MR0376346}.
For more information and related work on the finitization problem see
\cite{MR1020505, MR1270402, MR3400602, MR1776151, MR1187451,
  MR1636979, MR1488292, MR1011183, MR0172797, MR280345, MR289274,
  MR1465620, MR1153440, MR3433641, MR1446276, Th87, Th87a, MR1442088,
  MR1131030}.

Monk's conjecture comes from \cite[p.\,20]{MR289274}, where he wrote,
\begin{quote}
  The results obtained in this paper are, however, quite analogous to
  those obtained in \cite{MR0256861} for cylindric algebras. They
  contribute to the conjecture that no equational form of first-order
  logic is finitely axiomatizable---more precisely, with respect to
  any conception $\LL$ of a set algebra (corresponding to the notion
  of satisfaction), and any choice of basic operations, the
  corresponding class $\LL'$ is not finitely axiomatizable. It appears
  difficult to give this conjecture a very precise form, since there
  is a wide latitude of choice with regard to the fundamental
  operations as well as the kinds of sequences considered in the
  satisfaction relation. The conjecture has been verified for most
  brands of algebraic logic known to the author.
\end{quote}
Decades later, Thompson gave two talks on the background for the
Thompson groups. The first was at AIM in Palo Alto on January 10,
2004, the second at Luminy on June 2, 2008, and both were attended by
Matthew Brin. Brin took notes and wrote them up in \emph{The Thompson
monoid is finitely presented}, Feb.\ 7, 2021, 13 pp.  According to
Brin's notes, in the first talk Thompson introduced his monoid $\M$
with which he had started his researches and which contains the group
later known as $\VV$ as the group of invertible elements.  Thompson
said he was interested in ``finding an algebraic system that fit the
predicate calculus as well as Boolean algebra fits the propositional
calculus.''  This was a reference to the finitization problem. He gave
background on some systems that were attempts in this direction.  He
said he started with the $\lambda$-calculus of Alonzo Church, but it
was the combinatory logic of Haskell Curry that he ended up using and
is the language that he gave his talks in.  The monoid $\M$ can be
represented as a set of endomorphisms of the Cantor set.  This
representation as endomorphisms of the Cantor set has its own
advantages and is more familiar to those already acquainted with the
Thompson groups. The connection with relation algebras is best made
through manipulations on parenthesized expressions. This connection
will presented in \SS\ref{s12}.

\section{Universal algebra}
\label{s2}
Assumed to be known are the universal algebraic concepts of operation
on a set, algebra, universe of an algebra, subalgebra, direct product,
subdirect product, subdirect irreducibility, function, permutation,
injection, surjection, bijection, homomorphism, isomorphism, equation,
and satisfaction of an equation in an algebra.  An algebra $\AA$ is
{\bf simple} if it has at least two elements and every homomorphic
image of $\AA$ is a one-element algebra or is isomorphic to $\AA$.  A
class $\K$ of algebras is a {\bf variety} if it has an equational
axiomatization. By a theorem of Birkhoff, a class of algebras is a
variety iff every homomorphic image of a subalgebra of a direct
product of algebras in $K$ is again in $K$.  For all of this and much
more see \cite{MR4496007, MR4563240, MR0314620, MR0781929, MR2269199,
  MR0883644, MR3793673}.  We also assume familiarity with basic group
theory, including group presentations.
\section{Relation algebras}
\label{s3}
For this paper, the basic facts about relation algebras presented in
Tarski-Givant \cite[\SS8.2, \SS8.3]{MR920815} are enough. For more,
consult the books by Hirsch and Hodkinson \cite{MR1935083}, Givant
\cite{MR3699802, MR3699801}, and Maddux \cite{MR2269199}. Tarski's ten
axioms \eqref{ra1}--\eqref{ra10} in the list below are designed for a
smaller similarity type that does not include $\bp$, $0$, or
$1$. Tarski preferred introducing $\bp$, $0$, and $1$ by definitions
which, in this paper, are simply the additional axioms
\eqref{ra11}--\eqref{ra13}.
\begin{defn}
  \label{RA}
  Let $\AA=\<A,+,\bp,\min\blank,0,1,\rp,\con\blank,\id\>$, where $\bp$
  and $\rp$ are binary operations on $A$, $\min\blank$ and
  $\con\blank$ are unary operations on $A$, and $0,1,\id\in\A$.  $\AA$
  is a {\bf relation algebra} if it satisfies these axioms:
  \begin{align}
    \label{ra1}
    \x+\y&=\y+\x &&\text{$+$ is commutative} \\
    \label{ra2}
    \x+(\y+\z)&=(\x+\y)+\z &&\text{$+$ is associative} \\
    \label{ra3}
    \min{\min\x+\min\y}+\min{\min\x+\y}&=\x &&\text{the Huntington
      axiom} \\
    \label{ra4} \x\rp(\y\rp\z)&=(\x\rp\y)\rp\z
    &&\text{$\rp$ is associative} \\
    \label{ra5}
    (\x+\y)\rp\z&=\x\rp\z+\y\rp\z &&\text{$\rp$ distributes over $+$}
    \\
    \label{ra6} \x\rp\id&=\x &&\text{the identity law}
    \\
    \label{ra7} \con{\con\x}&=\x &&\text{$\con\blank$ is an
      involution} \\
    \label{ra8} \con{\x+\y}&=\con\x+\con\y
    &&\text{$\con\blank$ distributes over $+$} \\
    \label{ra9}
    \con{\x\rp\y}&=\con\y\rp\con\x &&\text{$\con\blank$ distributes in
      reverse over $\rp$} \\
    \label{ra10}
    \con\x\rp\min{\x\rp\y}+\min\y&=\min\y &&\text{the Tarski/De~Morgan
      axiom} \\
    \label{ra11} \x\bp\y&=\min{\min\x+\min\y}
    &&\text{definition of $\,\bp$} \\
    \label{ra12} 1&=\id+\min\id&&\text{definition of $1$} \\
    \label{ra13}
    0&=\min1&&\text{definition of $0$}
  \end{align}
  $\RA$ is the class of relation algebras.
\end{defn}
Axioms \eqref{ra1}, \eqref{ra2}, and \eqref{ra3} characterize Boolean
algebras as algebras of the form $\<\A,+,\min\blank\,\>$.  They are
due to Huntington \cite{MR1501702, MR1501684, MR1501729}. To develop
the equational theory of Boolean algebras from
\eqref{ra1}--\eqref{ra3} one first states \eqref{ra11}, \eqref{ra12},
and \eqref{ra13} as definitions of $\bp$, $1$, and $0$.  Having the
constant $\id$ with which to define $0$ and $1$ makes it easier to
develop the equational theory. For a complete proof of all the usual
equations true in Boolean algebras for the case with no such constant
see \cite{MR1392462}.

In expressions denoting elements in a relation algebra, unary
operations $\con\blank$ and $\min\blank$ are computed before binary
ones, and among binary operations, the order is first $\rp$, then
$\bp$, and finally, $+$. For example, $\w+\x\bp\y\rp\z =
\w+(\x\bp(\y\rp\z))$.  Association is to the left for repeated uses of
$+$, $\bp$, or $\rp$.  For example, $\x+\y+\z=(\x+\y)+\z$,
$\x\bp\y\bp\z = (\x\bp\y)\bp\z$ and $\x\rp\y\rp\z =
(\x\rp\y)\rp\z$. Repeated relative products are indicated by
exponents, \eg, $\x^2=\x\rp\x$ and $\x^3=\x\rp\x\rp\x$.

If $\AA=\<A,+,\bp,\min\blank,0,1,\rp,\con\blank,\id\>$ is a relation
algebra then $\<A,+,\bp,\min\blank,0,1\>$ is a Boolean algebra called
the {\bf Boolean reduct} of $\AA$. Results and concepts from the
theory of Boolean algebras, when applied to $\AA$, refer to the
Boolean reduct of $\AA$.

\section{J-algebras}
\label{s5}
An algebra whose proper subalgebras have strictly smaller cardinality
than the algebra itself is known in the literature as a ``J\'onsson
algebra''. For that reason, and also for brevity, the algebras
introduced here are simply called ``J-algebras''.  The first ten
axioms occur in the characterization by J\'onsson
\cite[Th.\,1]{MR108459} of algebras isomorphic to sets of binary
relations equipped with the operations of intersection, composition,
and converse. All of the axioms are true in relation algebras.
\begin{defn}
  A {\bf J-algebra} is an algebra
  \begin{align*}
    \AA=\<A,\bp,0,1,\rp,\con\blank,\id\>,
  \end{align*}
  where $\bp$ and $\rp$ are binary operations on $A$, $\con\blank$ is
  a unary operation on $A$, and $0,1,\id\in\A$, satisfying the
  following axioms.
  \begin{align}
    \label{bassoc}
    \x\bp(\y\bp\z)&=(\x\bp\y)\bp\z
    &&\text{\cite[Th.\,1(i)]{MR108459}} \\
    \label{comm}
    \x\bp\y&=\y\bp\x
    &&\text{\cite[Th.\,1(ii)]{MR108459}} \\
    \label{idem}
    \x\bp\x&=\x
    &&\text{\cite[Th.\,1(iii)]{MR108459}} \\
    \label{2}
    \x\rp(\y\rp z)&=(\x\rp\y)\rp z
    &&\text{\cite[Th.\,1(iv)]{MR108459}} \\
    \label{id}
    \x\rp\id&=\x
    &&\text{\cite[Th.\,1(v)]{MR108459}} \\
    \label{mon}
    (\x\bp\y)\rp\z&=(\x\bp\y)\rp\z\bp\y\rp\z
    &&\text{\cite[Th.\,1(vi)]{MR108459}} \\
    \label{6}
    \con{\con\x}&=\x
    &&\text{\cite[Th.\,1(vii)]{MR108459}} \\
    \label{5}
    \con{\x\rp\y}&=\con\y\rp\con\x
    &&\text{\cite[Th.\,1(viii)]{MR108459}}\\
    \label{4}
    \con{\x\bp\y}&=\con\x\bp\con\y
    &&\text{\cite[Th.\,1(ix)]{MR108459}} \\
    \label{rot}
    \x\rp\y\bp\z&=(\z\rp\con\y\bp\x)\rp(\y\bp\con\x\rp\z)\bp\z
    &&\text{\cite[Th.\,1($\Gamma$) for $n=2$]{MR108459}} \\
    \label{zero}
    0\bp\x&=0 &&\text{$0$ is the bottom} \\
    \label{one}
    \x\bp1&=\x &&\text{$1$ is the top} \\
    \label{norm}
    \x\rp0&=0 &&\text{$\rp$ is normal}
  \end{align}
  $\JA$ is the class of J-algebras.
\end{defn}
In Definition \ref{d1} below we define $\x\leq\y$ as $\x\bp\y=\x$ for
J-algebras. This allows some axioms to be rewritten in a more familiar
way. For example, \eqref{one} says $\x\leq1$ and \eqref{zero} is
equivalent by \eqref{comm} to $0\leq\x$.  The following proposition
points out that a J-algebra can be constructed from the algebraic form
of a bounded semilattice, that is, from any idempotent semigroup with
absorbing element 0 and identity element~1.
\begin{prop}
  If $\<A,\bp,0,1\>$ is an algebra satisfying axioms \eqref{bassoc},
  \eqref{comm}, \eqref{idem}, \eqref{zero}, and \eqref{one}, then
  $\<A,\bp,0,1,\bp,\con\blank,1\>$ is a J-algebra where $\con\x=\x$
  for all $\x\in\A$.
\end{prop}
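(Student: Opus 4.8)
The plan is to verify directly that the algebra $\<A,\bp,0,1,\bp,\con\blank,1\>$, in which the relative product $\rp$ is interpreted as the semilattice operation $\bp$, the converse $\con\blank$ as the identity map (so $\con\x=\x$), and the relative identity $\id$ as $1$, satisfies each of the thirteen J-algebra axioms \eqref{bassoc}--\eqref{norm}. Under these interpretations most axioms either coincide with one of the five hypotheses or collapse to a triviality, so I would organize the check according to how each axiom specializes.

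First I would dispose of the immediate cases. Axioms \eqref{bassoc}, \eqref{comm}, \eqref{idem}, \eqref{zero}, and \eqref{one} hold by hypothesis. Since $\rp$ is $\bp$ and $\id$ is $1$, axiom \eqref{2} is exactly \eqref{bassoc} and axiom \eqref{id} is exactly \eqref{one}. Because $\con\x=\x$, axiom \eqref{6} reads $\x=\x$, axiom \eqref{4} becomes the tautology $\x\bp\y=\x\bp\y$, and axiom \eqref{5} reduces to $\x\bp\y=\y\bp\x$, which is \eqref{comm}. Finally \eqref{norm}, namely $\x\rp0=0$, becomes $\x\bp0=0$, which follows from \eqref{comm} and \eqref{zero}.

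This leaves the two axioms that genuinely use the semilattice structure, \eqref{mon} and \eqref{rot}. For \eqref{mon}, with $\rp=\bp$ the right-hand side becomes $\bigl((\x\bp\y)\bp\z\bigr)\bp(\y\bp\z)$, and expanding by associativity and commutativity and then absorbing the repeated occurrences of $\y$ and $\z$ by idempotence collapses it to $\x\bp\y\bp\z$, which is exactly the left-hand side $(\x\bp\y)\rp\z$. The main obstacle is axiom \eqref{rot}, J\'onsson's $\Gamma$-axiom at $n=2$, whose right-hand side is a compound expression that must be parsed with care. Here I would first apply the stated precedence conventions (unary operations first, then $\rp$, then $\bp$, with left association) to read each side correctly; after substituting $\con\x=\x$ and $\rp=\bp$, both the left side $\x\rp\y\bp\z$ and the parenthesized right side become products in which $\x$, $\y$, and $\z$ each occur, so by commutativity, associativity, and idempotence each side reduces to the single product $\x\bp\y\bp\z$. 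The only genuine care required anywhere is in parsing \eqref{rot} before simplifying; once the precedence is resolved every remaining step is a routine semilattice collapse, and all thirteen axioms hold.
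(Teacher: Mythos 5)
Your verification is correct: under the interpretation $\rp=\bp$, $\con\x=\x$, $\id=1$, each of the thirteen axioms either is one of the five hypotheses, collapses to a tautology, or (in the cases of \eqref{mon} and \eqref{rot}) reduces after the precedence conventions are applied to the identity $\x\bp\y\bp\z=\x\bp\y\bp\z$ by associativity, commutativity, and idempotence. The paper states this proposition without proof, treating the check as routine, and your direct axiom-by-axiom verification --- including the care taken in parsing \eqref{rot} --- is exactly the argument being left to the reader.
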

Deleting $+$ and $\min\blank$ from a relation algebra leaves a
J-algebra.
\begin{prop}
  \label{jaisra}
  If $ \<A,+,\bp,\min\blank,0,1,\rp,\con\blank,\id\>$ is a relation
  algebra then $\<A,\bp,0,1,\rp,\con\blank,\id\>$ is a J-algebra.
\end{prop}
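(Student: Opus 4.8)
The plan is to check that every one of the thirteen J-algebra axioms \eqref{bassoc}--\eqref{norm} already holds in an arbitrary relation algebra, organizing them by the reasoning involved. Five are immediate from the Boolean reduct $\langle A,+,\bp,\min\blank,0,1\rangle$: axioms \eqref{bassoc}, \eqref{comm}, and \eqref{idem} are just associativity, commutativity, and idempotence of the meet $\bp$, while \eqref{zero} and \eqref{one} assert that $0$ is the bottom and $1$ the top. Four more are verbatim relation algebra axioms, namely \eqref{2} is \eqref{ra4}, \eqref{id} is \eqref{ra6}, \eqref{6} is \eqref{ra7}, and \eqref{5} is \eqref{ra9}, so for these nothing is to be proved.

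For \eqref{4} I would argue that $\con\blank$ is an order-automorphism of the Boolean reduct: by \eqref{ra8} it preserves $+$, hence the order $\leq$, and by \eqref{ra7} it is its own inverse, so it preserves $\leq$ in both directions; an order-automorphism of a Boolean algebra preserves meets, which is exactly $\con{\x\bp\y}=\con\x\bp\con\y$. For \eqref{mon} the point is that $\rp$ is monotone in its left argument, a consequence of the distributive law \eqref{ra5}; since $\x\bp\y\leq\y$ this gives $(\x\bp\y)\rp\z\leq\y\rp\z$, and rewriting this inequality in meet form is precisely the assertion of \eqref{mon}.

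The two axioms with genuine content are \eqref{norm} and \eqref{rot}. For \eqref{norm} I would invoke the Peircean cycle law, that $(\x\rp\y)\bp\z=0$ iff $(\con\x\rp\z)\bp\y=0$, which is a standard consequence of the De Morgan/Tarski axiom \eqref{ra10}: taking $\y$ to be $0$ shows $(\x\rp0)\bp\z=0$ for every $\z$, and then setting $\z$ to $\x\rp0$ gives $\x\rp0=0$. The crux is \eqref{rot}, J\'onsson's law $\Gamma$ for $n=2$, which I would split into two inclusions. The inclusion of the right-hand side in the left is pure monotonicity: $(\z\rp\con\y)\bp\x\leq\x$ and $\y\bp(\con\x\rp\z)\leq\y$ force the relative product of these two factors to lie below $\x\rp\y$, so meeting with $\z$ keeps the right side below $(\x\rp\y)\bp\z$. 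The reverse inclusion is exactly the Dedekind (modular) law $(\x\rp\y)\bp\z\leq(\x\bp\z\rp\con\y)\rp(\y\bp\con\x\rp\z)$, whose two factors agree, after one use of \eqref{comm}, with those appearing in \eqref{rot}. This Dedekind law is the single substantive ingredient and the main obstacle; rather than rederive it I would cite it from the standard treatments \cite{MR920815, MR2269199}, where it is obtained from \eqref{ra10} (equivalently, from the same cycle law already used for \eqref{norm}).
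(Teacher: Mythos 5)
Your proposal is correct and follows essentially the same route as the paper: both proofs verify the thirteen J-algebra axioms one at a time, reducing most of them to facts about the Boolean reduct or to verbatim relation-algebra axioms, and both ultimately rest the two substantive cases on the same ingredients (left-monotonicity of $\rp$ from \eqref{ra5}, the Peircean cycle law from \eqref{ra10} for \eqref{norm}, and the Dedekind/modular law for \eqref{rot}). The only difference is presentational: the paper discharges each axiom by citation to \cite{MR1392462} and \cite{MR2269199}, whereas you supply short arguments for most axioms and cite only the Dedekind law.
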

\begin{proof}
  It suffices to show {that} the axioms of J-algebras hold in any
  relation algebra. From axioms \eqref{ra1}--\eqref{ra13} we get
  \eqref{bassoc} by \cite[Th.\,3(ix)]{MR1392462}, \eqref{comm} by
  \cite[Th.\,3(viii)]{MR1392462}, \eqref{idem} by
  \cite[Th.\,3(vii)]{MR1392462}, \eqref{2} by \eqref{ra4}, \eqref{id}
  by \eqref{ra6}, \eqref{mon} by \cite[Th.\,253]{MR2269199} applied to
  $\x\bp\y\leq\y$, \eqref{6} by \eqref{ra7}, \eqref{5} by \eqref{ra9},
  \eqref{4} by \cite[Th.\,250]{MR2269199}, \eqref{rot} by
  \cite[Th.\,296, (6.49)]{MR2269199}, \eqref{zero} by
  \cite[Ths.\,3(ii)(iv),\,5(vi)]{MR1392462} plus \eqref{comm},
  \eqref{one} by \cite[Ths.\,3(ii)(v)(vi),\,5(viii)]{MR1392462}, and
  \eqref{norm} by \cite[Th.\,287, (6.31)]{MR2269199}.
\end{proof}

\section{Representability}
\label{s4}
For any set $\E$, $\pow\E=\{\X:\X\subseteq\E\}$ is the {\bf powerset}
of $\E$. The notation $\<\x,\y\>$ is used for ordered pairs. A {\bf
  binary relation} is a set of ordered pairs.
\begin{defn}
  \label{SbE}
  Define operations on binary relations $\R$ and $\S$ as follows.
  \begin{align*}
    \R\cup\S&=\{\<\x,\y\>:\<\x,\y\>\in\R\mor\<\x,\y\>\in\S\}
    &&\text{union}
    \\ \R\cap\S&=\{\<\x,\y\>:\<\x,\y\>\in\R\mand\<\x,\y\>\in\S\}
    &&\text{intersection} \\ \R|\S&=\{\<\x,\y\>:\exists_\z
    (\<\x,\z\>\in\R\mand\<\z,\y\>\in\S)\} &&\text{composition}
    \\ \conv\R&=\{\<\y,\x\>:\<\x,\y\>\in\R\} &&\text{converse}
    \\ Id(\R)&=\{\<\x,\x\>:\<\x,\x\>\in\R\} &&\text{identity part}
  \end{align*}
\end{defn}
A binary relation $\E$ is an {\bf equivalence relation} iff
$\E=\E|\E=\conv\E$.  If $\E$ is an equivalence relation then
$Id(\E)\in\pow\E$ and $\pow\E$ is closed under intersection, union,
composition, and converse.  This observation enables the following
definition.
\begin{defn}
  For any equivalence relation $\E$, the {\bf J-algebra of
    subrelations of $\E$} is
  \begin{align*}
    \<\pow\E,\cap,\emptyset,\E,|,\conv{},Id(\E)\>
  \end{align*}
  and the {\bf relation algebra of subrelations of $\E$} is
  \begin{align*}
    \Sb\E&=\<\pow\E,\cup,\cap,\min\blank,\emptyset,\E,|, \conv{},
    Id(\E)\>,
  \end{align*}
  where, for all $\R\in\pow\E$,
  \begin{align*}
    \min\R&=\{\<\x,\y\>:\<\x,\y\>\in\E\mand\<\x,\y\>\notin\R\}
    &&\text{complement (w.r.t. $\E$)}
  \end{align*}
  For any set $\X$, the {\bf J-algebra of relations on $\X$} is
  \begin{align*}
    \<\pow\E,\cap,\emptyset,\X^2,|,\conv{},Id(\X^2)\>
  \end{align*}
  and the {\bf relation algebra of relations on $\X$} is
  \begin{align*}
    \Re\X&=\Sb{\X^2}.
  \end{align*}
  A J-algebra (or relation algebra) is {\bf representable} if it is
  isomorphic to a subalgebra of the J-algebra (or relation algebra) of
  subrelations of an equivalence relation.  An algebra $\AA$ is a {\bf
    proper relation algebra} if it is a subalgebra of $\Sb\E$ for some
  equivalence relation $\E$.  The {\bf base} of a proper relation
  algebra is the field $\{\x:\<\x,\x\>\in\E\}$ of its equivalence
  relation $\E$.  A {\bf representation} of a relation algebra $\AA$
  is an isomorphism that embeds $\AA$ into a proper relation algebra.
  A {\bf representation over $\X$} is an isomorphism that embeds $\AA$
  into $\Re\X$. $\RRA$ is the class of representable relation algebras
  and ``$\RRA$'' serves as an abbreviation of ``representable relation
  algebra''.
\end{defn}
As one would expect, every representable relation algebras satisfies
the axioms \eqref{ra1}--\eqref{ra10}.  Tarski \cite{MR0066303} proved
that $\RRA$ is a variety but Monk \cite{MR0172797} proved it cannot be
characterized by any finite set of equations.

\section{Monoids and groups in relation algebras}
\label{s6}
Let $\AA$ be a relation algebra or a J-algebra. An element $\x$ in
$\AA$ is {\bf functional} if $\con\x\rp\x\leq\id$ and {\bf
  permutational} if $\con\x\rp\x=\id=\x\rp\con\x$.  $\Fn\AA$ is the
set of functional elements of $\AA$ and $\Pm\AA$ is the set of
permutational elements of $\AA$.  A functional element of $\Re\X$ is a
function that maps a subset of $\X$ to a subset of $\X$, while
permutational elements of $\Re\X$ are permutations of $\X$. It is
proved in Prop.\ \ref{func}(v) that $ \<\Fn\AA,\rp,\id\>$ is a monoid
(a semigroup with an identity element) and
$\<\Pm\AA,\rp,\con\blank,\id\>$ is a group because the required
closure properties and identities hold. We therefore define
$\gc{Fn}(\AA)=\<\Fn\AA,\rp,\id\>$ and
$\gc{Pm}(\AA)=\<\Pm\AA,\rp,\con\blank,\id\>$.  Furthermore, if $\h$ is
a homomorphism from $\AA$ to $\BB$ then the restriction of $\h$ to
$\Fn\AA$ is a homomorphism from $\gc{Fn}(\AA)$ to $\gc{Fn}(\BB)$ and
the restriction of $\h$ to $\Pm\AA$ is a homomorphism from
$\gc{Pm}(\AA)$ to $\gc{Pm}(\BB)$.

\section{Q-relation algebras}
\label{s7}
The primary source for Q-relation algebras is \cite[\SS8.4]{MR920815}.
For a detailed historical survey of the origin of Q-relation algebras
see \cite[Ch.\,1, \SS11]{MR2269199}.  The representability of
Q-relation algebras is proved in \cite[Ch.\,6, \SS53]{MR2269199} and
is mentioned in \cite[p.\,209]{MR1935083} and in
\cite[p.\,301]{MR3699802}.
\begin{defn}[{\cite[8.4(i)(ii)]{MR920815}}]
  \label{d1}
  Two elements $\a,\b$ in a relation algebra or J-algebra are {\bf
    conjugated quasiprojections} if
  \begin{align*}
    \cona\rp\a\leq\id, \quad\conb\rp\b\leq\id, \quad1=\cona\rp\b.
  \end{align*}
  A {\bf Q-relation algebra} is a relation algebra that contains a
  pair of conjugated quasiprojections.  $\QRA$ is the class of all
  Q-relation algebras and ''$\QRA$'' serves as an abbreviation of
  ``Q-relation algebra''.
\end{defn}
It is shown in Prop.\ \ref{prop1a} that, in a relation algebra or
J-algebra, the third equation in \eqref{Q} implies that the first two
equations can be simplified as follows.
\begin{prop}
  \label{prop1}
  A relation algebra is a Q-relation algebra iff it contains elements
  $\a,\b$ such that $\id=\cona\rp\a=\conb\rp\b$ and $1=\cona\rp\b$.
\end{prop}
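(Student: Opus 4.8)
The content of this proposition lies entirely in strengthening the two quasiprojection inequalities $\cona\rp\a\leq\id$ and $\conb\rp\b\leq\id$ to equalities, and the only hypothesis available for this is the third condition $1=\cona\rp\b$. The backward implication is immediate: if $\id=\cona\rp\a=\conb\rp\b$ then trivially $\cona\rp\a\leq\id$ and $\conb\rp\b\leq\id$, so $\a,\b$ form a pair of conjugated quasiprojections and $\AA$ is a Q-relation algebra. Thus the plan is to assume $\a,\b$ are conjugated quasiprojections and to prove the two reverse inequalities $\id\leq\cona\rp\a$ and $\id\leq\conb\rp\b$. Concretely these say that $\a$ and $\b$ are \emph{surjective} (their ranges exhaust the base), and the equation $1=\cona\rp\b$ is precisely what forces this.

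First I would extract surjectivity in the workable form $1\rp\b=1$ and $1\rp\a=1$. For $\b$ this is direct: since $\cona\leq1$, monotonicity of $\rp$ gives $1=\cona\rp\b\leq1\rp\b\leq1$, hence $1\rp\b=1$. The element $\a$ sits on the \emph{left} of the relative product $\cona\rp\b$, so I would first pass to converses: because $\con1=1$ and converse reverses relative products (using $\con{\con\a}=\a$), we get $1=\con{\cona\rp\b}=\conb\rp\a$, and then $\conb\leq1$ yields $1=\conb\rp\a\leq1\rp\a\leq1$, so $1\rp\a=1$.

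The crux is then to pass from $1\rp\x=1$ back to $\id\leq\con\x\rp\x$ for each $\x\in\{\a,\b\}$. Here I would invoke the Dedekind (modular) law of relation algebras in the instance $(\y\rp\x)\bp\z\leq(\y\bp\z\rp\con\x)\rp\x$ with $\y=1$ and $\z=\id$, which after the simplifications $\id\rp\con\x=\con\x$ and $1\bp\con\x=\con\x$ reads $(1\rp\x)\bp\id\leq\con\x\rp\x$. Since $1\rp\x=1$, the left side collapses to $\id\bp1=\id$, giving $\id\leq\con\x\rp\x$; combined with the quasiprojection inequality $\con\x\rp\x\leq\id$ this yields $\con\x\rp\x=\id$. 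Applying this to both $\a$ and $\b$ completes the forward implication. The only genuine obstacle is this last inequality: the reductions to $1\rp\x=1$ use nothing beyond monotonicity and one application of converse, whereas turning a surjectivity/range condition into $\id\leq\con\x\rp\x$ requires the Dedekind law (equivalently, the identity $\id\bp1\rp\x=\id\bp\con\x\rp\x$ expressing the range of $\x$ as a subidentity).
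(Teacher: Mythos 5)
Your proof is correct and takes essentially the same route as the paper's: the paper's Prop.~\ref{prop1a} obtains $\id\leq\cona\rp\a$ by a single application of the rotation/Dedekind law to $\id\bp\cona\rp\b$, whereas you first reduce to $1\rp\a=1\rp\b=1$ and then apply the same law to $\id\bp1\rp\x$ --- a cosmetic reshuffling. In both arguments the sole substantive step is the modular law converting the totality condition $1=\cona\rp\b$ into surjectivity of $\a$ and $\b$.
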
\noindent
Tarski and Givant \cite[p.\,242]{MR920815} wrote, ``The main
contribution of this work to the theory of relation algebras is the
following theorem.''
\begin{thm}[Tarski {\cite[8.4(iii)]{MR920815}}]
  \label{QRA}
  Every $\QRA$ is an $\RRA$.$^{1^*}$
\end{thm}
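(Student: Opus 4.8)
The plan is to reduce to the simple case and then build a representation directly from the pairing that the quasiprojections encode. First I would observe that $\QRA$ is closed under homomorphic images and direct products: the images $h(\a),h(\b)$ of a pair of conjugated quasiprojections again satisfy \eqref{Q}, as do the tuples $(\a_i)_i,(\b_i)_i$ assembled from quasiprojections in each factor of a product. Since every relation algebra embeds subdirectly into its subdirectly irreducible quotients, and a relation algebra is subdirectly irreducible iff it is simple, any $\QRA$ $\AA$ embeds into a product $\prod_i\AA_i$ of simple homomorphic images, each of which is again a $\QRA$. As $\RRA$ is a variety, hence closed under subalgebras and products, it suffices to represent each simple $\AA_i$. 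This packages the global problem into the representation of a single simple $\QRA$.

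Next I would record the geometric meaning of \eqref{Q} inside any representation over a base $U$. Reading $\a,\b$ as binary relations on $U$, the inequalities $\cona\rp\a\leq\id$ and $\conb\rp\b\leq\id$ say that $\a$ and $\b$ are single-valued, while $1=\cona\rp\b$ says that the map $z\mapsto\langle\a(z),\b(z)\rangle$ is onto $U\times U$; Prop.\ \ref{prop1} sharpens functionality to $\id=\cona\rp\a=\conb\rp\b$, so the two projections are in addition surjective. Thus a representation forces $U$ to carry a definable surjective pairing onto $U\times U$, and conversely the task is to manufacture a base $U$ on which the quasiprojection terms induce exactly such a pairing. This is the internal analogue of the bijection $U\cong U\times U$ that underlies free J\'onsson--Tarski algebras and the Cantor-set picture.

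For the construction I would pass to a complete, atomic $\QRA$---either the canonical extension of the simple algebra, which satisfies the same quasiprojection equations, or an ultrapower realizing enough points---and take suitable ultrafilters, equivalently subidentity atoms, as the points of a base $U$. The identity $\cona\rp\b=1$ guarantees that every ordered pair of points is coded by at least one point, so iterating the quasiprojection maps along finite sequences produces a base closed under pairing. The representation then sends each element $\x$ to the set of pairs of points that $\x$ relates, and one checks that $\bp$, $\con\blank$, $\rp$, and $\id$ are matched by intersection, converse, composition, and the diagonal; simplicity is used to keep the base connected, so that the unit $1$ is carried onto the full $U\times U$.

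The hard part is to verify that the pairing built from the quasiprojections is coherent enough to preserve relative multiplication faithfully: because \eqref{Q} supplies only the inequalities $\cona\rp\a\leq\id$ and $\conb\rp\b\leq\id$, with no Unicity hypothesis \eqref{U} to force a coding point to be unique, a single point may code several pairs, and one must show this ambiguity does no harm---equivalently, that distinct algebra elements are separated by some pair of points in the constructed base. This faithfulness is precisely the content of Tarski's ``Main Mapping Theorem,'' and it is the step he settled by metamathematical means, translating between the calculus of relations with and without variables. A purely relation-algebraic treatment instead extracts from \eqref{Q} and simplicity enough derived identities---projection, pairing, and the associated ``quasi-inverse'' laws---to run a back-and-forth/saturation argument that extends finite partial representations to a total one; establishing those identities and the extension step is where essentially all the work lies.
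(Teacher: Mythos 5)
Your proposal is an outline rather than a proof: your last paragraph concedes that ``establishing those identities and the extension step is where essentially all the work lies,'' and that deferred step \emph{is} the theorem. Two concrete problems. First, the base you propose does not exist where you look for it. Subidentity atoms of the canonical extension (or of an ultrapower) of a simple $\QRA$ are not in general ``points'' in the required sense (elements $\u$ with $\u\rp\di\rp\u=0$): in an integral $\QRA$ the only subidentity atom is $\id$ itself and $\id\rp\di\rp\id\neq0$. No completion supplies points for free; the entire content of Lemma~\ref{firstlemma} is that they must be \emph{manufactured} by embedding the given algebra into a strictly larger $\QRA$ built on the relativized universe $\B=\{\x:(\a\rp\cona\bp\b\rp\conb)\rp\x=\x\leq\w\}$ with the twisted composition $\x\bullet\y=\x\rp\cona\bp\b\rp\conb\rp\y$, and then iterating (Lemma~\ref{secondlemma}) until the extension is point-dense and hence representable. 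Your sketch skips exactly this construction. Second, the issue you single out as ``the hard part'' --- that one point may code several pairs absent \eqref{U} --- is not the real obstruction; the real work is verifying that relative multiplication is preserved, and the tool for that is the pairing identity \eqref{Pr}, which must first be derived equationally from \eqref{Q} (Props.~\ref{QPr} and \ref{pair}). Your proposal never isolates this identity, yet every known algebraic proof runs through it or through something equivalent.

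For comparison, the paper completes the argument by two routes, neither of which needs your reduction to the simple case. One is the point-density route of \SS\ref{origQRAth} just described. The other goes through tabularity: if $0\neq\x$ in a $\QRA$ with quasiprojections $\a,\b$, then $0\neq\con\p\rp\q\leq\x$ where $\p=\a\bp\b\rp\con\x$ and $\q=\b\bp\a\rp\x$ are functional, so every $\QRA$ is tabular (\SS\ref{s8}, Theorem~\ref{TRA}), and Theorem~\ref{tabrra} represents every tabular algebra by explicitly extending partial representations (Lemmas~\ref{LEMMA3}, \ref{LEMMA5}, \ref{LEMMA6}) and assembling them with an ultrafilter. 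That extension machinery is the rigorous version of the ``back-and-forth/saturation argument'' you gesture at; until you supply it, or the point-adjunction lemma, the theorem is not proved.
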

Footnote 1* was added by Givant (signified by the asterisk) in the
years following Tarski's death in 1983. Notation in the quoted
footnote is explained below.
\begin{quote}
  $^{1^*}$This statement is actually equivalent to the assertion that
  $\LL^+$ and $\LL^\times$ are equipollent in means of proof relative
  to sentences $\mathbf{Q}_{AB}$; \ie, it is equivalent to Theorem
  4.4(xxxvii) (or, alternately, it is equivalent to the semantical
  completeness of $\LL^\times$ relative to sentences
  $\mathbf{Q}_{AB}$, \ie, it is equivalent to Theorem 4.4(xl)). In
  fact, the proof of 8.4(iii) shows that 8.4(iii) is implied by
  4.4(xxxvii). For a simple proof of the converse implication, we use
  the methods of the next section, in particular \dots\ \cite[fn.\,1*,
    pp.\,242--3]{MR920815}
\end{quote}
In footnote 1*, $\LL^+$ is a conservative extension of $\LL$, where
$\LL$ is first-order logic with equality and binary relation symbols,
but no function symbols or constants. $\LL^+$ is obtained by adding
operators that produce new binary relation symbols from old, along
with axioms that define the meanings of these new symbols.  For
example, the axiom for the operator $+$ says that in any model the
relation symbol $A+B$ denotes the union of the relations denoted by
$A$ and $B$.  Tarski also added a new equality symbol that combines
two relation symbols into a sentence which is true in a model iff the
relations denoted by the two relation symbols are the
same. $\LL^\times$ is a sub-system of $\LL^+$ whose sentences are just
the equations between relation symbols. Its axioms are the
appropriately translated equational axioms for relation
algebras. $\mathbf{Q}_{AB}$ is an equation between relation symbols
$A$ and $B$ that says $A$ and $B$ are conjugated quasiprojections,
\ie, an appropriately translated single equation that is equivalent to
\eqref{Q}.

The omitted part of footnote 1* (that follows the quoted part) is an
edited version of a proof I sent to Givant in 1985 at his request in
response to comments by Jan Mycielski, who was the referee of
\cite{MR920815}.  After describing their proof of $\QRA\subseteq\RRA$
which was first announced in 1953 \cite{Tarski1953}, Tarski and Givant
wrote,
\begin{quote}
  The reasoning just outlined uses essentially Theorem 4.4(xxxvii) and
  depends therefore on the heavy proof-theoretical argument by means
  of which that theorem has been established. On the other hand, in
  Maddux \cite{MR460210} a substantial generalization of 8.4(iii) can
  be found which, moreover, is established by purely algebraic
  methods.$^{3^*}$ \cite[p.\,244]{MR920815}
\end{quote}
The ``substantial generalization of 8.4(iii)'' is Theorem \ref{TRA}
below.  Mycielski's concern, addressed by footnote 1*, was that the
proof of the main result in \cite{MR920815} might be out-of-date.

Givant's footnote $3^*$, added to the remarks on page 244 quoted
above, says,
\begin{quote}
  $^{3^*}$In view of the observation made in footnote $1^*$ on p.\,242,
  Maddux's algebraic proof of 8.4(iii) gives a semantical proof of the
  relative equipollence $\LL^\times$ and $\LL^+$ in means of proof,
  \ie, of Theorem 4.4(xxxvii).  Their relative equipollence in means
  of expression, Theorem 4.4(xxxvi), was already established by
  semantical methods in 4.4(xiv). Of course, Maddux's proof also gives
  us a semantical proof of the various properties of the translation
  mappings $\KAB$ (cf.\,2.4(vi)). \cite[fn.\,3*, p.\,244]{MR920815}
\end{quote}
Mycielski was concerned that the lack of a semantical proof of the
main result made the book outdated, but this semantical proof was
provided by Givant's footnotes.

The ``translation mappings $\KAB$'' mentioned in footnote $3^*$ are
discussed by Tarski and Givant in \SS4.3, pp.\ 107--110, entitled
``Historical remarks regarding the translation mapping from $\LL^+$ to
$\LL^\times$''. The section begins,
\begin{quote}
  In establishing the relative equipollence of $\LL^+$ and
  $\LL^\times$, \ie, the equipollence of the systems obtained by
  relativizing the formalisms $\LL^+$ to $\LL^\times$ to any given
  sentence $\mathbf{Q}_{AB}$, we shall apply the same general method
  which was used to establish the equipollence of $\LL^+$ and $\LL$ in
  Chapter 2 and of $\LL^+_3$ and $\LL^\times$ in Chapter 3.$^{3^*}$
\end{quote}
Givant's footnote about their proof of relative equipollence says,
\begin{quote}
  $^{3^*}$Thus, the proof we shall give may be regarded as a
  syntactical proof. In the footnote on p.\ 242 we discuss briefly a
  semantical proof essentially due to Maddux.  \cite[fn.\,3*,
    p.\,107]{MR920815}
\end{quote}
Later in \SS4.3 on p.\ 109, after pointing out that ``\dots\ the proof
(by induction on sentences derivable in $\LL^+$) that $\KAB$ has the
desired property \dots turns out to be more complicated than one would
expect'', Tarski and Givant wrote,
\begin{quote}
  There is another construction of translation mappings which leads to
  some simplification of both the basic definitions and the proofs of
  the fundamental results. This construction was discovered by Monk
  around 1960 (but was never published and was not known to the
  authors) and was rediscovered in 1974 by Maddux in a slightly
  modified form. With their permission we shall use the new
  construction as a base for the subsequent discussion, and in fact we
  shall present it in the form given by Maddux. In particular, the
  specific proof of (viii) that we shall give in the next section is
  essentially due to Maddux.  \cite[p.\,109]{MR920815}
\end{quote}
Theorem (viii), the subject of these remarks, is
\begin{thm}[{\cite[4.3(viii)]{MR920815}}]
  \label{viii}
  For every $\Psi\subseteq\Sigma^+$ and every $X\in\Sigma^+$, if
  $\Psi\vdash^+X$ then
  \begin{align*}
    \mathbf{K}^*_{AB}(\Psi)\vdash^\times_{\mathbf{Q}_{AB}}\KAB(X).
  \end{align*}
\end{thm}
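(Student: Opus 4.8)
The plan is to induct on the length of a derivation witnessing $\Psi\vdash^+X$. A derivation of $X$ from $\Psi$ in $\LL^+$ is a finite sequence of sentences from $\Sigma^+$ ending in $X$, each entry being a member of $\Psi$, a logical axiom of $\LL^+$, or a consequence of earlier entries by one of the finitely many rules of inference; so it suffices to establish the conclusion in each of these three cases, assuming as the induction hypothesis that $\mathbf{K}^*_{AB}(\Psi)\vdash^\times_{\mathbf{Q}_{AB}}\KAB(Y)$ for every earlier entry $Y$. Fixing $\Psi$ throughout, the hypothesis case is immediate: if $X\in\Psi$ then $\KAB(X)\in\mathbf{K}^*_{AB}(\Psi)$ by the definition of $\mathbf{K}^*_{AB}$ as the pointwise image of $\KAB$, whence $\mathbf{K}^*_{AB}(\Psi)\vdash^\times_{\mathbf{Q}_{AB}}\KAB(X)$ trivially.

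For the rule case I would verify once and for all that each inference rule of $\LL^+$ is preserved by $\KAB$ relative to $\mathbf{Q}_{AB}$: whenever the translations of a rule's premises are $\mathbf{Q}_{AB}$-derivable, so is the translation of its conclusion. Detachment reduces to showing that the equations $\KAB(Y)$ and $\KAB(Y\to X)$ equationally force $\KAB(X)$ over the axioms of $\LL^\times$, and generalization, together with any substitution rule, is matched by a corresponding equational manipulation once $\mathbf{Q}_{AB}$ is in force. The axiom case then requires, for each logical axiom $X$ of $\LL^+$, that $\vdash^\times_{\mathbf{Q}_{AB}}\KAB(X)$. The operator-defining axioms---those stipulating that the compound symbols $A+B$, $\con A$, $A\rp B$, and the like denote union, converse, and relative product---are routine: computing their translations reduces them to the defining identities for these operators among the relation-algebra axioms \eqref{ra1}--\eqref{ra10}, and the sentential-connective structure of any axiom is absorbed by the Boolean operations of those same axioms.

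The main obstacle is the remaining family of logical axioms, those governing the quantifiers and equality, and it is here that $\mathbf{Q}_{AB}$ becomes indispensable and that, in Tarski and Givant's words, matters turn out ``more complicated than one would expect.'' The translation $\KAB$ simulates the binding and discharge of individual variables by composing with the conjugated quasiprojections $A$ and $B$: projecting onto a coordinate, or existentially closing a variable, is rendered by relative multiplication involving $1$ together with the pairing furnished by $A$ and $B$. To show that the $\KAB$-image of, say, an instantiation or a quantifier-distribution axiom is $\mathbf{Q}_{AB}$-derivable, I would unwind the translation into an equation between relation terms and verify it using exactly the quasiprojection equations $\con A\rp A\leq\id$, $\con B\rp B\leq\id$, and $1=\con A\rp B$ packaged in $\mathbf{Q}_{AB}$, by way of their algebraic consequences---chiefly the pairing identity derived from \eqref{Q} in \SS\ref{s10}. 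These quantifier and equality cases carry essentially the entire weight of the relative equipollence; the delicate part is the bookkeeping that matches each bound variable to the correct projection so that the unwound equations close up under the pairing. Assembling the three cases completes the induction and yields $\mathbf{K}^*_{AB}(\Psi)\vdash^\times_{\mathbf{Q}_{AB}}\KAB(X)$.
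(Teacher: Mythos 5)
The theorem you are proving is one the paper itself does not prove: it is quoted from Tarski--Givant \cite[4.3(viii)]{MR920815}, with the remark that its proof ``occupies all of \SS4.4, pp.~110--124'' of that book and proceeds ``by induction on sentences derivable in $\LL^+$.'' Your outline reproduces the correct skeleton of that argument --- induction on a derivation, the hypothesis case trivial, the rule case reduced to showing detachment is preserved by $\KAB$, the axiom case reduced to showing $\vdash^\times_{\mathbf{Q}_{AB}}\KAB(X)$ for each logical axiom $X$ --- and you correctly single out the pairing identity (Definition \ref{pairing}, Props.~\ref{pair} and \ref{pair2}) as the algebraic engine, matching the paper's statement in \SS\ref{s10a} that \eqref{Pr} ``plays a vital r\^ole'' in the proof of Theorem \ref{viii} and must be established before the main work begins.

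The gap is that essentially the entire mathematical content of the theorem sits inside your sentence ``I would unwind the translation into an equation between relation terms and verify it.'' The quantifier and equality axiom cases, together with the check that detachment survives translation (itself nontrivial, since $\KAB$ compresses the sentential structure of a formula into a single equation between relation terms, so modus ponens does not pass through formally), are precisely the fourteen pages of \SS4.4 of \cite{MR920815}; your proposal discharges none of them. Nor can the induction even be set up before a precise definition of $\KAB$ on quantified and compound sentences is fixed, and the paper recounts at length (\SS\ref{s7}, \SS\ref{s10a}) that the choice of this construction --- Tarski's original mapping with $\forall$ primitive versus the Monk/Maddux mapping with $\exists$ primitive --- is what determines whether the inductive verification closes in a manageable number of pages at all. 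As written, your text is an accurate table of contents for the proof rather than a proof.
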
\noindent
In Theorem \ref{viii}, $\Sigma^+$ is the set of sentences of $\LL^+$
and $\Sigma^\times$ is the set of equations of $\LL^\times$.  For any
two relation symbols $A$ and $B$, $\KAB$ is a function that takes a
sentence of $\LL^+$ as input and produces an equation in $\LL^\times$.
The theorem says that if a sentence $X$ is provable from a set of
sentences $\Psi$ in first-order logic then the translation of $X$ into
an equation is provable from the equations that are the translations
of the sentences in $\Psi$, using just the equational axioms for
relation algebras along with equations asserting that $A$ and $B$ are
a pair of conjugated quasiprojections.

The proof of Theorem \ref{viii} occupies all of \SS4.4,
pp.\ 110--124, entitled, ``Proof of the main mapping theorem for
$\LL^\times$ and $\LL^+$.''  The {\bf Main Mapping Theorem}, a
restatement of Theorem \ref{viii} as an equivalence with the
hypothesis weakened to provability relative to $\mathbf{Q}_{AB}$, is
\begin{thm}[{\cite[4.4(xxxiv)]{MR920815}}]
  \label{xxxiv}
  For every $\Psi\subseteq\Sigma^+$ and every $X\in\Sigma^+$, we have
  \begin{align*}
    \Psi\vdash^+_{\mathbf{Q}_{AB}}X\;\;\text{ iff }\;\;
    \mathbf{K}^*_{AB}(\Psi)\vdash^\times_{\mathbf{Q}_{AB}}\KAB(X).
  \end{align*}
\end{thm}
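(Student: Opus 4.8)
The plan is to obtain both directions from Theorem~\ref{viii} together with one auxiliary ingredient, which I will call the \emph{translation equivalence}: for every sentence $X\in\Sigma^+$, reading the equation $\KAB(X)\in\Sigma^\times$ as a sentence of $\LL^+$ (legitimate since $\LL^\times$ is a subsystem of $\LL^+$), one has both $X\vdash^+_{\mathbf{Q}_{AB}}\KAB(X)$ and $\KAB(X)\vdash^+_{\mathbf{Q}_{AB}}X$. I also use the inclusion $\vdash^\times_{\mathbf{Q}_{AB}}\subseteq\vdash^+_{\mathbf{Q}_{AB}}$, valid because every equational derivation in $\LL^\times$ is a first-order derivation in $\LL^+$ and the axioms of $\LL^\times$ are the translated relation-algebra equations, all of which are theorems of $\LL^+$.

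For the forward direction, suppose $\Psi\vdash^+_{\mathbf{Q}_{AB}}X$, that is, $\Psi\cup\{\mathbf{Q}_{AB}\}\vdash^+X$. Theorem~\ref{viii} gives $\mathbf{K}^*_{AB}(\Psi\cup\{\mathbf{Q}_{AB}\})\vdash^\times_{\mathbf{Q}_{AB}}\KAB(X)$. As $\mathbf{K}^*_{AB}$ is pointwise application of $\KAB$, the left-hand side equals $\mathbf{K}^*_{AB}(\Psi)\cup\{\KAB(\mathbf{Q}_{AB})\}$. Since $\mathbf{Q}_{AB}$ is itself an equation of $\LL^\times$ and $\KAB$ acts on such equations as the identity up to equational provability, the extra hypothesis satisfies $\vdash^\times_{\mathbf{Q}_{AB}}\KAB(\mathbf{Q}_{AB})$ and may be discharged, leaving $\mathbf{K}^*_{AB}(\Psi)\vdash^\times_{\mathbf{Q}_{AB}}\KAB(X)$.

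For the backward direction, suppose $\mathbf{K}^*_{AB}(\Psi)\vdash^\times_{\mathbf{Q}_{AB}}\KAB(X)$. By the inclusion above this gives $\mathbf{K}^*_{AB}(\Psi)\vdash^+_{\mathbf{Q}_{AB}}\KAB(X)$. The translation equivalence applied to each $\psi\in\Psi$ shows that $\Psi$ and $\mathbf{K}^*_{AB}(\Psi)$ are interderivable relative to $\mathbf{Q}_{AB}$, so $\Psi\vdash^+_{\mathbf{Q}_{AB}}\KAB(X)$; applying it once more, now to $X$, yields $\Psi\vdash^+_{\mathbf{Q}_{AB}}X$.

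The hard part is the translation equivalence itself---exactly the property of $\KAB$ that Tarski and Givant call ``more complicated than one would expect.'' It is proved by induction on the construction of $X$. The atomic and Boolean cases are routine, since $\KAB$ is built to commute with the connectives up to provable equivalence. The decisive case is the existential quantifier, where one must show $\exists_v Y$ interderivable with $\KAB(\exists_v Y)$ relative to $\mathbf{Q}_{AB}$: here the conjugated quasiprojections $A$ and $B$ are indispensable, for it is precisely their pairing and projection behaviour, secured by $\mathbf{Q}_{AB}$, that allows a single relational coordinate to absorb the bound variable $v$. Synchronizing these pairing manipulations with the quantifier rules of $\LL^+$ while keeping each step equationally derivable from $\mathbf{Q}_{AB}$ is the delicate core of the argument; it is what makes the translation a faithful interpretation and, as the quoted footnotes explain, ties this theorem to Tarski's representation theorem $\QRA\subseteq\RRA$.
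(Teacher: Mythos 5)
The paper does not prove Theorem~\ref{xxxiv}; it is quoted verbatim from Tarski--Givant \cite[4.4(xxxiv)]{MR920815}, and the surrounding text only discusses its history and its relation to Theorem~\ref{viii}. So there is no in-paper proof to compare against. That said, your reduction is the standard one and its skeleton is sound: the forward direction follows from Theorem~\ref{viii} applied to $\Psi\cup\{\mathbf{Q}_{AB}\}$ once you know $\vdash^\times_{\mathbf{Q}_{AB}}\KAB(\mathbf{Q}_{AB})$ (true because $\KAB$ restricted to equations of $\LL^\times$ returns an equationally equivalent equation), and the backward direction follows from the inclusion $\vdash^\times_{\mathbf{Q}_{AB}}\subseteq\vdash^+_{\mathbf{Q}_{AB}}$ together with the interderivability of $X$ and $\KAB(X)$ relative to $\mathbf{Q}_{AB}$ in $\LL^+$. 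The caveat is that essentially all of the content lives in the two lemmas you invoke without proof: the ``translation equivalence'' $X\dashv\vdash^+_{\mathbf{Q}_{AB}}\KAB(X)$ is a substantial result of \cite{MR920815} (established there by induction on the structure of $X$, with the quantifier case carried by the quasiprojections), and Theorem~\ref{viii} itself is the fourteen-page induction on derivations that the paper describes as ``more complicated than one would expect.'' Your sketch correctly identifies where the difficulty sits and does not misattribute it, but as written it is a correct reduction of the theorem to those two cited results rather than a self-contained proof --- which, given that the paper itself treats the theorem the same way, is an acceptable level of detail here.
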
\noindent
It says that, relative to the assumption that $A$ and $B$ are
conjugated quasiprojections, provability in Tarski's conservative
extension of first-order logic is equivalent to provability in the
equational theory of relation algebras, where $\KAB$ is the function
that translates first-order sentences into equations.

In \cite[\SS4.3]{MR920815}, Tarski's original mapping is given and it
is stated that the Main Mapping Theorem is based on a new mapping that
was discovered by Monk around 1960, rediscovered by me in 1974, not
published by Monk, and unknown to the authors. This is all true, but
there is more to the story. I discovered the new mapping by trying to
write down the old mapping when I didn't have Tarski's manuscript. I
wrote what seemed natural and ended up with something different from
Tarski's original mapping. I wrote up a new proof of Theorem
\ref{viii}, based on the new mapping, and gave it to Steve Givant.  He
used it to rewrite the proof in {\cite[\SS4.4]{MR920815}.}

Prior to having this new version of the proof of the Main Mapping
Theorem, Tarski had had to reconstruct it based on his original
translation mapping. He did not have and had been seeking some old
notes written by Gebhard Fuhrken for a seminar in Berkeley.  Steve
Givant had written to Fuhrken and to Don Monk about those notes, but
they did not know where the notes might be. When Don Pigozzi heard
this story from me in the summer of 1977, he recalled that he had some
notes from one of Tarski's seminars and thought they might be the ones
Tarski and Givant were looking for. He mailed them to me when he got
back to Ames, Iowa. I copied the notes and passed them on to Tarski,
along with the news that the mapping I had discovered was already in
the notes.

Many of the seminar notes were written by Monk. After receiving the
notes, Tarski and Givant probably wrote to Monk to find out when he
discovered the new mapping. Don Pigozzi had the notes because he took
a seminar from Tarski in 1968--69 on equational logic.  Tarski wanted
him to present an undecidable equational theory, namely the equational
theory of relation algebras.  Tarski told Don to write to Monk for
some notes from an earlier seminar. Monk sent notes that came from a
Berkeley seminar, probably around 1960, plus notes from a seminar in
the sixties at Boulder, Colorado. Don thought these notes were written
by Steve Comer and Jim Johnson as well as Monk. The notes also
included the ones by Fuhrken that Tarski wanted so badly. Don used the
notes, made his report, and, years later, sent them all to me.

Thus, what Don sent me were notes from three seminars, one in Berkeley
around 1960, one in Boulder in the 1960's, and Don's own notes for the
1968--69 seminar in Berkeley. The new translation mapping that appears
in those notes may have been presented in Tarski's own seminar just
three years before he began converting his 1942 manuscript into the
book \cite{MR920815}.  Of course, Don may not have presented the
mapping, but it's the central feature of the proof.

\section{Tabular relation algebras}
\label{s8}
\begin{defn}
  \label{tab-orig}
  A relation algebra $\AA$ is {\bf tabular} if every non-zero element
  contains a non-zero element of the form $\con\p\rp\q$ with
  $\p,\q\in\Fn\AA$.
\end{defn}
\begin{thm}[{\cite[Th.\,9(2)]{MR2628352}, \cite[Th.\,7]{MR460210},
      \cite[Th.\,423]{MR2269199}}]
  \label{TRA}
  Every tabular relation algebra is representable.
\end{thm}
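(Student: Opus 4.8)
My plan is to build a representation directly, by a step-by-step network amalgamation, using as building blocks the functional elements that tabularity supplies; note that Theorem \ref{QRA} cannot simply be invoked, since a tabular algebra need not carry a pair of conjugated quasiprojections, so a fresh construction is needed. The target is an embedding $h$ of $\AA$ into $\Sb\E$ for a suitable equivalence relation $\E$ on a countable base $V=\bigcup_n V_n$. I work with finite \emph{consistent labelled graphs} (networks): a network $N$ assigns to each ordered pair $(a,b)$ of its nodes a label $N(a,b)\in\A$, subject to $N(a,a)\leq\id$, the requirement that $N(b,a)$ be the converse of $N(a,b)$, and the triangle law $N(a,b)\leq N(a,c)\rp N(c,b)$ for all nodes. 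The union $N_\omega$ of an increasing chain of such networks then yields $h(r)=\{\<a,b\>:N_\omega(a,b)\leq r\}$ together with $\E=\{\<a,b\>:N_\omega(a,b)\neq0\}$, whose classes are the connected components; this device treats simple and non-simple $\AA$ uniformly and sidesteps any separate subdirect decomposition.

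The construction interleaves two families of tasks, each discharged at a finite stage. First, for every nonzero $r\in\A$ I install a pair $(a,b)$ with $N(a,b)\leq r$; by Definition \ref{tab-orig} I may take $N(a,b)=\con\p\rp\q$ with $\p,\q\in\Fn\AA$, so \emph{every label may be refined downward to a functional graph} $\con\p\rp\q$. Meeting this for all $r$ makes $h$ injective. Second, for every edge $(a,b)$ and every factorization $N(a,b)\leq s\rp t$ with $s,t$ nonzero, I adjoin a fresh node $c$ with $N(a,c)\leq s$ and $N(c,b)\leq t$; meeting this for all triples forces $h(s\rp t)=h(s)\,|\,h(t)$. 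That $h$ preserves $\bp$, converse, $\id$, $0$, $1$ and the containments $h(s)\,|\,h(t)\subseteq h(s\rp t)$ is immediate from the network axioms, so once both families of tasks are exhausted along the chain, $h$ is the desired embedding; the enumeration of tasks, the passage to the union, and these verifications are routine bookkeeping.

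The real content, and the step I expect to be the main obstacle, is the amalgamation lemma underlying the second family: that any consistent network whose edges carry functional-graph labels can be extended by a witnessing node \emph{without} violating the triangle law against the nodes already present. Here the functional calculus is decisive. Because $\Fn\AA$ is closed under $\rp$ and forms a monoid (Prop.\ \ref{func}), a graph $\con\p\rp\q$ behaves like a single abstract ``pair of points'' $(\p,\q)$, and composing two graphs reduces, after the functional identities, to locating a common functional factor; tabularity then guarantees that the nonzero meet governing each new triangle again dominates a functional graph fit to label the edges incident to $c$. The crux is to make these local choices \emph{coherently} at the new node, \ie\ to satisfy simultaneously the finitely many triangle constraints imposed by the existing nodes. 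I would isolate this as a single lemma on the solvability of $\con\p\rp\q\leq s\rp t$ in functional unknowns, proved from the functional-element identities together with tabularity applied to each relevant nonzero meet. Granting it, $h$ is an isomorphism of $\AA$ onto a subalgebra of $\Sb\E$, which is exactly what representability requires.
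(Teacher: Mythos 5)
Your overall strategy --- saturate a growing finite structure with composition witnesses supplied by tabularity and read the representation off the limit --- is the same one the paper follows in Lemmas \ref{LEMMA2}--\ref{LEMMA6} and Theorem \ref{tabrra}. But the step you defer with ``granting it'' is the entire content of the theorem, and in the form you state it the deferred lemma is not what the paper proves and is not obviously true. You propose to adjoin a witnessing node $c$ to an edge-labelled network \emph{without} violating the triangle law against the nodes already present, i.e.\ keeping all existing labels fixed and only choosing labels on the edges incident to $c$. That is exactly the amalgamation step that fails for general relation algebras (it is why $\RA\neq\RRA$), and your justification addresses only the triangle on $a,c,b$; it says nothing about how to label $(d,c)$ coherently for the other nodes $d$ already present. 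The paper's Lemma \ref{LEMMA5} gets around this by \emph{not} keeping the old structure fixed: nodes carry functional elements $\f_\k$ rather than edges carrying labels, tabularity supplies $\p,\q\in\Fn\AA$ with $0\neq\con\p\rp\q\leq\f_\i\rp\x\bp\f_\j\rp\con\y$, and the extension replaces every old $\f_\k$ by $\g_\k=\r\rp1\bp\p\rp\f_\k$ (pre-composition with $\p$ and restriction to the new common domain $\r\rp1$, where $\r=\q\bp\p\rp\f_\i\rp\x\bp\p\rp\f_\j\rp\con\y$), setting $\g_\m=\r\rp1\bp\q$ for the new node. Only the induced relations $\{\<\k,\ell\>:\g_\k\rp\z\geq\g_\ell\}$ are preserved, not the labels themselves; that global transformation is what makes the finitely many triangle constraints at the new node hold simultaneously, and it is precisely the coherence argument your proposal leaves unproved.

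Two further omissions. First, your task list never forces $h$ to preserve $+$ or complement: refining each label to some functional graph $\con\p\rp\q$ does not make labels join-prime, so $N_\omega(a,b)\leq r+s$ need not yield $N_\omega(a,b)\leq r$ or $N_\omega(a,b)\leq s$; the paper needs a separate refinement step for exactly this (Lemma \ref{LEMMA3}). Second, a single countable chain can discharge only countably many composition tasks, so your construction as written handles only countable $\AA$; the paper schedules witnesses only for $\x,\y$ ranging over a finite set $X$ and then takes an ultraproduct over all finite $X\subseteq\A$ in Lemma \ref{LEMMA6}, which is what makes the argument work at arbitrary cardinality. (The identity is likewise handled only at the very end, by quotienting by the equivalence relation $\varphi(\id)$, rather than being imposed on the finite approximations from the outset.)
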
\noindent
That Theorem \ref{TRA} is ``a substantial generalization'' of
$\QRA\subseteq\RRA$ follows from the observation that if $\x$ is not
zero in a Q-relation algebra with conjugated quasiprojections $\a$ and
$\b$, then $\x$ contains the non-zero element $\con\p\rp\q$ where
$\p=\a\bp\b\rp\con\x$ and $\q=\b\bp\a\rp\x$. This step depends on the
fact that relation algebras have a Boolean part. Theorem \ref{TRA} and
its extension to J-algebras will be proved later in Theorem
\ref{tabrra}, using the following alternate formulation of tabularity.
\begin{prop}
  \label{tab-prop}
  A relation algebra $\AA$ is tabular iff for all $\v,\w\in\A$,
  $\v<\w$ implies there are $\p,\q\in\Fn\AA$ such that
  $0\neq\con\p\rp\q\leq\w$ and $\v\bp\con\p\rp\q=0$.
\end{prop}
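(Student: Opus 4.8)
The plan is to prove the two implications separately, each as a single application of the opposing characterization, so that the only real work is a Boolean manipulation of the ``difference'' element. First I would show that tabularity in the sense of Definition~\ref{tab-orig} implies the stated condition. Given $\v<\w$, I would pass in the Boolean reduct to the difference $\w\bp\min\v$. This element is nonzero: were $\w\bp\min\v=0$ we would have $\w\leq\v$ and hence $\w=\v$, contradicting $\v<\w$. Applying Definition~\ref{tab-orig} to $\w\bp\min\v$ produces $\p,\q\in\Fn\AA$ with $0\neq\con\p\rp\q\leq\w\bp\min\v$. Since $\w\bp\min\v\leq\w$ this gives $\con\p\rp\q\leq\w$, and since $\con\p\rp\q\leq\min\v$ the Boolean law that $\x\leq\min\v$ is equivalent to $\v\bp\x=0$ yields $\v\bp\con\p\rp\q=0$. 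Both required conclusions follow.

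For the converse I would assume the stated condition and verify Definition~\ref{tab-orig} directly. Given any $\x\neq0$, I would apply the condition with $\v=0$ and $\w=\x$; since $0<\x$ this yields $\p,\q\in\Fn\AA$ with $0\neq\con\p\rp\q\leq\x$ and $0\bp\con\p\rp\q=0$. The second equation is automatic by axiom~\eqref{zero}, and the first says exactly that $\x$ contains a nonzero element of the form $\con\p\rp\q$, so $\AA$ is tabular.

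The only step beyond bookkeeping is the Boolean equivalence of $\v<\w$ with the conjunction of $\v\leq\w$ and $\w\bp\min\v\neq0$, which is what lets me trade the pair $(\v,\w)$ for the single difference element; I do not anticipate a genuine obstacle here. The subtlety worth flagging is that the forward direction genuinely needs arbitrary $\v$, not just $\v=0$, which is precisely why one feeds the difference $\w\bp\min\v$ rather than $\w$ itself into Definition~\ref{tab-orig}. Note also that the resulting condition refers only to $\bp$, $\leq$, and the vanishing of a meet, never to complementation $\min\blank$; this is the feature that will let it transcribe verbatim into the complement-free setting of J-algebras later, even though Definition~\ref{tab-orig} as stated relies on the Boolean structure through the difference element used in this proof.
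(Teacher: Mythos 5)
Your proof is correct and follows essentially the same route as the paper's: feed the Boolean difference $\w\bp\min\v$ into Definition~\ref{tab-orig} for the forward direction, and specialize to $\v=0$, $\w=\x$ for the converse. The only difference is that you spell out why $\w\bp\min\v\neq0$ follows from $\v<\w$, a step the paper simply asserts.
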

\begin{proof}
  Assume $\AA$ is tabular and $\v<\w$. In a relation algebra, $\v<\w$
  implies $\w\bp\min\v\neq0$. By tabularity, there are
  $\p,\q\in\Fn\AA$ such that $0\neq\con\p\rp\q\leq\w\bp\min\v$, hence
  $0\neq\con\p\rp\q\leq\w$ and $\con\p\rp\q\leq\min\v$, but the latter
  equation implies $\con\p\rp\q\bp\v=0$.  For the converse, assume the
  alternate form of tabularity and $0\neq\x$. Then $\v<\w$ where
  $\v=0$ and $\w=\x$, so there are $\p,\q\in\Fn\AA$ such that
  $0\neq\con\p\rp\q\leq\w=\x$ and $0\bp\con\p\rp\q=\v\bp\con\p\rp\q
  =0$.  The first group of equations is the desired conclusion and the
  second set is always true.
\end{proof}

\section{Qu-algebras}
\label{s9}
If $\a$ and $\b$ are functional elements of the simple proper relation
algebra $\Re\X$, then they are, in fact, functions whose domains are
subsets of $\X$. The Universal Domain property,
\begin{align*}
  1=\a\rp1=\b\rp1,
\end{align*}
says that $\a$ and $\b$ are defined on every point in $\X$ and are
therefore unary operations on $\X$. The Universal Domain property is
not needed for the main results of \cite{MR920815}, but it is so
convenient in proofs that it was arranged to hold by the creation of
new quasiprojections from given ones, according to the next
proposition and its corollary. See the remarks surrounding
\cite[4.1(iv), 4.1(xi)]{MR920815} and consult \cite{MR920815} for a
proof of the following proposition, which is rather complicated when
carried out directly from the axioms for relation algebras.
\begin{prop}[{\cite[4.1(xii)]{MR920815}}]
  Assume $\AA\in\RA$, $\a,\b\in\A$, and \eqref{Q} holds. If
  $\p=\a+\id\bp\min{\a\rp1}$, and $\q=\b+\id\bp\min{\b\rp1}$ then
  $\id=\con\p\rp\p=\con\q\rp\q$ and $1=\con\q\rp\p=\p\rp1=\q\rp1$.
\end{prop}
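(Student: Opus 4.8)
The plan is to regard $\p$ and $\q$ as the \emph{totalizations} of the partial functions $\a$ and $\b$: one adjoins identity pairs exactly on the points outside the domains of $\a$ and $\b$. Accordingly, write $\e=\id\bp\min{\a\rp1}$ and $\f=\id\bp\min{\b\rp1}$, so that $\p=\a+\e$, $\q=\b+\f$, with $\e,\f\leq\id$. The single fact that drives everything is that the third equation of \eqref{Q} upgrades the first two from inequalities to equalities: from $1=\cona\rp\b\leq\cona\rp1\leq1$ I get $\cona\rp1=1$, and then the Dedekind (modular) law gives $\id=\id\bp\cona\rp1\leq\cona\rp\a$, so with \eqref{Q} in hand $\cona\rp\a=\id$; symmetrically $\conb\rp\b=\id$. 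Taking converses in $1=\cona\rp\b$ also records $1=\conb\rp\a$. (This is the simplification already announced for Prop.\ \ref{prop1}.)

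Next I would isolate two routine facts about the subidentity $\e$ (and, identically, $\f$). Since $\e\leq\id$ it is symmetric, $\con\e=\e$, and idempotent, $\e\rp\e=\e$. Because $\e\leq\min{\a\rp1}$ is a subidentity disjoint from the domain $\a\rp1$ of $\a$, composing with $\a$ annihilates it, $\e\rp\a=0$, and dually $\cona\rp\e=0$. Finally, since $\a\rp1$, hence $\min{\a\rp1}$, are right-ideal elements, the standard ``a right ideal is recovered from its identity part'' identity gives $\e\rp1=(\id\bp\min{\a\rp1})\rp1=\min{\a\rp1}$.

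With these in hand the five equalities fall out by expanding the relevant products. Using $\con\p=\cona+\e$,
\begin{align*}
  \con\p\rp\p&=\cona\rp\a+\cona\rp\e+\e\rp\a+\e\rp\e=\id+0+0+\e=\id,
\end{align*}
the last step because $\e\leq\id$; the computation for $\con\q\rp\q=\id$ is identical with $\b,\f$ in place of $\a,\e$. Totality is immediate from the right-ideal fact: $\p\rp1=\a\rp1+\e\rp1=\a\rp1+\min{\a\rp1}=1$, and likewise $\q\rp1=1$. For $\con\q\rp\p=1$ no expansion is even needed: monotonicity gives $\con\q\rp\p\geq\conb\rp\a=1$, hence equality.

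I expect the only real obstacle to be the upgrade $\cona\rp\a=\id$ in the first paragraph. Everything afterward is mechanical, but this step is essential: geometrically it says $\a$ is surjective, and without it the totalization $\p$ need not be onto, so $\con\p\rp\p=\id$ could fail---one would only get $\con\p\rp\p=\cona\rp\a+\e\leq\id$. The bookkeeping with right ideals and subidentities is standard relation-algebra fare; as the paper notes, carrying it out strictly from Tarski's axioms \eqref{ra1}--\eqref{ra13} is what makes a fully detailed proof longer than one might expect.
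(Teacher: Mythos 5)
Your proof is correct. Note that the paper itself gives no proof of this proposition: it explicitly defers to Tarski--Givant \cite[4.1(xii)]{MR920815} and only remarks that a derivation ``is rather complicated when carried out directly from the axioms for relation algebras,'' so there is no in-paper argument to compare against. Your outline is the standard one and every step checks: the upgrade $\cona\rp\a=\id$ is exactly Prop.~\ref{prop1a} of the paper (your route via $\cona\rp1=1$ and the Dedekind law is an equivalent variant of the rotation argument given there); the facts $\con\e=\e$, $\e\rp\e=\e$, $\e\rp\a=0=\cona\rp\e$, and $\e\rp1=\min{\a\rp1}$ are all true and yield the five equalities by distributivity, with $\con\q\rp\p=1$ following from $\conb\rp\a=\con{\cona\rp\b}=1$ by monotonicity. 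The one thing worth flagging is that the ``routine facts'' you invoke --- that complements of right-ideal elements are right-ideal elements, that $(\id\bp r)\rp1=r$ for such $r$, and that subidentities are symmetric and idempotent --- are themselves theorems requiring derivation from \eqref{ra1}--\eqref{ra13}; you acknowledge this at the end, and it is precisely this bookkeeping that makes the fully formal proof in \cite{MR920815} long. As a mathematical argument your proposal is complete and sound.
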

\begin{cor}
  \label{cor2}
  $\AA$ is a Q-relation algebra iff $\AA\in\RA$ and there are
  $\a,\b\in\A$ such that $\id=\cona\rp\a=\conb\rp\b$ and
  $1=\cona\rp\b=\a\rp1=\b\rp1$.
\end{cor}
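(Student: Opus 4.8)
The plan is to derive the corollary directly from the preceding proposition (\cite[4.1(xii)]{MR920815}), which carries out the substantive axiom-level computation; the corollary merely repackages it, strengthening Prop.\ \ref{prop1} by additionally securing the Domain condition. I would prove the two implications separately, and the two directions are very lopsided in difficulty.

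For the backward direction, suppose $\AA\in\RA$ and there are $\a,\b\in\A$ with $\id=\cona\rp\a=\conb\rp\b$ and $1=\cona\rp\b=\a\rp1=\b\rp1$. Then in particular $\cona\rp\a=\id\leq\id$, $\conb\rp\b=\id\leq\id$, and $1=\cona\rp\b$, so $\a,\b$ satisfy \eqref{Q} and are a pair of conjugated quasiprojections; by Definition \ref{d1}, $\AA$ is a Q-relation algebra. This direction is immediate because the hypotheses of the corollary are strictly stronger than those defining conjugated quasiprojections, so nothing beyond unwinding the definitions is required.

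For the forward direction, suppose $\AA$ is a Q-relation algebra. By Definition \ref{d1}, $\AA\in\RA$ and there are $\a,\b\in\A$ satisfying \eqref{Q}. I would invoke the preceding proposition with these $\a,\b$: putting $\p=\a+\id\bp\min{\a\rp1}$ and $\q=\b+\id\bp\min{\b\rp1}$ yields $\id=\con\p\rp\p=\con\q\rp\q$ and $1=\con\q\rp\p=\p\rp1=\q\rp1$. The only remaining task is to align the order of factors with the corollary's requirement $1=\cona\rp\b$. Since the proposition delivers $1=\con\q\rp\p$, I would simply take $\q$ and $\p$ (in that order) as the new witnesses: substituting $\a\mapsto\q$ and $\b\mapsto\p$ turns the proposition's conclusions into $\id=\con\q\rp\q=\con\p\rp\p$ and $1=\con\q\rp\p=\q\rp1=\p\rp1$, which is exactly the asserted form. (Equivalently, one could keep $\p,\q$ in their original roles and recover $1=\con\p\rp\q$ by applying converse to $1=\con\q\rp\p$ together with $\con1=1$.)

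There is essentially no genuine obstacle, since all the difficult, purely relation-algebraic work—which the excerpt itself notes is ``rather complicated when carried out directly from the axioms''—is hidden inside the preceding proposition. The one point demanding care is the asymmetry between the two quasiprojections: the corollary asks for $1=\cona\rp\b$, whereas the construction produces $1=\con\q\rp\p$, so a relabeling (or a single appeal to $\con1=1$) is needed to make the two statements match.
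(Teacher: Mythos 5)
Your proof is correct and follows exactly the route the paper intends: the corollary is stated without a separate proof precisely because it is the preceding proposition (\cite[4.1(xii)]{MR920815}) applied to the quasiprojections guaranteed by Definition \ref{d1}, with the trivial converse coming from the fact that the corollary's conditions subsume \eqref{Q}. Your attention to the $\con\q\rp\p$ versus $\cona\rp\b$ ordering, resolved either by relabeling or by applying converse to $1$, is exactly the right (and only) point of care.
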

For any two elements $\a$ and $\b$ of $\Re\X$, the equation
$1=\cona\rp\b$ asserts that if $\x,\y\in\X$ then there is a point
$\z\in\X$ such that $\<\z,\x\>\in\a$ and $\<\z,\y\>\in\b$. If $\a$ and
$\b$ are functional elements of $\Re\X$, then they are functions and
$\x$ and $\y$ can be recovered from $\z$ by applying $\a$ and $\b$,
that is, $\a(\z)=\x$ and $\b(\z)=\y$. The point $\z$ may not be the
only one with this property, but the Unicity Condition
\begin{align*}
  \a\rp\cona\bp\b\rp\conb\leq\id
\end{align*}
says that the point $\z$ is uniquely determined by $\x$ and $\y$.
Indeed, if $\<\z,\x\>\in\a$, $\<\z,\y\>\in\b$, $\<\z',\x\>\in\a$, and
$\<\z',\y\>\in\b$, then $\<\x,\z'\>\in\conv\a$ and
$\<\y,\z'\>\in\conv\b$, so $\<\z,\z'\>\in\a|\conv\a\cap\b|\conv\b
\subseteq Id(\X^2)$ by \eqref{U} applied to $\Re\X$, hence $\z=\z'$.

By Corollary \ref{cor2}, Q-relation algebras are those relation
algebras with elements satisfying both \eqref{Q} and \eqref{D}. The
ones that also satisfy the Unicity Condition \eqref{U} are given a
name with a ``u'' added to Tarski's name as a reminder that
``unicity'' is assumed. In Prop.\ \ref{prop2a} it is shown that
\eqref{Q}, \eqref{D}, and \eqref{U} are jointly equivalent to the
condition \eqref{Qu}.
\begin{defn}
  A {\bf Qu-algebra} is a relation algebra with elements $\a,\b$ such
  that
  \begin{align}
    \label{Qu}
    \id=\cona\rp\a=\conb\rp\b=\a\rp\cona\bp\b\rp\conb,
    \quad1=\cona\rp\b=\a\rp1=\b\rp1.
  \end{align}
\end{defn}

\section{Fork algebras}
\label{s10}
Fork algebras were introduced by Haeberer, Baum, Schmidt, and Veloso
in the early 1990s for applications in computer science; see
\cite{MR1131030, BHSV93, MR1302717}. The primary reference for fork
algebras is \cite{MR1922164}.  Fork algebras are also heavily involved
in the finitization problem; see \cite{ MR1636979, MR1442088,
  MR1131030, BHSV93, MR1302717, MR1478959, MR1996109, MR1611255,
  MR1858518, MR1336885, MR1370064, MR1450025, MR2755277, MR1616405,
  MR1488293, MR2338322}.
\begin{defn}[{\cite[Def.\;3.4]{MR1922164}}]
  A {\bf fork algebra} is a relation algebra with a binary operation
  $\nabla$ satisfying these axioms:
  \begin{align}
    \label{F1}
    \x\nabla \y&=\x\rp(\id\nabla1)\bp\y\rp(1\nabla\id), \\
    \label{F2}
    \u\rp\con\v\bp\x\rp\con\y&=(\u\nabla\x)\rp\con{\v\nabla\y}, \\
    \label{F3}
    \id&\geq\con{\id\nabla1}\nabla\con{1\nabla\id}.
  \end{align}
\end{defn}
In a fork algebra, the elements $\con{\id\nabla1}$ and
$\con{1\nabla\id}$ form a pair of conjugated quasi-projections that
also satisfy the Unicity Condition. Therefore, the algebra obtained by
deleting $\nabla$ is a Q-relation algebra and it is representable by
Tarski's theorem that $\QRA\subseteq\RRA$. This connection has been
used to prove representability for fork algebras; see \cite{MR1465620,
  MR1365215, MR1479630}.

Props.\ \ref{fork} and \ref{QPr}, coming up next, use
Props.\ \ref{p2}, \ref{p4}, \ref{p5}, \ref{p6}, \ref{i1}, \ref{pair},
and \ref{pair2} in their proofs.  Recall that
Props.\ \ref{p1}--\ref{presM} are proved for all elements in an
arbitrary relation algebra or J-algebra. They are derived in Part II
from the axioms for J-algebras and can be used in proofs in Part
I. Props.\ \ref{fork} and \ref{QPr} involve the exceptionally
important identity defined next.
\begin{defn}
  \label{pairing}
  The {\bf pairing identity} for elements $\a$ and $\b$ is
  \begin{align}
    \label{Pr}
    \u\rp\v\bp\x\rp\y&=(\u\rp\cona\bp\x\rp\conb)\rp(\a\rp\v\bp\b\rp\y).
  \end{align}
\end{defn}
\begin{prop}
  \label{fork}
  The axioms for fork algebras hold in a J-algebra or relation algebra
  $\AA$ if $\AA$ has elements $\a$ and $\b$ that satisfy \eqref{Q} and
  \eqref{U} and $\nabla$ is defined by $$\x\nabla\y=\x\rp\cona\bp\y\rp
  \conb.$$
\end{prop}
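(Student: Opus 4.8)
The plan is to verify the three fork-algebra axioms \eqref{F1}, \eqref{F2}, \eqref{F3} one at a time for $\x\nabla\y=\x\rp\cona\bp\y\rp\conb$, drawing on the pairing identity \eqref{Pr}, which holds for $\a,\b$ by Props.\ \ref{pair} and \ref{pair2}, and on the elementary consequences of the J-algebra axioms collected in Part II (Props.\ \ref{p2}, \ref{p4}, \ref{p5}, \ref{p6}, \ref{i1}). First I would record, from the definition of $\nabla$ together with the identity laws and $\con\id=\id$, that $\id\nabla1=\cona\bp1\rp\conb$ and $1\nabla\id=1\rp\cona\bp\conb$. It is worth stressing at the outset that, since the Domain Condition \eqref{D} is \emph{not} among the hypotheses, these two elements do \emph{not} collapse to $\cona$ and $\conb$; the proof must therefore avoid that tempting shortcut and work with the meets as written.

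For \eqref{F2} I would simply instantiate the pairing identity. Replacing $\v$ by $\con\v$ and $\y$ by $\con\y$ in \eqref{Pr} turns its left side into $\u\rp\con\v\bp\x\rp\con\y$ and its right side into $(\u\rp\cona\bp\x\rp\conb)\rp(\a\rp\con\v\bp\b\rp\con\y)$. By the converse axioms \eqref{6}, \eqref{5}, \eqref{4} one computes $\con{\v\nabla\y}=\a\rp\con\v\bp\b\rp\con\y$, while the first factor is exactly $\u\nabla\x$, so the instantiated identity reads $\u\rp\con\v\bp\x\rp\con\y=(\u\nabla\x)\rp\con{\v\nabla\y}$, which is \eqref{F2}. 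This step uses only \eqref{Q} (through \eqref{Pr}); neither \eqref{U} nor \eqref{D} is needed.

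For \eqref{F3} I would again use the converse axioms to get $\con{\id\nabla1}=\a\bp\b\rp1$ and $\con{1\nabla\id}=\a\rp1\bp\b$, so that $\con{\id\nabla1}\leq\a$ and $\con{1\nabla\id}\leq\b$. Expanding $\nabla$ and applying monotonicity of $\rp$ then gives $\con{\id\nabla1}\nabla\con{1\nabla\id}=\con{\id\nabla1}\rp\cona\bp\con{1\nabla\id}\rp\conb\leq\a\rp\cona\bp\b\rp\conb$, and the Unicity Condition \eqref{U} finishes it, since its right side is $\leq\id$. This is the only place \eqref{U} enters.

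Finally \eqref{F1} splits into two inequalities. The upper bound is immediate from monotonicity: since $\id\nabla1=\cona\bp1\rp\conb\leq\cona$ and $1\nabla\id\leq\conb$, one has $\x\rp(\id\nabla1)\bp\y\rp(1\nabla\id)\leq\x\rp\cona\bp\y\rp\conb=\x\nabla\y$. The lower bound is the one genuinely delicate point, and here I would invoke the Dedekind (modular) law, a consequence of \eqref{rot} recorded in Part II: it gives $\x\rp\cona\bp\y\rp\conb\leq\x\rp(\cona\bp\con\x\rp\y\rp\conb)$, and since $\con\x\rp\y\rp\conb\leq1\rp\conb$ by \eqref{one} and monotonicity, the right side is $\leq\x\rp(\cona\bp1\rp\conb)=\x\rp(\id\nabla1)$; the symmetric computation yields $\x\nabla\y\leq\y\rp(1\nabla\id)$, and meeting the two gives $\x\nabla\y\leq\x\rp(\id\nabla1)\bp\y\rp(1\nabla\id)$. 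Combining the two bounds proves \eqref{F1}. I expect this lower bound --- recovering $\id\nabla1$ and $1\nabla\id$ from $\cona$ and $\conb$ \emph{without} the Domain Condition --- to be the main obstacle in the argument; the heavier lifting, namely the derivation of \eqref{Pr} itself from \eqref{Q}, is quarantined in Props.\ \ref{pair} and \ref{pair2} and simply cited here.
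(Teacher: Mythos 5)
Your proposal is correct and follows essentially the same route as the paper: \eqref{F2} is obtained by instantiating the pairing identity \eqref{Pr} (quarantined in Props.~\ref{pair} and \ref{pair2} via Prop.~\ref{QPr}), \eqref{F3} follows from $\con{\id\nabla1}\leq\a$, $\con{1\nabla\id}\leq\b$, monotonicity, and \eqref{U}, and \eqref{F1} reduces to reconciling $\x\rp\cona\bp\y\rp\conb$ with $\x\rp(\cona\bp1\rp\conb)\bp\y\rp(\conb\bp1\rp\cona)$. The only variation is in \eqref{F1}, where you prove two separate inclusions using the rotation law (Prop.~\ref{p8}) for the nontrivial direction, while the paper runs a single equational chain through Prop.~\ref{i1} ($\y\rp\z\bp1\rp\x=\y\rp(\z\bp1\rp\x)$), itself a consequence of rotation --- so the underlying machinery is the same.
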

\begin{proof}
  The conjugated quasiprojections are
  \begin{align*}
    \con{1\nabla\id}
    &=\con{1\rp\cona\bp\id\rp\conb}&&\text{def $\nabla$}
    \\&=\con{1\rp\cona}\bp\con{\id\rp\conb}&&\text{\eqref{4}}
    \\&=\con{\cona}\rp\con1\bp\con{\conb}\rp\con\id&&\text{\eqref{5}}
    \\&=\a\rp\con1\bp\b\rp\con\id&&\text{\eqref{6}}
    \\&=\a\rp1\bp\b\rp\id&&\text{Prop.\ \ref{p5}}
    \\&=\a\rp1\bp\b,&&\text{\eqref{id}}
  \end{align*}
  and, similarly, $\con{\id\nabla1}=\a\bp\b\rp1$.  Therefore, axiom
  \eqref{F3} holds because
  \begin{align*}
    \con{\id\nabla1}\,\nabla\,\con{1\nabla\id}
    &=(\a\bp\b\rp1)\rp\cona\bp(\a\rp1\bp\b)\rp\conb &&\text{}
    \\ &\leq\a\rp\cona\bp\b\rp\conb &&\text{Props.\ \ref{p2},
      \ref{p4}} \\ &\leq\id&&\text{\eqref{U}}
  \end{align*}
  By Prop.\ \ref{p5}, $\id\nabla1=\cona\bp1\rp\conb$ and
  $1\nabla\id=1\rp\cona\bp\conb$, so axiom \eqref{F1} takes the form
  \begin{align*}
    \x\rp\cona\bp\y\rp\conb
    &=\x\rp(\cona\bp1\rp\conb)\bp\y\rp(\conb\bp1\rp\cona),
  \end{align*}
  which can be proved as follows.  We have $\x\leq1$ and $\y\leq1$ by
  \eqref{one}, so $\x\rp\cona\leq1\rp\cona$ and
  $\y\rp\conb\leq1\rp\conb$ by Prop.\ \ref{p6}, \ie,
  $\x\rp\cona\bp1\rp\cona=\x\rp\cona$ and
  $\y\rp\conb\bp1\rp\conb=\y\rp\conb$. This accounts for the first
  step in
  \begin{align*}
    \x\rp\cona\bp\y\rp\conb&=
    (\x\rp\cona\bp1\rp\cona)\bp(\y\rp\conb\bp1\rp\conb)
    \\&=(\x\rp\cona\bp1\rp\conb)\bp(\y\rp\conb\bp1\rp\cona)
    &&\text{\eqref{comm}, \eqref{bassoc}}
    \\&=\x\rp(\cona\bp1\rp\conb)\bp\y\rp(\conb\bp1\rp\cona)
    &&\text{Prop.\ \ref{i1}}
  \end{align*}
  Substituting $\con\v$ for $\v$ and $\con\y$ for $\y$ in axiom
  \eqref{F2} produces an equation that is equivalent to \eqref{F2}
  because of \eqref{6}.  When this equation is rewritten using the
  definition of $\nabla$ it becomes $\u\rp\v\bp\x\rp\y = (\u\rp\cona
  \bp \x\rp\conb)\rp\con{\con\v\rp\cona \bp \con\y\rp\conb}.$ By
  \eqref{6}, \eqref{5}, and \eqref{4}, this equation is equivalent to
  the pairing identity \eqref{Pr} for $\a,\b$.  By
  \cite[4.1(viii)]{MR920815}, \eqref{Pr} follows from \eqref{Q} in any
  relation algebra without help from \eqref{U} or \eqref{D}, but the
  derivation of \eqref{Pr} from \eqref{Q} is generalized from relation
  algebras to J-algebras in Prop.\ \ref{pair}. Axiom \eqref{F2}
  therefore holds by Prop.\ \ref{QPr} below.
\end{proof}
\begin{prop}
  \label{QPr}
  If $\AA\in\JA\cup\RA$ then \eqref{Q} implies \eqref{Pr}.
\end{prop}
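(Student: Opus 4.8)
\emph{Plan.} The crucial observation is that both the hypothesis \eqref{Q} and the conclusion \eqref{Pr} are expressed entirely with the J-algebra operations $\bp$, $\rp$, $\con\blank$ and the constants $\id,1$ (together with the derived order $\leq$), and mention neither $+$ nor $\min\blank$. Hence, by Prop.\ \ref{jaisra}, if $\AA\in\RA$ then deleting $+$ and $\min\blank$ produces a J-algebra in which $\a$ and $\b$ still satisfy \eqref{Q}; so it suffices to prove the implication for $\AA\in\JA$ and read off the $\RA$ case by passing to this reduct. This reduction is exactly the work done by stating \ref{QPr} separately from the underlying J-algebraic derivation.

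For the J-algebra case I would establish the two halves of \eqref{Pr} as opposite inequalities. Put $P=\u\rp\cona\bp\x\rp\conb$ and $Q=\a\rp\v\bp\b\rp\y$. The inequality $P\rp Q\leq\u\rp\v\bp\x\rp\y$ needs only the functional clauses $\cona\rp\a\leq\id$ and $\conb\rp\b\leq\id$ of \eqref{Q}: from $P\leq\u\rp\cona$ and $Q\leq\a\rp\v$, monotonicity of $\rp$ (Prop.\ \ref{p6}) and associativity \eqref{2} give $P\rp Q\leq\u\rp(\cona\rp\a)\rp\v\leq\u\rp\v$ (using also the identity law \eqref{id}), and symmetrically $P\rp Q\leq\x\rp(\conb\rp\b)\rp\y\leq\x\rp\y$; meeting the two bounds yields $P\rp Q\leq\u\rp\v\bp\x\rp\y$.

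The reverse inequality $\u\rp\v\bp\x\rp\y\leq P\rp Q$ is the heart of the matter and the only point at which the surjectivity clause $1=\cona\rp\b$ is needed. The relational picture guides the construction: a pair $\langle p,q\rangle$ in $\u\rp\v\bp\x\rp\y$ has witnesses $s,t$ with $\langle p,s\rangle\in\u$, $\langle s,q\rangle\in\v$, $\langle p,t\rangle\in\x$, $\langle t,q\rangle\in\y$, and $\cona\rp\b=1$ supplies an intermediate point $r$ with $\a(r)=s$ and $\b(r)=t$, placing $\langle p,r\rangle$ in $P$ and $\langle r,q\rangle$ in $Q$. Algebraically, one inserts $1=\cona\rp\b$ through the top law \eqref{one} and then factors the composite through the witnessing element by repeated use of the rotation axiom \eqref{rot} (the $n=2$ Dedekind-type law), in place of the modular law and complementation freely available in a relation algebra. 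I expect this surjectivity direction to be the main obstacle: in a relation algebra it is the brief computation of Tarski--Givant \cite[4.1(viii)]{MR920815}, but in a J-algebra it must be replayed using only \eqref{rot}, monotonicity, and the semilattice laws \eqref{bassoc}--\eqref{idem}, with no complement to manipulate. This is precisely the generalization carried out in Prop.\ \ref{pair}, which I would invoke to finish the J-algebra case.
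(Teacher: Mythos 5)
Your proposal is correct and follows essentially the same route as the paper: the easy inclusion is exactly Prop.\ \ref{1/2pr}, and the hard inclusion is obtained by invoking Prop.\ \ref{pair} (the paper also offers Prop.\ \ref{pair2}(iii) as an alternative). The only content of the paper's own proof that you leave implicit is the verification of the hypotheses of Prop.\ \ref{pair}(ii) with $\c=\a$ and $\d=\b$, namely that $\u\leq1=\cona\rp\b$ and $\v\rp\con\y\leq1=\cona\rp\b$ by \eqref{one}; this trivial check should be stated explicitly.
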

\begin{proof}
  Assume $\a,\b\in\Fn\AA$ and $1=\cona\rp\b$.  We can get \eqref{Pr}
  using either of two later propositions about J-algebras.  Let
  $\c=\a$ and $\d=\b$.  Then $\a,\b,\c,\d\in\Fn\AA$, as is required
  for both Prop.\ \ref{pair}(ii) and Prop.\ \ref{pair2}(iii).  The
  remaining hypotheses of Prop.\ \ref{pair}(ii) are satisfied because
  $\u\leq1=\con\c\rp\d=\cona\rp\b$ and $\v\rp\con\y\leq1=\cona\rp\b$.
  For Prop.\ \ref{pair2}(iii) we need only note that
  $\u\rp\v\cdot\x\rp\y\leq1=\con\c\rp\d=\cona\rp\b$.  In both cases
  the conclusion is \eqref{Pr}.
\end{proof}

\section{The pairing identity}
\label{s10a}
Referring to Theorem 4.1(viii), that \eqref{Q} implies \eqref{Pr},
Tarski and Givant wrote,
\begin{quote}
  ``The proof of the next theorem is the first long and rather
  involved derivation in this chapter within the formalism
  $\LL^\times$. In connection with such derivations, the reader may
  recall the closing remarks of \SS3.2. The particular proof of this
  theorem presented below is due to Maddux.'' \cite[p.\,97]{MR920815}
\end{quote}
In ``the closing remarks of \SS3.2'' they point out that derivations
in $\LL^\times$ have the same form as equational derivations from the
axioms for relation algebras.  An equational derivation of \eqref{Pr}
from \eqref{Q} is encountered at this early stage in \cite{MR920815}
because \eqref{Pr} plays a vital r\^ole in the proof{s} of Theorem
\ref{viii} and the Main Mapping Theorem. It needs to be established
first before the main work begins.

As was mentioned earlier in the quotations from \cite{MR920815},
Tarski's original proof of $\QRA\subseteq\RRA$ depends on Theorem
4.4(xxxvii), which requires a ``heavy proof-theoretical argument''.
Tarski realized that one could go in the other direction and prove
Theorem 4.4(xxxvii) using $\QRA\subseteq\RRA$ (as was eventually done
in Givant's footnote 1*, quoted earlier).  However, this would require
a proof of $\QRA\subseteq\RRA$ that did not use Theorem 4.4(xxxvii).
Since $\QRA\subseteq\RRA$ is a purely algebraic statement about
relation algebras, such a proof should also be purely algebraic and
remain within the theory of relation algebras.

Tarski recommended this problem to his student George McNulty during
George's final year at Berkeley. George passed the problem along to
me.  Having already proved that point-dense relation algebras are
representable (see \cite{MR1049616}), I thought this problem might be
easily solved using similar methods.  My initial attempts in the fall
of 1973 to discover an abstract algebraic proof of $\QRA\subseteq\RRA$
included the consideration of \eqref{Pr} as a test case.

The advantage of the pairing identity \eqref{Pr} is that it is shorter
and simpler than some other equations that fail in some relation
algebra and yet hold in every representable relation algebra, such as
the equations \eqref{L} and \eqref{M} that are mentioned in
\SS\ref{s18}. Also, \eqref{Pr} directly involves the functional
elements $\a$ and $\b$, whose presence in the algebra is required for
representability. I thought that an equational derivation of
\eqref{Pr} from \eqref{Q} might suggest an algebraic method for
proving $\QRA\subseteq\RRA$. Instead, in December of 1973 I found an
algebraic proof of $\QRA\subseteq\RRA$, described below in
\SS\ref{origQRAth}, that relies on the representability of point-dense
relation algebras. My proof suggested a method for creating an
equational derivation of \eqref{Pr} from \eqref{Q}.

The situation was reversed a year later, when an equational derivation
of \eqref{Pr} was required for a proof rather than being suggested by
a proof.  At that time in 1974 the manuscript for \cite{MR920815}
contained Tarski's original construction of $\KAB$.  Tarski and Givant
were working on a proof of Theorem \ref{viii} that was based on this
construction and was too long to be entirely included in the book.
They called the proof ``more complicated than one would expect'', as
was mentioned earlier.  While home for the holidays in December, 1974,
I attempted to write out Tarski's construction of $\KAB$ without
having Tarski's manuscript with me. I started fresh, using $\exists$
as primitive and $\forall$ as defined, unlike Tarski's original
construction, which takes $\forall$ as primitive and $\exists$ as
defined.  The resulting construction of $\KAB$ allowed me to write out
a complete proof of Theorem \ref{viii} in a reasonable number of
pages. This proof, dated May 23, 1975, was given to Givant, edited,
and included in \SS4.4, as was noted above in \SS\ref{s7}.

By Tarski's theorem that $\QRA\subseteq\RRA$, \eqref{Pr} must hold in
any relation algebra satisfying \eqref{Q}.  Indeed, a relation algebra
satisfying \eqref{Q} has a representation, so a proof of \eqref{Pr} in
a relation algebra satisfying \eqref{Q} can proceed by looking at the
points in the base of a representation.  However, Tarski's original
method of proof through metamathematical means offered no obvious way
of constructing a direct equational derivation of \eqref{Pr} from
\eqref{Q} using the axioms for relation algebras.

By contrast, the first algebraic proof of $\QRA\subseteq\RRA$,
described in the next section, shows that one can assume the existence
of ``points''. These are elements of the algebra that mimic the
behavior of singleton relations of the form $\{\<\x,\x\>\}$ where $\x$
is an element in the base of a representation. This allows the
construction of equational derivations that directly mimic proofs of
equations that refer to points in the base set of a representation.

For the proof of Theorem \ref{viii} in \cite{MR920815}, this situation
is reversed.  The first step is to derive \eqref{Pr} directly from
\eqref{Q} using the axioms for relation algebras.  The pairing
identity \eqref{Pr} is then repeatedly applied in proofs of properties
of finite sequences.

The same situation occurs here.  The pairing identity \eqref{Pr} must
be derived first because it is used frequently in proofs of properties
of elements representing actions on trees; see \SS\ref{s20}.
Furthemore, the equational derivation of \eqref{Pr} from \eqref{Q}
must be based on the more limited set of axioms for J-algebras; see
the proofs of Props.\ \ref{pair} and \ref{pair2} in \SS\ref{s17}.

\section{The first algebraic proof of Tarski's theorem}
\label{origQRAth}
A purely algebraic proof of Theorem \ref{QRA}, found in December 1973,
begins with Lemmas \ref{firstlemma} and \ref{secondlemma}.  Lemma
\ref{firstlemma} says that if a $\QRA$ has a nonzero element $\z$ then
it can be embedded in a larger $\QRA$ that has a ``point'' $\u$ in the
domain of $\z$.  Lemma \ref{secondlemma} follows immediately from
Lemma \ref{firstlemma} by the general theory of algebras.  Using Lemma
\ref{secondlemma}, one can prove $\QRA\subseteq\RRA$ by imitating the
construction of an algebraically closed extension of an arbitrary
field to show that every Q-relation algebra can be embedded in a
relation algebra that is point-dense and therefore representable by
\cite[Th.\,54]{MR1049616}.
\begin{lem}
  \label{firstlemma}
  If $\AA=\<A,+,\bp,\min\blank,0,1,\rp,\con\blank,\id\>\in\QRA$ and
  $0\neq\z\in\A$ then there is some $\gc\B\in\QRA$ and a function
  $\f\colon\A\to\B$ such that
  \begin{enumerate}
  \item
    $\f$ is an isomorphic embedding of $\AA$ into $\BB$,
  \item
    for some $\u\in\B$, $\u\rp\di\rp\u=0$ and
    $1\rp\u\rp\f(\z)\rp1=1\rp\f(\z)\rp1$.
  \end{enumerate}
\end{lem}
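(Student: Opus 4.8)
The plan is to imitate, one root at a time, the construction of an algebraically closed extension of a field. Here a ``root'' is a single point lying in the domain of $\z$, and Lemma \ref{firstlemma} is the basic adjunction step that Lemma \ref{secondlemma} and the ensuing direct-limit argument will iterate until every nonzero element has a point in its domain, at which stage representability follows from the point-density theorem \cite[Th.\,54]{MR1049616}.

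Before constructing $\BB$ I would record two reductions. First, every $\QRA$ is simple (a standard fact, following from $1=\cona\rp\b$), so $1\rp\z\rp1=1$; and since any relation-algebra embedding carries $\AA$ into a subalgebra with the same unit, if $\BB$ is simple then the target equation $1\rp\u\rp\f(\z)\rp1=1\rp\f(\z)\rp1$ collapses to the single requirement $\u\rp\f(\z)\ne0$, because both sides then equal $1$ as soon as $\u\rp\f(\z)\ne0$ and $\f(\z)\ne0$. Second, by Corollary \ref{cor2} I may assume the quasiprojections are total, $1=\a\rp1=\b\rp1$. The goal thus becomes: produce a simple $\BB\in\QRA$, an embedding $\f\colon\AA\to\BB$, and an element $0\ne\u\in\B$ that is a genuine ``point'' ($\u\rp\di\rp\u=0$) lying in the domain of $\f(\z)$, say $\u\le\f(\z)\rp1$, so that $0\ne\u=\u\bp\f(\z)\rp1$ forces $\u\rp\f(\z)\ne0$.

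Constructing $\BB$ is the heart of the matter, and the essential difficulty is that $\u$ must be inserted \emph{under the same unit and identity}: a relation-algebra embedding must send $1$ to $1$ and $\id$ to $\id$, so I cannot merely enlarge the base by a fresh diagonal point, since that would enlarge $\id$ and wreck $\f$. The way out is to spend the pairing hypothesis. Because $1=\cona\rp\b$ with $\a,\b$ total and functional, the algebra behaves as though its base $X$ carries a surjection onto $X\times X$, and this ``room'' is exactly what lets one extra point be split off the domain element $\id\bp\z\rp1$ while $\id$ and $1$ are held fixed. Concretely I would pass to the atom structure of the canonical extension, refine it by a single new atom $\u$ sitting below a copy of $\id\bp\z\rp1$ and designed to be a true point, extend $\rp$ and $\con\blank$ to the enlarged atom structure, and take $\BB$ to be the subalgebra generated by the image of $\AA$ together with $\u$, with $\f$ the induced map. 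Since only a refinement \emph{below} $\id$ has been performed, $\f$ preserves $1$ and $\id$ and is a bona fide embedding rather than an isomorphism onto a relativization.

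The step I expect to be the main obstacle is verifying that $\BB$ is again a \emph{Q}-relation algebra, \ie\ reconstructing the quasiprojections so that $\cona\rp\a\le\id$, $\conb\rp\b\le\id$, and above all $1=\cona\rp\b$ survive once $\u$ is present. This is where the surjection $X\to X\times X$ must be put to work: the new point has to be threaded through the pairing so that every pair involving $\u$ is still covered by $\cona\rp\b$ and so that $\a,\b$ remain functional after the split. Granting a correct reconstruction, the remaining verifications are light --- $\u\rp\di\rp\u=0$ holds because $\u$ was built as a single diagonal atom, and $0\ne\u\le\f(\z)\rp1$ yields $\u\rp\f(\z)\ne0$ --- but the bookkeeping that keeps $\f$ an embedding while re-establishing $1=\cona\rp\b$ over the split structure is, I anticipate, the longest and most error-prone part of the argument.
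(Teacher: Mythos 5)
Your proposal takes a genuinely different route from the paper's, and it has two real gaps. The first is that your opening reduction rests on a false premise: it is not true that every $\QRA$ is simple. If $\AA$ is a nontrivial Q-relation algebra with conjugated quasiprojections $\a,\b$, then $\AA\times\AA$ is again a Q-relation algebra, because converse and relative product are computed coordinatewise, so the pair $\<\a,\a\>$, $\<\b,\b\>$ still satisfies \eqref{Q}; but $\AA\times\AA$ is not simple. Worse, simplicity of a relation algebra is the universal condition that $\x\neq0$ implies $1\rp\x\rp1=1$, which is inherited by subalgebras, so a non-simple $\AA$ cannot be embedded into \emph{any} simple $\BB$. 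The strategy ``arrange $\BB$ simple and collapse the conclusion to $\u\rp\f(\z)\neq0$'' is therefore unavailable for general $\QRA$'s, and the equation $1\rp\u\rp\f(\z)\rp1=1\rp\f(\z)\rp1$ must be verified as stated.

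The second and more serious gap is that the construction you sketch --- refine the atom structure of the canonical extension by splitting a new point $\u$ off the domain element $\id\bp\z\rp1$ and then re-extend $\rp$ and $\con\blank$ --- is precisely the step you defer, and it is the entire content of the lemma rather than bookkeeping. To make $\u$ a point you must arrange $\u\rp\d\rp\u=0$ for every diversity atom $\d$ with domain and range under the identity atom being split, and these exclusions must cohere with composition and converse across all atoms simultaneously; nothing in the proposal shows that such a coherent refinement exists, and the pairing hypothesis enters only as a slogan about ``room.'' The paper does something quite different and entirely internal: with $\w=\b\rp\z\rp1+\min{1\rp\z\rp1}$ it takes the universe $\B=\{\x:(\a\rp\cona\bp\b\rp\conb)\rp\x=\x\leq\w\}$, new operations $\x\bullet\y=\x\rp\cona\bp\b\rp\conb\rp\y$ and $\x^\circ=\a\rp\con\x\bp\b\rp\conb\rp\a$, new identity $\i=\a\bp\w$, embedding $\f(\x)=\a\rp\x\bp\w$, and point $\u=\b\bp\f(\id)$. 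In other words, the quasiprojections are used to re-coordinatize each base point as an ordered pair (the relation-algebraic analogue of adding a dimension to a cylindric algebra), so the identity of $\BB$ is a \emph{different element of $\A$} and the tension you identify between keeping $\id$ fixed and creating a new point never arises. To become a proof, your outline would need either a concrete, verified splitting construction (which I doubt exists in this generality) or a switch to an internal re-coding of this kind.
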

\begin{proof}
  Since $\AA\in\QRA$, by Corollary \ref{cor2} there are $\a,\b\in\A$
  such that $\id=\cona\rp\a=\conb\rp\b$ and
  $1=\cona\rp\b=\a\rp1=\b\rp1$. Let $\w=\b\rp\z\rp1+\min{1\rp\z\rp1}$,
  $\B=\{\x:\x\in\A,\,(\a\rp\cona\bp\b\rp\conb)\rp\x=\x\leq\w\}$,
  $\i=\a\bp\w$, and, for all $\x,\y\in\A$, let
  $\widetilde\x=\min\x\bp\w$,
  $\x^\circ=\a\rp\con\x\bp\b\rp\conb\rp\a$,
  $\x\bullet\y=\x\rp\cona\bp\b\rp\conb\rp\y$, and
  $\f(\x)=\a\rp\x\bp\w$.  Set
  $\BB=\<\B,+,\bp,\min\blank,0,1,\bullet,{}^\circ,\i\>$.  Some
  calculations show that $\BB\in\QRA$ and $\f$ embeds $\AA$ into
  $\BB$.  To prove part (ii), let $\u=\b\bp\f(\id)$ and verify by some
  more calculation that $\u\in\B$, $\u\rp\di\rp\u=0$, and
  $1\rp\u\rp\f(\x)\rp1=1\rp\f(\x)\rp1$.
\end{proof}
\begin{lem}
  \label{secondlemma}
  If $\AA\in\QRA$ and $\z\in\A$ then there is some $\BB\in\QRA$ such
  that
  \begin{enumerate}
  \item $\AA\subseteq\BB$,
  \item there is some $\u\in\B$ such that $\u\rp\di\rp\u=0$ and
    $1\rp\u\rp\x\rp1=1\rp\x\rp1$.
  \end{enumerate}
\end{lem}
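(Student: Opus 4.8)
The plan is to deduce Lemma~\ref{secondlemma} from Lemma~\ref{firstlemma} by the standard universal-algebra device of replacing an isomorphic embedding with a genuine subalgebra inclusion; this is what the phrase ``the general theory of algebras'' refers to here.

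First I would apply Lemma~\ref{firstlemma} to $\AA$ and $\z$. When $\z\neq0$ this yields a $\QRA$, say $\BB_0$, an isomorphic embedding $\f\colon\AA\to\BB_0$, and an element $\u_0\in\BB_0$ with $\u_0\rp\di\rp\u_0=0$ and $1\rp\u_0\rp\f(\x)\rp1=1\rp\f(\x)\rp1$; the proof of Lemma~\ref{firstlemma} in fact establishes this last equation for every $\x\in\A$, which is exactly what conclusion~(ii) requires. The degenerate cases cause no trouble: a one-element $\AA$ admits $\BB=\AA$ and $\u=0$, and if the given $\z$ equals $0$ while $\AA$ is nondegenerate one may instead apply Lemma~\ref{firstlemma} to some nonzero element in place of $\z$, since the ``for every $\x$'' form of the conclusion it produces does not single out $\z$.

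Next I would convert the embedding $\f$ into an inclusion. Because $\f$ is injective, $\f^{-1}$ is a bijection of $\f(\A)$ onto $\A$; after renaming the elements of $\B_0\setminus\f(\A)$, if necessary, so that they are disjoint from $\A$, I extend $\f^{-1}$ by the identity on those elements to a bijection $\g$ from $\B_0$ onto a set $\B$ with $\A\subseteq\B$. Pushing the operations of $\BB_0$ forward along $\g$ turns $\g$ into an isomorphism $\g\colon\BB_0\to\BB$ satisfying $\g(\f(\a))=\a$ for all $\a\in\A$. Then $\AA$ is literally a subalgebra of $\BB$, and $\BB\in\QRA$ because $\BB\cong\BB_0\in\QRA$ and being a $\QRA$ is preserved by isomorphism; this is conclusion~(i).

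Finally, put $\u=\g(\u_0)$. Since $\g$ is an isomorphism it fixes $0$, $1$, and $\id$ and commutes with $\min\blank$, hence fixes the diversity element $\di=\min\id$. Applying $\g$ to the two equations of the first step and using $\g(\f(\x))=\x$ gives $\u\rp\di\rp\u=\g(\u_0\rp\di\rp\u_0)=\g(0)=0$ and $1\rp\u\rp\x\rp1=\g(1\rp\u_0\rp\f(\x)\rp1)=\g(1\rp\f(\x)\rp1)=1\rp\x\rp1$ for every $\x\in\A$, which is conclusion~(ii). The whole of the mathematical content sits in Lemma~\ref{firstlemma}; the only obstacle in the present argument is the routine set-theoretic bookkeeping that makes the two underlying sets disjoint before they are glued, after which everything is a mechanical transfer of equations across an isomorphism.
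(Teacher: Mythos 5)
Your central device---apply Lemma \ref{firstlemma}, then trade the isomorphic embedding for a genuine inclusion by renaming the elements of $\B_0\setminus\f(\A)$ away from $\A$ and pushing the operations of $\BB_0$ forward along the resulting bijection---is exactly what the paper means by ``the general theory of algebras,'' and that part of your argument is correct; the paper supplies no further detail, so your write-up is the intended proof.

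There is, however, a genuine error in your reading of conclusion (ii). You claim that the proof of Lemma \ref{firstlemma} ``in fact establishes'' $1\rp\u_0\rp\f(\x)\rp1=1\rp\f(\x)\rp1$ for \emph{every} $\x\in\A$, and you rely on this both to read (ii) as universally quantified over $\x$ and to dispatch the case $\z=0$ by applying Lemma \ref{firstlemma} to some other nonzero element. No proof can establish the universal version, because it is false. The condition $\u\rp\di\rp\u=0$ forces a nonzero $\u$ to be a single diagonal point in any representation: if $\<p,q\>$ and $\<r,s\>$ both lie in the relation representing $\u$ then $q=r$, from which every pair in that relation has the form $\<s,s\>$ for one fixed $s$. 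The equation $1\rp\u\rp\x\rp1=1\rp\x\rp1$ then asserts that $s$ lies in the domain of every nonzero $\x\in\A$. Take $\AA=\Re\omega$ with any pair of quasiprojections, and let $\x,\y$ be two disjoint subidentity atoms, so that $\x\rp1\bp\y\rp1=0$ while $1\rp\x\rp1=1\rp\y\rp1=1$; these equations persist in any extension $\BB\in\QRA$, and since $\BB$ is representable by Theorem \ref{QRA}, no single $\u$ can satisfy (ii) for both $\x$ and $\y$. The free occurrence of $\x$ in Lemma \ref{secondlemma}(ii), like the $\f(\x)$ at the end of the proof of Lemma \ref{firstlemma}, must be read as $\z$: the lemma produces a point in the domain of the one element $\z$, which is all the iterative construction of a point-dense extension requires. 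Under that reading your renaming argument goes through verbatim, and the degenerate case $\z=0$ needs no detour through Lemma \ref{firstlemma} at all: take $\BB=\AA$ and $\u=0$, since $0\rp\di\rp0=0$ and $1\rp0\rp\z\rp1=0=1\rp\z\rp1$.
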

In the seminar notes discussed in remarks following Theorem
\ref{xxxiv} in \SS\ref{s7} there is algebraic proof of a
cylindric-algebraic version of $\QRA\subseteq\RRA$ that bears a
striking resemblance to the proof just outlined. It is in a section of
Monk's notes entitled ``\SS6. \emph{Pairing elements in cylindric
algebras}''.  Because of the connections between cylindric algebras
and relation algebras it is possible to see that Monk's algebraic
proof and the algebraic proof of $\QRA\subseteq\RRA$ described above
are, in general methodological terms, ``the same''.

Both proofs show that a Q-relation algebra, or a ``Q-cylindric
algebra'' in Monk's case, can be embedded in another one with one more
dimension.  This embedding can be repeated for Q-cylindric algebras
until one gets an embedding into a locally finite-dimensional
$\omega$-dimensional cylindric algebra, which had already been proved
to be representable by Tarski. This step is similar to the extension
of a relation algebra to one that is point-dense.

\section{Direct products}
\label{s18}
Abstract algebraic formulations of direct products have been
introduced, either as data types or as operations, for applications in
computer science by de~Roever \cite{MR0660461}, Schmidt
\cite{MR0619683}, Schmidt and Str\"ohlein \cite{MR1254438}, Zierer
\cite{MR1130150}, and Berghammer and Zierer \cite{MR855968}.  The
relevance of direct products to this paper is that the first three
conditions in the following definition are jointly equivalent to
\eqref{Qu}, the conclusion of formula \eqref{I} is the pairing
identity, and the conclusion of formula \eqref{J} is stronger half of
the pairing identity. Schmidt's conjecture and the results obtained
for its solution show the limits and extent of the validity of the
pairing identity.
\begin{defn}
  \label{ij1}
  Two elements $\a$ and $\b$ of a relation algebra $\AA$ form a {\bf
    direct product} if the following conditions hold.
  \begin{enumerate}
  \item
    $\a,\b\in\Fn\AA$ \quad($\a$ and $\b$ are functional),
  \item
    $\a\rp1=\b\rp1$ \quad($\a$ and $\b$ have the same domain),
  \item
    $\a\rp\cona\bp\b\rp\conb\leq\id$ \quad(ordered pairs are unique),
  \item
    $\cona\rp1\rp\b=\cona\rp\b$ \quad(\;$\cona\rp\b$ is
    ``rectangular'').
  \end{enumerate}
  Let $\Pi(\a,\b)$ be the conjunction of conditions \rm(i)--(iv).
  Then \eqref{I} and \eqref{J} are the formulas
  \begin{align}
    \label{I}
    \Pi(\a,\b)\land\con\u\rp\x\bp\v\rp\con\y\leq\cona\rp\b\implies
    \u\rp\v\bp\x\rp\y
    =(\u\rp\cona\bp\x\rp\conb)\rp(\a\rp\v\bp\b\rp\y),
    \\
    \label{J}
    \con\u\rp\x\bp\v\rp\con\y\leq\cona\rp\b\implies\u\rp\v\bp\x\rp\y
    \leq(\u\rp\cona\bp\x\rp\conb)\rp(\a\rp\v\bp\b\rp\y).
  \end{align}
\end{defn}
Because of the apparent difficulty of deriving \eqref{I} in an
arbitrary relation algebra, Gunther Schmidt conjectured that it may
not be possible. His conjecture was confirmed by the following result.
\begin{thm}[\cite{MR1375942}]
  \label{ij5}
  There is a finite simple nonintegral relation algebra with 58 atoms
  in which \eqref{I} and \eqref{J} fail for appropriately chosen atoms
  $\a,\b,\u,\v,\x,\y$.
\end{thm}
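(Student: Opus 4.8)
The plan is to prove this pure existence claim by exhibiting a concrete finite relation algebra and then reading off the required failure directly from its composition table. I would specify the algebra $\AA$ by its \emph{atom structure}: a finite set of $58$ atoms together with the designation of $\con\blank$ as an involution on atoms, the choice of which atoms lie below $\id$, and the relative-product table recording, for each ordered pair of atoms, which atoms lie below their product. The algebra is then the complex algebra of this structure, with $+$ and $\bp$ read as union and intersection of atom sets. To force nonintegrality I would split $\id$ into two (or more) subidentity atoms; this partitions the $58$ atoms into blocks indexed by their left and right subidentity parts, and it is exactly this block structure that leaves room for a configuration imitating conjugated quasiprojections locally while failing the pairing conclusion globally.

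The verification that the atom structure yields a genuine relation algebra reduces to the standard criteria for complex algebras of atom structures: that the composition table is invariant under the rotations induced by $\con\blank$ and interacts correctly with the identity atoms (this encodes \eqref{ra6}--\eqref{ra9}), and that the associative law \eqref{ra4} holds. The associative law is the one substantive condition; at the atom level it asserts that for all atoms $w,x,y,z$ there is an atom $u$ with $u\leq w\rp x$ and $z\leq u\rp y$ if and only if there is an atom $v$ with $v\leq x\rp y$ and $z\leq w\rp v$. With $58$ atoms this is a large but finite check, which I would organize either by building the table from a symmetric combinatorial object so that associativity collapses to a few orbit representatives under an automorphism group, or by direct machine verification. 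Simplicity is then confirmed by checking $1\rp p\rp1=1$ for every atom $p$, which rules out any proper nonzero ideal, and nonintegrality is witnessed by the two distinct subidentity atoms.

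With $\AA$ in hand I would designate two functional atoms as $\a$ and $\b$ and verify $\Pi(\a,\b)$ straight from the table: $\con\a\rp\a\leq\id$ and $\con\b\rp\b\leq\id$ (so $\a,\b\in\Fn\AA$), the common-domain equation $\a\rp1=\b\rp1$, the uniqueness inequality $\a\rp\con\a\bp\b\rp\con\b\leq\id$, and the rectangularity equation $\con\a\rp1\rp\b=\con\a\rp\b$. I would then choose atoms $\u,\v,\x,\y$ so that the shared hypothesis $\con\u\rp\x\bp\v\rp\con\y\leq\con\a\rp\b$ holds, and compute both sides of the pairing conclusion, exhibiting a single atom that lies below $\u\rp\v\bp\x\rp\y$ but not below $(\u\rp\con\a\bp\x\rp\con\b)\rp(\a\rp\v\bp\b\rp\y)$. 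Since the hypothesis of \eqref{J} holds while this $\leq$ fails, \eqref{J} fails; and since $\Pi(\a,\b)$ holds as well and the equality conclusion of \eqref{I} would imply that very $\leq$, the same atoms make \eqref{I} fail.

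The hard part is the tension built into the construction. The conditions $\Pi(\a,\b)$ are precisely the ones from which the pairing identity \eqref{Pr} \emph{does} follow in every representable relation algebra, so $\AA$ must be non-representable in a sharply targeted way—breaking \eqref{Pr} at the chosen atoms—while still passing the associativity test and so remaining a bona fide relation algebra. Keeping the local quasiprojection behavior intact, maintaining associativity throughout the composition table, and yet arranging a genuine gap in the pairing product is what forces a fairly elaborate structure and accounts for the need for as many as $58$ atoms and for nonintegrality. Producing such a structure together with its machine-checkable composition table is the substantive content carried out in \cite{MR1375942}.
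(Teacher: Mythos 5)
Your proposal correctly identifies every ingredient that a proof of this theorem requires: building the algebra as the complex algebra of a finite atom structure, reducing the relation-algebra axioms to the Peircean/rotation conditions plus the atom-level associativity check, certifying simplicity via $1\rp\p\rp1=1$ for all atoms $\p$, witnessing nonintegrality by splitting $\id$ into several subidentity atoms, verifying $\Pi(\a,\b)$ from the table, and observing that a single atom below $\u\rp\v\bp\x\rp\y$ but not below $(\u\rp\cona\bp\x\rp\conb)\rp(\a\rp\v\bp\b\rp\y)$ refutes \eqref{J} and hence also \eqref{I} once $\Pi(\a,\b)$ is in place. The logical reductions are all sound, and your closing remark about the tension between satisfying $\Pi(\a,\b)$ and breaking the pairing conclusion correctly locates where the difficulty lies.

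The gap is that the one thing constituting the actual content of this existence theorem --- the explicit $58$-atom structure with its composition table, or at least a combinatorial recipe that determines it and from which associativity and the failure of \eqref{J} can be verified --- is exactly what you defer to the literature. Everything you have written is a verification \emph{schema}: it tells the reader what to check once the algebra is on the table, but it does not produce the algebra, and there is no soft or abstract argument that such an algebra must exist (indeed, the theorem is delicate precisely because $\Pi(\a,\b)$ forces \eqref{I} in every representable algebra, so the example must be nonrepresentable in a controlled way). To be fair, the paper itself offers no proof either --- it states the result with a citation to \cite{MR1375942} and points to \cite[\S3.2]{KahlSchmidt2000} for the construction --- so your outline is consistent with how the cited construction proceeds. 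But as a self-contained proof the proposal does not establish the statement; it reduces it to the unexhibited construction.
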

Besides \cite{MR1375942}, this construction can be found in
\cite[\SS3.2]{KahlSchmidt2000}.  The solution to Schmidt's conjecture
is discussed by Kahl and Schmidt \cite[\SS3.2]{KahlSchmidt2000},
Schmidt and Winter \cite[\SS3.2]{MR3793161}, and by Berghammer,
Haeberer, Schmidt, and Veloso \cite[\SS9]{BHSV93}. Although \eqref{I}
and \eqref{J} can fail, \eqref{J} holds under five similar hypotheses.
\begin{thm}[\cite{MR1375942}]
  \label{ij6}
  Assume $\AA\in\JA\cup\RA$, $\a,\b,\c,\d,\t,\u,\v,\x,\y\in A$, and
  $\t$ is one of these five elements: $\u\rp\v\bp\x\rp\y$,
  $\u\bp\x\rp\y\rp\con\v$, $\v\bp\con\u\rp\x\rp\y$,
  $\x\bp\u\rp\v\rp\con\y$, $\y\bp\con\x\rp\u\rp\v$.  If
  $\t\leq\con\c\rp\d$ and $\c,\d\in\Fn\AA$ then \eqref{J} holds.
\end{thm}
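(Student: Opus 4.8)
The statement to prove is the weaker, inequality half \eqref{J} of the pairing identity \eqref{Pr}, made available under a functional bound on one of five ``edge'' expressions. It is worth recalling first why some extra hypothesis is unavoidable: the consequent of \eqref{J} holds in every representable relation algebra by the obvious base-set argument (if $\<s,t\>\in\u\rp\v\bp\x\rp\y$, pick $p$ with $\<s,p\>\in\u$, $\<p,t\>\in\v$ and $q$ with $\<s,q\>\in\x$, $\<q,t\>\in\y$; then $\<p,q\>\in\con\u\rp\x\bp\v\rp\con\y\le\cona\rp\b$, so some $m$ has $\<m,p\>\in\a$ and $\<m,q\>\in\b$, whence $\<s,t\>\in(\u\rp\cona\bp\x\rp\conb)\rp(\a\rp\v\bp\b\rp\y)$), yet by Theorem \ref{ij5} it is \emph{not} a consequence of the relation-algebra axioms alone. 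Thus the plan is not to reprove the semantic fact but to convert this witness argument into an equational derivation, using the hypothesis $\t\le\con\c\rp\d$ with $\c,\d\in\Fn\AA$ as the algebraic substitute for ``choosing and pinning down the middle point''.

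The first and central case is $\t=\u\rp\v\bp\x\rp\y$. Here I would appeal directly to the pairing proposition \ref{pair2}(iii), whose hypothesis is exactly $\u\rp\v\bp\x\rp\y\le\con\c\rp\d$ with $\c,\d$ functional; together with the antecedent $\con\u\rp\x\bp\v\rp\con\y\le\cona\rp\b$ of \eqref{J} (which I assume throughout, since \eqref{J} is an implication), it delivers the consequent of \eqref{J}. This is the same move used in the proof of Prop. \ref{QPr}, specialized there to $\con\c\rp\d=1$. The only difference is that here $\con\c\rp\d$ need not be the unit and $\a,\b$ are not assumed functional, so one obtains only the $\le$ half rather than the full equality \eqref{Pr}.

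The remaining four forms of $\t$ are the Peircean rotations of the same configuration: writing $p$ for the $\u,\v$-midpoint and $q$ for the $\x,\y$-midpoint, the expressions $\u\bp\x\rp\y\rp\con\v$, $\v\bp\con\u\rp\x\rp\y$, $\x\bp\u\rp\v\rp\con\y$, and $\y\bp\con\x\rp\u\rp\v$ are the four intersection-restricted edges $s$--$p$, $p$--$t$, $s$--$q$, $q$--$t$. I would handle them by the cycle (Dedekind/De~Morgan) inequalities valid in every J-algebra --- e.g. $\u\rp\v\bp\x\rp\y\le\u\rp(\v\bp\con\u\rp\x\rp\y)$ and its companions --- which peel off one factor at a time and expose exactly one of these edge expressions as the relevant core. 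A bound $\t\le\con\c\rp\d$ on that core then supplies the hypotheses for the matching part of Prop. \ref{pair} or \ref{pair2}, after which the derivation proceeds as in the first case. Because $\c,\d$ are functional, each rotation transfers the bound without loss, so all five cases collapse to a single pattern.

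The main obstacle is not this case analysis but the equational content packaged in Props. \ref{pair} and \ref{pair2}: the derivation of the consequent of \eqref{J} from a single functional bound. The difficulty is twofold. First, the natural proof ``chooses a middle point'', and replacing that choice by a chain of monotonicity and modular-law steps is precisely the long, delicate computation the paper repeatedly flags as ``more complicated than one would expect''; the functionality $\con\c\rp\c\le\id$ and $\con\d\rp\d\le\id$ is what lets the otherwise non-equational existential step be simulated. Second, all of this must be carried out in the J-algebra signature, with neither join nor complement available, so the usual relation-algebraic shortcuts (passing to complements, or splitting on atoms) are forbidden and every inequality must be produced from \eqref{bassoc}--\eqref{norm} and their consequences in Part II. Once those two propositions are in hand, assembling Theorem \ref{ij6} from the five forms is routine.
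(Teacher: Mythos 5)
Your handling of the case $\t=\u\rp\v\bp\x\rp\y$ is the paper's own: the standing hypotheses $\c,\d\in\Fn\AA$ and $\u\rp\v\bp\x\rp\y\leq\con\c\rp\d$ are exactly those of Prop.~\ref{pair2}, whose part (i) is literally the implication \eqref{J}. The part to cite is (i), not (iii): part (iii) additionally assumes $\a,\b\in\Fn\AA$ and $1=\cona\rp\b$, neither of which is available in Theorem~\ref{ij6}; your own remark that ``$\a,\b$ are not assumed functional'' shows you mean part (i).

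For the remaining four forms of $\t$ there is a genuine gap. The rotation step itself is fine: for $\t=\v\bp\con\u\rp\x\rp\y$, say, one may replace $\v$ by $\v'=\v\bp\con\u\rp(\x\rp\y)$ without changing $\u\rp\v\bp\x\rp\y$, the antecedent of \eqref{J}, or (up to monotonicity) its consequent, so one may assume $\v\leq\con\c\rp\d$. But at that point neither proposition applies: Prop.~\ref{pair2} needs the bound on $\u\rp\v\bp\x\rp\y$ itself, while Prop.~\ref{pair}(i) needs the bound on $\u$ \emph{together with} the hypothesis $\v\rp\con\y\leq\cona\rp\b$, which is strictly stronger than the antecedent $\con\u\rp\x\bp\v\rp\con\y\leq\cona\rp\b$ of \eqref{J} and cannot be manufactured from it by further restriction in a J-algebra (there are no complements with which to localize $\v\rp\con\y$). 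The same mismatch occurs for $\t=\u\bp\x\rp\y\rp\con\v$ and the other two cases. So ``the derivation proceeds as in the first case'' is precisely the missing content: each case requires its own long equational derivation, structurally parallel to that of Prop.~\ref{pair2}(i) but with the functional cover $\con\c\rp\d$ attached to a different vertex of the configuration. The paper does not reprove these cases either; it defers to \cite{MR1375942}, where the symmetry is exploited inside a single parametrized derivation rather than by reducing the other four cases to the first.
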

Theorem \ref{ij6} is proved in Prop.\ \ref{pair2} for the case
$\t=\u\rp\v\bp\x\rp\y$.  Consult \cite{MR1375942} for the remaining
cases, which reduce to a single proof because of the symmetries
involved.

We can summarize the contents of Theorems \ref{ij5} and \ref{ij6} by
using the following implication.
\begin{gather}
  \label{K}
        {}^{\text{(1)}}\cona\rp\a\leq\id
        \land{}^{\text{(2)}}\conb\rp\b\leq\id
        \land{}^{\text{(3)}}\a\rp1=\b\rp1
        \land{}^{\text{(4)}}\a\rp\cona\bp\b\rp\conb\leq\id \\\notag
        \land{}^{\text{(5)}}\cona\rp1\rp\b=\cona\rp\b
        \land{}^{\text{(6)}}\con\c\rp\c\leq\id
        \land{}^{\text{(7)}}\con\d\rp\d\leq\id
        \land{}^{\text{(8)}}\t\leq\con\c\rp\d \\\notag
        \land{}^{\text{(9)}}\con\u\rp\x\bp\v\rp\con\y\leq\cona\rp\b
        \implies{}^{\text{(10)}}
        \u\rp\v\bp\x\rp\y=(\u\rp\cona\bp\x\rp\conb)\rp(\a\rp\v\bp\b\rp\y).
\end{gather}
Formulas numbered (1)--(5) in \eqref{K} say that $\a,\b$ form a direct
product. Formulas (6) and (7) assert that $\c,\d\in\Fn\AA$.
Theorems \ref{ij5} and \ref{ij6} imply that \eqref{K} fails in some
relation algebra when $\t$ is the left-hand-side of (9), even if the
hypotheses $\c=\a$ and $\d=\b$ are added, but \eqref{K} holds in every
relation algebra whenever $\t$ is replaced by $\u\rp\v\bp\x\rp\y$,
$\u\bp\x\rp\y\rp\con\v$, $\x\bp\u\rp\v\rp\con\y$,
$\v\bp\con\u\rp\x\rp\y$, or $\y\bp\con\x\rp\u\rp\v$, even if
hypotheses (3)--(5) are deleted.

Along with \eqref{J}, the formulas \eqref{L} and \eqref{M} below are
instances of condition $(\Gamma)$ when $n=3$ in J\'onsson's
\cite[Th.\,1]{MR108459}. Note that \eqref{J} closely resembles
\eqref{L} while \eqref{M} is rather different. This similarity and
difference is indicated by Lyndon's \cite{MR37278} way of writing
them, where $\x_{ji}=\con{\x_{ij}}$ when $i,j=0,\dots,6$. The terms in
the hypothesis of \eqref{J} appear in the left side of \eqref{L} and
the terms in the conclusion of \eqref{J} appear in the right side of
\eqref{L}.
\begin{gather}
  \x_{20}\rp\x_{03}\bp\x_{21}\rp\x_{13}\leq\x_{24}\rp\x_{43}\implies
  \\ \notag
  \x_{02}\rp\x_{21}\bp\x_{03}\rp\x_{31}\leq(\x_{02}\rp\x_{24}\bp
  \x_{03}\rp\x_{34})\rp(\x_{42}\rp\x_{21}\bp\x_{43}\rp\x_{31}), \\
  \label{L}
  \x_{20}\rp\x_{03}\bp\x_{21}\rp\x_{13}\bp\x_{24}\rp\x_{43}\leq
  \\ \notag \x_{20}\rp\big(\x_{02}\rp\x_{21}\bp\x_{03}\rp\x_{31}\bp
  (\x_{02}\rp\x_{24}\bp\x_{03}\rp\x_{34})\rp(\x_{42}\rp\x_{21}\bp
  \x_{43}\rp\x_{31})\big)\rp\x_{13}, \\
  \label{M}
  \x_{01}\bp(\x_{02}\bp\x_{05}\rp\x_{52})\rp(\x_{21}\bp\x_{26}\rp\x_{61})\leq
  \\ \notag
  \x_{05}\rp\big((\x_{50}\rp\x_{01}\bp\x_{52}\rp\x_{21})\rp\x_{16}\bp
  \x_{52}\rp\x_{26}\bp\x_{50}\rp(\x_{01}\rp\x_{16}\bp\x_{02}\rp\x_{26})\big)\rp\x_{61}.
\end{gather}
The formulas \eqref{J}, \eqref{L}, and \eqref{M} are independent.
Each of them fails in some relation algebra that satisfies the other
two \cite[Ch.\,6, \SS64--5]{MR2269199}. The independence of \eqref{L}
is ``hardest'' to prove.  Table~26 in \cite{MR2269199} (which is
missing one line) and Table \ref{tabl} (which contains the missing
line) show that among 4527 integral relation algebras with at most
five atoms, there are \emph{exactly two} that fail to satisfy
\eqref{J} and \eqref{M} but not \eqref{L}. By contrast, the
independence of \eqref{J} is shown by 170 of these 4527 algebras and
that of \eqref{M} by 602. All these algebras are not representable,
because representability requires that a finite relation algebra have
an $n$-dimensional relational basis for every $\n\in\omega$ and an
algebra that has a 5-dimensional relational basis satisfies \eqref{J},
\eqref{L}, and \eqref{M} \cite[Th.\,341]{MR2269199}.  Among the 1729
relation algebras that satisfy \eqref{J}, \eqref{L}, and \eqref{M},
many are not representable because they fail to have a 5-dimensional
relational basis, or have a 5-dimensional relational basis but no
6-dimensional relational basis, \etc.
\begin{table}
  $\renewcommand{\arraystretch}{1.02}
  \begin{array}{|c|c|cccccccc|}\hline
    & \text{Total} & \text{fail:} & \text{fail:} & \text{fail:} &
    \text{fail:} & \text{fail:} & \text{fail:} & \text{fail:} &
    \text{fail:} \\ \text{Atoms} & \#\RA & \text{(J)(L)(M)} &
    \text{(J)(L)} & \text{(J)(M)} & \text{(L)(M)} & \text{(J)} &
    \text{(L)} & \text{(M)} & \emptyset \\\hline \id & 1 & 0 & 0 & 0
    & 0 & 0 & 0 & 0 & 1 \\ \id a & 2 & 0 & 0 & 0 & 0 & 0 & 0 & 0 & 2
    \\ \id a \con a & 3 & 0 & 0 & 0 & 0 & 0 & 0 & 0 & 3 \\ \id a b &
    7 & 0 & 0 & 0 & 0 & 0 & 0 & 0 & 7 \\ \id a b \con b & 37 & 5 & 0
    & 2 & 0 & 0 & 0 & 2 & 28 \\ \id a b c & 65 & 5 & 2 & 3 & 0 & 0 &
    0 & 6 & 49 \\ \id a \con a b \con b & 83 & 9 & 0 & 4 & 1 & 1 & 0
    & 8 & 60 \\ \id a b c \con c & 1316 & 369 & 76 & 127 & 16 & 37 &
    0 & 132 & 559 \\ \id a b c d & 3013 & 741 & 168 & 495 & 1 & 132
    & 2 & 454 & 1020 \\\hline \text{Totals} & 4527 &1129 & 246 & 631
    & 18 & 170 & 2 & 602 & 1729\\\hline
  \end{array}$
  \bigskip
  \caption{\Large Failures of (J), (L), and (M)}
  \label{tabl}
\end{table}

\section{J\'onsson-Tarski algebras}
\label{s11}
These algebras were introduced by J\'onsson and Tarski in 1955.
Props.\ \ref{JT=bij} and \ref{JT=Qu} below detail the intimate
connections between J\'onsson-Tarski algebras, Qu-algebras, and
bijections between a set and its Cartesian square.
\begin{defn}[{\cite[Th.\,5]{MR126399}}]
  The algebra $\UUU=\<\U,*,\a,\b\>$ is a {\bf J\'onsson-Tarski
    algebra} if $*$ is a binary operation on $\U$, $\a$ and $\b$ are
  unary operations on $\U$, and, for all $\x,\y\in\U$,
  \begin{align}
    \label{JT1} \a(\x*\y)&=\x,
    \\
    \label{JT2} \b(\x*\y)&=\y,
    \\
    \label{JT3} \a(\x)*\b(\x)&=\x.
  \end{align}
  $\JT$ is the class of J\'onsson-Tarski algebras.
\end{defn}
Using juxtaposition instead of $*$, the equations characterizing
J\'onsson-Tarski algebras become
\begin{align*}
  \a(\x\y)=\x,\quad\b(\x\y)=\y,\quad\a(\x)\b(\x)=\x.
\end{align*}
These algebras originated as an example in \cite[Th.\,5]{MR126399}. As
part of the proof J\'onsson and Tarski showed that finitely generated
J\'onsson-Tarski algebras are 1-generated, and, in fact, they have the
stronger property that if a J\'onsson-Tarski algebra $\UUU$ is freely
generated by $\X\cup\{\y,\z\}$ and $\y,\z\notin\X$, then $\UUU$ is
freely generated by $\X\cup\{\y*\z\}$.
\begin{prop}
  \label{JT=bij}
  $\UUU=\<\U,*,\a,\b\>$ is a J\'onsson-Tarski algebra if and only if
  $*\colon\U^2\to\U$ is a bijection that determines $\a$ and $\b$ by
  $\a=\{\<\x,\y\>:\exists_\z(\y*\z=\x)\}$ and
  $\b=\{\<\x,\y\>:\exists_\z(\z*\y=\x)\}$.
\end{prop}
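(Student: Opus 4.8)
The plan is to prove the two implications of the biconditional separately, with both resting on the single observation that the axioms \eqref{JT1}--\eqref{JT3} say exactly that every element of $\U$ has a \emph{unique} factorization under $*$, with $\a$ and $\b$ extracting its two coordinates. Accordingly I would phrase each direction in terms of existence and uniqueness of factorizations.

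For the direction assuming that $*\colon\U^2\to\U$ is a bijection with $\a$ and $\b$ defined by the displayed formulas, the first task is to confirm that $\a$ and $\b$ really are unary operations, i.e.\ total single-valued relations on $\U$, so that $\UUU$ is genuinely an algebra of the stated similarity type. Totality of $\a$ follows from surjectivity of $*$: given $\x$, pick $\<\u,\v\>$ with $\u*\v=\x$, so $\<\x,\u\>\in\a$. Single-valuedness follows from injectivity of $*$: if $\y*\z=\x=\y'*\z'$ then $\<\y,\z\>=\<\y',\z'\>$, whence $\y=\y'$; the argument for $\b$ is symmetric. Having established that $\a$ and $\b$ are functions, I would verify the three axioms by direct computation. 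For \eqref{JT1}, writing $\w=\x*\y$, the unique factorization of $\w$ is $\<\x,\y\>$, so $\a(\w)=\x$; \eqref{JT2} is identical with the second coordinate; and \eqref{JT3} says $\a(\x)*\b(\x)=\x$, which is immediate because $\<\a(\x),\b(\x)\>$ is by definition the factorization of $\x$.

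For the converse, assume $\UUU\in\JT$. Surjectivity of $*$ is just \eqref{JT3} read as $\x=\a(\x)*\b(\x)$, which exhibits every $\x$ as a value of $*$. Injectivity comes from applying $\a$ and $\b$ to an equation $\x*\y=\x'*\y'$: axioms \eqref{JT1} and \eqref{JT2} give $\x=\x'$ and $\y=\y'$. To recover the formula for $\a$ I would argue both containments: if $\a(\x)=\y$, then taking $\z=\b(\x)$ gives $\y*\z=\a(\x)*\b(\x)=\x$ by \eqref{JT3}, so $\<\x,\y\>$ lies in the displayed relation; conversely, if $\y*\z=\x$ for some $\z$, then \eqref{JT1} gives $\a(\x)=\a(\y*\z)=\y$. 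The argument for $\b$ is the same using \eqref{JT2}. There is no genuine obstacle in this proposition; the only step requiring care is the well-definedness check in the first direction, where \emph{both} surjectivity and injectivity of $*$ are needed before one can even assert that the relations $\a$ and $\b$ are unary operations and apply the axioms to them.
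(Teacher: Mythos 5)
Your proof is correct and follows essentially the same route as the paper's: both directions are handled by direct verification, with injectivity and surjectivity of $*$ doing exactly the work you describe. If anything, your argument is slightly more complete, since you explicitly verify in the $\JT$-to-bijection direction that $\a$ and $\b$ coincide with the displayed relations (the ``determines'' clause of the statement), a point the paper's proof passes over.
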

\begin{proof} If $\UUU=\<\U,*,\a,\b\>$ is a J\'onsson-Tarski
  algebra, then $*$ is surjective because $\UUU$ satisfies
  $\a(\x)*\b(\x)=\x$.  The other two equations satisfied by $\UUU$
  show that $*$ is injective, for if $\x*\y=\x'*\y'$ then
  $\x=\a(\x*\y)=\a(\x'*\y')=\x'$ and
  $\y=\b(\x*\y)=\b(\x'*\y')=\y'$. Thus, $*$ is a bijection mapping
  $\U^2$ onto $\U$.  Conversely, to obtain a J\'onsson-Tarski
  algebra, choose any bijection $*\colon\U^2\to\U$ between a set
  $\U$ and its Cartesian square $\U^2$. Such functions exist iff
  $\U$ is infinite or has a single element.  Then $\a$ and $\b$ can
  be defined as relations by $\a=\{\<\x,\y\>:\exists_\z(\y*\z=\x)\}$
  and $\b=\{\<\x,\y\>:\exists_\z(\z*\y=\x)\}$.

  To see that $\a$ is actually a function, assume $\<\x,\y\>,
  \<\x,\y'\>\in\a$. Then by the definition of $\a$ there are
  $\z,\z'\in\U$ such that $\y*\z=\x$ and $\y'*\z'=\x$, hence
  $\y*\z=\y'*\z'$, but $*$ is a bijection so $\y=\y'$ (and
  $\z=\z'$). Similarly, $\b$ is a function.  Both $\a$ and $\b$ are
  defined on every $\x\in\U$ because $*$ is assumed to be surjective
  so that $\conv*(\x)$ always exists.  By the definition of $\a$ as
  a binary relation, $\a(\x*\y)=\x$ iff $\<\x*\y,\x\>\in\a$ iff
  $\exists_\z(\x*\z=\x*\y)$. The third statement is obviously true
  so the first is true as well. Similarly, $\b(\x*\y)=\y$.

  Finally, we will show $\a(\x)*\b(\x)=\x$.  By notational
  conventions only, $\<\x,\a(\x)\>\in\a$, so by the definition of
  $\a$, $\a(\x)*\z=\x$ for some $\z\in\U$ and similarly
  $\z'*\b(\x)=\x$ for some $\z'\in\U$. Consequently,
  $\x=\a(\x)*\z=\z'*\b(\x)$, but $*$ is bijective, hence
  $\a(\x)=\z'$ and $\z=\b(\x)$. Thus, $\<\U,*,\a,\b\>$ is a
  J\'onsson-Tarski algebra.
\end{proof}
Because of this connection between J\'onsson-Tarski algebras and
bijections between a set and its Cartesian squre, the reduct
$\<\U,*\>$ obtained from a J\'onsson-Tarski algebra $\<\U,*,\a,\b\>$
by deleting the two unary operations is sometimes called a ``Cantor
algebra'' (or even a ``J\'onsson-Tarski algebra''), axiomatized
by
\begin{align*}
  \x*\y=\u*\v\implies\x=\y\land\u=\v.
\end{align*}
\begin{prop}
  \label{JT=Qu}
  $\UUU=\<\U,*,\a,\b\>$ is a J\'onsson-Tarski algebra if and only if
  $\Re\U$ is a Qu-algebra such that $\a$ and $\b$ satisfy \eqref{Qu}
  and $\a,\b$ determine $*$ as follows:
  \begin{align*}
    \x*\y=\z\iff\x=\a(\z)\land\b(\z)=\y.
  \end{align*}
\end{prop}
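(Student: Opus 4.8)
The plan is to prove the biconditional by translating each clause of \eqref{Qu} into a pointwise statement about $\a$ and $\b$ as functions on $\U$, using the standard dictionary between functional elements of $\Re\U$ and ordinary functions. The dictionary I would record first is this: for a relation $\r\in\Re\U$, the inequality $\con\r\rp\r\leq\id$ says $\r$ is single-valued, $\r\rp1=1$ says $\r$ is total, and $\con\r\rp\r=\id$ says in addition that $\r$ is surjective. Thus the clauses $\cona\rp\a=\id$, $\a\rp1=1$ (and likewise for $\b$) assert exactly that $\a,\b$ are total surjective functions $\U\to\U$. I would also record the two ``pairing'' translations: $1=\cona\rp\b$ says that for every $\<\x,\y\>\in\U^2$ there is some $\z$ with $\a(\z)=\x$ and $\b(\z)=\y$, while the Unicity clause $\a\rp\cona\bp\b\rp\conb\leq\id$ says that such a $\z$ is unique, i.e. $\a(\z)=\a(\z')$ and $\b(\z)=\b(\z')$ force $\z=\z'$.

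For the forward direction I would assume $\UUU$ is a J\'onsson-Tarski algebra and verify the clauses of \eqref{Qu} directly from \eqref{JT1}--\eqref{JT3}, invoking Prop.\ \ref{JT=bij} to know that $\a,\b$ are genuine functions. Totality is automatic since $\a,\b$ are unary operations; surjectivity of $\a$ follows from $\a(\x*\y)=\x$; existence of pairings follows by taking $\z=\x*\y$ in \eqref{JT1}--\eqref{JT2}; and uniqueness follows from \eqref{JT3}, since $\a(\z)=\a(\z')$ and $\b(\z)=\b(\z')$ give $\z=\a(\z)*\b(\z)=\a(\z')*\b(\z')=\z'$. The determination of $*$ is then immediate: $\x*\y=\z$ gives $\a(\z)=\x$ and $\b(\z)=\y$ by \eqref{JT1}--\eqref{JT2}, while conversely $\a(\z)=\x$ and $\b(\z)=\y$ give $\z=\a(\z)*\b(\z)=\x*\y$ by \eqref{JT3}.

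For the reverse direction I would assume \eqref{Qu} holds for $\a,\b\in\Re\U$ and run the dictionary backwards. The first clauses make $\a,\b$ total (indeed surjective) functions, so they may legitimately serve as the unary operations of a candidate algebra $\UUU$. The crux is to show that the formula $\x*\y=\z\iff\x=\a(\z)\land\b(\z)=\y$ defines a genuine binary operation: existence of a value $\z$ for each $\<\x,\y\>$ comes from $1=\cona\rp\b$, and single-valuedness comes from the Unicity clause $\a\rp\cona\bp\b\rp\conb\leq\id$. Once $*$ is known to be a well-defined operation, \eqref{JT1} and \eqref{JT2} hold by the very definition of $*$, and \eqref{JT3} holds because $\z=\x$ trivially satisfies $\a(\z)=\a(\x)$ and $\b(\z)=\b(\x)$ and is the unique such $\z$.

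I expect the only real friction to be the careful matching of the converse and composition conventions of Def.\ \ref{SbE} with the pointwise reading of $\a$ and $\b$ — in particular getting the direction of the ordered pairs right when computing $\cona\rp\b$ and $\a\rp\cona\bp\b\rp\conb$ — together with confirming that the Unicity inequality is precisely what makes $*$ single-valued rather than merely a relation. No genuinely hard step is anticipated; the proposition is a direct dictionary translation in both directions.
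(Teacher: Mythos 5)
Your proposal is correct and follows essentially the same route as the paper's proof: both directions are handled by the same dictionary between the clauses of \eqref{Qu} and pointwise statements about $\a$ and $\b$, with existence of $*$-values coming from $1=\cona\rp\b$, single-valuedness from the Unicity clause, and the uniqueness half of the forward direction from \eqref{JT3} exactly as the paper does it. No substantive difference.
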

\begin{proof} Let $\UUU=\<\U,*,\a,\b\>$ be a J\'onsson-Tarski
  algebra.  Consider the proper relation algebra
  $\mathfrak{B}=\Re\U$.  The unary operations $\a$ and $\b$ are also
  elements of $\BB$.  That $\a$ and $\b$ are unary operations on
  $\U$ (functions defined on every point in $\U$) is expressed in
  $\Re\U$ by the equations $1=\a\rp1=\b\rp1$, $\cona\rp\a\leq\id$,
  and $\conb\rp\b\leq\id$.  Note that $\a$ and $\b$ are surjective
  because $\a(\x*\x)=\x =\b(\x*\x)$ by \eqref{JT1} and \eqref{JT2}.
  That $\a$ and $\b$ are surjective functions is expressed by
  $\cona\rp\a=\id=\conb\rp\b$.  The equation $\cona\rp\b=1$ is
  equivalent to $\forall_{\x\y}\exists_\z\(\a(\z)=\x
  \land\b(\z)=\y\)$, which is certainly true because one can just
  take $\z=\x*\y$.  To prove $\a\rp\cona\bp\b\rp\conb\leq\id$, the
  Unicity Condition \eqref{U}, note first that it is equivalent to
  \begin{align*}
    \exists_\z(\a(\x)=\z=\a(\y))\land\exists_{\z'}(\b(\x)=\z'=\b(\y))
    \implies\x=\y.
  \end{align*}
  To prove this, assume the hypotheses, that there are $\z,\z'\in\U$
  such that $\a(\x)=\z=\a(\y)$ and $\b(\x)=\z'=\b(\y)$. Then by
  \eqref{JT3}, we have $\x=\a(\x)*\b(\x) = \a(\y)*\b(\y)=\y$.  This
  proves that $\Re\U$ is a Qu-algebra with $\a$ and $\b$ as the
  conjugated quasiprojections that satisfy \eqref{Qu}.

  Conversely, if $\Re\U$ is a Qu-algebra with $\a$ and $\b$ as the
  conjugated quasiprojections satisfying \eqref{Qu}, then,
  according to the interpretation in $\Re\U$ of the equations in
  \eqref{Qu}, $\a$ and $\b$ are functions defined on all of $\U$
  (unary operations on $\U$) and the required binary operation $*$
  on $\U$ can be defined as a ternary relation by
  \begin{align*}
    *=\{\<\x,\y,\z\>:\x=\a(\z)\land\b(\z)=\y\}
  \end{align*}
  To show that $*$ as a function on two inputs, assume $\<\x,\y,\z\>$
  and $\<\x,\y,\z'\>$ are both in $*$.  We must show $\z=\z'$. By the
  definition of $*$, we have $\x=\a(\z)$, $\b(\z)=\y$, $\x=\a(\z')$,
  and $\b(\z')=\y$, \ie, $\<\z,\x\>\in\a$, $\<\z,\y\>\in\b$,
  $\<\x,\z'\>\in\cona$, and $\<\y,\z'\>\in\conb$, from which it
  follows that $\<\z,\z'\>\in\a\rp\cona$ and
  $\<\z,\z'\>\in\b\rp\conb$, hence
  $\<\z,\z'\>\in\a\rp\cona\bp\b\rp\conb\leq\id$ by \eqref{Qu}, so
  $\z=\z'$ since $\id=\{\<\w,\w\>:\w\in\U\}$ in $\Re\U$.  From
  $1=\cona\rp\b$ it follows that for all $\x,\y\in\U$ there is some
  $\z\in\U$ such that $\x=\a(\z)\land\b(\z)=\y$, \ie, $\<\x,\y,\z\>$
  is in $*$ according to the definition of $*$, but, written in
  functional notation, this says $\x*\y=\z$. Thus, the domain of $*$
  as a binary operation is $\U^2$. It is easy to check that the
  algebra $\<\U,*,\a,\b\>$ satisfies the equations
  \eqref{JT1}--\eqref{JT3}, and is therefore a J\'onsson-Tarski
  algebra.
\end{proof}

\section{Relations on J\'onsson-Tarski algebras}
\label{s12}
In his presentation to the Berkeley seminar, Thompson used the
parenthetical notation of combinatory logic to describe groups of
functions acting on rooted infinite binary trees. In terms of
parenthetical notation, Thompson's group $\F$ is the group of
associative laws, $\T$ allows cyclic rearrangements, and $\VV$ allows
arbitrary rearrangements.  Later, Thompson discovered a topological
interpretation.  Topological representations of his groups are now the
most common way of briefly defining his groups.

The parenthetical notation for the generator called $\A$ in
\cite{MR1426438} and $\R$ in Brin's notes is $0(12)\mapsto(01)2$. This
function carries a tree called $0(12)$ to the tree called $(01)2$,
where $0$ is both the left branch of $0(12)$ and the left branch of
the left branch of $(01)2$, $1$ is the left branch of the right branch
of $0(12)$ and also the right branch of the left branch of $(01)2$,
and $2$ is the right branch of the right branch of $0(12)$ and the
right branch of $(01)2$. See Figure \ref{fog}, known as a ``paired
tree diagram''. It shows how three branches of an input tree on the
left should be cut off and grafted back onto the remaining finite tree
in a way described by the output tree on the right.

Every element of a J\'onsson-Tarski algebra $\UUU=\<\U,*,\a,\b\>$ can
be seen as the root of an infinite binary tree. The ``tree'' $\x\in\U$
has the ``tree'' $\a(\x)$ as its left branch and the ``tree'' $\b(\x)$
as its right branch.  Viewed this way, elements of Thompson's groups
are functions acting on $\U$. The unary operations $\a$ and $\b$ are
themselves members of the Thompson monoid $\M$.  In Figure \ref{fog1},
$0$, $1$, and $2$ are elements of the J\'onsson-Tarski algebra $\UUU$.
Arrows, colors, and labels have been added to the edges to signify the
action of $\a$ (red arrow, left branch) and $\b$ (blue arrow, right
branch) on the elements called $(01)2$ and $0(12)$. Figure \ref{fog1}
is simplified in Figure \ref{fog2} so that $0$, $1$, and $2$ occur
only once with no decorations suggesting they are trees.  All the
information in Figure \ref{fog2} is represented in the series-parallel
diagram shown in Figure \ref{fog3}, which includes the corresponding
J-algebraic term.
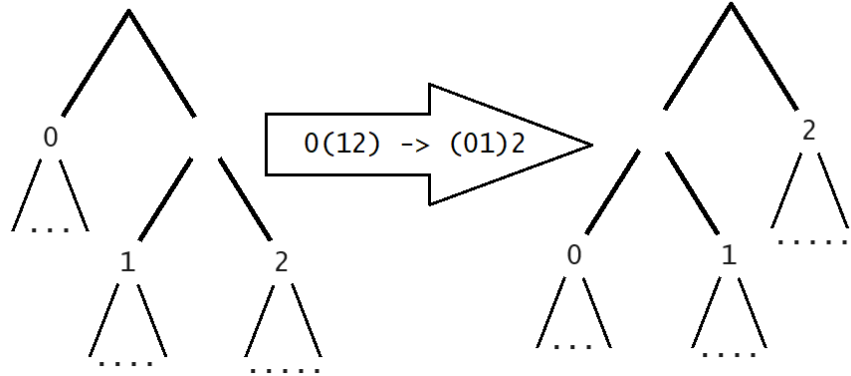
\begin{figure}
  \begin{tikzpicture}[scale=.5]
    \draw ( 8, 7) node[anchor=center] {$0(12)\mapsto(01)2$};
    \draw ( 0, 5) node[anchor=center] {0};
    \draw (16, 5) node[anchor=center] {2};
    \draw ( 2, 2) node[anchor=center] {1};
    \draw ( 6, 2) node[anchor=center] {2};
    \draw (10, 2) node[anchor=center] {0};
    \draw (14, 2) node[anchor=center] {1};
    \draw[line width=.6 mm] (   2,    8) -- ( 2/5, 28/5);
    \draw[line width=.6 mm] (  14,    8) -- (  12,    5);
    \draw[line width=.6 mm] (   2,    8) -- (   4,    5);
    \draw[line width=.6 mm] (  14,    8) -- (78/5, 28/5);
    \draw[line width=.6 mm] (18/5, 22/5) -- (12/5, 13/5);
    \draw[line width=.6 mm] (22/5, 22/5) -- (28/5, 13/5);
    \draw[line width=.6 mm] (58/5, 22/5) -- (52/5, 13/5);
    \draw[line width=.6 mm] (62/5, 22/5) -- (68/5, 13/5);
    \draw[line width=.4 mm] (-1/5, 23/5) -- ( -4/5, 17/5);
    \draw[line width=.4 mm] ( 1/5, 23/5) -- (  4/5, 17/5);
    \draw[line width=.4 mm] ( 9/5,  8/5) -- (  6/5,  2/5);
    \draw[line width=.4 mm] (11/5,  8/5) -- ( 14/5,  2/5);
    \draw[line width=.4 mm] (29/5,  8/5) -- ( 26/5,  2/5);
    \draw[line width=.4 mm] (31/5,  8/5) -- ( 34/5,  2/5);
    \draw[line width=.4 mm] (49/5,  8/5) -- ( 46/5,  2/5);
    \draw[line width=.4 mm] (51/5,  8/5) -- ( 54/5,  2/5);
    \draw[line width=.4 mm] (69/5,  8/5) -- ( 66/5,  2/5);
    \draw[line width=.4 mm] (71/5,  8/5) -- ( 74/5,  2/5);
    \draw[line width=.4 mm] (79/5, 23/5) -- ( 76/5, 17/5);
    \draw[line width=.4 mm] (81/5, 23/5) -- ( 84/5, 17/5);
    \draw ( 0,3) node[anchor=center] {.....};
    \draw (16,3) node[anchor=center] {.........};
    \draw ( 2,0) node[anchor=center] {.......};
    \draw ( 6,0) node[anchor=center] {.........};
    \draw (10,0) node[anchor=center] {.....};
    \draw (14,0) node[anchor=center] {.......};
  \end{tikzpicture}
  \caption{The action of $\R$ on trees}
  \label{fog}
\end{figure}
\begin{figure}
  \begin{tikzpicture}[scale=.5]
    \draw (  .8,7) node[anchor=center] {\a};
    \draw ( 3.2,7) node[anchor=center] {\b};
    \draw (12.8,7) node[anchor=center] {\a};
    \draw (15.2,7) node[anchor=center] {\b};
    \draw ( 2.8,4) node[anchor=center] {\a};
    \draw ( 5.2,4) node[anchor=center] {\b};
    \draw (10.8,4) node[anchor=center] {\a};
    \draw (13.2,4) node[anchor=center] {\b};
    \draw ( 8,7) node[anchor=center] {$0(12)\mapsto(01)2$};
    \draw ( 2,8) node[anchor=center] {0(12)};
    \draw (14,8) node[anchor=center] {(01)2};
    \draw ( 0,5) node[anchor=center] {0};
    \draw ( 4,5) node[anchor=center] {12};
    \draw (12,5) node[anchor=center] {01};
    \draw (16,5) node[anchor=center] {2};
    \draw ( 2,2) node[anchor=center] {1};
    \draw ( 6,2) node[anchor=center] {2};
    \draw (10,2) node[anchor=center] {0};
    \draw (14,2) node[anchor=center] {1};
    \draw[-stealth, line width=.6 mm, color=red ] ( 8/5, 37/5) -- ( 2/5, 28/5);
    \draw[-stealth, line width=.6 mm, color=red ] (68/5, 37/5) -- (62/5, 28/5);
    \draw[-stealth, line width=.6 mm, color=blue] (12/5, 37/5) -- (18/5, 28/5);
    \draw[-stealth, line width=.6 mm, color=blue] (72/5, 37/5) -- (78/5, 28/5);
    \draw[-stealth, line width=.6 mm, color=red ] (18/5, 22/5) -- (12/5, 13/5);
    \draw[-stealth, line width=.6 mm, color=blue] (22/5, 22/5) -- (28/5, 13/5);
    \draw[-stealth, line width=.6 mm, color=red ] (58/5, 22/5) -- (52/5, 13/5);
    \draw[-stealth, line width=.6 mm, color=blue] (62/5, 22/5) -- (68/5, 13/5);
    \draw[line width=.4 mm] (-1/5, 23/5) -- ( -4/5, 17/5);
    \draw[line width=.4 mm] ( 1/5, 23/5) -- (  4/5, 17/5);
    \draw[line width=.4 mm] ( 9/5,  8/5) -- (  6/5,  2/5);
    \draw[line width=.4 mm] (11/5,  8/5) -- ( 14/5,  2/5);
    \draw[line width=.4 mm] (29/5,  8/5) -- ( 26/5,  2/5);
    \draw[line width=.4 mm] (31/5,  8/5) -- ( 34/5,  2/5);
    \draw[line width=.4 mm] (49/5,  8/5) -- ( 46/5,  2/5);
    \draw[line width=.4 mm] (51/5,  8/5) -- ( 54/5,  2/5);
    \draw[line width=.4 mm] (69/5,  8/5) -- ( 66/5,  2/5);
    \draw[line width=.4 mm] (71/5,  8/5) -- ( 74/5,  2/5);
    \draw[line width=.4 mm] (79/5, 23/5) -- ( 76/5, 17/5);
    \draw[line width=.4 mm] (81/5, 23/5) -- ( 84/5, 17/5);
    \draw ( 0,3) node[anchor=center] {.....};
    \draw (16,3) node[anchor=center] {.........};
    \draw ( 2,0) node[anchor=center] {.......};
    \draw ( 6,0) node[anchor=center] {.........};
    \draw (10,0) node[anchor=center] {.....};
    \draw (14,0) node[anchor=center] {.......};
  \end{tikzpicture}
  \caption{Tree diagram of $\R$ in a J\'onsson-Tarski algebra}
  \label{fog1}
\end{figure}
\begin{figure}
  \begin{tikzpicture}[scale=.5]
    \draw ( 2  ,3.5) node[anchor=center] {\b};
    \draw ( 6  ,3.5) node[anchor=center] {\a};
    \draw ( 4  ,6.5) node[anchor=center] {\a};
    \draw (14  ,5.5) node[anchor=center] {\a};
    \draw (10  ,5.5) node[anchor=center] {\b};
    \draw (10.3,7.2) node[anchor=center] {\a};
    \draw (12  ,2.5) node[anchor=center] {\b};
    \draw ( 6  ,1.5) node[anchor=center] {\b};
    
    \draw ( -.4,3.9) node[anchor=center] {0(12)};
    \draw (4   ,2  ) node[anchor=center] {12};
    \draw (8   , .2) node[anchor=center] {2};
    \draw (8   ,4  ) node[anchor=center] {1};
    \draw (8   ,7.8) node[anchor=center] {0};
    \draw (12  ,6  ) node[anchor=center] {01};
    \draw (16.4,3.9) node[anchor=center] {(01)2};
    \draw[-stealth, line width=.6 mm, color=blue] ( 4/5,18/5) -- (16/5,12/5);
    \draw[-stealth, line width=.6 mm, color=red ] ( 4/5,22/5) -- (36/5,38/5);
    \draw[-stealth, line width=.6 mm, color=blue] (24/5, 8/5) -- (36/5, 2/5);
    \draw[-stealth, line width=.6 mm, color=red ] (76/5,22/5) -- (64/5,28/5);
    \draw[-stealth, line width=.6 mm, color=blue] (76/5,18/5) -- (44/5, 2/5);
    \draw[-stealth, line width=.6 mm, color=red ] (56/5,32/5) -- (44/5,38/5);
    \draw[-stealth, line width=.6 mm, color=red ] (24/5,12/5) -- (36/5,18/5);
    \draw[-stealth, line width=.6 mm, color=blue] (56/5,28/5) -- (44/5,22/5);
  \end{tikzpicture}
  \caption{Simplified tree diagram of $\R$ in a J\'onsson-Tarski
    algebra}
  \label{fog2}
\end{figure}
\begin{figure}
  \begin{tikzpicture}[scale=.5]
    \draw ( 1.8,2.5) node[anchor=center] {\b};
    \draw ( 2.4,4.5) node[anchor=center] {\b};
    \draw ( 6  ,4.5) node[anchor=center] {\a};
    \draw ( 4  ,6.5) node[anchor=center] {\a};
    \draw (14  ,5.5) node[anchor=center] {\a};
    \draw (13.8,3.6) node[anchor=center] {\a};
    \draw (10  ,4.5) node[anchor=center] {\b};
    \draw (10.3,7.2) node[anchor=center] {\a};
    \draw (12  ,2.5) node[anchor=center] {\b};
    \draw ( 6  ,1.5) node[anchor=center] {\b};
    
    \draw ( -.4,3.9) node[anchor=center] {0(12)};
    \draw ( 4  ,2  ) node[anchor=center] {12};
    \draw ( 4  ,4  ) node[anchor=center] {12};
    \draw ( 8  ,0.2) node[anchor=center] {2};
    \draw ( 8  ,4  ) node[anchor=center] {1};
    \draw ( 8  ,7.8) node[anchor=center] {0};
    \draw (12  ,4  ) node[anchor=center] {01};
    \draw (12  ,6  ) node[anchor=center] {01};
    \draw (16.4,3.9) node[anchor=center] {(01)2};
    \draw[-stealth, line width=.6 mm, color=blue] ( 4/5, 18/5) -- (16/5, 12/5);
    \draw[-stealth, line width=.6 mm, color=blue] ( 4/5,  4  ) -- (16/5,  4  );
    \draw[-stealth, line width=.6 mm, color=red ] ( 4/5, 22/5) -- (36/5, 38/5);
    \draw[-stealth, line width=.6 mm, color=blue] (24/5,  8/5) -- (36/5,  2/5);
    \draw[-stealth, line width=.6 mm, color=red ] (24/5,  4  ) -- (36/5,  4  );
    \draw[-stealth, line width=.6 mm, color=red ] (76/5, 22/5) -- (64/5, 28/5);
    \draw[-stealth, line width=.6 mm, color=red ] (76/5,  4  ) -- (64/5,  4  );
    \draw[-stealth, line width=.6 mm, color=blue] (76/5, 18/5) -- (44/5,  2/5);
    \draw[-stealth, line width=.6 mm, color=red ] (56/5, 32/5) -- (44/5, 38/5);
    \draw[-stealth, line width=.6 mm, color=blue] (56/5,  4  ) -- (44/5,  4  );
    \draw (8,-1) node[anchor=center]
    {$\R=\a\rp\con\a\rp\con\a\bp\b\rp\a\rp\con\b\rp\con\a\bp\b\rp\b\rp\con\b$};
  \end{tikzpicture}
  \caption{Series-parallel diagram of $\R$ in a J-algebra}
  \label{fog3}
\end{figure}
\begin{defn}
  \label{paths}
  Assume $\AA\in\JA\cup\RA$ and $\a,\b$ are elements of $\AA$.  Let
  $\P_{\a,\b}$ be the closure of $\a$ and $\b$ in $\AA$ under relative
  product $\rp$ and Boolean product $\bp$, \ie, $\P_{\a,\b}=
  \bigcup_{\n\in\omega}\Y_\n$ where $\Y_0=\{0,\id,\a,\b\}$, and
  \begin{align*}
    \Y_\n = \Y_{\n-1}\cup\{\x\rp\y:\x,\y\in\Y_{\n-1}\}
    \cup\{\x\bp\y:\x,\y\in\Y_{\n-1}\}
  \end{align*}
  for $\n>0$.  Elements of $\P_{\a,\b}$ are called {\bf paths} or {\bf
    branches}.
\end{defn}
Definition \ref{parnot} below is intended for application when $\a$
and $\b$ are functional because in that case we have the following
consequence of Prop.\ \ref{func}.
\begin{prop}
  \label{closed}
  If $\AA\in\JA\cup\RA$ and $\a,\b$ are functional elements of $\AA$
  then every path is functional.
\end{prop}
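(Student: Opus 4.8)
The plan is to argue by induction on the index $\n$ in the construction $\P_{\a,\b}=\bigcup_{\n\in\omega}\Y_\n$ from Definition \ref{paths}, showing that every element of $\Y_\n$ lies in $\Fn\AA$. All of the closure facts I need are supplied by Prop.\ \ref{func}: that $\id\in\Fn\AA$, that $\Fn\AA$ is closed under $\rp$ (the monoid property, Prop.\ \ref{func}(v)), and that $\Fn\AA$ is closed downward, \ie\ $\y\leq\x\in\Fn\AA$ implies $\y\in\Fn\AA$. With these in hand the proposition is essentially a bookkeeping corollary, so the emphasis is on tracking the two generating operations.

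For the base case $\Y_0=\{0,\id,\a,\b\}$, the elements $\a$ and $\b$ are functional by hypothesis, $\id\in\Fn\AA$ by Prop.\ \ref{func}, and $0\in\Fn\AA$ since $0\leq\id$ and $\Fn\AA$ is downward closed. For the inductive step I would assume every element of $\Y_{\n-1}$ is functional and take $\z\in\Y_\n$. By Definition \ref{paths}, either $\z\in\Y_{\n-1}$, or $\z=\x\rp\y$, or $\z=\x\bp\y$ with $\x,\y\in\Y_{\n-1}$. In the first case $\z$ is functional by the inductive hypothesis. In the second case $\x,\y\in\Fn\AA$ by hypothesis, so $\z=\x\rp\y\in\Fn\AA$ by closure under $\rp$. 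In the third case I would first record that $\x\bp\y\leq\x$, which holds because $(\x\bp\y)\bp\x=\x\bp\x\bp\y=\x\bp\y$ by \eqref{bassoc}, \eqref{comm}, and \eqref{idem}; then downward closure gives $\z=\x\bp\y\in\Fn\AA$, completing the induction and hence the proof that every path is functional.

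There is no genuine obstacle here: the real content has already been isolated in Prop.\ \ref{func}, and what remains is the observation that paths are built from functional elements using only $\rp$ and $\bp$, each of which preserves functionality. The single point worth stating explicitly is the downward-closure lemma used in both the $0$ case and the $\bp$ case, namely that $\y\leq\x$ yields $\con\y\leq\con\x$ and hence $\con\y\rp\y\leq\con\x\rp\x\leq\id$ by monotonicity of $\con\blank$ and $\rp$, so that $\y$ inherits functionality from $\x$. (Alternatively, functionality of $0$ could be obtained directly from normality \eqref{norm}, since $\con0\rp0=0\leq\id$.) Once the downward-closure lemma is available, both nontrivial cases of the induction close in one line apiece.
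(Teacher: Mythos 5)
Your proof is correct and is precisely the argument the paper intends: the paper states Prop.~\ref{closed} without proof as ``a consequence of Prop.~\ref{func},'' and your induction on the stages $\Y_\n$ of Definition~\ref{paths}, using Prop.~\ref{func}(i)--(iii) together with $\x\bp\y\leq\x$ (Prop.~\ref{p2}) and downward closure, is exactly the bookkeeping being left to the reader. The only redundancy is that $0\in\Fn\AA$ and the downward-closure lemma are already recorded as Prop.~\ref{func}(i) and (ii), so your re-derivations of them, while correct, are not needed.
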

The passage from parenthetical notation to elements of an arbitrary
J-algebra or relation algebra $\AA$ with elements $\a,\b$ satisfying
\eqref{Qu} is made precise in the definition of the two operations
$\wedge$ and $\mapsto$.  The operation $\wedge$ combines two functions
that map finite sets into $\AA$ into a single function that maps the
union of the domains of the two input functions into $\AA$.  Any two
functions $\sigma,\tau$ that map finite sets into $\AA$ determine an
element of $\AA$ called $\sigma\mapsto\tau$, defined as $1$ if the
domains of $\sigma$ and $\tau$ are disjoint, and otherwise defined as
the Boolean product of $\sigma(\x)\rp\con{\tau(\x)}$ for every $\x$ in
the domain of both $\sigma$ and $\tau$.

For intuition regarding these operations, consider the case
$\AA=\Re\U$. If the domain of $\sigma$ were a subset of $\U$ then
$\sigma$ would say, ``to get from where you are to the point $\x$ in
my domain, follow the path (or climb the branch) $\sigma(\x)$.'' What
$\sigma$ says is true of $\y$ iff $\sigma(\x)(\y)=\x$ for every $\x$
in the domain of $\sigma$. The relation $\sigma\mapsto\tau$ is the
intersection of all the two-part paths that pass through points in the
intersection of the domains of $\sigma$ and $\tau$. Such a two-part
path consists of travel first along a $\sigma$-path (to a point in the
common domain) and then backward along a $\tau$-path.
\begin{defn}
  \label{parnot}
  Assume $\AA\in\JA\cup\RA$ and $\a,\b$ are elements of $\AA$.
  Suppose there are two finite sets $\X,\Y$ and two functions
  \begin{align*}
    \sigma\colon\X\to\AA,&&\tau\colon\Y\to\AA.
  \end{align*}
  Define a function
  \begin{align*}
    \sigma\wedge\tau\colon\X\cup\Y\to\AA
  \end{align*}
  for every $\u\in\X\cup\Y$ by
  \begin{align}
    \label{wedg}
    (\sigma\wedge\tau)(\u)=
    \begin{cases}
      \a\rp\sigma(\u) &\text{if}\quad \u\in\X\setminus\Y
      \\ \a\rp\sigma(\u)\bp\b\rp\tau(\u)&\text{if}\quad\u\in\X\cap\Y
      \\ \b\rp\tau(\u) &\text{if}\quad \u\in\Y\setminus\X
    \end{cases}
  \end{align}
  Define an element $\sigma\mapsto\tau$ of $\AA$ by
  \begin{align}
    \label{maps}
    \sigma\mapsto\tau=\prod_{\u\in\X\cap\Y}\sigma(\u)\rp\con{\tau(\u)},
  \end{align}
  and $\sigma\mapsto\tau=1$ if $\X\cap\Y=\emptyset$.
\end{defn}

\section{Generators of $\F,\T,\VV,\M$}
\label{s12a}
Assume $\AA\in\JA\cup\RA$ and \eqref{Qu} holds for elements $\a$ and
$\b$. Using Definition \ref{parnot} we will create elements of $\AA$
that generate homomorphic images of the Thompson groups and monoid.
Suppose $\x_0\neq\x_1$. Start with two initial functions
$\iota_0=\{\<\x_0,\id\>\}$ and $\iota_1=\{\<\x_1,\id\>\}$.  First,
combining these two initial functions in three ways using $\wedge$,
let
\begin{align*}
  \sigma_0&=\iota_0\wedge\iota_0, & \sigma_1&=\iota_0\wedge\iota_1, &
  \sigma_2&=\iota_1\wedge\iota_0.
\end{align*}
By Definition \ref{parnot},
\begin{align*}
  \sigma_0&=\{\<\x_0,\a\bp\b\>\}, & \sigma_1&=\{\<\x_0,\a\>,
  \<\x_1,\b\>\}, & \sigma_2&=\{\<\x_0,\b\>, \<\x_1,\a\>\}.
\end{align*}
The following four elements, given the names they have in Brin's
notes, are obtained by combining the $\iota$'s and $\sigma$'s with
$\mapsto$ and applying Definition \ref{parnot}.
\begin{align*}
  \K=\sigma_1\mapsto\iota_0
  &=\sigma_1(\x_0)\rp\con{\iota_0(\x_0)}=\a\rp\con\id=\a,
  \\ \L=\sigma_1\mapsto\iota_1
  &=\sigma_1(\x_1)\rp\con{\iota_1(\x_1)}=\b\rp\con\id=\b,
  \\ \U=\iota_0\mapsto\sigma_0
  &=\iota_0(\x_0)\rp\con{\sigma_0(\x_0)}=\id\rp\con{\a\bp\b}=\cona\bp\conb,
  \\ \P=\sigma_1\mapsto\sigma_2
  &=\prod_{\u\in\{\x_0,\x_1\}}\sigma_1(\u)\rp\con{\sigma_2(\u)}
  \\ &=\sigma_1(\x_0)\rp\con{\sigma_2(\x_0)}\bp\sigma_1(\x_1)
  \rp\con{\sigma_2(\x_1)} \\ &= \a\rp\conb \bp \b\rp\cona.
\end{align*}
Next we rewrite the results of these computations entirely in terms of
$\iota$'s and then delete the $\iota$'s, leaving only the subscripts,
which are then written in regular, non-subscript, size.  Also, we
delete $\wedge$ in favor of juxtaposition. For example,
$\K=\sigma_1\mapsto\iota_0= (\iota_0\wedge\iota_1)\mapsto
\iota_0=01\mapsto0$. The resulting abbreviations are
\begin{align*}
  \K&=01\mapsto0=\a, \\ \L&=01\mapsto1=\b,
  \\ \U&=0\mapsto00=\cona\bp\conb,
  \\ \P&=01\mapsto10=\a\rp\conb\bp\b\rp\cona.
\end{align*}
The actions of $K$, $L$, $U$,and $P$ can be read directly from this
abbreviated notation. $\K$ maps an infinite rooted binary tree to its
left branch and $\L$ maps it to its right branch.  $\U$ maps a tree to
a new tree whose left and right branches coincide with the input
tree. $\P$ produces a new tree from an input tree by interchanging the
two branches.

Assume $\{\x_0,\x_1,\x_2,\x_4\}$ is a set of four distinct points. As
initial functions use $\iota_0=\{\<\x_0,\id\>\}$,
$\iota_1=\{\<\x_1,\id\>\}$, $\iota_2=\{\<\x_2,\id\>\}$, and
$\iota_3=\{\<\x_3,\id\>\}$.  In general, for a given finite set $\X$,
its initial functions are those of the form $\{\<\x,\id\>\}$, each of
which says, ``To get to the tree $\x$, the sole tree in my domain, do
nothing. You are already there.''  Consider the relations
$\sigma\mapsto\tau$ for $\sigma,\tau$ in the closure of
$\{\{\<\x,\id\>\}:\x\in\X\}$ under $\wedge$. It can easily be seen
that if the domain of $\tau$ is a subset of the domain of $\sigma$,
then $\sigma\mapsto\tau$ is functional.  The following results are
obtained by applying Definition \ref{parnot} to various
$\wedge$-combinations of the four initial functions, deleting
$\wedge$'s and $\iota$'s, and restoring subscripts to regular size,
\eg, $\iota_3\wedge((\iota_0\wedge\iota_1)\wedge\iota_2)=3((01)2)$.
Recall that $\a^2=\a\rp\a$, $\b^3=\b\rp\b\rp\b$, \etc.
\begin{align*}
  0(12)&=\big\{\<\x_0,\a\>,\<\x_1,\b\rp\a\>,\<\x_2,\b^2\>\big\},
  \\ (01)2&=\big\{\<\x_0,\a^2\>,\<\x_1,\a\rp\b\>,\<\x_2,\b\>\big\},
  \\ 1(20)&=\big\{\<\x_0,\b^2\>,\<\x_1,\a\>,\<\x_2,\b\rp\a\>\big\},
  \\ 1(02)&=\big\{\<\x_0,\b\rp\a\>,\<\x_1,\a\>,\<\x_2,\b^2\>\big\},
  \\ 3(0(12))&=\big\{\<\x_0,\b\rp\a\>,\<\x_1,\b^2\rp\a\>,
  \<\x_2,\b^3\>,\<\x_3,\a\>\big\},
  \\ 3((01)2)&=\big\{\<\x_0,\b\rp\a^2\>,\<\x_1,\b\rp\a\rp\b\>,
  \<\x_2,\b^2\>,\<\x_3,\a\>\big\}.
\end{align*}
With these functions we define six more elements of $\AA$ using the
same names that were given to them in Brin's notes and
\cite{MR1426438}. Both sources copied Thompson's notation.  The
element called $\A$ in \cite{MR1426438} is called $\R$ in Brin's
notes.
\begin{align*}
  \R=\A&=0(12)\mapsto(01)2
  =\a\rp\con\a^2\bp\b\rp\a\rp\con\b\rp\con\a\bp\b^2\rp\con\b,
  \\ \B&=3(0(12))\mapsto3((01)2)=\a\rp\cona
  \bp\b\rp\a\rp\cona^2\rp\conb
  \bp\b^2\rp\a\rp\conb\rp\cona\rp\conb\bp\b^3\rp\conb^2,
  \\ \C&=0(12)\mapsto1(20)
  =\a\rp\con\b^2\bp\b\rp\a\rp\con\a\bp\b^2\rp\con\a\rp\con\b,
  \\ \pi_0&=0(12)\mapsto1(02)
  =\a\rp\cona\rp\conb\bp\b\rp\a\rp\cona\bp\b^2\rp\conb^2,
  \\ \P_0&=(01)2\mapsto(10)2
  =\a^2\rp\conb\rp\cona\bp\a\rp\b\rp\cona^2\bp\b\rp\conb,
  \\ \R_0&=(0(12))3\mapsto((01)2)3=\a^2\rp\cona^3
  \bp\a\rp\b\rp\a\rp\conb\rp\cona^2
  \bp\a\rp\b^2\rp\conb\rp\cona\bp\b\rp\conb.
\end{align*}
Assume $\AA\in\JA\cup\RA$ and \eqref{Qu} holds for $\a$ and $\b$.  By
Prop.\ \ref{perms}, the elements $\A=\R$, $\B$, $\C$, $\pi_0$, $\P$,
$\P_0$, and $\R_0$ are permutational, while $\K$, $\L$, $\U$, and
$\con\U$ are only functional.  Combinations of these elements generate
homomorphic images of Thompson's groups $\F$, $\T$, and $\VV$ and
monoid $\M$.

Assuming $\AA$ is sufficiently free, specifically, that no equations
hold in $\AA$ other than consequences of either the relation algebra
or J-algebra axioms together with the equations in \eqref{Qu}, we can
conclude by Theorem \ref{Tgroups} below that $\AA$ will contain actual
\emph{copies} of $\F$, $\T$, $\VV$, and $\M$ instead of possibly
proper homomorphic images.  In the group $\gc{Pm}(\AA)$, $\{\A,\B\}$
generates a copy of $\F$, $\{\A,\B,\C\}$ generates a copy of $\T$, and
a copy of $\VV$ is generated by both $\{\A,\B,\C,\pi_0\}$ and
$\{\P,\R,\P_0,\R_0\}$. The latter generating set of $\VV$ is mentioned
in Brin's notes.  In the monoid $\gc{Fn}(\AA)$, a copy of $\M$ is
generated by both $\{\P,\R,\U,\K\}$ and $\{\P_0,\R_0,\U,\K\}$, as was
also pointed out in Brin's notes.

\section{Finite presentations of $\F$, $\T$, and $\VV$}
\label{s13}
Thompson's finite presentations for $\F$, $\T$, and $\VV$ are covered
in detail by Cannon, Floyd, and Parry \cite{MR1426438}. Those
presentations, repeated below, are in the notation used by Thompson in
his hand-written notes, reproduced in \cite{MR1426438}.  The
commutator in a group may defined as $[\X,\Y]=\X\Y\X^{-1}\Y^{-1}$ or
$[\X,\Y]=\X^{-1}\Y^{-1}\X\Y$. Either way, the assertion that the
commutator of two elements is the identity element is simply a way of
saying that the two elements commute.

When relations to be satisfied by the generators in a presentation are
stated in group-theoretic notation, juxtaposition denotes $\circ$ (the
composition of functions), a superscript $-1$ indicates the inverse of
a function, and $1$ denotes the identity function.  The corresponding
algebraic notation uses $\rp$, $\con\blank$, and $\id$ in their place.
Functions are composed right-to-left according to the standard usage
of $\circ$.  Relation-algebraic notation involves an order-reversal to
account for this. For example, if the element $\X_2$ in Thompson's
group $\F$ is represented as a function acting on a set containing an
element $\x$, then $\x$ is mapped by $\X_2$ to $\X_2(x)$. By the
definition $\X_2=\A^{-1}\B\A$ we then have $\X_2=\A|\B|\A^{-1}$
because
\begin{align*}
  \X_2(\x)&=(\A^{-1}\B\A)(\x) =(\A^{-1}\circ\B\circ\A)(\x)
  \\ &=\A^{-1}(\B(\A(\x))) =(\A|\B|\A^{-1})(\x).
\end{align*}
In a J-algebra or relation algebra, the element $\X_2$ is defined by
$\X_2=\A\rp\B\rp\con\A$.  The relations and definitions below are
written in both group-theoretic notation (juxtaposition means $\circ$)
and in relation-algebraic notation.
\begin{itemize}
\item[($\F$)] The presentation of $\F$ has two generators $\A$ and
  $\B$, and two relations \eqref{ta1} and \eqref{ta2}.
\item[($\T$)] The presentation of $\T$ has three generators $\A$,
  $\B$, and $\C$, and six relations \eqref{ta1}--\eqref{ta6}.
\item[($\VV$)] The presentation of $\VV$ has four generators $A$, $B$,
  $C$, and $\pi_0$, and 14 relations \eqref{ta1}--\eqref{ta14}.
\end{itemize}
The following elements are defined in \cite[pp.\ 218, 236,
  241]{MR1426438}.
\begin{align*}
  \X_1&=\B & \C_1&=\C
  \\ \X_2&=\A^{-1}\B\A=\A\rp\B\rp\con\A&\X_3&=\A^{-2}\B\A^2=\A^2\rp\B\rp\con\A^2
  \\ \C_2&=\A^{-1}\C\B=\B\rp\C\rp\con\A&\C_3&=\A^{-2}\C\B^2=\B^2\rp\C\rp\con\A^2
  \\ \pi_1&=\C_2^{-1}\pi_0\C_2=\C_2\rp\pi_0\rp\con\C_2
  \\ \pi_2&=\A^{-1}\pi_1\A=\A\rp\pi_1\rp\con\A&\pi_3
  &=\A^{-2}\pi_1\A^2=\A^2\rp\pi_1\rp\con\A^2
\end{align*}
The fourteen relations {\eqref{ta1}--\eqref{ta14}},
\begin{align}
  \text{in a group:}&&\text{in a J-algebra:}\notag \\
  \label{ta1}
  [\A\conv\B,\X_2]&=1&[\con\B\rp\A,\X_2]&=\id \\
  \label{ta2}
  [\A\conv\B,X_3]&=1&[\con\B\rp\A,\X_3]&=\id \\
  \label{ta3}
  \C&=\B\C_2&\C&=\C_2\rp\B \\
  \label{ta4}
  \C_2\X_2&=\B\C_3&\X_2\rp\C_2&=\C_3\rp\B \\
  \label{ta5}
  \C\A&=\C_2^2&\A\rp\C&=\C_2^2 \\
  \label{ta6}
  \C^3&=1&\C^3&=\id \\
  \label{ta7} \pi_1^2&=1&\pi_1^2&=\id \\
  \label{ta8}
  \pi_1\pi_3&=\pi_3\pi_1&\pi_3\rp\pi_1&=\pi_1\rp\pi_3 \\
  \label{ta9}
  (\pi_2\pi_1)^3&=1&(\pi_1\rp\pi_2)^3&=\id \\
  \label{ta10}
  \X_3\pi_1&=\pi_1X_3&\pi_1\rp\X_3&=\X_3\rp\pi_1 \\
  \label{ta11}
  \pi_1\X_2&=\B\pi_2\pi_1&\X_2\rp\pi_1 &=\pi_1\rp\pi_2\rp\B \\
  \label{ta12}
  \pi_2\B&=\B\pi_3&\B\rp\pi_2&=\pi_3\rp\B \\
  \label{ta13}
  \pi_1\C_3&=C_3\pi_2&\C_3\rp\pi_1&=\pi_2\rp\C_3 \\
  \label{ta14}
  (\pi_1\C_2)^3&=1&(\C_2\rp\pi_1)^3&=\id
\end{align}
Bleak and Quick \cite{MR3737287} found two smaller presentations of
$\VV$, one with three generators and eight relations and another with
only two generators and seven relations. The latter presentation has
generators $\u$ and $\v$ from \cite[Th.\,1.3]{MR3737287}, translated
here into parenthetical notation and algebraic notation from their
tree diagrams on \cite[p.\ 1407]{MR3737287}.
\begin{align*}
  \u&=(01)(2(34))\mapsto(10)(4(23)) \\ &=\a^2\rp\conb\rp\cona \bp
  \a\rp\b\rp\cona^2 \bp \b\rp\a\rp\cona\rp\conb^2 \bp
  \b^2\rp\a\rp\conb^3 \bp \b^3\cona\rp\conb
  \\ \v&=(01)(23)\mapsto(03)(12) \\ &=\a^2\rp\cona\rp\cona \bp
  \a\rp\b\rp\cona\rp\conb \bp \b\rp\a\rp\conb^2 \bp  \b^2\rp\conb\rp\cona
\end{align*}
In \cite[\SS2.1]{MR3737287} the generators are $\t_{00,01}$ and
$\t_{1,00}$, while $\t_{01,10,11}$ is the product of two of them,
namely, $\t_{01,10}$ and $\t_{01,11}$.  When interpreted as an
operator on trees, $\t_{00,01}$ takes the left branch of the left
branch and interchanges it with the right branch of the left
branch. The $0$ and $1$ match up with $\a$ and $\b$, respectively.
The three generators of $\VV$ from \cite[Th.\,1.2]{MR3737287} are
\begin{align}
  \t_{00,01}&=(01)2\mapsto(10)2 =\a\rp\a\rp\conb\rp\cona \bp
  \a\rp\b\rp\cona\rp\cona \bp \b\rp\conb\notag
  \\ \t_{01,10,11}&=(01)(23)\mapsto(03)(12) =\a^2\rp\cona^2 \bp
  \a\rp\b\rp\cona\rp\conb \bp \b\rp\a\rp\conb^2 \bp
  \b^2\rp\conb\rp\cona \notag \\ \t_{1,00}&=(01)2\mapsto(21)0
  =\a^2\rp\conb \bp \a\rp\b\rp\conb\rp\cona \bp \b\rp\cona^2 \notag
\end{align}

\section{Infinite presentation of $\M$}
\label{s13a}
The infinite presentation of $\M$ presented here comes from Brin's
notes, where it is proved that there is a finite subset of the
relations from which all the other relations can be deduced. In these
relations, $\x$ and $\y$ are arbitrary elements of $\M$, but the
relations are proved here for arbitrary functional elements of $\AA$.
Only two special cases of the concept of `deferment' from Brin's notes
are needed: for any $\x$, let $\x_0= \a\rp\x\rp\cona \bp \b\rp\conb$
and $\x_1= \a\rp\cona \bp \b\rp\x\rp\conb$.  The generators are $\K$,
$\L$, $\U$, $\P_0$, and $\R_0$. Composition proceeds in Brin's notes
from left to right, so the order-reversal required in writing
algebraic versions of the relations in the presentations of $\F$,
$\T$, and $\VV$ is not required. The relations on the generators come
in five groups.
\begin{enumerate}
\item
  Invertiblity relations: $\P\rp\P=\id$, and $(\P\rp\R)^3=(\R\rp\P)^3=\id$.
\item
  Commutativity relations: $\x_0\rp\y_1=\y_1\rp\x_0$ for all $\x,\y$.
\item
  Splitting relations: $\x\rp\U=\U\rp\x_0\rp\x_1$ for all $\x$.
\item
  Reconstruction relations: $\x=\U\rp\x_0\rp\x_1\rp\K_0\rp\L_1$ for
  all $\x,\y$.
\item
  Rewriting relations:
  \begin{align*}
    \U\rp\K&=\id,&\P_0\rp\K\rp\K&=\K\rp\L,&\R_0\rp\K\rp\K\rp\K&=\K\rp\K,
    \\ \U\rp\L&=\id,&\P_0\rp\K\rp\L&=\K\rp\K,
    &\R_0\rp\K\rp\K\rp\L&=\K\rp\L\rp\K,
    \\ &&\P_0\rp\L&=\L,&\R_0\rp\K\rp\L&=\K\rp\L\rp\L,
    \\ &&&&\R_0\rp\L&=\L.
  \end{align*}
\end{enumerate}
Thompson proved that this infinite presentation can be reduced to a
finite one. Details are worked out in Brin's notes, based on two talks
by Thompson.

\section{$\F$, $\T$, $\VV$, and $\M$ in relation algebras
  and J-algebras}
\label{s14}
Suppose $\AA$ is an algebra that is free over the variety of algebras
obtained by supplementing the operations and axioms of $\JA$ with two
new constants, $\a$ and $\b$, and the equations in \eqref{Qu} are
regarded as additional axioms. Then all the homomorphisms in the
following theorem are isomorphisms. The monoid of functional elements
of $\AA$ contains a copy of $\M$ and the group of permutational
elements of $\AA$ contains copies of $\F$, $\T$, and $\VV$. For an
arbitrary algebra with quasiprojections, one can define elements that
behave like generators of Thompson's groups and monoid, \ie, they
satisfy all the relations required by the presentations, but that is
only enough to conclude that they generate homomorphic images.
\begin{thm}
  \label{Tgroups}
  Assume $\AA$ is J-algebra or a relation algebra and that \eqref{Qu}
  holds for elements $\a$ and $\b$ of $\AA$, \ie,
  \begin{align*}
    \id=\cona\rp\a=\conb\rp\b=\a\rp\cona\bp\b\rp\conb\;\mand\;
    1=\cona\rp\b=\a\rp1=\b\rp1.
  \end{align*}
  Define elements of $\AA$ as follows.
  \begin{align*}
    \K&=01\mapsto0=\a
    \\ \L&=01\mapsto1=\b
    \\ \U&=0\mapsto00=\cona\bp\conb
    \\ \P&=01\mapsto10=\a\rp\conb\bp\b\rp\cona
    \\ \P_0&=(01)2\mapsto(10)2=\a\rp\P\rp\cona\bp\b\rp\conb
    \\ \A=\R&=0(12)\mapsto(01)2
    =\a\rp\cona^2 \bp \b\rp\a\rp\conb\rp\cona \bp \b^2\rp\conb
    \\ \R_0&=(0(12))3\mapsto((01)2)3=\a\rp\R\rp\cona\bp\b\rp\conb
    \\ \B&=0(1(23))\mapsto0((12)3)=\a\rp\cona \bp \b\rp\A\rp\conb
    \\ \C&=0(12)\mapsto1(20)=\a\rp\conb^2 \bp \b\rp\a\rp\cona \bp
    \b^2\rp\cona\rp\conb
    \\ \pi_0&=0(12)\mapsto1(02)=\a\rp\cona\rp\conb \bp \b\rp\a\rp\cona
    \bp \b^2\rp\conb^2
  \end{align*}
  Then
  \begin{enumerate}
  \item
    $\K$, $\L$, $\U$, and $\con\U$ are functional elements of $\AA$.
  \item
    $\P$, $\P_0$, $\R$, $\R_0$, $\A$, $\B$, $\C$, and $\pi_0$ are
    permutational elements of $\AA$.
  \item
    The relations \eqref{ta1}--\eqref{ta14} hold in the group
    $\gc{Pm}(\AA)$.
  \item
    The subgroup of $\gc{Pm}(\AA)$ generated by $\{\A,\B\}$ is a
    homomorphic image of the Thompson group $\F$.
  \item
    The subgroup of $\gc{Pm}(\AA)$ generated by $\{\A,\B,\C\}$ is a
    homomorphic image of the Thompson group $\T$.
  \item
    The subgroup of $\gc{Pm}(\AA)$ generated by $\{\A,\B,\C,\pi_0\}$
    is a homomorphic image of the Thompson group $\VV$.
  \item
    $\{\P_0,\R_0,\K,\U\}$ and $\{\P,\R,\K,\U\}$ generate the same
    submonoid of $\gc{Fn}(\AA)$.
  \item
    The relations for $\M$ hold in the monoid $\gc{Fn}(\AA)$.
  \item
    The submonoid of $\gc{Fn}(\AA)$ generated by $\{\P_0,\R_0,\K,\U\}$
    is a homomorphic image of the Thompson monoid $\M$.
  \end{enumerate}
\end{thm}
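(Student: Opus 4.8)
The plan is to treat Theorem~\ref{Tgroups} as an assembly of the equational and group-theoretic facts established in Part~II, so that almost every clause is discharged by citing a single proposition. First I would invoke Prop.~\ref{func}: $\gc{Pm}(\AA)$ is a group and $\gc{Fn}(\AA)$ is a monoid under $\rp$, $\con\blank$, $\id$, so the clauses about subgroups and submonoids are meaningful and $\Pm\AA$ is closed under $\rp$ and $\con\blank$. Each of the ten displayed elements is a path built from the functional $\a,\b$, hence functional by Prop.~\ref{closed}. Parts~(i) and~(ii) are then exactly Prop.~\ref{perms}, provided one first checks that the deferment presentations $\P_0=\a\rp\P\rp\cona\bp\b\rp\conb$, $\R_0=\a\rp\R\rp\cona\bp\b\rp\conb$, and $\B=\a\rp\cona\bp\b\rp\A\rp\conb$ used in the statement agree with the parenthetical expressions of \SS\ref{s12a}; that agreement is a routine identity obtained by distributing the functional factors $\a$ and $\cona$ over $\bp$ using the functional-element lemmas of \SS\ref{s17}, and is exactly the deferment construction $\x_0,\x_1$ of \SS\ref{s13a}.

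For part~(iii) I would cite the relation-verifying propositions directly: \eqref{ta1} and \eqref{ta2} hold by the proposition of \SS\ref{s22}, relations \eqref{ta1}--\eqref{ta6} hold by the proposition of \SS\ref{s23}, and the remaining \eqref{ta7}--\eqref{ta14} hold by the entirely parallel derivations for $\VV$, which I would omit just as the paper does. Every one of these is an equation among permutational elements, so by part~(ii) and the closure in Prop.~\ref{func} the commutators and inverses occurring in the definitions of $\X_2,\X_3,\C_2,\C_3,\pi_1,\pi_2,\pi_3$ all name genuine elements of the group $\gc{Pm}(\AA)$.

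Parts~(iv)--(vi) then follow formally from the universal property of group presentations: for $G=\langle X\mid R\rangle$ and any group $H$ with an assignment of the generators $X$ to elements satisfying every relation in $R$, the assignment extends uniquely to a homomorphism $G\to H$ whose image is the subgroup generated by those elements. Taking $H=\gc{Pm}(\AA)$, with $\A,\B$ for $\F$, with $\A,\B,\C$ for $\T$, and with $\A,\B,\C,\pi_0$ for $\VV$, part~(iii) supplies precisely the required hypotheses, so each generated subgroup is a homomorphic image of the corresponding Thompson group. Parts~(vii) and~(viii) I would read off from \SS\ref{s24} and Prop.~\ref{presM}: the two four-element sets $\{\P_0,\R_0,\K,\U\}$ and $\{\P,\R,\K,\U\}$ generate the same submonoid of $\gc{Fn}(\AA)$, via the rewriting relations that express each of $\P,\R$ through $\P_0,\R_0,\K,\U$ and conversely (this is~(vii)), and the chosen four generators satisfy every relation of the infinite presentation of $\M$ (this is~(viii)). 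Part~(ix) is the monoid analogue of~(iv)--(vi): the universal property of the four-generator presentation of $\M$ yields a surjective monoid homomorphism from $\M$ onto the submonoid generated by $\{\P_0,\R_0,\K,\U\}$.

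The genuine work is not in this packaging but in the deferred computations of Part~II that establish part~(iii), part~(viii), and the generating-set equalities of part~(vii): one must verify equationally that the explicit expressions for $\A,\B,\C,\pi_0,\P,\P_0,\R,\R_0$ satisfy each presentation relation, and these derivations are long because every relation has ultimately to be reduced to \eqref{Qu} through repeated use of the pairing identity \eqref{Pr} (available by Prop.~\ref{QPr}) and the closure lemmas of \SS\ref{s17} and \SS\ref{s19}. The one subtlety I would watch at the level of the theorem is the order-reversal between functional composition and $\rp$ emphasized in \SS\ref{s13}: the relations \eqref{ta1}--\eqref{ta14} are stated in their reversed, relation-algebraic form, so the homomorphisms in~(iv)--(vi) must be built from those reversed relations rather than from the group-theoretic originals, while for $\M$, where composition in Brin's notes already runs left to right, no such reversal intervenes.
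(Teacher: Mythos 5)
Your proposal is correct and takes essentially the same route as the paper, whose proof is just the same list of citations: Prop.~\ref{perms} for (i)--(ii), Props.~\ref{presF} and \ref{presT} for (iii) with \eqref{ta7}--\eqref{ta14} left as an exercise, the universal property of presentations for (iv)--(vi) and (ix), Prop.~\ref{same} for (vii), and Prop.~\ref{presM} for (viii). One harmless slip: elements such as $\U=\cona\bp\conb$ and $\P$ involve converses and so are not paths in the sense of Definition~\ref{paths}, so Prop.~\ref{closed} does not give their functionality (indeed $\U\in\Fn\AA$ genuinely needs \eqref{U}); this does not matter since you correctly rest (i)--(ii) on Prop.~\ref{perms}.
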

\begin{proof}
  Parts (i) and (ii) hold by Prop.\ \ref{perms}.  For part (iii),
  relations \eqref{ta1} and \eqref{ta2} hold by Prop.\ \ref{presF} and
  \eqref{ta3}--\eqref{ta6} hold by Prop.\ \ref{presT}. Relations
  \eqref{ta7}--\eqref{ta14} can be proved similarly: this is left as an
  exercise for the interested reader.  Parts (iv), (v), and (vi) follow
  from part (iii).  Part (vii) is proved in Prop.\ \ref{same}.  Part
  (viii) is proved in Prop.\ \ref{presM}.  Part (ix) follows from part
  (viii).
\end{proof}

\section*{{\bf Part II.}}
Throughout Part II, consisting of \SS\ref{s16}--\SS\ref{s24}, the
blanket assumptions for Definitions \ref{d13}--\ref{FnPm} and
Props.\ \ref{p1}--\ref{presM} are that
$\AA=\<A,\bp,0,1,\rp,\con\blank,\id\>$ is a J-algebra containing
elements $\a$, $\b$, $\c$, $\d$, $\e$, $\p$, $\q$, $\r$, $\s$, $\u$,
$\v$, $\w$, $\x$, $\y$, and $\z$.  Starting in \SS\ref{s19}, two
elements $\a$ and $\b$ are chosen to remain fixed so that new binary
operations $\nabla$, $\otimes$, and $\fkc{}{}$ can be defined using
$\a,\b$ as parameters.
\section{Consequences of the J-algebra axioms}
\label{s16}
\begin{defn}
  \label{d13}
  Relations $\leq$, $\geq$, $<$, and $>$ are defined by
  \begin{align*}
    \x\leq\y&\iff\y\geq\x\iff\x\bp\y=\x,
    \\ \x<\y&\iff\y<\x\iff\x\bp\y=\x\neq\y.
  \end{align*}
\end{defn}
\begin{prop}
  \label{p1}
  The relations $\leq$ and $\geq$ are partial orderings.
\end{prop}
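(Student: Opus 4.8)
The plan is to verify the three defining properties of a partial ordering for $\leq$ --- reflexivity, antisymmetry, and transitivity --- directly from the three semilattice axioms \eqref{bassoc}, \eqref{comm}, and \eqref{idem}, and then to obtain the statement for $\geq$ for free, since $\geq$ is by definition the converse of $\leq$ and the converse of any partial ordering is again a partial ordering.

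First, reflexivity is immediate: unwinding Definition \ref{d13}, the assertion $\x\leq\x$ is exactly $\x\bp\x=\x$, which is the idempotency axiom \eqref{idem}. For antisymmetry, suppose $\x\leq\y$ and $\y\leq\x$, that is, $\x\bp\y=\x$ and $\y\bp\x=\y$. Commutativity \eqref{comm} reconciles the two products, giving
\[
  \x=\x\bp\y=\y\bp\x=\y,
\]
so $\x=\y$ as required.

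For transitivity, suppose $\x\leq\y$ and $\y\leq\z$, i.e., $\x\bp\y=\x$ and $\y\bp\z=\y$. Then associativity \eqref{bassoc} together with the two hypotheses yields
\[
  \x\bp\z=(\x\bp\y)\bp\z=\x\bp(\y\bp\z)=\x\bp\y=\x,
\]
so $\x\leq\z$. This shows $\leq$ is a partial ordering. Finally, since Definition \ref{d13} sets $\y\geq\x\iff\x\leq\y$, the relation $\geq$ is the converse of the partial ordering $\leq$, hence is itself a partial ordering. I expect no substantive obstacle here: the entire content is the standard fact that a commutative, associative, idempotent operation is a semilattice operation whose induced order is the relation defined by $\x\bp\y=\x$. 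The only places demanding even minimal care are antisymmetry, where \eqref{comm} is used to match $\x\bp\y$ against $\y\bp\x$, and transitivity, the sole step invoking associativity \eqref{bassoc}.
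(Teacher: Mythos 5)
Your proof is correct and follows essentially the same route as the paper: reflexivity from \eqref{idem}, transitivity from \eqref{bassoc} via the computation $\x\bp\z=(\x\bp\y)\bp\z=\x\bp(\y\bp\z)=\x\bp\y=\x$, antisymmetry from \eqref{comm}, and $\geq$ handled as the converse of a partial ordering. No issues.
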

\begin{proof}
  A partial ordering is a reflexive, transitive, and antisymmetric
  relation. By Definition \ref{d13}, $\x\leq\x$ is equivalent to
  $\x\bp\x=\x$, which holds by \eqref{idem}. Thus, $\leq$ is
  reflexive. For transitivity, assume $\x\leq\y$ and $\y\leq\z$.  Then
  $\x\bp\y=\x$ and $\y\bp\z=\y$ by Definition \ref{d13}.  These two
  equations and \eqref{bassoc} imply $\x\bp\z = (\x\bp\y)\bp\z =
  \x\bp(\y\bp\z) = \x\bp\y = \x$, \ie, $\x\leq\z$.  For antisymmetry,
  assume $\x\leq\y$ and $\y\leq\x$, \ie, $\x\bp\y=\x$ and
  $\y\bp\x=\y$. These two equations and \eqref{comm} imply
  $\x=\x\bp\y=\y\bp\x=y$. Thus, $\leq$ is a partial ordering.  The
  converse of any partial ordering is a partial ordering, so $\geq$ is
  also a partial ordering.
\end{proof}
\begin{prop}
  \label{p2}
  $\x\bp\y\leq\y$ and $\x\bp\y\leq\x$.
\end{prop}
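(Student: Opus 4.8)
The plan is to unwind the definition of $\leq$ from Definition \ref{d13} and reduce each of the two inequalities to an identity that follows purely from the three semilattice axioms \eqref{bassoc}, \eqref{comm}, and \eqref{idem}. By Definition \ref{d13}, the claim $\x\bp\y\leq\y$ is by definition the equation $(\x\bp\y)\bp\y=\x\bp\y$, and the claim $\x\bp\y\leq\x$ is the equation $(\x\bp\y)\bp\x=\x\bp\y$. So the whole proposition is the statement that $\bp$ behaves as a meet with respect to its induced order, and it should follow formally once these two equations are checked.

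First I would dispatch $\x\bp\y\leq\y$. Regrouping by associativity \eqref{bassoc} and then collapsing the repeated factor by idempotence \eqref{idem} gives $(\x\bp\y)\bp\y=\x\bp(\y\bp\y)=\x\bp\y$, which is exactly the required defining equation.

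For $\x\bp\y\leq\x$ there are two equally short routes, and I would take whichever reads more cleanly. One option is to avoid a separate computation: commutativity \eqref{comm} rewrites $\x\bp\y$ as $\y\bp\x$, and applying the inequality just proved with the roles of $\x$ and $\y$ interchanged yields $\y\bp\x\leq\x$, i.e.\ $\x\bp\y\leq\x$. Alternatively, I would compute directly, $(\x\bp\y)\bp\x=\x\bp(\y\bp\x)=\x\bp(\x\bp\y)=(\x\bp\x)\bp\y=\x\bp\y$, using \eqref{bassoc}, \eqref{comm}, and \eqref{idem} in turn.

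There is no genuine obstacle here. The only point requiring care is to translate the order relation $\leq$ into its defining Boolean-product equation \emph{before} manipulating, and then to cite the correct axiom for each regrouping or collapse; everything after that is a one-line semilattice calculation already licensed by the axioms and by Prop.\ \ref{p1}.
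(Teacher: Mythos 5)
Your proposal is correct and follows essentially the same route as the paper: reduce each inequality to its defining equation, prove $(\x\bp\y)\bp\y=\x\bp(\y\bp\y)=\x\bp\y$ by \eqref{bassoc} and \eqref{idem}, and obtain the second inclusion from the first by \eqref{comm}. Nothing is missing.
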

\begin{proof}
  The inclusion $\x\bp\y\leq\y$ is equivalent to
  $(\x\bp\y)\bp\y=\x\bp\y$. By \eqref{bassoc} and \eqref{idem},
  $(\x\bp\y)\bp\y=\x\bp(\y\bp\y)=\x\bp\y$, so $\x\bp\y\leq\y$. The
  other inclusion follows from this by \eqref{comm}.
\end{proof}
\begin{prop}
  \label{p3}
  If $\x\leq\y$ and $\x\leq\z$ then $\x\leq\y\bp\z$.
\end{prop}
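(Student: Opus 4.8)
The plan is to unwind everything to the single binary operation $\bp$ using the definition of $\leq$ and then apply associativity of $\bp$. By Definition \ref{d13}, the two hypotheses $\x\leq\y$ and $\x\leq\z$ translate to the equations $\x\bp\y=\x$ and $\x\bp\z=\x$. The desired conclusion $\x\leq\y\bp\z$ is, by the same definition, the single equation $\x\bp(\y\bp\z)=\x$. So the entire task reduces to verifying this one equation.

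First I would regroup the left-hand side using associativity \eqref{bassoc} to write $\x\bp(\y\bp\z)=(\x\bp\y)\bp\z$. Then I would substitute the first hypothesis $\x\bp\y=\x$ into the inner product, obtaining $(\x\bp\y)\bp\z=\x\bp\z$. Finally I would substitute the second hypothesis $\x\bp\z=\x$ to conclude $\x\bp(\y\bp\z)=\x$, which is exactly $\x\leq\y\bp\z$. The whole derivation is a three-step chain:
\begin{align*}
  \x\bp(\y\bp\z)=(\x\bp\y)\bp\z=\x\bp\z=\x.
\end{align*}

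This is really just the standard semilattice fact that a lower bound of $\y$ and $\z$ lies below their meet, and there is no genuine obstacle here: the argument uses only \eqref{bassoc} together with the definitional rewriting supplied by Definition \ref{d13}, with no appeal to \eqref{comm} or \eqref{idem}. The only point requiring any care is bookkeeping — making sure the associativity is applied in the direction that exposes $\x\bp\y$ as a subterm so that the first hypothesis can be used before the second.
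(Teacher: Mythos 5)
Your proof is correct. It is the same basic strategy as the paper's --- unwind $\leq$ to equations via Definition \ref{d13} and compute with $\bp$ --- but your chain
$\x\bp(\y\bp\z)=(\x\bp\y)\bp\z=\x\bp\z=\x$
is genuinely leaner: it uses only \eqref{bassoc} plus the two hypotheses. The paper instead first doubles $\x$ by idempotence, rearranges $(\x\bp\x)\bp(\y\bp\z)$ into $(\x\bp\y)\bp(\x\bp\z)$ using \eqref{bassoc} four times and \eqref{comm} once, applies both hypotheses simultaneously, and closes with \eqref{idem} again. What your version buys is economy of axioms (no appeal to \eqref{comm} or \eqref{idem}, as you correctly note); what the paper's version buys is symmetry --- the two hypotheses are used in parallel rather than in a forced order, which is the pattern reused in the very similar Prop.\ \ref{p4}. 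Either proof is complete and acceptable.
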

\begin{proof}
  Assume $\x\leq\y$ and $\x\leq\z$, \ie, $\x\bp\y=\x$ and
  $\x\bp\z=\x$. Then $\x\leq\y\bp\z$ because
  \begin{align*}
    \x\bp(\y\bp\z)&=(\x\bp\x)\bp(\y\bp\z)&&\text{\eqref{idem}}
    \\ &=(\x\bp\y)\bp(\x\bp\z)&&\text{\eqref{bassoc} four times and
      \eqref{comm} once} \\ &=\x\bp\x&&\text{$\x\bp\y=\x$ and
      $\x\bp\z=\x$} \\ &=\x&&\text{\eqref{idem}}
  \end{align*}
\end{proof}
\begin{prop}
  \label{p4}
  If $\x\leq\y$ and $\u\leq\v$ then $\x\bp\u\leq\y\bp\v$.
\end{prop}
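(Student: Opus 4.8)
The plan is to reduce the claim to the earlier propositions \ref{p1}, \ref{p2}, and \ref{p3} rather than computing directly, since this fits the incremental style in which these lemmas are being developed. Assume $\x\leq\y$ and $\u\leq\v$. By Prop.\ \ref{p3} it suffices to establish the two inclusions $\x\bp\u\leq\y$ and $\x\bp\u\leq\v$, because together they yield $\x\bp\u\leq\y\bp\v$, which is exactly the desired conclusion.

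For the first inclusion I would invoke Prop.\ \ref{p2} to get $\x\bp\u\leq\x$, and then combine the hypothesis $\x\leq\y$ with the transitivity of $\leq$ (part of Prop.\ \ref{p1}) to obtain $\x\bp\u\leq\y$. For the second inclusion the other half of Prop.\ \ref{p2} gives $\x\bp\u\leq\u$, and transitivity applied to $\u\leq\v$ yields $\x\bp\u\leq\v$. Feeding these two inclusions back into Prop.\ \ref{p3} completes the argument.

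An alternative is to argue by direct computation. Unfolding Definition \ref{d13}, the hypotheses read $\x\bp\y=\x$ and $\u\bp\v=\u$, and the goal becomes $(\x\bp\u)\bp(\y\bp\v)=\x\bp\u$. Repeated applications of \eqref{comm} and \eqref{bassoc} regroup the left-hand side as $(\x\bp\y)\bp(\u\bp\v)$, after which substituting the two hypotheses gives $\x\bp\u$. Either route is routine, and there is no genuine obstacle: the statement is just the monotonicity of the meet operation, which is immediate once the infimum characterization encoded in Props.\ \ref{p2} and \ref{p3} is available. The only thing to watch in the direct route is the bookkeeping of commutativity and associativity across the four-fold product $\x\bp\u\bp\y\bp\v$, which is why I would prefer the proposition-chaining route as cleaner and less error-prone.
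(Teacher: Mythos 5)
Your proposal is correct, and in fact contains the paper's own proof as your ``alternative'': the paper argues exactly by unfolding Definition \ref{d13} to $\x\bp\y=\x$ and $\u\bp\v=\u$ and then computing $(\x\bp\u)\bp(\y\bp\v)=(\x\bp\y)\bp(\u\bp\v)=\x\bp\u$ via \eqref{bassoc} and \eqref{comm}. Your preferred route is genuinely different in organization: it treats $\bp$ as a meet and derives the claim from the infimum characterization already encoded in Props.\ \ref{p2} and \ref{p3}, together with transitivity from Prop.\ \ref{p1}. Both are sound, since Props.\ \ref{p1}--\ref{p3} precede this statement. What the chaining route buys is conceptual clarity (monotonicity of meet follows formally from ``lower bound'' plus ``greatest lower bound'' properties, with no rebracketing to check); what the direct computation buys is brevity --- it is a two-line equation chain and avoids stacking three prior propositions. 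Given that the paper immediately afterwards adopts the convention that Props.\ \ref{p1}--\ref{p4} may be used without explicit reference, either style is consistent with the surrounding development.
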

\begin{proof}
  Assume $\x\leq\y$ and $\u\leq\v$. Then $\x\bp\y=\x$ and $\u\bp\v=\u$,
  hence $\x\bp\u\leq\y\bp\v$ because
  \begin{align*}
    (\x\bp\u)\bp(\y\bp\v) &=(\x\bp\y)\bp(\u\bp\v)
    &&\text{\eqref{bassoc} and \eqref{comm}} \\ &=\x\bp\u.
  \end{align*}
\end{proof}
\begin{prop}
  \label{p5}
  $\con0=0$, $\con1=1$, $\con\id=\id$, $0\rp\x=0$, and $\id\rp\x=\x$.
\end{prop}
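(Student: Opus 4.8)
The plan is to derive each of the five identities from its already-available ``mirror'' axiom by applying $\con\blank$ and exploiting that $\con\blank$ is an involution, hence a bijection, by \eqref{6}. The common mechanism throughout is: take a one-sided axiom, apply $\con\blank$ to both sides, push the converse through the operation using \eqref{4} or \eqref{5}, and then use surjectivity of $\con\blank$ to replace $\con\x$ by an arbitrary element. No single step is difficult; the whole proposition is a sequence of one-line specializations.

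First I would dispose of the three constants. For $\con0=0$, applying $\con\blank$ to \eqref{zero} and using \eqref{4} gives $\con0\bp\con\x=\con0$ for all $\x$; specializing $\x=\con0$ (so that $\con\x=0$ by \eqref{6}) yields $\con0\bp0=\con0$, while \eqref{comm} and \eqref{zero} give $\con0\bp0=0$, so $\con0=0$. Symmetrically, applying $\con\blank$ to \eqref{one} in the form $1\bp\x=\x$ gives $\con1\bp\con\x=\con\x$, and the choice $\x=\con1$ together with \eqref{one} forces $\con1=1$. For $\con\id=\id$, applying $\con\blank$ to \eqref{id} and using \eqref{5} gives $\con\id\rp\con\x=\con\x$; since $\con\blank$ is onto, this says $\con\id$ is a \emph{left} identity, and taking the argument to be $\id$ and comparing with \eqref{id} forces $\con\id=\id$.

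The two product identities then follow with at most one extra input each. For $0\rp\x=0$, applying $\con\blank$ to \eqref{norm} and using \eqref{5} gives $\con0\rp\con\x=\con0$; substituting the already-proved $\con0=0$ and again using surjectivity of $\con\blank$ gives $0\rp\y=0$ for every $\y$. For $\id\rp\x=\x$, no new work is needed: the intermediate identity $\con\id\rp\con\x=\con\x$ from the previous paragraph already yields $\con\id\rp\y=\y$ for all $\y$, and substituting $\con\id=\id$ finishes it. The only point requiring care is the ordering of the five claims—$0\rp\x=0$ genuinely depends on $\con0=0$, and $\id\rp\x=\x$ is most cleanly read off from the derivation of $\con\id=\id$—but there is no real obstacle here, since every step is a single application of an axiom combined with the bijectivity of $\con\blank$.
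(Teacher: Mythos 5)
Your proposal is correct and follows essentially the same route as the paper: both arguments exploit the involution \eqref{6} together with the distribution laws \eqref{4} and \eqref{5} to transfer each one-sided axiom to its mirror form, and both observe the same dependency ordering (proving $\con0=0$ and $\con\id=\id$ before the product identities). The only difference is presentational — the paper writes each claim as a single chain of equalities, while you phrase the same computations as ``apply $\con\blank$ to both sides and use surjectivity.''
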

\begin{proof}
  We use only axioms \eqref{comm}, \eqref{id}, \eqref{6}, \eqref{5},
  \eqref{4}, \eqref{zero}, and \eqref{one}.  Each equality is labeled
  with the axiom that justifies it, with three exceptions, where we
  use the previously proved facts that $\con0=0$ (twice in the fourth
  line) and $\con\id=\id$ (once in the fifth line).
  \begin{align*}
    \con0&\overset{\eqref{zero}}=\con{0\bp\con0}
    \overset{\eqref{5}}=\con0\bp\con{\con0}
    \overset{\eqref{6}}=\con0\bp0 \overset{\eqref{comm}}=0\bp\con0
    \overset{\eqref{zero}}=0 \\ 1&\overset{\eqref{6}}=\con{\con1}
    \overset{\eqref{one}}=\con{\con1\bp1}
    \overset{\eqref{4}}=\con{\con1}\bp\con1
    \overset{\eqref{6}}=1\bp\con1 \overset{\eqref{comm}}=\con1\bp1
    \overset{\eqref{one}}=\con1
    \\ \id&\overset{\eqref{6}}=\con{\con{\id}}
    \overset{\eqref{id}}=\con{\con{\id}\rp\id}
    \overset{\eqref{5}}=\con\id\rp\con{\con\id}
    \overset{\eqref{6}}=\con\id\rp\id \overset{\eqref{id}}=\con\id
    \\ 0\rp\x&\overset{\eqref{6}}=\con{\con{0\rp\x}}
    \overset{\eqref{5}}=\con{\con\x\rp\con0} =\con{\con\x\rp0}
    \overset{\eqref{norm}}=\con0=0
    \\ \id\rp\x&\overset{\eqref{6}}=\con{\con{\id\rp\x}}
    \overset{\eqref{5}}=\con{\con\x\rp\con{\id}} =\con{\con\x\rp\id}
    \overset{\eqref{id}}=\con{\con\x} \overset{\eqref{6}}=\x
  \end{align*}
\end{proof}
\begin{prop}
  \label{p6}
  If $\x\leq\y$ then $\con\x\leq\con\y$, $\x\rp\z\leq\y\rp\z$, and
  $\z\rp\x\leq\z\rp\y$.
\end{prop}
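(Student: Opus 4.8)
The plan is to prove the three inclusions one at a time, in each case unfolding the claim $u\leq v$ into the equation $u\bp v=u$ supplied by Definition \ref{d13}, and to obtain the third inclusion from the first two by passing through converses.

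First I would dispose of $\con\x\leq\con\y$. The hypothesis $\x\leq\y$ is the equation $\x\bp\y=\x$. Applying $\con\blank$ to both sides and invoking \eqref{4}, which says that $\con\blank$ distributes over $\bp$, turns the left side into $\con{\x\bp\y}=\con\x\bp\con\y$ and the right side into $\con\x$; thus $\con\x\bp\con\y=\con\x$, which is exactly $\con\x\leq\con\y$.

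Next I would establish $\x\rp\z\leq\y\rp\z$. The key observation is that the monotonicity axiom \eqref{mon}, namely $(\x\bp\y)\rp\z=(\x\bp\y)\rp\z\bp\y\rp\z$, is nothing but the assertion $(\x\bp\y)\rp\z\leq\y\rp\z$ read through Definition \ref{d13} (with $u=(\x\bp\y)\rp\z$ and $v=\y\rp\z$). Since $\x\bp\y=\x$ gives $(\x\bp\y)\rp\z=\x\rp\z$, substituting this in yields $\x\rp\z\leq\y\rp\z$ directly.

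Finally, for $\z\rp\x\leq\z\rp\y$ I would route through the converse, because the J-algebra axioms contain no left analogue of \eqref{mon}. From $\x\leq\y$ the first claim gives $\con\x\leq\con\y$, and then the second claim gives $\con\x\rp\con\z\leq\con\y\rp\con\z$. By \eqref{5} these two products equal $\con{\z\rp\x}$ and $\con{\z\rp\y}$, so $\con{\z\rp\x}\leq\con{\z\rp\y}$; applying the first claim once more and then \eqref{6} (that $\con\blank$ is an involution) recovers $\z\rp\x\leq\z\rp\y$. I do not anticipate any real obstacle: the only point requiring a moment's thought is recognizing that right-monotonicity is already packaged as the equation \eqref{mon}, while left-monotonicity has to be derived indirectly by conjugating with converse rather than proved on its own.
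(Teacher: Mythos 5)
Your proposal is correct and follows essentially the same route as the paper's proof: converse applied to $\x\bp\y=\x$ via \eqref{4} for the first claim, reading \eqref{mon} as right-monotonicity and substituting $(\x\bp\y)\rp\z=\x\rp\z$ for the second, and conjugating with converse via \eqref{5} and \eqref{6} for the third. The only difference is the immaterial order in which \eqref{5} is invoked in the last step.
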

\begin{proof}
  Assume $\x\leq\y$, \ie, $\x\bp\y=\x$. Then, by \eqref{4},
  $\con\x\bp\con\y=\con{\x\bp\y}=\con\x$, so $\con\x\leq\con\y$.  From
  $\x\bp\y=\x$ we get $(\x\bp\y)\rp\z=\x\rp\z$. By \eqref{mon},
  $(\x\bp\y)\rp\z\leq\y\rp\z$, so $\x\rp\z\leq\y\rp\z$.  By applying
  these two principles, starting with $\x\leq\y$, we first get
  $\con\x\leq\con\y$, then $\con\x\rp\con\z\leq\con\y\rp\con\z$, then
  $\con{\con\x\rp\con\z}\leq\con{\con\y\rp\con\z}$, and finally
  $\z\rp\x\leq\z\rp\y$ by \eqref{5} and \eqref{6}.
\end{proof}
\begin{prop}
  \label{p7}
  $(\u\bp\v)\rp(\x\bp\y)\leq\u\rp\x\bp\v\rp\y$
\end{prop}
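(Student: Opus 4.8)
The plan is to reduce the single inclusion to two simpler ones, exploiting the fact that $\bp$ computes greatest lower bounds. Since the right-hand side $\u\rp\x\bp\v\rp\y$ is itself a Boolean product, Proposition \ref{p3} says that to prove $(\u\bp\v)\rp(\x\bp\y)$ lies below it, it suffices to prove the two separate inclusions
\begin{align*}
  (\u\bp\v)\rp(\x\bp\y)&\leq\u\rp\x, & (\u\bp\v)\rp(\x\bp\y)&\leq\v\rp\y.
\end{align*}
So the whole argument splits into two symmetric halves, each handled purely by monotonicity of the relative product.

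For the first half, I would invoke Proposition \ref{p2} to obtain $\u\bp\v\leq\u$ and $\x\bp\y\leq\x$. Applying the two monotonicity clauses of Proposition \ref{p6}---once on the left factor and once on the right---gives $(\u\bp\v)\rp(\x\bp\y)\leq\u\rp(\x\bp\y)$ and $\u\rp(\x\bp\y)\leq\u\rp\x$, and transitivity of $\leq$ (Proposition \ref{p1}) chains these into $(\u\bp\v)\rp(\x\bp\y)\leq\u\rp\x$. The second half is identical with the roles swapped: Proposition \ref{p2} also yields $\u\bp\v\leq\v$ and $\x\bp\y\leq\y$, and the same two applications of Proposition \ref{p6} followed by transitivity deliver $(\u\bp\v)\rp(\x\bp\y)\leq\v\rp\y$.

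With both inclusions in hand, Proposition \ref{p3} closes the argument. I do not anticipate a genuine obstacle here: the proposition is essentially the statement that $\rp$ is monotone in both arguments combined with the universal property of the meet, and every ingredient (monotonicity of $\rp$, the meet bounds, and the meet introduction rule) is already available from Propositions \ref{p2}, \ref{p3}, and \ref{p6}. The only point requiring minor care is bookkeeping the direction of each monotonicity application, but no axiom beyond those feeding into Proposition \ref{p6} is needed, and in particular the rotation axiom \eqref{rot} plays no role.
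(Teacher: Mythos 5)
Your proof is correct and follows essentially the same route as the paper: Prop.~\ref{p2} for the bounds $\u\bp\v\leq\u,\v$ and $\x\bp\y\leq\x,\y$, two applications of the monotonicity in Prop.~\ref{p6}, and then combining the two resulting inclusions. The only cosmetic difference is that you close with the meet-introduction rule Prop.~\ref{p3} while the paper cites Prop.~\ref{p4}; both are valid ways to finish.
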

\begin{proof}
  We have $\u\bp\v\leq\u$ and $\x\bp\y\leq\x$ by Prop.\ \ref{p2}, so
  $(\u\bp\v)\rp(\x\bp\y)\leq\u\rp(\x\bp\y)\leq\u\rp\x$ by
  Prop.\ \ref{p6}, and, similarly, $(\u\bp\v)\rp(\x\bp\y)\leq\v\rp\y$,
  so the conclusion follows by Prop.\ \ref{p4}.
\end{proof}
\begin{prop}
  \label{p8}
  $\x\rp\y\bp\z\leq(\z\rp\con\y\bp\x)\rp\y$ and
  $\x\rp\y\bp\z\leq\x\rp(\y\bp\con\x\rp\z)$.
\end{prop}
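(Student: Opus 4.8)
The plan is to read both inequalities straight off the rotation axiom \eqref{rot}, which asserts the equality $\x\rp\y\bp\z=(\z\rp\con\y\bp\x)\rp(\y\bp\con\x\rp\z)\bp\z$. The right-hand side already displays $\x\rp\y\bp\z$ as a Boolean product whose terms are nearly the two targets; the only work is to throw away the pieces that obstruct each bound, using Prop.\ \ref{p2} (a Boolean product lies below each of its factors) and Prop.\ \ref{p6} (monotonicity of $\rp$ in each argument). Both inequalities will therefore come from the single equation \eqref{rot} by discarding different factors.

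For the first inequality I would start from \eqref{rot} and apply Prop.\ \ref{p2} to drop the trailing Boolean factor $\z$, obtaining $\x\rp\y\bp\z\leq(\z\rp\con\y\bp\x)\rp(\y\bp\con\x\rp\z)$. Since $\y\bp\con\x\rp\z\leq\y$ by Prop.\ \ref{p2}, right-hand monotonicity of $\rp$ (Prop.\ \ref{p6}) gives $(\z\rp\con\y\bp\x)\rp(\y\bp\con\x\rp\z)\leq(\z\rp\con\y\bp\x)\rp\y$, and chaining the two inclusions yields the claim $\x\rp\y\bp\z\leq(\z\rp\con\y\bp\x)\rp\y$.

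For the second inequality I would again begin from \eqref{rot}, drop the trailing $\z$ by Prop.\ \ref{p2} to reach $\x\rp\y\bp\z\leq(\z\rp\con\y\bp\x)\rp(\y\bp\con\x\rp\z)$, and this time use $\z\rp\con\y\bp\x\leq\x$ (Prop.\ \ref{p2}) together with left-hand monotonicity of $\rp$ (Prop.\ \ref{p6}) to get $(\z\rp\con\y\bp\x)\rp(\y\bp\con\x\rp\z)\leq\x\rp(\y\bp\con\x\rp\z)$. Chaining gives $\x\rp\y\bp\z\leq\x\rp(\y\bp\con\x\rp\z)$.

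There is essentially no obstacle: both bounds fall out of the one equality \eqref{rot} once the trailing meet with $\z$ is dropped, after which I weaken the right factor $\y\bp\con\x\rp\z$ up to $\y$ for the first bound and the left factor $\z\rp\con\y\bp\x$ up to $\x$ for the second. The only point requiring care is bookkeeping—making sure each inequality is paired with the correct one of the two relative-product terms produced by \eqref{rot}, and invoking left- versus right-hand monotonicity of $\rp$ accordingly.
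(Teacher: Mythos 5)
Your proof is correct and follows exactly the paper's route: drop the trailing $\bp\z$ in \eqref{rot} via Prop.\ \ref{p2} to reach $\x\rp\y\bp\z\leq(\z\rp\con\y\bp\x)\rp(\y\bp\con\x\rp\z)$, then weaken the appropriate factor of the relative product by Prop.\ \ref{p2} and Prop.\ \ref{p6}. Nothing to change.
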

\begin{proof}
  By \eqref{rot} and Prop.\ \ref{p2},
  $\x\rp\y\bp\z\leq(\z\rp\con\y\bp\x)\rp(\y\bp\con\x\rp\z)$. Both
  inclusions follow from this by Prop.\ \ref{p2} and Prop.\ \ref{p6}.
\end{proof}
\begin{prop}
  \label{p9}
  $\x\leq\x\rp1$ and $\x\leq1\rp\x$.
\end{prop}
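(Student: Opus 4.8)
The plan is to reduce both inclusions to the single observation that $\id\leq1$, which then propagates through the monotonicity of relative product established in Prop.\ \ref{p6}. First I would record that $\id\leq1$: by axiom \eqref{one} applied to $\x=\id$ we have $\id\bp1=\id$, and this is exactly the statement $\id\leq1$ under Definition \ref{d13}.

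For the inclusion $\x\leq\x\rp1$, I would invoke the third conclusion of Prop.\ \ref{p6} (that $\x\leq\y$ implies $\z\rp\x\leq\z\rp\y$), applied with $\id$ in the r\^ole of the smaller element, $1$ in the r\^ole of the larger, and $\x$ as the fixed left factor. This yields $\x\rp\id\leq\x\rp1$. Since $\x\rp\id=\x$ by the identity law \eqref{id}, the left-hand side is just $\x$, giving $\x\leq\x\rp1$.

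For the symmetric inclusion $\x\leq1\rp\x$, I would instead use the second conclusion of Prop.\ \ref{p6} (that $\x\leq\y$ implies $\x\rp\z\leq\y\rp\z$), again with $\id\leq1$ and with $\x$ now the fixed right factor, obtaining $\id\rp\x\leq1\rp\x$. Here I would appeal to Prop.\ \ref{p5}, which supplies $\id\rp\x=\x$, so the left-hand side collapses to $\x$ and we conclude $\x\leq1\rp\x$.

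There is no real obstacle to anticipate: the entire content is the recognition that $\id\leq1$ together with the already-proved monotonicity and the two identity laws (the right identity \eqref{id} and the left identity $\id\rp\x=\x$ from Prop.\ \ref{p5}). The only point requiring slight care is matching the correct half of Prop.\ \ref{p6} to each side, since the two inclusions differ in whether $\x$ is the left or the right factor.
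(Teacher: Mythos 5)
Your proposal is correct and follows exactly the same route as the paper: establish $\id\leq1$ from \eqref{one}, then apply the monotonicity of $\rp$ from Prop.\ \ref{p6} together with the identity laws \eqref{id} and $\id\rp\x=\x$ from Prop.\ \ref{p5}. No differences worth noting.
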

\begin{proof}
  We have $\id\leq1$ by \eqref{one}, hence $\x=\x\rp\id\leq\x\rp1$ and
  $\x=\id\rp\x\leq1\rp\x$ by \eqref{id}, Prop.\ \ref{p5}, and
  Prop.\ \ref{p6}.
\end{proof}
References to axioms \eqref{bassoc}--\eqref{rot} and
Props.\ \ref{p1}--\ref{p9} and \ref{f-dist} will often be indirect or
omitted, according to the conventions that
\begin{itemize}
\item
  \eqref{bassoc}--\eqref{2}, \eqref{zero}--\eqref{norm}, and
  Props.\ \ref{p1}--\ref{p4} may be used with no explicit reference,
\item
  ``id'' (identity) refers to \eqref{id} or the last part of
  Prop.\ \ref{p5},
\item
  ``mon'' (monotonicity) refers to \eqref{mon}, Prop.\ \ref{p2},
  Prop.\ \ref{p3}, Prop.\ \ref{p4}, Prop.\ \ref{p6}, or
  Prop.\ \ref{p7},
\item
  ``rot'' (rotation) refers to \eqref{rot} or Prop.\ \ref{p8},
  combined with ``mon'',
\item
  ``assoc'' (associativity) refers to \eqref{2},
\item
  ``con'' (converse) refers to \eqref{6}, \eqref{5}, \eqref{4}, or one
  of the first three parts of Prop.\ \ref{p5}, combined with
  \eqref{2},
\item
  ``func~dist'' (functional elements distribute) refers to
  Prop.\ \ref{f-dist} below.
\end{itemize}
Proofs of equations are often a sequence of inclusions that start with
one side of the equation, pass through the other side, and return to
the first side, followed by an implicit reference to the antisymmetry
of $\leq$.
\begin{prop}
  \label{p10}
  $\x\leq\x\rp\con\x\rp\x$ and $1\rp\x\rp1=1\rp\con\x\rp1$.
\end{prop}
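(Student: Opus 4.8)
The plan is to treat the two assertions in turn, deriving the equation in the second part from the inclusion in the first.

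For $\x\leq\x\rp\con\x\rp\x$, I would instantiate the second rotation inequality of Prop.~\ref{p8}, namely $\x\rp\y\bp\z\leq\x\rp(\y\bp\con\x\rp\z)$, at the special values $\y=\id$ and $\z=\x$. The left-hand side $\x\rp\id\bp\x$ then collapses to $\x$ by the identity law \eqref{id} and idempotence \eqref{idem} (it equals $\x\bp\x=\x$), while the right-hand side becomes $\x\rp(\id\bp\con\x\rp\x)$. Since $\id\bp\con\x\rp\x\leq\con\x\rp\x$ by Prop.~\ref{p2}, monotonicity (Prop.~\ref{p6}) together with associativity \eqref{2} gives $\x\rp(\id\bp\con\x\rp\x)\leq\x\rp\con\x\rp\x$, and the inclusion follows. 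The only non-mechanical move is choosing $\y=\id$ and $\z=\x$ so that the left side of the rotation inequality degenerates to $\x$; this is the key step.

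For the equation $1\rp\x\rp1=1\rp\con\x\rp1$, I would first record the auxiliary fact $1\rp1=1$: the inequality $1\rp1\leq1$ holds because $1$ is the top by \eqref{one}, and $1\leq1\rp1$ is the instance $\x=1$ of Prop.~\ref{p9}. The central lemma I would then establish is $\x\leq1\rp\con\x\rp1$, obtained from the first part of the present proposition by monotonicity (Prop.~\ref{p6}), applying $\x\leq1$ to each outer factor, i.e. via the chain $\x\leq\x\rp\con\x\rp\x\leq1\rp\con\x\rp\x\leq1\rp\con\x\rp1$. Applying the operation $1\rp(-)\rp1$ monotonically to $\x\leq1\rp\con\x\rp1$ and then simplifying the two occurrences of $1\rp1$ to $1$ by the auxiliary fact (with associativity \eqref{2}) yields $1\rp\x\rp1\leq1\rp\con\x\rp1$.

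Finally, the reverse inclusion is immediate: replacing $\x$ by $\con\x$ in the inequality just proved gives $1\rp\con\x\rp1\leq1\rp\con{\con\x}\rp1$, and $\con{\con\x}=\x$ by \eqref{6}, so $1\rp\con\x\rp1\leq1\rp\x\rp1$. Antisymmetry of $\leq$ (Prop.~\ref{p1}) then delivers the equation. I expect no genuine obstacle beyond the clever instantiation in the first part; once $1\rp1=1$ and the lemma $\x\leq1\rp\con\x\rp1$ are in hand, the remainder is routine monotonicity and converse bookkeeping.
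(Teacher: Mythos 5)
Your proposal is correct and follows essentially the same route as the paper: the first inclusion is obtained by degenerating a rotation inequality from Prop.~\ref{p8} at $\y=\id$, $\z=\x$ (you use the second of the two symmetric forms, the paper the first), and the second part is monotonicity plus associativity applied to the first part, with the reverse inclusion by substituting $\con\x$ and invoking \eqref{6}. Your detour through $1\rp1=1$ and the lemma $\x\leq1\rp\con\x\rp1$ is slightly longer than the paper's direct chain $1\rp\x\rp1\leq1\rp(\x\rp\con\x\rp\x)\rp1\leq(1\rp\x)\rp\con\x\rp(\x\rp1)\leq1\rp\con\x\rp1$, but it is sound.
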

\begin{proof}
  For the first part,
  \begin{align*}
    \x&=\id\rp\x\bp\x&&\text{\eqref{idem}, id}
    \\ &\leq(\id\bp\x\rp\con\x)\rp\x&&\text{rot}
    \\ &\leq\x\rp\con\x\rp\x&&\text{mon} \intertext{For the second
      part, use the first part in the first step.}
    1\rp\x\rp1&\leq1\rp(\x\rp\con\x\rp\x)\rp1 &&\text{mon}
    \\ &\leq(1\rp\x)\rp\con\x\rp(\x\rp1)&&\text{assoc}
    \\ &\leq1\rp\con\x\rp1&&\text{mon}
  \end{align*}
  The opposite inclusion follows from this by substituting $\con\x$
  for $\x$ and invoking \eqref{6}.
\end{proof}
\begin{prop}
  \label{cyc1}
  $(\y\rp\z\bp\x)\rp1=(\x\rp\con\z\bp\y)\rp1$ and
  $1\rp(\y\rp\z\bp\x)=1\rp(\x\rp\con\z\bp\y)$.
\end{prop}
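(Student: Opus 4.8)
The plan is to derive the first equation directly from the rotation principle and then obtain the second by applying converse to the first. Relationally, both $(\y\rp\z\bp\x)\rp1$ and $(\x\rp\con\z\bp\y)\rp1$ are the domain cylinder of one and the same set of pairs, which is what makes the equation plausible and suggests that a single one-sided rotation inequality, symmetrized by a substitution, should suffice; the companion equation with $1\rp$ on the left should then be nothing more than the converse of the first, read off carefully.

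For the forward inclusion of the first equation I would start from Prop.\ \ref{p8} after the substitution $(\x,\y,\z)\mapsto(\y,\z,\x)$, namely $\y\rp\z\bp\x\leq(\x\rp\con\z\bp\y)\rp\z$. Composing on the right with $1$ and using associativity \eqref{2} gives $(\y\rp\z\bp\x)\rp1\leq(\x\rp\con\z\bp\y)\rp(\z\rp1)$. Since $\z\rp1\leq1$ by \eqref{one}, monotonicity (Prop.\ \ref{p6}) absorbs the trailing factor and yields $(\y\rp\z\bp\x)\rp1\leq(\x\rp\con\z\bp\y)\rp1$. For the reverse inclusion I would feed this same inequality the substitution $(\x,\y,\z)\mapsto(\y,\x,\con\z)$ and simplify $\con{\con\z}$ to $\z$ by \eqref{6}; this turns it into $(\x\rp\con\z\bp\y)\rp1\leq(\y\rp\z\bp\x)\rp1$, so the first equation follows by antisymmetry of $\leq$.

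For the second equation I would take converses of the first. Using \eqref{5}, \eqref{4}, and $\con1=1$ (Prop.\ \ref{p5}), the converse of $(\y\rp\z\bp\x)\rp1$ is $1\rp(\con\z\rp\con\y\bp\con\x)$ and the converse of $(\x\rp\con\z\bp\y)\rp1$ is $1\rp(\z\rp\con\x\bp\con\y)$, so the first equation gives $1\rp(\con\z\rp\con\y\bp\con\x)=1\rp(\z\rp\con\x\bp\con\y)$. Substituting $(\x,\y,\z)\mapsto(\con\x,\con\y,\con\z)$ and clearing the double converses with \eqref{6} then puts this in the desired left-handed form. The one point demanding care is that converse reverses the order of $\rp$, so the exact cyclic arrangement of the right-hand side must be tracked through the order-reversal rather than guessed; it is precisely here that a direction error could slip in.

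I do not expect any single hard estimate: the whole argument stays inside the elementary apparatus of \SS\ref{s16}, using only \eqref{rot}/Prop.\ \ref{p8}, \eqref{one}, \eqref{6}, associativity \eqref{2}, and monotonicity (Prop.\ \ref{p6}). The main obstacle is therefore bookkeeping—applying \eqref{2} and Prop.\ \ref{p6} in the correct order for the absorption step of the first equation, and faithfully accounting for the order-reversal of converse so that the second equation lands in exactly the intended cyclic form.
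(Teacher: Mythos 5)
Your treatment of the first equation is correct and is essentially the paper's own argument: rotation in the form of Prop.~\ref{p8} gives $\y\rp\z\bp\x\leq(\x\rp\con\z\bp\y)\rp\z$, the trailing $\z\rp1$ is absorbed into $1$ by \eqref{one}, associativity, and monotonicity, and the reverse inclusion comes from substituting $\con\z$ for $\z$ and invoking \eqref{6}, exactly as the paper does.

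The second equation is where the proposal does not close, and you flag the exact spot yourself. Taking converses of the first identity gives $1\rp(\con\z\rp\con\y\bp\con\x)=1\rp(\z\rp\con\x\bp\con\y)$; replacing each variable by its converse and then renaming so that the left side reads $1\rp(\y\rp\z\bp\x)$ forces the right side to read $1\rp(\con\y\rp\x\bp\z)$, \emph{not} $1\rp(\x\rp\con\z\bp\y)$. These are genuinely different: in $\Re{\{1,2,3\}}$ with $\y=\{\<1,2\>\}$, $\z=\{\<2,3\>\}$, $\x=\{\<1,3\>\}$ one computes $1\rp(\y\rp\z\bp\x)=\{1,2,3\}\times\{3\}$ while $1\rp(\x\rp\con\z\bp\y)=\{1,2,3\}\times\{2\}$. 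Since $\Re{\{1,2,3\}}$ is a J-algebra by Prop.~\ref{jaisra}, the second equation as printed is not derivable from the axioms; it is evidently a misprint for the cyclic variant $1\rp(\y\rp\z\bp\x)=1\rp(\z\bp\con\y\rp\x)$, which is precisely what your converse argument proves and what the paper's ``similar proof'' (run through the second half of Prop.~\ref{p8}) also yields. Only the first equation is ever invoked later (Lemmas \ref{LEMMA2}, \ref{LEMMA3}, \ref{LEMMA5}), so nothing downstream is affected. The upshot: your method for the second part is sound and is a reasonable alternative to redoing the computation, but the claim that the substitution ``puts this in the desired left-handed form'' is false for the form as stated --- the bookkeeping you deferred is exactly the step that cannot be made to work, and carrying it out honestly is what exposes the misprint.
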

\begin{proof}
  The proof of the first part in one direction is
  \begin{align*}
    (\y\rp\z\bp\x)\rp1&\leq(\x\rp\con\z\bp\y)\rp\z\rp1&&\text{rot}
    \\ &\leq(\x\rp\con\z\bp\y)\rp1\rp1&&\text{mon}
    \\ &\leq(\x\rp\con\z\bp\y)\rp(1\rp1)&&\text{assoc}
    \\ &\leq(\x\rp\con\z\bp\y)\rp1&&\text{mon}
  \end{align*}
  The opposite direction follows from this by \eqref{6}. The second
  part has a similar proof.
\end{proof}
\begin{prop}
  \label{i1}
  $\x\rp1\bp\y\rp\z=(\x\rp1\bp\y)\rp\z$ and
  $\y\rp\z\bp1\rp\x=\y\rp(\z\bp1\rp\x)$.
\end{prop}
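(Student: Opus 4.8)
The plan is to prove each identity as a pair of opposite inclusions and then to obtain the second identity cheaply from the first by applying converse, so that only the first identity requires real work. For the first identity $\x\rp1\bp\y\rp\z=(\x\rp1\bp\y)\rp\z$, the inclusion $(\x\rp1\bp\y)\rp\z\leq\x\rp1\bp\y\rp\z$ is pure monotonicity: from $\x\rp1\bp\y\leq\y$ and $\x\rp1\bp\y\leq\x\rp1$ (Prop.\ \ref{p2}) together with Prop.\ \ref{p6} I get $(\x\rp1\bp\y)\rp\z\leq\y\rp\z$ and $(\x\rp1\bp\y)\rp\z\leq(\x\rp1)\rp\z=\x\rp(1\rp\z)\leq\x\rp1$, the last step using \eqref{2} with $1\rp\z\leq1$ (from \eqref{one}) and Prop.\ \ref{p6}; Prop.\ \ref{p3} then combines the two bounds into the desired inclusion.

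The reverse inclusion $\x\rp1\bp\y\rp\z\leq(\x\rp1\bp\y)\rp\z$ is the heart of the matter and is where the rotation axiom \eqref{rot}, in the form of Prop.\ \ref{p8}, is needed. Rewriting the left side as $\y\rp\z\bp\x\rp1$ by \eqref{comm}, I apply the first inclusion of Prop.\ \ref{p8} with $\x\rp1$ in the role of the extra factor, obtaining $\y\rp\z\bp\x\rp1\leq(\x\rp1\rp\con\z\bp\y)\rp\z$. The point is that the spurious factor can now be reabsorbed: $\x\rp1\rp\con\z=\x\rp(1\rp\con\z)\leq\x\rp1$ since $1\rp\con\z\leq1$, whence $\x\rp1\rp\con\z\bp\y\leq\x\rp1\bp\y$ by Prop.\ \ref{p4} and $(\x\rp1\rp\con\z\bp\y)\rp\z\leq(\x\rp1\bp\y)\rp\z$ by Prop.\ \ref{p6}. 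Antisymmetry of $\leq$ (Prop.\ \ref{p1}) then closes the first identity.

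For the second identity I would avoid repeating the argument and instead take the converse of the first. Using $\con1=1$ (Prop.\ \ref{p5}), \eqref{5}, \eqref{4}, and \eqref{6}, the converse of $\x\rp1\bp\y\rp\z=(\x\rp1\bp\y)\rp\z$ is $1\rp\con\x\bp\con\z\rp\con\y=\con\z\rp(1\rp\con\x\bp\con\y)$. Since \eqref{6} makes converse a bijection on $\A$, this holds for all values of $\con\x,\con\y,\con\z$; renaming these back to $\x,\z,\y$ and applying \eqref{comm} yields exactly $\y\rp\z\bp1\rp\x=\y\rp(\z\bp1\rp\x)$.

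I expect the only genuine obstacle to be choosing the correct instance of the rotation axiom in the reverse inclusion: one must feed $\x\rp1$ into precisely the slot that produces the factor $\x\rp1\rp\con\z$, so that $1\rp\con\z\leq1$ lets it collapse back to $\x\rp1$. Everything else is routine monotonicity and converse bookkeeping.
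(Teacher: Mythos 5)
Your proof is correct and follows essentially the same route as the paper: the key step in both is the rotation inclusion $\x\rp1\bp\y\rp\z\leq(\x\rp1\rp\con\z\bp\y)\rp\z$ from Prop.~\ref{p8}, followed by collapsing $1\rp\con\z$ to $1$ via monotonicity and associativity, with the reverse inclusion being routine monotonicity. The only (harmless) difference is that you obtain the second identity by taking converses of the first, whereas the paper runs a symmetric direct argument; both work.
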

\begin{proof}
  We prove only the first equation. The second can be proved
  similarly.
  \begin{align*}
    \x\rp1\bp\y\rp\z&\leq(\x\rp1\rp\con\z\bp\y)\rp\z&&\text{rot}
    \\ &\leq(\x\rp1\rp1\bp\y)\rp\z&&\text{mon}
    \\ &\leq(\x\rp(1\rp1)\bp\y)\rp\z&&\text{assoc}
    \\ &\leq(\x\rp1\bp\y)\rp\z&&\text{mon}
    \\ &\leq(\x\rp1)\rp\z\bp\y\rp\z&&\text{mon}
    \\ &\leq\x\rp(1\rp\z)\bp\y\rp\z&&\text{assoc}
    \\ &\leq\x\rp1\bp\y\rp\z&&\text{mon}
  \end{align*}
\end{proof}
\begin{prop}
  \label{icyc}
  $\id\bp\u\rp\v\bp\x\rp\y\leq
  \id\bp(\u\bp\con\v)\rp(\con\u\rp\x\bp\v\rp\con\y)\rp(\y\bp\con\x)$
\end{prop}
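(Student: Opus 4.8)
The plan is to first discard the outer identity factor on the right. Since the left-hand side $\id\bp\u\rp\v\bp\x\rp\y$ lies below $\id$ by Prop.~\ref{p2}, it suffices by Prop.~\ref{p3} to prove the sharper inclusion
\begin{align*}
  \id\bp\u\rp\v\bp\x\rp\y\leq(\u\bp\con\v)\rp(\con\u\rp\x\bp\v\rp\con\y)\rp(\y\bp\con\x);
\end{align*}
conjoining this with $\id\bp\u\rp\v\bp\x\rp\y\leq\id$ then yields the stated inequality. To peel off the left factor $\u\bp\con\v$ I would apply the full rotation axiom \eqref{rot} to $\u\rp\v\bp(\id\bp\x\rp\y)$, which equals the left-hand side by \eqref{comm} and \eqref{bassoc}. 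This gives the exact identity
\begin{align*}
  \id\bp\u\rp\v\bp\x\rp\y=\big((\id\bp\x\rp\y)\rp\con\v\bp\u\big)\rp\big(\v\bp\con\u\rp(\id\bp\x\rp\y)\big)\bp(\id\bp\x\rp\y).
\end{align*}
Dropping the trailing conjunct by Prop.~\ref{p2} and bounding the first factor via $(\id\bp\x\rp\y)\rp\con\v\leq\id\rp\con\v=\con\v$ (Prop.~\ref{p6}, Prop.~\ref{p5}) leaves
\begin{align*}
  \id\bp\u\rp\v\bp\x\rp\y\leq(\u\bp\con\v)\rp\big(\v\bp\con\u\rp(\id\bp\x\rp\y)\big).
\end{align*}

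It then remains to show the middle-right factor satisfies $\v\bp\con\u\rp(\id\bp\x\rp\y)\leq(\con\u\rp\x\bp\v\rp\con\y)\rp(\y\bp\con\x)$, after which monotonicity (Prop.~\ref{p6}) closes the argument. For this I would first apply the second form of Prop.~\ref{p8} to get $\id\bp\x\rp\y\leq\x\rp(\y\bp\con\x)$, so that $\v\bp\con\u\rp(\id\bp\x\rp\y)\leq\v\bp(\con\u\rp\x)\rp(\y\bp\con\x)$ by Prop.~\ref{p4} and Prop.~\ref{p6}. Writing $G=\y\bp\con\x$ and rotating once more with the first form of Prop.~\ref{p8}---treating $\con\u\rp\x$ as the product and $\v$ as the side term---produces $\v\bp(\con\u\rp\x)\rp G\leq(\v\rp\con G\bp\con\u\rp\x)\rp G$; since $\con G=\con\y\bp\x\leq\con\y$ (by \eqref{4}, \eqref{6}, Prop.~\ref{p2}) gives $\v\rp\con G\leq\v\rp\con\y$, the left factor drops below $\con\u\rp\x\bp\v\rp\con\y$ by Prop.~\ref{p4}, which is exactly the middle factor of the target.

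The main obstacle is that the obvious single application of Prop.~\ref{p8} to peel off $\u\bp\con\v$ discards the $\x\rp\y$ information entirely and is far too weak to reach the conclusion. The device that avoids this is to use the \emph{equational} rotation \eqref{rot} rather than its inequality consequence, so that the factor one wishes to keep reappears inside the retained middle term $\v\bp\con\u\rp(\id\bp\x\rp\y)$; the residual $\x,\y$ are then redistributed into the target's middle factor $\con\u\rp\x\bp\v\rp\con\y$ by the second rotation. Keeping the subidentity $\id\bp\x\rp\y$ intact through these steps, instead of bounding it prematurely, is precisely what makes this redistribution go through.
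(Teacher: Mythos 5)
Your proof is correct and takes essentially the same approach as the paper: three applications of the rotation axiom \eqref{rot}/Prop.~\ref{p8} plus monotonicity, producing the same intermediate term $\v\bp\con\u\rp(\cdots)$ and the same final rotation onto $(\con\u\rp\x\bp\v\rp\con\y)\rp(\y\bp\con\x)$. The only differences are cosmetic --- you strip the outer $\id$ at the start and restore it via Prop.~\ref{p3}, and you perform the first rotation against $\id\bp\x\rp\y$ as a whole rather than splitting against $\id$ on each side first.
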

\begin{proof}
  \begin{align*}
    \id\bp\u\rp\v\bp\x\rp\y&\leq
    \id\bp(\u\bp\id\rp\con\v)\rp\v\bp\x\rp(\y\bp\con\x\rp\id)&&\text{rot}
    \\ &=\id\bp(\u\bp\con\v)\rp\v\bp\x\rp(\y\bp\con\x) &&\text{id}
    \\ &\leq\id\bp(\u\bp\con\v)\rp(\v\bp\con{\u\bp\con\v}
    \rp(\x\rp(\y\bp\con\x))) &&\text{rot}
    \\ &\leq\id\bp(\u\bp\con\v)\rp(\v\bp\con\u\rp(\x\rp(\y\bp\con\x)))
    &&\text{mon}
    \\ &=\id\bp(\u\bp\con\v)\rp(\v\bp\con\u\rp\x\rp(\y\bp\con\x))
    &&\text{assoc}
    \\ &\leq\id\bp(\u\bp\con\v)\rp((\con\u\rp\x\bp\v\rp\con{\y\bp\con\x})
    \rp(\y\bp\con\x)) &&\text{rot}
    \\ &\leq\id\bp(\u\bp\con\v)\rp(\con\u\rp\x\bp\v\rp\con\y)\rp(\y\bp\con\x)
    &&\text{mon, assoc}
  \end{align*}
\end{proof}
\begin{prop}
  \label{exch}
  $\id\bp(\u\bp\x)\rp(\v\bp\y)=\id\bp(\u\bp\con\v)\rp(\con\x\bp\y)$
  and $\id\bp\x\rp\y=\id\bp(\x\bp\con\y)\rp(\con\x\bp\y))$.
\end{prop}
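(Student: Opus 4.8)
The whole proposition rests on one special case of the rotation axiom \eqref{rot}, namely the case in which its third variable is $\id$. Setting that variable to $\id$ and then simplifying the inner products against the identity --- using $\id\rp\con\q=\con\q$ from Prop.\ \ref{p5}, $\con\p\rp\id=\con\p$ from \eqref{id}, and \eqref{comm} --- collapses \eqref{rot} to the master identity
\begin{align*}
  \id\bp\p\rp\q=\id\bp(\p\bp\con\q)\rp(\con\p\bp\q).
\end{align*}
Since $\con\p\bp\q=\q\bp\con\p$ by \eqref{comm}, this is, after renaming $\p,\q$ to $\x,\y$, exactly the \emph{second} displayed equation of the proposition, so that equation requires no further argument.

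For the first equation the plan is to apply the master identity to each of its two sides and observe that both land on one and the same element. On the left-hand side $\id\bp(\u\bp\x)\rp(\v\bp\y)$ I would take $\p=\u\bp\x$ and $\q=\v\bp\y$; distributing converse over meet with \eqref{4} turns $\con\q$ into $\con\v\bp\con\y$ and $\con\p$ into $\con\u\bp\con\x$, and after reordering the meets by \eqref{comm} and \eqref{bassoc} the right-hand side of the master identity becomes
\begin{align*}
  \id\bp(\u\bp\x\bp\con\v\bp\con\y)\rp(\v\bp\y\bp\con\u\bp\con\x).
\end{align*}
On the right-hand side $\id\bp(\u\bp\con\v)\rp(\con\x\bp\y)$ I would instead take $\p=\u\bp\con\v$ and $\q=\con\x\bp\y$; here \eqref{4} together with the involution law \eqref{6} turns $\con\q=\con{\con\x\bp\y}$ into $\x\bp\con\y$ and $\con\p=\con{\u\bp\con\v}$ into $\con\u\bp\v$, and the same reordering produces the identical element displayed above. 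Equating the two expressions gives the first equation.

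The argument is thus entirely equational, and I expect no genuine obstacle: the one idea is recognizing that both sides of the first equation are rotation-images (at $\id$) of the single symmetric meet $\u\bp\x\bp\con\v\bp\con\y$ composed with $\v\bp\y\bp\con\u\bp\con\x$. The only care needed is the routine bookkeeping of pushing $\con\blank$ through $\bp$ via \eqref{4} and \eqref{6} and then rearranging the factors of the meets by \eqref{comm} and \eqref{bassoc}; none of these steps is delicate.
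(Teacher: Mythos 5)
Your proof is correct, and while it pivots on the same key step as the paper's --- instantiating the rotation axiom \eqref{rot} at $\z=\id$, which after simplification by Prop.~\ref{p5}, \eqref{id}, and \eqref{comm} yields the ``master identity'' $\id\bp\p\rp\q=\id\bp(\p\bp\con\q)\rp(\con\p\bp\q)$ --- the architecture of your argument is genuinely different. The paper proves the \emph{first} equation first: one rotation, then a monotonicity step dropping factors from the two meets, giving a single inclusion whose reverse is recovered by substituting converses and invoking \eqref{6}; the second equation is then read off as the special case $\u=\x$, $\v=\y$. You reverse the dependency: the second equation \emph{is} the master identity, and the first follows because both of its sides rotate to the same four-factor normal form $\id\bp(\u\bp\x\bp\con\v\bp\con\y)\rp(\con\u\bp\con\x\bp\v\bp\y)$, the two images differing only by \eqref{4}, \eqref{6}, \eqref{comm}, and \eqref{bassoc}. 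Your route works with equalities throughout --- no inclusions, no monotonicity, no converse-substitution trick --- at the cost of applying the master identity twice instead of once, and it makes the underlying reason transparent: since $\con{\p\bp\con\q}=\con\p\bp\q$, the identity says that below $\id$ the product $\p\rp\q$ depends only on $\r=\p\bp\con\q$ via $\r\rp\con\r$, and the two sides of the first equation produce the same $\r$ up to rearrangement of meets. Both proofs are fully rigorous; yours is arguably the cleaner bookkeeping.
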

\begin{proof}
  The inclusion from left to right in the first equation holds because
  \begin{align*}
    \id\bp(\u\bp\x)\rp(\v\bp\y)
    &=\id\bp(\id\rp\con{\v\bp\y}\bp\u\bp\x)\rp(\v\bp\y\bp\con{\u\bp\x}\rp\id)
    &&\text{rot}
    \\&=\id\bp(\con{\v\bp\y}\bp\u\bp\x)\rp(\v\bp\y\bp\con{\u\bp\x})
    &&\text{id}
    \\&=\id\bp(\con\v\bp\con\y\bp\u\bp\x)\rp(\v\bp\y\bp\con\u\bp\con\x)
    &&\text{con}
    \\&\leq\id\bp(\con\v\bp\x)\rp(\con\u\bp\y)&&\text{mon}
  \end{align*}
  The opposite inclusion follows from this by \eqref{6}.  The second
  equation follows from the first when $\u=\x$ and $\v=\y$.
\end{proof}

\section{Functional and permutational elements}
\label{s17}
\begin{defn}
  \label{FnPm}
  An element $\x$ is {\bf functional} if $\con\x\rp\x\leq\id$ and {\bf
    permutational} if $\x\rp\con\x=\id=\con\x\rp\x$.  $\Fn\AA$ is the
  set of {\bf functional elements} of $\AA$ and $\Pm\AA$ is the set of
  {\bf permutational elements} of $\AA$.
\end{defn}
\begin{prop}\;
  \label{func}
  \begin{enumerate}
  \item
    $0,\id\in\Fn\AA$ and $\id\in\Pm\AA\subseteq\Fn\AA$,
  \item
    if $\x\leq\y\in\Fn\AA$ then $\x\in\Fn\AA$,
  \item
    if $\x,\y\in\Fn\AA$ then $\x\rp\y\in\Fn\AA$,
  \item
    if $\x,\y\in\Pm\AA$ then $\x\rp\y\in\Pm\AA$ and $\con\x\in\Pm\AA$,
  \item
    $\<\Fn\AA,\rp,\id\>$ is a monoid and $\<\Pm\AA,\rp,\con\;,\id\>$
    is a group.
  \end{enumerate}
\end{prop}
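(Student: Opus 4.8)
The plan is to verify the five parts in sequence, since the closure facts in (i)--(iv) are exactly what is needed to assemble the monoid and group in (v). Throughout I would lean on Prop.\ \ref{p5} (giving $\con0=0$, $\con\id=\id$, and $\id\rp\x=\x$), the converse laws \eqref{5} and \eqref{6}, the monotonicity package of Prop.\ \ref{p6}, and associativity \eqref{2}.

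For part (i) each membership is checked directly against Definition \ref{FnPm}. Since $\con0=0$, I get $\con0\rp0=0\leq\id$ from \eqref{norm} and \eqref{zero}, so $0\in\Fn\AA$; since $\con\id=\id$, the equation $\id\rp\id=\id$ from \eqref{id} gives both $\con\id\rp\id=\id$ and $\id\rp\con\id=\id$, so $\id\in\Pm\AA$ and hence $\id\in\Fn\AA$; and $\Pm\AA\subseteq\Fn\AA$ holds because $\con\x\rp\x=\id\leq\id$ for permutational $\x$. For part (ii), assuming $\x\leq\y$ and $\con\y\rp\y\leq\id$, I would chain Prop.\ \ref{p6}: first $\con\x\leq\con\y$, then $\con\x\rp\x\leq\con\y\rp\x\leq\con\y\rp\y\leq\id$.

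Parts (iii) and (iv) are where the converse laws do the real work. For (iii) I write $\con{\x\rp\y}=\con\y\rp\con\x$ by \eqref{5} and bound $\con\y\rp\con\x\rp\x\rp\y\leq\con\y\rp\id\rp\y=\con\y\rp\y\leq\id$, inserting $\con\x\rp\x\leq\id$ in the middle and using $\y\in\Fn\AA$ at the end. For (iv) the same rewriting handles both claims: $\con{\con\x}=\x$ by \eqref{6} turns the two permutational equations for $\x$ into the two for $\con\x$; and for the product I cancel the inner permutational pair ($\y\rp\con\y=\id$, resp.\ $\con\x\rp\x=\id$) and then the outer one to obtain $\id$ on each side. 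Finally, part (v) is just collection: closure under $\rp$ from (iii) (resp.\ (iv)), closure under $\con\blank$ from (iv), membership of $\id$ from (i), associativity from \eqref{2}, the identity laws from \eqref{id} and Prop.\ \ref{p5}, and inverses from the permutational equations $\x\rp\con\x=\id=\con\x\rp\x$. I anticipate no genuine obstacle; the only point demanding care is applying \eqref{5} and \eqref{6} consistently to move converse across relative products in (iii) and (iv), since everything else reduces to the order-theoretic lemmas already established.
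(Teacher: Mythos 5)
Your proposal is correct and follows essentially the same route as the paper's own proof: part (i) by direct computation with $\con0=0$ and $\con\id=\id$, part (ii) by monotonicity of converse and relative product, parts (iii) and (iv) by the standard rewriting $\con{\x\rp\y}=\con\y\rp\con\x$ followed by cancellation of the inner pair, and part (v) by collecting the closure facts. The only difference is cosmetic granularity in (ii), where you spell out the two monotonicity steps that the paper combines into one appeal to Prop.~\ref{p6}.
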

Recall from \SS\ref{s6} that on the basis of Prop.\ \ref{func}(v) we
define $\gc{Fn}(\AA)=\<\Fn\AA,\rp,\id\>$ and
$\gc{Pm}(\AA)=\<\Pm\AA,\rp,\con\;,\id\>$.
\begin{proof}
  The inclusion $\Pm\AA\subseteq\Fn\AA$ is an immediate consequence of
  Def.\ \ref{FnPm}. For the rest of part (i), it suffices to note that
  $\con0\rp0=0\leq\id$ by \eqref{norm} and \eqref{zero} and
  $\con\id\rp\id=\id$ by \eqref{id}.  For part (ii), if
  $\x\leq\y\in\Fn\AA$ then $\con\x\rp\x\leq\con\y\rp\y$ by
  Prop.\ \ref{p6}, but $\con\y\rp\y\leq\id$ by hypothesis, so
  $\con\x\rp\x\leq\id$ by Prop.\ \ref{p1}.  For part (iii), if
  $\x,\y\in\Fn\AA$ then $\x\rp\y\in\Fn\AA$ because
  \begin{align*}
    \con{\x\rp\y}\rp(\x\rp\y)
    &=(\con\y\rp\con\x)\rp(\x\rp\y)&&\text{con}
    \\ &=\con\y\rp(\con\x\rp\x)\rp\y&&\text{assoc}
    \\ &\leq\con\y\rp\id\rp\y&&\text{mon, $\x\in\Fn\AA$}
    \\ &=\con\y\rp\y&&\text{id} \\ &\leq\id&&\text{$\y\in\Fn\AA$}
  \end{align*}
  For part (iv), assume $\x,\y\in\Pm\AA$. Then
  $\id=\con\x\rp\x=\x\rp\con\x$, hence
  $\id=\con\x\rp\con{\con\x}=\con{\con\x}\rp\con\x$ by \eqref{6}, so
  $\con\x\in\Pm\AA$. We also have two calculations similar to the one
  for part (iii), namely,
  \begin{align*}
    \con{\x\rp\y}\rp(\x\rp\y)&=(\con\y\rp\con\x)\rp(\x\rp\y)
    =\con\y\rp(\con\x\rp\x)\rp\y=\con\y\rp\id\rp\y=\con\y\rp\y=\id,
    \\ (\x\rp\y)\rp\con{\x\rp\y}&=(\x\rp\y)\rp(\con\y\rp\con\x)
    =\x\rp(\y\rp\con\y)\rp\con\x=\x\rp\id\rp\con\x=\con\x\rp\x=\id.
  \end{align*}
  The closure properties needed for part (v) come from parts (i),
  (iii), and (iv), while the requisite properties for a monoid and
  group are insured by \eqref{2}, \eqref{id}, \eqref{6}, \eqref{5},
  and Prop.\ \ref{p5}.
\end{proof}
By Prop.\ \ref{func}(v), $\id$ is the identity element of the group
$\gc{Pm}(\AA)$. More generally, every element $\e$ contained in $\id$
is the identity element of a group of elements that are
``permutational with respect to $\e$.''
\begin{prop}
  \label{grp}
  Assume $\e\leq\id$ and let $\X=\{\x:\x\rp\con\x=\e=\con\x\rp\x\}$.
  Then $\<\X,\rp,\con\;,\e\>$ is a group.
\end{prop}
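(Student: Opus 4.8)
The plan is to verify the group axioms for $\<\X,\rp,\con\blank,\e\>$ directly, following the pattern of Prop.~\ref{func}(v) (which is the case $\e=\id$) but now treating $\e$ as a merely \emph{local} identity, so that the laws $\x\rp\e=\x=\e\rp\x$ must be derived rather than quoted from an axiom. Throughout, the two workhorses will be Prop.~\ref{p10}, which supplies lower bounds via $\x\leq\x\rp\con\x\rp\x$, and the hypothesis $\e\leq\id$, which supplies the matching upper bounds via monotonicity.

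First I would record $\con\e\leq\id$, which follows from $\e\leq\id$ by Prop.~\ref{p6} together with $\con\id=\id$ from Prop.~\ref{p5}. Next I would show $\e\in\X$. The upper bounds $\e\rp\con\e\leq\e\rp\id=\e$ and $\con\e\rp\e\leq\id\rp\e=\e$ follow from $\con\e\leq\id$, $\e\leq\id$, \eqref{id}, and Prop.~\ref{p5}; the matching lower bounds come from Prop.~\ref{p10} in the form $\e\leq\e\rp\con\e\rp\e$, after absorbing the extreme factor $\e$ into $\id$ by monotonicity. Antisymmetry then yields $\e\rp\con\e=\e=\con\e\rp\e$, so $\e\in\X$.

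The identity laws on $\X$ use the same two tools. For $\x\in\X$, reassociating with \eqref{2} and substituting $\e=\con\x\rp\x=\x\rp\con\x$ gives $\x\rp\e=\x\rp\con\x\rp\x=\e\rp\x$; Prop.~\ref{p10} bounds this below by $\x$, while $\e\leq\id$ bounds it above, $\x\rp\e\leq\x\rp\id=\x$ and $\e\rp\x\leq\id\rp\x=\x$, so antisymmetry gives $\x\rp\e=\x=\e\rp\x$. Closure is then routine: expanding $\con{\x\rp\y}=\con\y\rp\con\x$ with \eqref{5} and reassociating yields $(\x\rp\y)\rp\con{\x\rp\y}=\x\rp\e\rp\con\x=\x\rp\con\x=\e$ and, symmetrically, $\con{\x\rp\y}\rp(\x\rp\y)=\con\y\rp\e\rp\y=\con\y\rp\y=\e$, so $\x\rp\y\in\X$; and $\con\x\in\X$ is immediate from \eqref{6}, which reduces the two required equations to $\con\x\rp\x=\e=\x\rp\con\x$. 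The element $\con\x$ is the inverse of $\x$ by the very definition of $\X$, and associativity is \eqref{2}, so all group axioms hold.

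The step I expect to be the main obstacle is showing simultaneously that $\e$ belongs to $\X$ and acts as a two-sided identity, since neither is available from an axiom once $\e\neq\id$. The resolution is uniform and is the crux of the argument: Prop.~\ref{p10} provides every lower bound and $\e\leq\id$ provides every upper bound, so antisymmetry of $\leq$ closes each of the required equations.
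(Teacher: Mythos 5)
Your proposal is correct and follows essentially the same route as the paper's proof: Prop.~\ref{p10} supplies every lower bound, $\e\leq\id$ with monotonicity supplies every upper bound, and antisymmetry of $\leq$ closes each equation ($\e\in\X$, the two-sided identity laws, and closure under $\rp$). The only cosmetic difference is that the paper separately establishes $\e=\con\e$ before checking closure under converse, whereas you observe directly that the defining condition of $\X$ is symmetric under \eqref{6}, which is a harmless shortcut.
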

\begin{proof}
  To show $\e\in\X$, we note that $\e=\e\rp\con\e$ because
  \begin{align*}
    \e&\leq\e\rp\con\e\rp\e&&\text{Prop.\ \ref{p10}} \\
    &\leq\e\rp\con\e\rp\id&&\text{mon, $\e\leq\id$} \\
    &=\e\rp\con\e&&\text{id} \\
    &\leq\e\rp\con\id&&\text{mon, $\e\leq\id$} \\
    &=\e\rp\id&&\text{con} \\
    &=\e&&\text{id}
  \end{align*}
  and, similarly, $\con\e\rp\e=\e$. Therefore, $\e\in\X$.  Also,
  $\e\leq\con\e$ because
  \begin{align*}
    \e&\leq\e\rp\con\e\rp\e&&\text{Prop.\ \ref{p10}} \\
    &\leq\id\rp\con\e\rp\id&&\text{mon, $\e\leq\id$}\\
    &=\con\e&&\text{id}
  \end{align*}
  hence $\con\e\leq\con{\con\e}=\e$ by Prop.\ \ref{p6} and \eqref{6}, so
  $\e=\con\e$.  If $\x\in\X$ then $\con\x\rp\x=\e=\con\x\rp\x$, so
  $\con{\con\x\rp\x}=\con\e=\con{\con\x\rp\x}$, hence
  $\x\rp\con\x=\e=\con\x\rp\x$ by \eqref{6}, \eqref{5}, and $\e=\con\e$,
  \ie, $\con\x\in\X$.  Thus, $\X$ is closed under converse. Next we see
  that $\e$ is an identity element for $\<\X,\rp,\con\;,\e\>$, that is,
  $\x=\x\rp\e=\e\rp\x$ for every $\x\in\X$. First, $\x=\e\rp\x$ because
  \begin{align*}
    \x&\leq\x\rp\con\x\rp\x&&\text{Prop.\ \ref{p10}}
    \\ &=\e\rp\x&&\x\in\X
    \\ &\leq\id\rp\x&&\text{mon, $\e\leq\id$}
    \\ &=\x&&\text{id}
  \end{align*}
  A similar proof shows $\x\rp\e=\x$. If $\x,\y\in\X$ then
  \begin{align*}
    \con{\x\rp\y}\rp(\x\rp\y)&=(\con\y\rp\con\x)\rp(\x\rp\y)&&\text{con}
    \\ &=\con\y\rp(\con\x\rp\x\rp\y)&&\text{assoc}
    \\ &=\con\y\rp(\e\rp\y)&&\x\in\X
    \\ &=\con\y\rp\y&&\text{proved above}
    \\ &=\e&&\y\in\X,
  \end{align*}
  and a similar proof shows $\x\rp\y\rp\con{\x\rp\y}=\e$. Therefore,
  $\x\rp\y\in\X$. We have shown that $\X$ is closed under converse and
  composition, and contains an identity element $\e$. Associativity
  holds by \eqref{2} and $\con\x$ is the inverse of $\x$, simply by
  the definition of $\X$. All the requirements for being a group are
  satisfied.
\end{proof}
\begin{prop}
  \label{f-dist}
  If $\cona\rp\a\leq\id$ then $\a\rp(\x\bp\y)=\a\rp\x\bp\a\rp\y$ and
  $(\x\bp\y)\rp\cona=\x\rp\cona\bp\y\rp\cona$.
\end{prop}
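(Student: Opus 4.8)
The plan is to prove each equation by antisymmetry of $\leq$ (Prop.\ \ref{p1}), establishing it as a pair of opposite inclusions, and then to deduce the second equation from the first by converse manipulations alone.

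For the first equation, the inclusion $\a\rp(\x\bp\y)\leq\a\rp\x\bp\a\rp\y$ is routine: since $\x\bp\y\leq\x$ and $\x\bp\y\leq\y$ by Prop.\ \ref{p2}, monotonicity (Prop.\ \ref{p6}) gives $\a\rp(\x\bp\y)\leq\a\rp\x$ and $\a\rp(\x\bp\y)\leq\a\rp\y$, so Prop.\ \ref{p3} yields the inclusion. The reverse inclusion $\a\rp\x\bp\a\rp\y\leq\a\rp(\x\bp\y)$ is the crux and is where the hypothesis $\cona\rp\a\leq\id$ enters. I would use the second inequality of Prop.\ \ref{p8} with $\a$, $\x$, $\a\rp\y$ substituted for its arguments $\x$, $\y$, $\z$, giving $\a\rp\x\bp\a\rp\y\leq\a\rp(\x\bp\cona\rp\a\rp\y)$. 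Functionality then collapses the inner factor: by \eqref{2}, monotonicity (Prop.\ \ref{p6}), and $\id\rp\y=\y$ (Prop.\ \ref{p5}), we have $\cona\rp\a\rp\y=(\cona\rp\a)\rp\y\leq\id\rp\y=\y$. Hence $\x\bp\cona\rp\a\rp\y\leq\x\bp\y$ by Prop.\ \ref{p4}, and one further application of Prop.\ \ref{p6} gives $\a\rp(\x\bp\cona\rp\a\rp\y)\leq\a\rp(\x\bp\y)$, completing the reverse inclusion.

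For the second equation I would take converses and reuse the first. By \eqref{5}, \eqref{6}, and \eqref{4} we have $\con{(\x\bp\y)\rp\cona}=\a\rp(\con\x\bp\con\y)$; applying the first equation (already proved) with $\con\x$, $\con\y$ in place of $\x$, $\y$ turns this into $\a\rp\con\x\bp\a\rp\con\y$, which by the same three converse axioms equals $\con{\x\rp\cona\bp\y\rp\cona}$. Thus $\con{(\x\bp\y)\rp\cona}=\con{\x\rp\cona\bp\y\rp\cona}$, and \eqref{6} applied to both sides yields $(\x\bp\y)\rp\cona=\x\rp\cona\bp\y\rp\cona$.

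The main obstacle is the reverse inclusion of the first equation: everything hinges on picking the instance of the rotation inequality Prop.\ \ref{p8} that introduces exactly the factor $\cona\rp\a$, which functionality then absorbs into $\id$. Once that instantiation is identified, the rest is bookkeeping with monotonicity and the converse axioms.
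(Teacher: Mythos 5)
Your proof is correct, and the essential step---instantiating the rotation inequality Prop.\ \ref{p8} with $\a$, $\x$, $\a\rp\y$ so as to introduce the factor $\cona\rp\a$, which the functionality hypothesis then absorbs into $\id$---is exactly the one the paper uses for the first equation. The only minor divergence is that the paper proves the second equation by a parallel direct application of the other half of Prop.\ \ref{p8}, whereas you deduce it from the first equation by converse-duality via \eqref{5}, \eqref{6}, and \eqref{4}; both routes are valid.
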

\begin{proof}
  Assume $\cona\rp\a\leq\id$. Then
  $\a\rp(\x\bp\y)=a\rp\x\bp\a\rp\y$ because
  \begin{align*}
    \a\rp(\x\bp\y) &\leq\a\rp\x\bp\a\rp\y&&\text{mon}
    \\ &\leq\a\rp(\x\bp\cona\rp(\a\rp\y))&&\text{rot}
    \\ &=\a\rp(\x\bp(\cona\rp\a)\rp\y)&&\text{assoc}
    \\ &\leq\a\rp(\x\bp\id\rp\y)&&\text{mon, $\cona\rp\a\leq\id$}
    \\ &\leq\a\rp(\x\bp\y)&&\text{id}
  \end{align*}
  and $(\x\bp\y)\rp\cona=\x\rp\cona\bp\y\rp\cona$ because
  \begin{align*}
    (\x\bp\y)\rp\cona &\leq\x\rp\cona\bp\y\rp\cona&&\text{mon}
    \\ &\leq(\y\bp\x\rp\cona\rp\con{\cona})\rp\cona&&\text{rot}
    \\ &=(\y\bp\x\rp(\cona\rp\a))\rp\cona&&\text{con, assoc}
    \\ &\leq(\y\bp\x\rp\id)\rp\cona&&\text{mon, $\cona\rp\a\leq\id$}
    \\ &\leq(\x\bp\y)\rp\cona&&\text{id}
  \end{align*}
\end{proof}
\begin{prop}
  \label{prop1a}
  Suppose $1=\cona\rp\b$ and $\cona\rp\a\leq\id$. Then
  $\cona\rp\a=\id$.
\end{prop}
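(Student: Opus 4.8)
The plan is to establish the missing inclusion $\id\leq\cona\rp\a$, since $\cona\rp\a\leq\id$ is assumed as a hypothesis; antisymmetry of $\leq$ (Prop.\ \ref{p1}) then yields the desired equation.

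First I would promote the hypothesis $1=\cona\rp\b$ to the statement that $\a$ has ``full range'', namely $\cona\rp1=1$. This follows because $\b\leq1$ by \eqref{one}, so $1=\cona\rp\b\leq\cona\rp1$ by monotonicity (Prop.\ \ref{p6}), while $\cona\rp1\leq1$ holds directly by \eqref{one}; hence $\cona\rp1=1$.

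The key step is then to feed $\cona\rp1\bp\id$ into the rotation inequality. Using the second form of Prop.\ \ref{p8} with $\x:=\cona$, $\y:=1$, and $\z:=\id$ gives
\begin{align*}
  \cona\rp1\bp\id\leq\cona\rp(1\bp\con{\cona}\rp\id).
\end{align*}
The right-hand factor simplifies: $\con{\cona}=\a$ by \eqref{6}, then $\a\rp\id=\a$ by \eqref{id}, and $1\bp\a=\a$ by \eqref{one}, so the right side is exactly $\cona\rp\a$. On the left, substituting $\cona\rp1=1$ from the previous paragraph turns $\cona\rp1\bp\id$ into $1\bp\id=\id$, again by \eqref{one}. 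Hence $\id\leq\cona\rp\a$, and combining with the hypothesis completes the proof.

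I expect the only real subtlety to be the choice of the right instance of the rotation axiom. The intuition is that $\cona\rp1=1$ already records that every point lies in the range of $\a$, and rotation is precisely the device that converts this ``range is everything'' fact, intersected with $\id$, into the surjectivity equation $\id\leq\cona\rp\a$ by re-routing the anonymous $1$-edge back along $\a$. Once that instance is selected, the remaining manipulations are routine appeals to \eqref{6}, \eqref{id}, and \eqref{one}.
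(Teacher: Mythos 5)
Your proof is correct and takes essentially the same route as the paper's: a single application of the second rotation inequality with $\x=\cona$ and $\z=\id$ to convert totality into $\id\leq\cona\rp\a$, followed by antisymmetry. The only cosmetic difference is that you first weaken $1=\cona\rp\b$ to $\cona\rp1=1$ and rotate against $\cona\rp1$, whereas the paper rotates against $\cona\rp\b$ directly and then discards the resulting $\b\bp\a\leq\a$ by monotonicity.
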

\begin{proof}
  \begin{align*}
    \id&=\id\bp1 &&\eqref{one}
    \\ &=\id\bp\cona\rp\b &&\text{hypothesis}
    \\ &\leq\cona\rp(\b\bp\con{\cona}\rp\id)&&\text{rot}
    \\ &=\cona\rp(\b\bp\a)&&\text{id, con}
    \\ &\leq\cona\rp\a &&\text{mon}
    \\ &\leq\id &&\text{hypothesis}
  \end{align*}
\end{proof}
The following proposition was used in \SS\ref{s9} to note that if
\eqref{D} and \eqref{U} hold then $\a\rp\con\a\bp\b\rp\con\b=\id$.
\begin{prop}
  \label{prop2a}
  Assume $1=\x\rp1=\y\rp1$ and
  $\x\rp\con\x\bp\y\rp\con\y\leq\id$. Then
  $\x\rp\con\x\bp\y\rp\con\y=\id$.
\end{prop}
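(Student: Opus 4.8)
The plan is to observe that the hypothesis already supplies the inclusion $\x\rp\con\x\bp\y\rp\con\y\leq\id$, so by the antisymmetry of $\leq$ (Prop.\ \ref{p1}) it suffices to establish only the reverse inclusion $\id\leq\x\rp\con\x\bp\y\rp\con\y$. Since the right-hand side is a Boolean product, Prop.\ \ref{p3} reduces this to proving the two inclusions $\id\leq\x\rp\con\x$ and $\id\leq\y\rp\con\y$ independently, and these two are symmetric under swapping $\x$ with $\y$.

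Thus the real content is a single lemma: if $1=\x\rp1$ then $\id\leq\x\rp\con\x$. I would prove this by a short rotation argument that closely mirrors the derivation in Prop.\ \ref{prop1a} just above. Starting from $\id=\id\bp1$ by \eqref{one} and rewriting $1$ as $\x\rp1$ using the hypothesis, I would apply the second rotation inclusion of Prop.\ \ref{p8} (with its $\y$ instantiated to $1$ and its $\z$ to $\id$) to obtain
\begin{align*}
  \id=\id\bp\x\rp1\leq\x\rp(1\bp\con\x\rp\id)=\x\rp(1\bp\con\x)=\x\rp\con\x,
\end{align*}
where the final two equalities use \eqref{id} and then \eqref{comm} together with \eqref{one}. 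Applying the same lemma to $\y$ yields $\id\leq\y\rp\con\y$, and combining the two via Prop.\ \ref{p3} delivers the required reverse inclusion.

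I do not anticipate a genuine obstacle here; the only decision point is selecting the correct form of rotation so that the spurious factor $\con\x\rp\id$ collapses to $\con\x$ and the leading $1$ is absorbed by \eqref{one}. The \emph{first} rotation form of Prop.\ \ref{p8} would instead produce a term of the shape $(\id\rp\con1\bp\x)\rp1$, which simplifies back to $\x\rp1$ and leads nowhere, so the second form is the one to use. Once that choice is made the derivation is purely mechanical and requires nothing beyond the elementary consequences of the J-algebra axioms already established in \SS\ref{s16}.
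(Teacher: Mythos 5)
Your proposal is correct and follows essentially the same route as the paper's proof: both reduce to showing $\id\leq\x\rp\con\x$ (and symmetrically for $\y$) via the rotation $\id\bp\x\rp1\leq\x\rp(1\bp\con\x\rp\id)$, then combine the two inclusions with Prop.~\ref{p3} and conclude by antisymmetry. Your remark about which form of rotation to use is accurate but matches what the paper does implicitly under its ``rot'' convention.
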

\begin{proof}
  First we obtain $\id\leq\x\rp\con\x$ as follows.
  \begin{align*}
    \id&=\id\bp1 &&\eqref{one}
    \\ &=\id\bp\x\rp1 &&\text{hypothesis}
    \\ &\leq\id\bp\x\rp(1\bp\con\x\rp\id) &&\text{rot}
    \\ &=\id\bp\x\rp\con\x&&\text{mon, id}
    \\ &\leq\x\rp\con\x&&\text{mon}
  \end{align*}
  We get $\id\leq\y\rp\con\y$ similarly, hence
  $\id\leq\x\rp\con\x\bp\y\rp\con\y$ by Prop.\ \ref{p3}.  Combining
  this with the other hypothesis $\x\rp\con\x\bp\y\rp\con\y\leq\id$
  yields the desired equality by Prop.\ \ref{p1}.
\end{proof}

\begin{prop}
  \label{f1}
  If $\cona\rp\a\leq\id$ then $\a\bp(\a\bp\x)\rp1\leq\x$.
\end{prop}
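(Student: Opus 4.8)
The plan is to collapse the left-hand side onto $\a\bp\x$ by a single rotation, using the functionality hypothesis to force a trailing factor below $\id$ so that the stray relative product with $1$ is absorbed. First I would note that, by commutativity of $\bp$ (used freely), $\a\bp(\a\bp\x)\rp1=(\a\bp\x)\rp1\bp\a$, which has the shape $\x'\rp\y'\bp\z'$ of the left-hand side of the second inclusion in Prop.~\ref{p8}, with $\x'=\a\bp\x$, $\y'=1$, and $\z'=\a$.

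Applying that inclusion (``rot'') yields $(\a\bp\x)\rp1\bp\a\leq(\a\bp\x)\rp(1\bp\con{\a\bp\x}\rp\a)$, the crucial feature being that $\a\bp\x$ is retained as the left relative-product factor. Next I would show that the inner factor lies below $\id$: by \eqref{4}, $\con{\a\bp\x}=\cona\bp\con\x$, so monotonicity (Prop.~\ref{p6}) together with the hypothesis gives $\con{\a\bp\x}\rp\a\leq\cona\rp\a\leq\id$, whence $1\bp\con{\a\bp\x}\rp\a\leq\con{\a\bp\x}\rp\a\leq\id$ by Prop.~\ref{p2}.

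To finish, I would invoke monotonicity and the identity law: since the inner factor is $\leq\id$, we obtain $(\a\bp\x)\rp(1\bp\con{\a\bp\x}\rp\a)\leq(\a\bp\x)\rp\id=\a\bp\x\leq\x$, the last inequality again by Prop.~\ref{p2}. Chaining the displayed inclusions then gives $\a\bp(\a\bp\x)\rp1\leq\x$ directly; as only one direction is asserted, no antisymmetry step is needed.

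The only point requiring genuine insight is the choice of rotation instance. Among the two forms of Prop.~\ref{p8} and the several possible assignments of $\x',\y',\z'$, one must keep $\a\bp\x$ in the left-hand relative-product slot: this is exactly what lets the hypothesis on $\a$ push the remaining factor below $\id$ and then have the identity law fold it back into $\a\bp\x$. The presence of the full unit $1$ is harmless, since meeting with $1$ is dominated by the other conjunct (Prop.~\ref{p2}), so it never obstructs the reduction.
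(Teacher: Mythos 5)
Your proof is correct and follows essentially the same route as the paper's: the identical instance of the rotation law Prop.~\ref{p8} giving $\a\bp(\a\bp\x)\rp1\leq(\a\bp\x)\rp(1\bp\con{\a\bp\x}\rp\a)$, followed by monotonicity, the hypothesis $\cona\rp\a\leq\id$, and the identity law. The only difference is the immaterial order in which you discharge the two reductions ($\a\bp\x\leq\x$ versus the inner factor falling below $\id$), so nothing further is needed.
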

\begin{proof}
  Assume $\cona\rp\a\leq\id$.
  \begin{align*}
    \a\bp(\a\bp\x)\rp1&\leq(\a\bp\x)\rp(1\bp\con{\a\bp\x}\rp\a)&&\text{rot}
    \\ &\leq\x\rp(\cona\rp\a)&&\text{mon}
    \\ &\leq\x\rp\id&&\text{mon, $\cona\rp\a\leq\id$}
    \\ &=\x&&\text{id}
  \end{align*}
\end{proof}

\begin{prop}[{\cite[4.1(ix)($\beta$)]{MR920815}}]
  \label{q1}
  Assume $\x\leq\cona\rp\b$ and $\a,\b\in\Fn\AA$. Then
  $\x=\cona\rp(\id\bp\a\rp\x\rp\conb)\rp\b$.
\end{prop}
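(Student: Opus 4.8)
The plan is to prove the stated equation by establishing the two inclusions
$\cona\rp(\id\bp\a\rp\x\rp\conb)\rp\b\leq\x$ and
$\x\leq\cona\rp(\id\bp\a\rp\x\rp\conb)\rp\b$ separately, and then to conclude equality by the antisymmetry of $\leq$ (Prop.~\ref{p1}).

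The inclusion from right to left is the easy one, and it does not even require the hypothesis $\x\leq\cona\rp\b$; it uses only $\a,\b\in\Fn\AA$. By Prop.~\ref{p2} we have $\id\bp\a\rp\x\rp\conb\leq\a\rp\x\rp\conb$, so by monotonicity $\cona\rp(\id\bp\a\rp\x\rp\conb)\rp\b\leq\cona\rp\a\rp\x\rp\conb\rp\b$. Reassociating by \eqref{2} and then applying the functionality hypotheses $\cona\rp\a\leq\id$ and $\conb\rp\b\leq\id$ with monotonicity collapses the two outer factors, giving $\cona\rp\a\rp\x\rp\conb\rp\b\leq\id\rp\x\rp\id=\x$ by the identity laws (\eqref{id} and Prop.~\ref{p5}). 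This settles one inclusion in just a few steps.

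The forward inclusion is where the hypothesis $\x\leq\cona\rp\b$ enters, and where essentially all of the work lies. Since $\x\leq\cona\rp\b$ we may write $\x=\cona\rp\b\bp\x$. A first application of rotation (Prop.~\ref{p8}, first form, instantiated with $\cona,\b,\x$) pulls $\b$ out to the right, yielding $\cona\rp\b\bp\x\leq(\x\rp\conb\bp\cona)\rp\b$, hence $\x\leq(\cona\bp\x\rp\conb)\rp\b$ after commuting the meet. A second application of rotation (Prop.~\ref{p8}, second form, instantiated with $\cona,\id,\x\rp\conb$) then factors $\cona$ out on the left: $\cona\bp\x\rp\conb=\cona\rp\id\bp\x\rp\conb\leq\cona\rp(\id\bp\con{\cona}\rp\x\rp\conb)$. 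Because $\con{\cona}=\a$ by \eqref{6}, the inner parenthesis becomes exactly $\id\bp\a\rp\x\rp\conb$. Composing on the right with $\b$ via monotonicity and chaining the two estimates gives $\x\leq\cona\rp(\id\bp\a\rp\x\rp\conb)\rp\b$, as required.

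The step I expect to demand the most care is this forward inclusion, and specifically the choice of instantiations for the two rotation inequalities. The delicate point is inserting the factor $\id$ into the correct slot of Prop.~\ref{p8}'s second form: it is precisely this placement that forces the meet $\id\bp\a\rp\x\rp\conb$ to appear, rather than a bare relative product, while simultaneously producing $\con{\cona}$ so that \eqref{6} can rewrite it as $\a$. Once the two rotations are set up correctly, the remaining manipulations are only monotonicity, associativity, and the identity laws, all of which fall under the conventions of \SS\ref{s16}.
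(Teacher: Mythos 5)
Your proof is correct and follows essentially the same route as the paper: the paper's proof is a single circular chain of inclusions whose first half is exactly your two rotation steps (using $\x\leq\cona\rp\b$, then \eqref{6}) and whose second half is exactly your right-to-left inclusion via monotonicity, associativity, and the functionality of $\a$ and $\b$. Splitting the chain into two separate inclusions rather than closing the loop with antisymmetry implicitly is a purely presentational difference.
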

\begin{proof}
  \begin{align*}
    \x &=\x\bp\cona\rp\b &&\text{$\x\leq\cona\rp\b$}
    \\ &\leq(\x\rp\conb\bp\cona)\rp\b &&\text{rot}
    \\ &=(\x\rp\conb\bp\cona\rp\id)\rp\b &&\text{id}
    \\ &\leq\cona\rp(\id\bp\con{\cona}\rp\x\rp\conb)\rp\b&&\text{rot}
    \\ &=\cona\rp(\id\bp\a\rp\x\rp\conb)\rp\b&&\text{con }
    \\ &\leq\cona\rp(\a\rp\x\rp\conb)\rp\b&&\text{mon}
    \\ &=(\cona\rp\a)\rp\u\rp(\conb\rp\b)&&\text{assoc}
    \\ &\leq\id\rp\u\rp\id&&\text{$\a,\b\in\Fn\AA$, mon}
    \\ &=\x&&\text{id}
  \end{align*}
\end{proof}
The proof of \cite[4.1(viii)]{MR920815} is an equational derivation of
\eqref{Pr} from \eqref{Q}. It was adapted from an earlier proof dated
May 23, 1975, and given to Givant; see \SS\ref{s10a}.  Proposition
\ref{pair} below shows the two applications of the equation
$\cona\rp\b=1$ that occur in the proof of \cite[4.1(viii)]{MR920815}
are independent of each other. The functionality of $\a,\b$ is used
only for one direction of \eqref{Pr} in part (ii) and is not needed
for the other direction proved in part(i), which requires only that
one element in the left side of \eqref{Pr} is covered by a product of
two (possibly different) functional elements.  When \eqref{Q} holds,
Prop.\ \ref{pair}(ii) provides a proof of \eqref{Pr} because \eqref{Q}
implies that the hypotheses of part (iii) hold when $\c=\a$ and
$\d=\b$; see Prop.\ \ref{QPr}. First we note one direction of the
pairing identity for $\a,\b$ follows from just the functionality of
$\a,\b$.
\begin{prop}
  \label{1/2pr}
  If $\a,\b\in\Fn\AA$ then
  $\u\rp\v\bp\x\rp\y\geq(\u\rp\cona\bp\x\rp\conb)\rp(\a\rp\v\bp\b\rp\y)$.
\end{prop}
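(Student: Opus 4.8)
The plan is to prove the single inclusion asserted here, namely that the right-hand side $(\u\rp\cona\bp\x\rp\conb)\rp(\a\rp\v\bp\b\rp\y)$ lies below the left-hand side $\u\rp\v\bp\x\rp\y$. By Prop.\ \ref{p3} it suffices to bound the right-hand side below each of the two meetands $\u\rp\v$ and $\x\rp\y$ separately and then combine. So the whole argument splits into two symmetric monotonicity computations followed by one appeal to Prop.\ \ref{p3}.

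For the bound by $\u\rp\v$, I would discard one conjunct from each of the two factors via Prop.\ \ref{p2}: we have $\u\rp\cona\bp\x\rp\conb\leq\u\rp\cona$ and $\a\rp\v\bp\b\rp\y\leq\a\rp\v$. Monotonicity of composition (Prop.\ \ref{p6}) then gives $(\u\rp\cona\bp\x\rp\conb)\rp(\a\rp\v\bp\b\rp\y)\leq(\u\rp\cona)\rp(\a\rp\v)$, and reassociating rewrites the right side as $\u\rp(\cona\rp\a)\rp\v$. Since $\a\in\Fn\AA$ means exactly $\cona\rp\a\leq\id$, a further application of monotonicity together with \eqref{id} collapses this to $\u\rp\v$, as desired.

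For the bound by $\x\rp\y$ the reasoning is symmetric, keeping the \emph{other} conjunct of each factor: $\u\rp\cona\bp\x\rp\conb\leq\x\rp\conb$ and $\a\rp\v\bp\b\rp\y\leq\b\rp\y$ yield $(\u\rp\cona\bp\x\rp\conb)\rp(\a\rp\v\bp\b\rp\y)\leq\x\rp(\conb\rp\b)\rp\y$, and functionality of $\b$, i.e.\ $\conb\rp\b\leq\id$, collapses this to $\x\rp\y$. Applying Prop.\ \ref{p3} to the two bounds finishes the proof.

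I do not expect any genuine obstacle: this is precisely the ``easy half'' of the pairing identity, and it uses only monotonicity, associativity, the identity law, and the two functionality hypotheses $\cona\rp\a\leq\id$ and $\conb\rp\b\leq\id$. In particular the hypothesis $1=\cona\rp\b$ from \eqref{Q} is never invoked; that equation is what the \emph{reverse} inclusion requires, and it is handled separately in Prop.\ \ref{pair}. The only mild care needed is to remember to drop the correct conjunct in each of the two computations so that the surviving inner product is $\cona\rp\a$ in one case and $\conb\rp\b$ in the other.
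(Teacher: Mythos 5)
Your proof is correct and is essentially the paper's argument: the paper compresses your two symmetric computations into a single chain by first invoking Prop.~\ref{p7} (the product of meets lies below the meet of products, i.e.\ ``mon''), then applying associativity, the functionality hypotheses $\cona\rp\a\leq\id$ and $\conb\rp\b\leq\id$, and the identity law to both meetands at once. Your version merely inlines the proof of Prop.~\ref{p7} and defers the recombination to Prop.~\ref{p3} at the end; there is no substantive difference.
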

\begin{proof}
  \begin{align*}
    (\u\rp\cona\bp\x\rp\conb)\rp(\a\rp\v\bp\b\rp\y)
    &\leq\u\rp\cona\rp(\a\rp\v)\bp\x\rp\conb\rp(\b\rp\y)&&\text{mon}
    \\ &=\u\rp(\cona\rp\a)\rp\v\bp\x\rp(\conb\rp\b)\rp\y&&\text{assoc}
    \\ &\leq\u\rp\id\rp\v\bp\x\rp\id\rp\y&&\text{$\a,\b\in\Fn\AA$,
      mon} \\ &=\u\rp\v\bp\x\rp\y &&\text{id}
  \end{align*}
\end{proof}
\begin{prop}
  \label{pair}
  Assume $\u\leq\con\c\rp\d$ and $\v\rp\con\y\leq\cona\rp\b$.
  \begin{enumerate}
  \item If $\c,\d\in\Fn\AA$ then
    $\u\rp\v\bp\x\rp\y\leq(\u\rp\cona\bp\x\rp\conb)\rp(\a\rp\v\bp\b\rp\y)$.
  \item If $\a,\b,\c,\d\in\Fn\AA$, then
    \begin{align*}
      \u\rp\v\bp\x\rp\y&=(\u\rp\cona\bp\x\rp\conb)\rp(\a\rp\v\bp\b\rp\y).
    \end{align*}
  \end{enumerate}
\end{prop}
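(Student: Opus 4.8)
The plan is to treat the two parts separately, obtaining part (ii) almost for free once part (i) is in hand. Indeed, Prop \ref{1/2pr} already gives the inclusion $\u\rp\v\bp\x\rp\y\geq(\u\rp\cona\bp\x\rp\conb)\rp(\a\rp\v\bp\b\rp\y)$ whenever $\a,\b\in\Fn\AA$, so under the hypotheses of part (ii) it remains only to establish the reverse inclusion; but that reverse inclusion is exactly the conclusion of part (i), whose hypotheses are implied by those of part (ii) (take the same $\c,\d$). Combining the two inclusions with the antisymmetry of $\leq$ (Prop \ref{p1}) yields the equation \eqref{Pr}. Thus all the work is in part (i), the hard half, which is the J-algebra version of Tarski--Givant 4.1(viii).

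For part (i) I would build a forward chain of inclusions from $\u\rp\v\bp\x\rp\y$ to $(\u\rp\cona\bp\x\rp\conb)\rp(\a\rp\v\bp\b\rp\y)$ using only rotation (\eqref{rot}, Prop \ref{p8}) and monotonicity (Props \ref{p6}, \ref{p7}), since neither $\a$ nor $\b$ may be assumed functional here. The decisive maneuver is a rotation that brings $\v$ next to $\con\y$ so as to expose the subterm $\v\rp\con\y$; for instance rotating $\x\rp\y\bp\u\rp\v$ yields $\u\rp\v\bp\x\rp\y\leq(\u\rp\v\rp\con\y\bp\x)\rp\y$, after which the hypothesis $\v\rp\con\y\leq\cona\rp\b$ lets me replace $\v\rp\con\y$ and so inject the elements $\cona$ and $\b$ into the expression. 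The other hypothesis enters through the functionality of $\c,\d$: since $\u\leq\con\c\rp\d$, Prop \ref{q1} applied with $\c,\d,\u$ in place of $\a,\b,\x$ gives the difunctional factorization $\u=\con\c\rp(\id\bp\c\rp\u\rp\con\d)\rp\d$, pinning $\u$ to an $\id$-slice that can be manipulated with Props \ref{icyc} and \ref{exch}. I expect the endgame to split the injected $\cona\rp\b$ by a further rotation, sending $\cona$ leftward to join $\u$ (forming $\u\rp\cona$) and $\b$ rightward to join $\y$ (forming $\b\rp\y$), and then regroup into the required two-factor product.

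The main obstacle is bookkeeping of exactly this kind. Applying $\v\rp\con\y\leq\cona\rp\b$ consumes the legs $\v$ and $\y$, yet the target right-hand factor $\a\rp\v\bp\b\rp\y$ needs both of them back, so the rotations must be arranged so that the bound is applied to one ``cross copy'' of $\v\rp\con\y$ while independent copies of $\v$ and $\y$ survive (or are regenerated from the difunctional skeleton of $\u$) to rebuild the second factor. That this retention cannot be achieved for arbitrary $\a,\b$ is precisely the content of Schmidt's conjecture: by Theorem \ref{ij5} the inclusion \eqref{J} --- which is this very direction --- fails in some relation algebra once the difunctional covering of $\u$ is dropped. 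Hence the covering hypothesis $\u\leq\con\c\rp\d$ with $\c,\d\in\Fn\AA$ is doing essential, non-representable work, and threading it correctly through the long inequality chain (``the first long and rather involved derivation'', in Tarski and Givant's phrase) is the real labor of the proof.
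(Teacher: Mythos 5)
Your overall architecture agrees with the paper's: part (ii) is reduced to part (i) via Prop.~\ref{1/2pr} and antisymmetry, and part (i) is a long forward chain of rotations and monotonicity in which the hypothesis $\v\rp\con\y\leq\cona\rp\b$ injects $\cona\rp\b$, which a further rotation then splits, sending $\cona$ left and $\b$ right. But you have correctly named the central difficulty --- that applying the hypothesis consumes $\v$ and the target still needs $\a\rp\v$ --- without actually overcoming it, and the machinery you propose for overcoming it is borrowed from the wrong proof. Props.~\ref{q1}, \ref{icyc}, and \ref{exch} are the tools of the companion result Prop.~\ref{pair2}, whose hypotheses ($\u\rp\v\bp\x\rp\y\leq\con\c\rp\d$ and $\con\u\rp\x\bp\v\rp\con\y\leq\cona\rp\b$) differ from the ones here; it is not clear how an $\id$-slice argument applies when only $\u$, rather than the whole product $\u\rp\v\bp\x\rp\y$, is covered by $\con\c\rp\d$. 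The paper's proof of the present proposition uses none of those three lemmas. Moreover your opening move, $\u\rp\v\bp\x\rp\y\leq(\u\rp\v\rp\con\y\bp\x)\rp\y$, discards the only free-standing occurrence of $\v$ at the very first step, so the chain as you describe it cannot reach $(\u\rp\cona\bp\x\rp\conb)\rp(\a\rp\v\bp\b\rp\y)$.

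The retention-and-regeneration device that actually closes the gap has three steps, none of which appears in your sketch. First, $\u\leq\con\c\rp\d$ is converted by a single rotation into $\u\leq(\u\rp\con\d\bp\con\c)\rp\d$, so that $\u\rp\v$ becomes $(\u\rp\con\d\bp\con\c)\rp(\d\rp\v)$ and the copy of $\v$ that must survive is carried on the prefix $\d$. Second, the cross copy is created while the original is retained in the Boolean product: rotation turns $\d\rp\v\bp\c\rp\x\rp\y$ into $\d\rp\v\bp(\c\rp\x\bp\d\rp(\v\rp\con\y))\rp\y$, and the hypothesis is applied only to the inner occurrence $\d\rp(\v\rp\con\y)\leq\d\rp\cona\rp\b$, leaving the outer $\d\rp\v$ intact. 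Third --- and this is where $\d\in\Fn\AA$ earns its keep --- after $\b$ has been rotated rightward onto $\y$, one more rotation converts the retained $\d\rp\v$ into $\con{\d\rp\cona}\rp(\d\rp\v)=\a\rp(\con\d\rp\d)\rp\v\leq\a\rp\v$, regenerating the missing factor; functionality of $\c$ and $\d$ is used once more at the end to collapse $(\u\rp\con\d\bp\con\c)\rp(\c\rp\x\rp\conb\bp\d\rp\cona)$ into $\u\rp\cona\bp\x\rp\conb$. Until these steps are supplied, your argument is a plan rather than a proof; your closing appeal to Theorem~\ref{ij5} rightly explains why the covering hypothesis cannot be dispensed with, but that observation is not a substitute for showing how the covering is threaded through the chain.
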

\begin{proof}
  The following calculation proves part (i) and one direction of the
  pairing identity \eqref{Pr} in part (iii).
  \begin{align*}
    &\u\rp\v\bp\x\rp\y \\ &=(\u\bp\con\c\rp\d)\rp\v\bp\x\rp\y
    &&\u\leq\con\c\rp\d
    \\ &=((\u\rp\con\d\bp\con\c)\rp\d)\rp\v\bp\x\rp\y &&\text{rot}
    \\ &=(\u\rp\con\d\bp\con\c)\rp(\d\rp\v)\bp\x\rp\y &&\text{assoc}
    \\ &\leq(\u\rp\con\d\bp\con\c)\rp\big(\d\rp\v\bp
    \con{\u\rp\con\d\bp\con\c}\rp(\x\rp\y)\big) &&\text{rot}
    \\ &\leq(\u\rp\con\d\bp\con\c)\rp\big(\d\rp\v\bp
    (\d\rp\con\u\bp\c)\rp(\x\rp\y)\big) &&\text{con}
    \\ &\leq(\u\rp\con\d\bp\con\c)\rp\big(\d\rp\v\bp
    \c\rp(\x\rp\y)\big) &&\text{mon}
    \\ &=(\u\rp\con\d\bp\con\c)\rp\big(\d\rp\v\bp\c\rp\x\rp\y\big)
    &&\text{assoc}
    \\ &=(\u\rp\con\d\bp\con\c)\rp\big(\d\rp\v\bp(\c\rp\x\bp\d\rp\v\rp\con\y)
    \rp\y\big) &&\text{rot}
    \\ &=(\u\rp\con\d\bp\con\c)\rp\big(\d\rp\v\bp(\c\rp\x\bp\d\rp(\v\rp\con\y))
    \rp\y\big) &&\text{assoc}
    \\ &\leq(\u\rp\con\d\bp\con\c)\rp\big(\d\rp\v\bp(\c\rp\x\bp\d\rp(\cona\rp\b))
    \rp\y\big) &&\text{$\v\rp\con\y\leq\cona\rp\b$, mon}
    \\ &=(\u\rp\con\d\bp\con\c)\rp\big(\d\rp\v\bp(\c\rp\x\bp\d\rp\cona\rp\b)\rp\y\big)
    &&\text{assoc}
    \\ &\leq(\u\rp\con\d\bp\con\c)\rp\big(\d\rp\v\bp((\c\rp\x\rp\conb\bp\d\rp\cona)
    \rp\b)\rp\y\big) &&\text{rot}
    \\ &=(\u\rp\con\d\bp\con\c)\rp\big(\d\rp\v\bp(\c\rp\x\rp\conb\bp\d\rp\cona)
    \rp(\b\rp\y)\big) &&\text{assoc}
    \\ &\leq(\u\rp\con\d\bp\con\c)\rp\big((\c\rp\x\rp\conb\bp\d\rp\cona)\rp
    (\con{\c\rp\x\rp\conb\bp\d\rp\cona}\rp(\d\rp\v)\bp\b\rp\y)\big)&&\text{rot}
    \\ &\leq(\u\rp\con\d\bp\con\c)\rp\big((\c\rp\x\rp\conb\bp\d\rp\cona)\rp
    (\con{\d\rp\cona}\rp(\d\rp\v)\bp\b\rp\y)\big) &&\text{mon}
    \\ &=(\u\rp\con\d\bp\con\c)\rp\big((\c\rp\x\rp\conb\bp\d\rp\cona)\rp
    (\a\rp\con\d\rp(\d\rp\v)\bp\b\rp\y)\big) &&\text{con}
    \\ &=(\u\rp\con\d\bp\con\c)\rp(\c\rp\x\rp\conb\bp\d\rp\cona)\rp
    (\a\rp(\con\d\rp\d)\rp\v\bp\b\rp\y) &&\text{assoc}
    \\ &\leq(\u\rp\con\d\bp\con\c)\rp(\c\rp\x\rp\conb\bp\d\rp\cona)\rp
    (\a\rp\id\rp\v\bp\b\rp\y) &&\text{$\d\in\Fn\AA$, mon}
    \\ &=(\u\rp\con\d\bp\con\c)\rp(\c\rp\x\rp\conb\bp\d\rp\cona)
    \rp(\a\rp\v\bp\b\rp\y) &&\text{id}
    \\ &\leq\big(\u\rp\con\d\rp(\d\rp\cona)\bp\con\c\rp(\c\rp\x\rp\conb)\big)
    \rp(\a\rp\v\bp\b\rp\y) &&\text{mon}
    \\ &=\big(\u\rp(\con\d\rp\d)\rp\cona\bp\con\c\rp\c\rp\x\rp\conb\big)
    \rp(\a\rp\v\bp\b\rp\y) &&\text{assoc}
    \\ &\leq\big(\u\rp\id\rp\cona\bp\id\rp\x\rp\conb\big)\rp(\a\rp\v\bp\b\rp\y)
    &&\text{$\c,\d\in\Fn\AA$, mon}
    \\ &=(\u\rp\cona\bp\x\rp\conb)\rp(\a\rp\v\bp\b\rp\y)&&\text{id}
  \end{align*}
  The other direction of \eqref{Pr} in part (ii) follows from the
  assumption that $\a,\b\in\Fn\AA$ by Prop.\ \ref{1/2pr}.
\end{proof}
The next proposition achieves conclusions similar to those of
Prop.\ \ref{pair} but under rather different hypotheses. Both
propositions yield \eqref{Pr} when \eqref{Q} holds in a J-algebra; see
Prop.\ \ref{QPr}. The conclusion in part (i) is the same formula
\eqref{J} introduced in \SS\ref{s18}; see Def.\ \ref{ij1}.  Thus,
Prop.\ \ref{pair2}(i) provides a proof for one case of Th.\ \ref{ij6}.
\begin{prop}
  \label{pair2}
  Assume $\c,\d\in\Fn\AA$ and $\u\rp\v\bp\x\rp\y\leq\con\c\rp\d$.
  \begin{enumerate}
  \item For all $\a,\b$,
    \begin{align*}
      \con\u\rp\x\bp\v\rp\con\y\leq\cona\rp\b\implies\u\rp\v\bp\x\rp\y
      \leq(\u\rp\cona\bp\x\rp\conb)\rp(\a\rp\v\bp\b\rp\y).
    \end{align*}
  \item
    If $\a,\b\in\Fn\AA$ then
    \begin{align*}
      \con\u\rp\x\bp\v\rp\con\y\leq\cona\rp\b\implies\u\rp\v\bp\x\rp\y
      =(\u\rp\cona\bp\x\rp\conb)\rp(\a\rp\v\bp\b\rp\y).
    \end{align*}
  \item
    If $\a,\b\in\Fn\AA$ and $1=\cona\rp\b$ then
    \begin{align*}
      \u\rp\v\bp\x\rp\y
      =(\u\rp\cona\bp\x\rp\conb)\rp(\a\rp\v\bp\b\rp\y).
    \end{align*}
  \end{enumerate}
\end{prop}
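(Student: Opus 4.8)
The plan is to prove the three parts in the order (i), (ii), (iii), since (ii) and (iii) are short corollaries of (i). For part (ii), the inequality $\leq$ is precisely the conclusion of part (i) — its hypotheses are already in force, and the extra assumption $\a,\b\in\Fn\AA$ does no harm — while the reverse inequality $\geq$ is supplied unconditionally by Prop.\ \ref{1/2pr}, which needs only $\a,\b\in\Fn\AA$; combining the two gives the asserted equality. For part (iii), I would take $\c=\a$ and $\d=\b$, so that $\con\c\rp\d=\cona\rp\b=1$. Then both the standing hypothesis $\u\rp\v\bp\x\rp\y\leq\con\c\rp\d$ and the antecedent $\con\u\rp\x\bp\v\rp\con\y\leq\cona\rp\b$ of part (ii) hold automatically by \eqref{one}, so part (ii) delivers the equality with no further assumptions.

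The real content is therefore part (i), which is formula \eqref{J} for the instance $\t=\u\rp\v\bp\x\rp\y$. Since \eqref{J} can fail outright in a relation algebra (Theorem \ref{ij5}), the hypothesis $\u\rp\v\bp\x\rp\y\leq\con\c\rp\d$ with $\c,\d$ functional must be used in an essential way. My plan is to turn this containment into a statement about a subidentity. Writing $\t=\u\rp\v\bp\x\rp\y$, Prop.\ \ref{q1} with $\c,\d$ in the roles of $\a,\b$ yields $\t=\con\c\rp(\id\bp\c\rp\t\rp\con\d)\rp\d$, exhibiting $\t$ as a subidentity $\s=\id\bp\c\rp\t\rp\con\d$ conjugated by $\con\c$ on the left and $\d$ on the right. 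Because $\c,\d\in\Fn\AA$, functional distributivity (Prop.\ \ref{f-dist}) expands $\c\rp\t\rp\con\d=(\c\rp\u)\rp(\v\rp\con\d)\bp(\c\rp\x)\rp(\y\rp\con\d)$, so $\s$ has the shape $\id\bp U\rp V\bp X\rp Y$. Prop.\ \ref{icyc} then bounds $\s$ by $\id\bp(U\bp\con V)\rp(\con U\rp X\bp V\rp\con Y)\rp(Y\bp\con X)$, and the middle factor $\con U\rp X\bp V\rp\con Y$ is bounded, using $\con\c\rp\c\leq\id$ and $\con\d\rp\d\leq\id$, by $\con\u\rp\x\bp\v\rp\con\y$, which the rectangular hypothesis sends into $\cona\rp\b$. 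This is exactly the point at which both hypotheses enter.

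The hard part will be the final re-sandwiching: reinserting $\con\c$ and $\d$ around the bound for $\s$ and collapsing the result to the target $(\u\rp\cona\bp\x\rp\conb)\rp(\a\rp\v\bp\b\rp\y)$. Crude monotonicity fails here, because $\s$ is organized along the composition split ($\u\rp\v$ against $\x\rp\y$), whereas the target is organized along the pairing split ($\u,\x$ first and then $\v,\y$); the rewiring between the two is carried entirely by the $\cona\rp\b$ produced in the previous step, and extracting it requires threading $\con\c$ and $\d$ through by rotation (rot) rather than simply discarding factors, using only the one-sided functionality $\con\c\rp\c\leq\id$ and $\con\d\rp\d\leq\id$ — the same delicate bookkeeping that makes the companion Prop.\ \ref{pair} a long derivation. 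An alternative I would keep in reserve is to forgo Props.\ \ref{q1} and \ref{icyc} and instead run a single direct rotation chain starting from $\u\rp\v\bp\x\rp\y$ in the style of Prop.\ \ref{pair}, engineered so that the pairing split emerges just as the rectangular hypothesis is applied; this trades the re-sandwiching obstacle for a longer but more tightly controlled sequence of rot, mon, assoc, and con steps.
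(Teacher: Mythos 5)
Your plan reproduces the paper's proof almost exactly. Parts (ii) and (iii) are dispatched just as in the paper: (ii) is (i) plus Prop.~\ref{1/2pr}, and (iii) only needs the observation that the antecedent of (ii) holds trivially by \eqref{one} when $\cona\rp\b=1$ (there is no need to replace $\c,\d$ by $\a,\b$, since the standing hypotheses about $\c,\d$ are already assumed in (iii); your substitution is harmless but superfluous). For part (i) the paper follows precisely your route: Prop.~\ref{q1} writes $\u\rp\v\bp\x\rp\y=\con\c\rp(\id\bp\c\rp(\u\rp\v\bp\x\rp\y)\rp\con\d)\rp\d$, functional distributivity splits the subidentity as $\id\bp\p\rp\q\bp\r\rp\s$ with $\p=\c\rp\u$, $\q=\v\rp\con\d$, $\r=\c\rp\x$, $\s=\y\rp\con\d$, Prop.~\ref{icyc} bounds this by $\id\bp(\p\bp\con\q)\rp(\con\p\rp\r\bp\q\rp\con\s)\rp(\s\bp\con\r)$, and the middle factor is absorbed into $\cona\rp\b$ exactly as you describe.

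The one ingredient you have not named is the tool that resolves what you call the re-sandwiching obstacle: Prop.~\ref{exch}, the exchange identity $\id\bp(\u\bp\x)\rp(\v\bp\y)=\id\bp(\u\bp\con\v)\rp(\con\x\bp\y)$ beneath a subidentity. After the $\cona$ and $\b$ from the middle factor are pushed onto the two flanks by rot/mon/assoc, the subidentity is bounded by $\id\bp(\c\rp\u\rp\cona\bp\d\rp\con\v\rp\cona)\rp(\b\rp\con\x\rp\con\c\bp\b\rp\y\rp\con\d)$, which is still organized along the composition split. Applying exch moves $\d\rp\con\v\rp\cona$ to the right flank as its converse $\a\rp\v\rp\con\d$ and $\b\rp\con\x\rp\con\c$ to the left flank as $\c\rp\x\rp\conb$; this single swap is what converts the composition split into the pairing split. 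Once it is done, your worry that one cannot simply discard factors evaporates: func dist pulls $\c$ out of the left flank and $\con\d$ out of the right, and the outer $\con\c\rp\c\leq\id$ and $\con\d\rp\d\leq\id$ are then discarded by plain monotonicity. So the skeleton is correct and nothing in it would fail, but the proof is not complete until the exchange step is supplied; without Prop.~\ref{exch} (or an equivalent rotation argument proving it inline) the final collapse does not go through.
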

\begin{proof}
  For part (i), assume $\con\u\rp\x\bp\v\rp\con\y\leq\cona\rp\b$.  Let
  $\w=\u\rp\v$ and $\z=\x\rp\y$, so one of the hypotheses now says
  $\w\bp\z\leq\con\c\rp\d$.  First we show
  \begin{align}
    \label{what}
    \w\bp\z&=\con\c\rp(\id\bp\c\rp\w\rp\con\d\bp\c\rp\z\rp\con\d)\rp\d
  \end{align}
  as follows.
  \begin{align*}
    \w\bp\z&=\con\c\rp(\id\bp\c\rp(\w\bp\z)\rp\con\d)\rp\d
    &&\text{Prop.\ \ref{q1}, $\c,\d\in\Fn\AA$,
      $\w\bp\z\leq\con\c\rp\d$}
    \\ &=\con\c\rp(\id\bp\c\rp\w\rp\con\d\bp\c\rp\z\rp\con\d)\rp\d
    &&\text{func dist, $\c,\d\in\Fn\AA$}
    \\ &\leq\con\c\rp(\c\rp\w\rp\con\d)\rp\d
    \bp\con\c\rp(\c\rp\z\rp\con\d)\rp\d &&\text{mon}
    \\ &=(\con\c\rp\c)\rp\w\rp(\con\d\rp\d)
    \bp(\con\c\rp\c)\rp\z\rp(\con\d\rp\d) &&\text{assoc}
    \\ &\leq\id\rp\w\rp\id\bp\id\rp\z\rp\id&&\text{mon,
      $\c,\d\in\Fn\AA$} \\ &=\w\bp\z &&\text{id}
  \end{align*}
  Let $\p=\c\rp\u$, $\q=\v\rp\con\d$, $\r=\c\rp\x$, and
  $\s=\y\rp\con\d$. Then, by \eqref{what}, \eqref{2}, and the
  definitions of $\w,\z,\p,\q,\r,\s$,
  \begin{align}
    \label{1}
    \u\rp\v\bp\x\rp\y=\con\c\rp(\id\bp\p\rp\q\bp\r\rp\s)\rp\d
  \end{align}
  Consider the subterm $\id\bp\p\rp\q\bp\r\rp\s$ appearing on the
  right side of \eqref{1}. By Prop.\ \ref{icyc} we have
  \begin{align}
    \label{2a}
    \id\bp\p\rp\q\bp\r\rp\s&\leq\id\bp(\p\bp\con\q)
    \rp(\con\p\rp\r\bp\q\rp\con\s)\rp(\s\bp\con\r)
  \end{align}
  The subterm $\con\p\rp\r\bp\q\rp\con\s$ on the right side of
  \eqref{2a} has the property that
  \begin{align}
    \label{3a}
    \con\p\rp\r\bp\q\rp\con\s \leq\cona\rp\b
  \end{align}
  because
  \begin{align*}
    \con\p\rp\r\bp\q\rp\con\s
    &=\con{\c\rp\u}\rp(\c\rp\x)\bp\v\rp\con\d\rp\con{\y\rp\con\d}
    &&\text{defs of $\p,\q,\r,\s$}
    \\ &=\con\u\rp\con\c\rp(\c\rp\x)\bp\v\rp\con\d\rp(\d\rp\con\y)
    &&\text{con}
    \\ &=\con\u\rp(\con\c\rp\c)\rp\x\bp\v\rp(\con\d\rp\d)\rp\con\y
    &&\text{assoc} \\ &\leq\con\u\rp\id\rp\x\bp\v\rp\id\rp\con\y
    &&\text{$\c,\d\in\Fn\AA$, mon} \\ &\leq\con\u\rp\x\bp\v\rp\con\y
    &&\text{id} \\ &\leq\cona\rp\b&&\text{assumption}
  \end{align*}
  From \eqref{2a} and \eqref{3a} we conclude that
  \begin{align}
    \label{4a}
    \id\bp\p\rp\q\bp\r\rp\s
    \leq\id\bp(\c\rp\u\rp\cona\bp\d\rp\con\v\rp\cona)
    \rp(\b\rp\con\x\rp\con\c\bp\b\rp\y\rp\con\d)
  \end{align}
  because
  \begin{align*}
    &\id\bp\p\rp\q\bp\r\rp\s
    \\ &\leq\id\bp(\p\bp\con\q)\rp(\cona\rp\b)\rp(\s\bp\con\r)
    &&\text{\eqref{2a}, \eqref{3a}, mon}
    \\ &=\id\bp(\c\rp\u\bp\con{\v\rp\con\d})\rp(\cona\rp\b)
    \rp(\y\rp\con\d\bp\con{\c\rp\x}) &&\text{defs of $\p,\q,\r,\s$}
    \\ &=\id\bp(\c\rp\u\bp\d\rp\con\v)\rp(\cona\rp\b)
    \rp(\y\rp\con\d\bp\con\x\rp\con\c)
    &&\text{con}
    \\ &=\id\bp(\c\rp\u\bp\d\rp\con\v)\rp\cona
    \rp(\b\rp(\y\rp\con\d\bp\con\x\rp\con\c))
    &&\text{assoc}
    \\ &\leq\id\bp(\c\rp\u\rp\cona\bp\d\rp\con\v\rp\cona)
    \rp(\b\rp(\con\x\rp\con\c)\bp
    \b\rp(\y\rp\con\d)) &&\text{mon}
    \\ &=\id\bp(\c\rp\u\rp\cona\bp\d\rp\con\v\rp\cona)
    \rp(\b\rp\con\x\rp\con\c\bp\b\rp\y\rp\con\d) &&\text{assoc}
  \end{align*}
  We now finish the proof of part (i) by showing
  $\u\rp\v\bp\x\rp\y\leq(\u\rp\cona\bp\x\rp\conb)\rp(\a\rp\v\bp\b\rp\y)$,
  the conclusion of \eqref{J}.
  \begin{align*}
    &\u\rp\v\bp\x\rp\y
    \\ &\leq\con\c\rp(\id\bp(\c\rp\u\rp\cona\bp\d\rp
    \con\v\rp\cona)\rp(\b\rp\con\x\rp\con\c\bp\b\rp\y\rp\con\d))\rp\d
    &&\text{\eqref{1}, \eqref{4a}, mon}
    \\ &=\con\c\rp(\id\bp(\c\rp\u\rp\cona\bp\con{\b\rp\con\x\rp\con\c})
    \rp(\con{\d\rp\con\v\rp\cona}\bp\b\rp\y\rp\con\d))\rp\d
    &&\text{Prop.\ \ref{exch}}
    \\ &=\con\c\rp(\id\bp(\c\rp\u\rp\cona\bp\c\rp\x\rp\conb)
    \rp(\a\rp\v\rp\con\d\bp\b\rp\y\rp\con\d))\rp\d &&\text{con, assoc}
    \\ &=\con\c\rp(\id\bp\c\rp(\u\rp\cona\bp\x\rp\conb)
    \rp((\a\rp\v\bp\b\rp\y)\rp\con\d))\rp\d
    &&\text{func dist, $\c,\d\in\Fn\AA$}
    \\ &\leq(\con\c\rp\c)\rp(\u\rp\cona\bp\x\rp\conb)
    \rp(\a\rp\v\bp\b\rp\y)\rp(\con\d\rp\d) &&\text{mon, assoc}
    \\ &\leq\id\rp(\u\rp\cona\bp\x\rp\conb)\rp(\a\rp\v\bp\b\rp\y)\rp\id
    &&\text{$\c,\d\in\Fn\AA$, mon}
    \\ &=(\u\rp\cona\bp\x\rp\conb)\rp(\a\rp\v\bp\b\rp\y) &&\text{id}
  \end{align*}

  Part (ii) holds by part (i) and Prop.\ \ref{1/2pr}, which provides
  the opposite direction of the pairing identity for $\a,\b$ with no
  need for the hypotheses of part (i), namely, $\c,\d\in\Fn\AA$,
  $\u\rp\v\bp\x\rp\y\leq\con\c\rp\d$, and
  $\con\u\rp\x\bp\v\rp\con\y\leq\cona\rp\b$.

  For part (iii) it suffices to note that the hypothesis of the
  implication proved in part (ii) holds by \eqref{one} under the
  assumption that $\cona\rp\b=1$.
\end{proof}

\section{Quasiprojections}
\label{s19}
Consider two fixed elements $\a$ and $\b$.  Borrowing notation and
concepts from fork algebras (see \SS\ref{s10} and
\cite[p.\,23]{MR1922164}), we use $\a$ and $\b$ as parameters to
define binary operations $\nabla$, $\otimes$, and $\fkc{}{}$ by
setting, for all $\x,\y\in\A$,
\begin{align}
  \fk\x\y&=\x\rp\cona\bp\y\rp\conb, \\
  \label{ot}
  \ot\x\y&=\a\rp\x\rp\cona\bp\b\rp\y\rp\conb,
  \\ \fkc\x\y&=\a\rp\x\bp\b\rp\y.
\end{align}
These operations acquire interesting and useful properties when the
parameters $\a$ and $\b$ satisfy various combinations of the equations
\eqref{Q}, \eqref{D}, and \eqref{U}.  By Prop.\ \ref{prop2a},
\eqref{D} and \eqref{U} imply
\begin{align}
  \label{T}
  \id&=\a\rp\cona\bp\b\rp\conb
\end{align}
By Prop.\ \ref{QPr}, which relies on either Prop.\ \ref{pair} or
Prop.\ \ref{pair2}, the pairing identity \eqref{Pr} holds for the
elements $\a$ and $\b$ whenever \eqref{Q} holds. By Prop.\ \ref{fork},
if \eqref{Q} and \eqref{U} then $\nabla$ is the fork operation
satisfying axioms \eqref{F1}--\eqref{F3}. The operation $\otimes$
could be called ``parallel computation'' to reflect its use in
applications in computer science, or ``parallel composition'' because
of Prop.\ \ref{gg-rule} below.  We show next the operations $\nabla$
and $\otimes$ produce functional outputs from functional inputs
whenever $\a$ and $\b$ are functional and the Unicity Condition
\eqref{U} holds.
\begin{prop}
  \label{f-closed}
  Assume $\a,\b\in\Fn\AA$ and \eqref{U}. If $\x,\y\in\Fn\AA$ then
  $\fk\x\y,\ot\x\y\in\Fn\AA$.
\end{prop}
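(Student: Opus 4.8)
The plan is to handle the fork operation $\nabla$ directly and then obtain the $\otimes$ case by reducing it to the fork case. The key observation enabling the reduction is that, by associativity \eqref{2} together with the two definitions, $\ot\x\y=\fk{(\a\rp\x)}{(\b\rp\y)}$. Since $\a,\b\in\Fn\AA$ by hypothesis, Prop.\ \ref{func}(iii) gives $\a\rp\x,\b\rp\y\in\Fn\AA$ whenever $\x,\y\in\Fn\AA$. Hence, once I establish that the fork of two functional elements is functional, the claim for $\otimes$ follows at once by applying that result to $\a\rp\x$ and $\b\rp\y$ in place of $\x$ and $\y$.

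It therefore remains to show $\fk\x\y\in\Fn\AA$ when $\x,\y\in\Fn\AA$, that is, $\con{\fk\x\y}\rp\fk\x\y\leq\id$. First I would compute the converse: applying \eqref{4}, \eqref{5}, and \eqref{6} to $\fk\x\y=\x\rp\cona\bp\y\rp\conb$ yields $\con{\fk\x\y}=\a\rp\con\x\bp\b\rp\con\y$. Next I would expand the product $\con{\fk\x\y}\rp\fk\x\y=(\a\rp\con\x\bp\b\rp\con\y)\rp(\x\rp\cona\bp\y\rp\conb)$ and bound it using Prop.\ \ref{p7}, which sends a relative product of two Boolean products to the Boolean product of the two ``diagonal'' relative products, giving $\con{\fk\x\y}\rp\fk\x\y\leq(\a\rp\con\x)\rp(\x\rp\cona)\bp(\b\rp\con\y)\rp(\y\rp\conb)$.

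To finish, I would regroup each summand by associativity so that the functionality of $\x$ and $\y$ can be invoked: $(\a\rp\con\x)\rp(\x\rp\cona)=\a\rp(\con\x\rp\x)\rp\cona\leq\a\rp\cona$ because $\con\x\rp\x\leq\id$, and likewise $(\b\rp\con\y)\rp(\y\rp\conb)\leq\b\rp\conb$. Combining these gives $\con{\fk\x\y}\rp\fk\x\y\leq\a\rp\cona\bp\b\rp\conb\leq\id$, where the final step is exactly the Unicity Condition \eqref{U}. This proves $\fk\x\y\in\Fn\AA$.

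The calculation itself is essentially routine: its only ingredients are the converse axioms, Prop.\ \ref{p7}, and the two functionality inequalities collapsing into \eqref{U}. Any real difficulty is conceptual rather than technical, namely spotting the reduction $\ot\x\y=\fk{(\a\rp\x)}{(\b\rp\y)}$, which lets one avoid repeating the fork computation for $\otimes$ and instead appeal to closure of $\Fn\AA$ under composition. I would also note that this reduction is precisely why $\a,\b$ are required to be \emph{functional} and not merely to satisfy \eqref{U}, since $\a\rp\x$ and $\b\rp\y$ must themselves lie in $\Fn\AA$.
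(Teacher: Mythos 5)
Your proof is correct and follows essentially the same route as the paper: the fork case is established by the identical chain (converse via the con axioms, the bound from Prop.~\ref{p7}, reassociation, functionality of $\x,\y$, and finally \eqref{U}), and the $\otimes$ case is obtained by the same reduction $\ot\x\y=\fk{(\a\rp\x)}{(\b\rp\y)}$ together with Prop.~\ref{func}(iii).
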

\begin{proof}
  We have $\fk\x\y\in\Fn\AA$ because
  \begin{align*}
    \con{\fk\x\y}\rp\fk\x\y
    &=\con{\x\rp\cona\bp\y\rp\conb}\rp(\x\rp\cona\bp\y\rp\conb)
    &&\text{def $\fk{}{}$}
    \\ &=(\a\rp\con\x\bp\b\rp\con\y)\rp(\x\rp\cona\bp\y\rp\conb)
    &&\text{con}
    \\ &\leq(\a\rp\con\x)\rp(\x\rp\cona)\bp(\b\rp\con\y)\rp(\y\rp\conb)
    &&\text{mon}
    \\ &=\a\rp(\con\x\rp\x)\rp\cona\bp\b\rp(\con\y\rp\y)\rp\conb&&\text{assoc}
    \\ &\leq\a\rp\id\rp\cona\bp\b\rp\id\rp\conb
    &&\text{$\x,\y\in\Fn\AA$, mon}
    \\ &=\a\rp\cona\bp\b\rp\conb&&\text{id}
    \\ &\leq\id&&\eqref{U}
  \end{align*}
  From $\a,\b,\x,\y\in\Fn\AA$ we get $\a\rp\x,\b\rp\y\in\Fn\AA$ by
  Prop.\ \ref{func}(iii), hence $\ot\x\y=\fk{(\a\rp\x)}{(\b\rp\y)}
  \in\Fn\AA$ by what was just proved.
\end{proof}
\begin{prop}
  \label{fgh-closed}
  Assume \eqref{Q} and \eqref{U}. Then $0,\id,\a,\b\in\Fn\AA$ and
  $\Fn\AA$ is closed under inclusion $\leq$, Boolean product $\bp$,
  relative product $\rp$, $\fk{}{}$, and $\ot{}{}$.
\end{prop}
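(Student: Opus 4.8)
The plan is to recognize this proposition as a bookkeeping corollary that assembles results already proved, so no genuinely new calculation is required; everything reduces to citing earlier propositions once the base memberships are in hand.

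First I would dispatch the four membership claims. By Prop.\ \ref{func}(i) we already have $0,\id\in\Fn\AA$. For $\a$ and $\b$, the defining condition for membership in $\Fn\AA$ (Definition \ref{FnPm}) is $\con\x\rp\x\leq\id$, and the instances $\cona\rp\a\leq\id$ and $\conb\rp\b\leq\id$ are precisely the first two equations of \eqref{Q}. Hence $\a,\b\in\Fn\AA$ with no further work.

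Next I would handle the three elementary closure properties. Closure under inclusion is exactly Prop.\ \ref{func}(ii), and closure under relative product is exactly Prop.\ \ref{func}(iii). For closure under Boolean product, given $\x,\y\in\Fn\AA$, Prop.\ \ref{p2} yields $\x\bp\y\leq\x$, and then closure under inclusion (Prop.\ \ref{func}(ii)) gives $\x\bp\y\in\Fn\AA$.

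The remaining two closures, under $\fk{}{}$ and $\ot{}{}$, follow immediately from Prop.\ \ref{f-closed}: its hypotheses are $\a,\b\in\Fn\AA$ and \eqref{U}, both now available (the former just established, the latter assumed), and its conclusion is exactly that $\fk\x\y,\ot\x\y\in\Fn\AA$ whenever $\x,\y\in\Fn\AA$. The only point requiring any attention---and so the nearest thing to an obstacle---is the ordering of the argument: Prop.\ \ref{f-closed} is phrased with $\a,\b\in\Fn\AA$ as a hypothesis rather than as a consequence of \eqref{Q}, so one must first extract $\a,\b\in\Fn\AA$ from \eqref{Q} before that proposition can be invoked. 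Once that is noted, the proof is complete. I would also observe in passing that the third equation $1=\cona\rp\b$ of \eqref{Q} is never used here; only the functionality of $\a$ and $\b$ together with \eqref{U} is needed.
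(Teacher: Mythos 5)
Your proposal is correct and follows essentially the same route as the paper's own proof: base memberships from Prop.~\ref{func}(i) and the first two equations of \eqref{Q}, closure under inclusion and relative product from Prop.~\ref{func}(ii)(iii), Boolean product via closure under inclusion, and the fork closures from Prop.~\ref{f-closed}. Your closing observation that $1=\cona\rp\b$ is never needed is accurate and a nice touch, though the paper does not remark on it.
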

\begin{proof}
  First observe that $0,\id\in\Fn\AA$ by Prop.\ \ref{func}(i), that
  $\a,\b\in\Fn\AA$ by \eqref{Q}, and that $\Fn\AA$ is closed under
  inclusion and relative product by Prop.\ \ref{func}(ii)(iii). It
  follows from closure under inclusion that $\Fn\AA$ is also closed
  under Boolean product. $\Fn\AA$ is closed under $\fk{}{}$ and
  $\ot{}{}$ by Prop.\ \ref{f-closed}.
\end{proof}
\begin{prop}
  \label{g-id}
  If $\a\rp\cona\bp\b\rp\conb=\id$ then $\ot\id\id=\id$.
\end{prop}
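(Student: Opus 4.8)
The plan is to unfold the definition \eqref{ot} of $\otimes$ at the pair $\id,\id$ and then strip away the copies of $\id$ using the two identity laws. By \eqref{ot}, $\ot\id\id=\a\rp\id\rp\cona\bp\b\rp\id\rp\conb$, so the entire task reduces to recognizing the two summands $\a\rp\id\rp\cona$ and $\b\rp\id\rp\conb$ as $\a\rp\cona$ and $\b\rp\conb$, at which point the hypothesis finishes the argument.

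For the left summand I would first apply \eqref{id} to get $\a\rp\id=\a$, then regroup by associativity \eqref{2} and apply the last clause of Prop.\ \ref{p5} (namely $\id\rp\x=\x$) to obtain $\a\rp\id\rp\cona=\a\rp(\id\rp\cona)=\a\rp\cona$; one could equally well use $\a\rp\id=\a$ directly and read off $\a\rp\id\rp\cona=(\a\rp\id)\rp\cona=\a\rp\cona$ by left association. The identical manipulation applied to the right summand yields $\b\rp\id\rp\conb=\b\rp\conb$. Combining these simplifications gives $\ot\id\id=\a\rp\cona\bp\b\rp\conb$, which is precisely the left-hand side of the hypothesis $\a\rp\cona\bp\b\rp\conb=\id$, and hence $\ot\id\id=\id$.

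There is no genuine obstacle here: the statement is an immediate consequence of unfolding the definition of $\otimes$ together with the identity law, so the whole proof is a single short chain of equalities requiring none of the deeper machinery (the Unicity Condition \eqref{U}, the pairing identity, or functionality) that powers the surrounding results such as Prop.\ \ref{f-closed}. The only point demanding any care is the routine bookkeeping between the two forms of the identity law ($\x\rp\id=\x$ from \eqref{id} and $\id\rp\x=\x$ from Prop.\ \ref{p5}) and the implicit use of associativity \eqref{2}, all of which are already covered by the notational conventions adopted at the end of \SS\ref{s16} and may be applied without explicit reference.
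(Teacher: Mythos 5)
Your proof is correct and is essentially identical to the paper's: both unfold the definition of $\otimes$ at $\id,\id$, simplify $\a\rp\id\rp\cona$ and $\b\rp\id\rp\conb$ via the identity law \eqref{id}, and conclude from the hypothesis $\a\rp\cona\bp\b\rp\conb=\id$. Your extra care about left association versus Prop.\ \ref{p5} is harmless but not needed beyond what the paper's one-line calculation already covers.
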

\begin{proof}
  By the definition of $\otimes$ and \eqref{id},
  \begin{align*}
    \ot\id\id&=\a\rp\id\rp\cona\bp\b\rp\id\rp\conb
    =\a\rp\cona\bp\b\rp\conb=\id.
  \end{align*}
\end{proof}
\begin{prop}
  \label{gg-rule}
  Assume \eqref{Pr}.  Then
  \begin{gather*}
    (\ot\u\v)\rp(\ot\x\y)=\ot{(\u\rp\x)}{(\v\rp\y)},
    \\ (\ot\id\x)\rp(\ot\y\id)=\ot\x\y=(\ot\x\id)\rp(\ot\id\y).
  \end{gather*}
\end{prop}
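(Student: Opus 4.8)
The plan is to prove the general composition rule $(\ot\u\v)\rp(\ot\x\y)=\ot{(\u\rp\x)}{(\v\rp\y)}$ first, since the two remaining identities are immediate specializations. Unfolding the definition \eqref{ot} of $\otimes$ in both factors, the left-hand side is
$$(\a\rp\u\rp\cona\bp\b\rp\v\rp\conb)\rp(\a\rp\x\rp\cona\bp\b\rp\y\rp\conb).$$
The decisive observation is that this product is already in the exact form of the right-hand side of the pairing identity \eqref{Pr}: regrouping the first factor as $(\a\rp\u)\rp\cona\bp(\b\rp\v)\rp\conb$ and the second as $\a\rp(\x\rp\cona)\bp\b\rp(\y\rp\conb)$, it is the instance of $(\u'\rp\cona\bp\x'\rp\conb)\rp(\a\rp\v'\bp\b\rp\y')$ with $\u'=\a\rp\u$, $\x'=\b\rp\v$, $\v'=\x\rp\cona$, and $\y'=\y\rp\conb$.

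I would then invoke \eqref{Pr} from right to left, collapsing the product to $\u'\rp\v'\bp\x'\rp\y'=(\a\rp\u)\rp(\x\rp\cona)\bp(\b\rp\v)\rp(\y\rp\conb)$. Repeated use of associativity \eqref{2} rewrites this as $\a\rp(\u\rp\x)\rp\cona\bp\b\rp(\v\rp\y)\rp\conb$, which is $\ot{(\u\rp\x)}{(\v\rp\y)}$ by \eqref{ot}. This is the whole of the argument for the first identity; \eqref{Pr} is used exactly once and nothing else is required.

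For the last two identities I would specialize: each is the instance of the composition rule in which one argument of each factor is $\id$. After substitution, the inner compositions of the form $\id\rp(-)$ and $(-)\rp\id$ are removed by the identity law \eqref{id} and by $\id\rp\x=\x$ from Prop.\ \ref{p5}, leaving $\ot\x\y$ in each case. No further appeal to \eqref{Pr} is needed, so these are bookkeeping consequences of the rule already established.

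I expect the only real difficulty to be organizational rather than mathematical. Because the sole available hypothesis is \eqref{Pr}, the argument must pass through a single application of it, so the work is entirely in setting up the correct pattern-match --- keeping straight which of the four subexpressions $\a\rp\u$, $\b\rp\v$, $\x\rp\cona$, $\y\rp\conb$ fills each of the four slots of the pairing identity --- and in performing the reassociations in a consistent order. Once that bookkeeping is fixed, the computation is forced and routine.
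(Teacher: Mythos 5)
Your proof is correct and is essentially identical to the paper's: both unfold the definition of $\otimes$, recognize the resulting product as an instance of the right-hand side of \eqref{Pr} with the slots filled exactly as you describe, apply \eqref{Pr} once in reverse, and finish with associativity and the definition, deriving the specializations afterwards from \eqref{id} and Prop.~\ref{p5}. (One small point that both you and the paper gloss over: the literal specialization gives $(\ot\id\x)\rp(\ot\y\id)=\ot\y\x$, so the displayed second identity is obtained after the harmless relabeling $(\ot\id\y)\rp(\ot\x\id)=\ot\x\y$, which is the form actually used later in Prop.~\ref{presM}.)
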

\begin{proof}
  The first equation holds because
  \begin{align*}
    (\ot\u\v)\rp(\ot\x\y)&=(\a\rp\u\rp\cona\bp\b\rp\v\rp\conb)
    \rp(\a\rp\x\rp\cona\bp\b\rp\y\rp\conb)&&\text{def $\otimes$}
    \\ &=\a\rp\u\rp(\x\rp\cona)\bp\b\rp\v\rp(\y\rp\conb)
    &&\text{\eqref{Pr}}
    \\ &=\a\rp(\u\rp\x)\rp\cona\bp\b\rp(\v\rp\y)\rp\conb&&\text{assoc}
    \\ &=\ot{(\u\rp\x)}{(\v\rp\y)}&&\text{def $\otimes$}
  \end{align*}
  The other equations follow from the first by \eqref{id} and
  Prop.\ \ref{p5}.
\end{proof}
All parts of \eqref{Qu} are needed to get permutational elements as
outputs from $\otimes$, as shown by the next proposition.
\begin{prop}
  \label{g-closed}
  Assume \eqref{Q}, \eqref{D}, and \eqref{U}. Then $\Pm\AA$ is closed
  under $\ot{}{}$.
\end{prop}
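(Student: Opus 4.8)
The plan is to reduce closure of $\Pm\AA$ under $\otimes$ to three facts about $\otimes$ that are already available under the stated hypotheses: its compatibility with relative product, its value on the identity pair, and its compatibility with converse. First I would check that the preconditions of the cited propositions are met. Since \eqref{Q} holds, the pairing identity \eqref{Pr} holds for $\a,\b$ by Prop.\ \ref{QPr}, so Prop.\ \ref{gg-rule} applies and gives $(\ot\u\v)\rp(\ot\x\y)=\ot{(\u\rp\x)}{(\v\rp\y)}$ for all $\u,\v,\x,\y$. Since \eqref{D} and \eqref{U} hold, Prop.\ \ref{prop2a} yields \eqref{T}, namely $\a\rp\cona\bp\b\rp\conb=\id$, whence Prop.\ \ref{g-id} gives $\ot\id\id=\id$.

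The key auxiliary observation is that converse passes through $\otimes$ componentwise, that is, $\con{\ot\x\y}=\ot{\con\x}{\con\y}$ for all $\x,\y$. This follows by ``con'' alone, since $\con{\a\rp\x\rp\cona\bp\b\rp\y\rp\conb}=\con{\a\rp\x\rp\cona}\bp\con{\b\rp\y\rp\conb}=\a\rp\con\x\rp\cona\bp\b\rp\con\y\rp\conb$, using that $\con\blank$ distributes over $\bp$ and reverses $\rp$, together with $\con{\cona}=\a$ and $\con{\conb}=\b$.

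With these in hand the conclusion is immediate. Given $\x,\y\in\Pm\AA$ we have $\x\rp\con\x=\id=\con\x\rp\x$ and $\y\rp\con\y=\id=\con\y\rp\y$, so, applying the converse identity above and then Prop.\ \ref{gg-rule},
\begin{align*}
  (\ot\x\y)\rp\con{\ot\x\y}
  &=(\ot\x\y)\rp(\ot{\con\x}{\con\y})
  =\ot{(\x\rp\con\x)}{(\y\rp\con\y)}=\ot\id\id=\id,
  \\
  \con{\ot\x\y}\rp(\ot\x\y)
  &=(\ot{\con\x}{\con\y})\rp(\ot\x\y)
  =\ot{(\con\x\rp\x)}{(\con\y\rp\y)}=\ot\id\id=\id,
\end{align*}
so $\ot\x\y$ is permutational, \ie\ $\ot\x\y\in\Pm\AA$. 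I do not anticipate a real obstacle here: all the substantive work has been front-loaded into Props.\ \ref{gg-rule}, \ref{g-id}, and \ref{prop2a}, and the only genuinely new ingredient is the componentwise behaviour of converse under $\otimes$, which is a one-line consequence of the converse axioms. The single point to watch is that \emph{all three} of \eqref{Q}, \eqref{D}, and \eqref{U} are genuinely used --- \eqref{Q} to license \eqref{Pr} and hence Prop.\ \ref{gg-rule}, and \eqref{D} together with \eqref{U} to license \eqref{T} and hence $\ot\id\id=\id$ --- which matches the remark preceding the proposition that all parts of \eqref{Qu} are needed to obtain permutational outputs from $\otimes$.
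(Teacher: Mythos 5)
Your proposal is correct and follows essentially the same route as the paper's proof: both derive $\ot\id\id=\id$ from Props.\ \ref{prop2a} and \ref{g-id}, obtain \eqref{Pr} via Prop.\ \ref{QPr} to invoke Prop.\ \ref{gg-rule}, and use the componentwise behaviour of converse under $\otimes$ (which the paper cites as ``def $\otimes$, con'') to reduce both permutationality equations to $\ot{(\con\x\rp\x)}{(\con\y\rp\y)}=\ot\id\id=\id$. No gaps.
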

\begin{proof}
  Let $\x,\y\in\Pm\AA$.  From \eqref{D} and \eqref{U} we get
  $\a\rp\con\a\bp\b\rp\con\b=\id$ by Prop.\ \ref{prop2a}, hence
  $\ot\id\id=\id$ by Prop.\ \ref{g-id}.  From \eqref{Q} we get the
  pairing identity \eqref{Pr} by Prop.\ \ref{QPr}, so
  Prop.\ \ref{gg-rule} can be applied.
  \begin{align*}
    \con{\ot\x\y}\rp(\ot\x\y) &=(\ot{\con\x}{\con\y})\rp(\ot\x\y)
    &&\text{def $\otimes$, con}
    \\ &=\ot{(\con\x\rp\x)}{(\con\y\rp\y)}&&\text{Prop.\ \ref{gg-rule}}
    \\ &=\ot\id\id&&\text{$\x,\y\in\Pm\AA$}
    \\ &=\id&&\text{Prop.\ \ref{g-id}}
  \end{align*}
  and a similar proof shows $(\ot\x\y)\rp\con{\ot\x\y}=\id$.
  Therefore, $\ot\x\y\in\Pm\AA$.
\end{proof}
\begin{prop}
  \label{fg-rule}
  If \eqref{Pr} then
  \begin{align*}
    (\fk\x\y)\rp(\ot\u\v)&=\fk{(\x\rp\u)}{(\y\rp\v)},
    \\ (\fk\x\y)\rp(\ot\u\id)&=\fk{(\x\rp\u)}\y,
    \\ (\fk\x\y)\rp(\ot\id\v)&=\fk\x{(\y\rp\v)}.
  \end{align*}
\end{prop}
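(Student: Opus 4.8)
The plan is to follow the pattern of the proof of Prop.\ \ref{gg-rule} that immediately precedes this one: derive the first equation as a single application of the pairing identity \eqref{Pr} together with associativity, and then obtain the second and third equations as the special cases $\v=\id$ and $\u=\id$.

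For the first equation I would begin by expanding both operations using their definitions from \SS\ref{s19}, so that
\begin{align*}
  (\fk\x\y)\rp(\ot\u\v)
  &=(\x\rp\cona\bp\y\rp\conb)\rp(\a\rp\u\rp\cona\bp\b\rp\v\rp\conb).
\end{align*}
Using \eqref{2} to regroup the inner products as $\a\rp(\u\rp\cona)$ and $\b\rp(\v\rp\conb)$, the right-hand factor matches $\a\rp\v\bp\b\rp\y$ and the left-hand factor matches $\u\rp\cona\bp\x\rp\conb$ from the right side of \eqref{Pr}, under the reading in which the four free variables of \eqref{Pr} (its first, second, third, and fourth arguments) are taken to be $\x$, $\u\rp\cona$, $\y$, and $\v\rp\conb$. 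Applying \eqref{Pr} therefore rewrites the whole product as its left-hand side $\x\rp(\u\rp\cona)\bp\y\rp(\v\rp\conb)$, and a final use of \eqref{2} turns this into $(\x\rp\u)\rp\cona\bp(\y\rp\v)\rp\conb=\fk{(\x\rp\u)}{(\y\rp\v)}$, as required.

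The remaining two equations then follow from the first with no further work. Setting $\v=\id$ gives $(\fk\x\y)\rp(\ot\u\id)=\fk{(\x\rp\u)}{(\y\rp\id)}$, and \eqref{id} simplifies $\y\rp\id$ to $\y$, yielding $\fk{(\x\rp\u)}\y$. Symmetrically, setting $\u=\id$ and using \eqref{id} on $\x\rp\id$ gives $(\fk\x\y)\rp(\ot\id\v)=\fk\x{(\y\rp\v)}$.

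Since the argument is merely a substitution into \eqref{Pr} combined with \eqref{2} and \eqref{id}, there is no real obstacle. The only point demanding care is bookkeeping: because the free variables of \eqref{Pr} carry the same names $\u,\v,\x,\y$ as the variables in the statement, one must keep straight which term plays which role---in particular that the second and fourth arguments become the compound terms $\u\rp\cona$ and $\v\rp\conb$---and apply \eqref{2} both before invoking \eqref{Pr} (to expose the $\a\rp(\cdot)$ and $\b\rp(\cdot)$ shape) and after it (to reassemble $(\x\rp\u)\rp\cona$ and $(\y\rp\v)\rp\conb$) so that the regroupings align.
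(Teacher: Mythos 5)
Your proof is correct and is essentially identical to the paper's: the paper likewise expands both operations by definition, applies \eqref{Pr} with the second and fourth arguments instantiated to $\u\rp\cona$ and $\v\rp\conb$, reassociates, and obtains the last two equations by setting $\v=\id$ or $\u=\id$ and invoking \eqref{id}. Nothing is missing.
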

\begin{proof}
  The first equation holds because
  \begin{align*}
    (\fk\x\y)\rp(\ot\u\v)
    &=(\x\rp\cona\bp\y\rp\conb)\rp(\a\rp\u\rp\cona\bp\b\rp\v\rp\conb)
    &&\text{defs $\nabla,\otimes$}
    \\&=\x\rp(\u\rp\cona)\bp\y\rp(\v\rp\conb)&&\text{\eqref{Pr}}
    \\&=\x\rp\u\rp\cona\bp\y\rp\v\rp\conb&&\text{assoc}
    \\&=\fk{(x\rp\u)}{(\y\rp\v)}&&\text{def $\fk{}{}$}
  \end{align*}
  The second and third equations come from taking $\v=\id$ or $\u=\id$
  in the first and invoking axiom \eqref{id}.
\end{proof}

\begin{prop}
  \label{fh-rule}
  If \eqref{Pr} then $(\fk\u\v)\rp(\fkc\x\y)=\u\rp\x\bp\v\rp\y$
\end{prop}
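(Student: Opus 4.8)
The plan is to unfold the two defined operations and then recognize the result as a direct instance of the pairing identity \eqref{Pr}. By the definitions of $\nabla$ and $\fkc{}{}$ in \SS\ref{s19}, the left-hand side $(\fk\u\v)\rp(\fkc\x\y)$ expands at once to $(\u\rp\cona\bp\v\rp\conb)\rp(\a\rp\x\bp\b\rp\y)$. The whole content of the proposition is then the observation that this is precisely the right-hand side of \eqref{Pr}, once the four variables are matched up correctly.

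To pin down the match I would compare the general right side of \eqref{Pr}, namely $(\u\rp\cona\bp\x\rp\conb)\rp(\a\rp\v\bp\b\rp\y)$, with the expanded expression above. The element paired with $\cona$ in the first factor and the element following $\a\rp$ in the second factor are interchanged relative to the standard statement. Applying \eqref{Pr} with the substitution $(\u,\v,\x,\y)\mapsto(\u,\x,\v,\y)$ --- that is, swapping the second and third arguments --- converts its right side into exactly our expression and its left side $\u\rp\v\bp\x\rp\y$ into $\u\rp\x\bp\v\rp\y$, which is the desired conclusion.

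Because \eqref{Pr} is assumed as the hypothesis, there is no real obstacle: unlike the companion Props.\ \ref{gg-rule} and \ref{fg-rule}, no post-hoc reassociation is needed, since the left side of \eqref{Pr} already presents the answer as a Boolean product of two relative products. The only point requiring attention is the bookkeeping of the renaming, because the roles of $\v$ and $\x$ are crossed in $\nabla$ versus $\fkc{}{}$ relative to how they sit in the standard pairing identity. I expect the finished proof to be a single two-line display: expand the definitions, then invoke \eqref{Pr}.
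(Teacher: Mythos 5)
Your proposal is correct and is essentially identical to the paper's proof, which likewise expands the definitions of $\nabla$ and $\fkc{}{}$ and then cites \eqref{Pr} in a two-line display; your explicit bookkeeping of the variable swap $(\u,\v,\x,\y)\mapsto(\u,\x,\v,\y)$ is just a more careful spelling-out of the instance of \eqref{Pr} that the paper invokes silently.
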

\begin{proof}
  \begin{align*}
    (\fk\u\v)\rp(\fkc\x\y)&=
    (\u\rp\cona\bp\v\rp\conb)\rp(\a\rp\x\bp\b\rp\y) &&\text{defs
      $\nabla,\fkc{}{}$} \\ &=\u\rp\x\bp\v\rp\y&&\eqref{Pr}
  \end{align*}
\end{proof}
The following proposition is needed for the proof of Prop.\ \ref{same}.
\begin{prop}
  \label{Ux0K}
  Assume \eqref{Q}. Then $(\cona\bp\conb)\rp(\ot\x\id)\rp\a=\x$ for
  all $\x$.
\end{prop}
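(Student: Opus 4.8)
The plan is to first collapse the leading factor using the parallel-composition rule of Prop.\ \ref{fg-rule}, and then to prove the resulting identity $(\x\rp\cona\bp\conb)\rp\a=\x$ by two opposite inclusions. Observe that $\cona\bp\conb=\fk\id\id$, since $\id\rp\cona=\cona$ and $\id\rp\conb=\conb$ by Prop.\ \ref{p5}. Because \eqref{Q} holds, the pairing identity \eqref{Pr} holds for $\a,\b$ by Prop.\ \ref{QPr}, so the $\nabla$--$\otimes$ rule of Prop.\ \ref{fg-rule} is available and yields $(\fk\id\id)\rp(\ot\x\id)=\fk{(\id\rp\x)}{(\id\rp\id)}=\fk\x\id=\x\rp\cona\bp\conb$. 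Multiplying on the right by $\a$ therefore reduces the claim to proving $(\x\rp\cona\bp\conb)\rp\a=\x$.

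The inclusion $(\x\rp\cona\bp\conb)\rp\a\leq\x$ is routine: discarding the $\conb$ conjunct by monotonicity gives $(\x\rp\cona\bp\conb)\rp\a\leq(\x\rp\cona)\rp\a=\x\rp(\cona\rp\a)$, and $\cona\rp\a=\id$ by Prop.\ \ref{prop1a} (using $1=\cona\rp\b$ together with $\cona\rp\a\leq\id$ from \eqref{Q}), so this equals $\x\rp\id=\x$.

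The reverse inclusion $\x\leq(\x\rp\cona\bp\conb)\rp\a$ is the crux, and it is where the surjectivity half $1=\cona\rp\b$ of \eqref{Q} must be invoked to supply ``enough witnesses''. The key observation is that taking converses of $1=\cona\rp\b$ yields $1=\conb\rp\a$, since $\con1=1$ by Prop.\ \ref{p5} while $\con{\cona\rp\b}=\conb\rp\a$ by \eqref{5} and \eqref{6}. Then $\x=1\bp\x=\conb\rp\a\bp\x$, and a single application of rotation (Prop.\ \ref{p8}, with $\conb,\a,\x$ playing the roles of $\x,\y,\z$) gives $\conb\rp\a\bp\x\leq(\x\rp\cona\bp\conb)\rp\a$. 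Antisymmetry of $\leq$ then combines the two inclusions and finishes the proof. I expect the main obstacle to be spotting this reverse direction: the rewriting of $1=\cona\rp\b$ into $1=\conb\rp\a$ followed by one rotation step is precisely what converts the existence of the common refinement $w$ with $\a(w)=\x(z)$ and $\b(w)=z$ into a purely algebraic inequality, and without it the naive attempt to factor $\rp\a$ out of $\x$ breaks down.
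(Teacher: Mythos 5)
Your proposal is correct, and in the reverse inclusion it takes a genuinely more economical route than the paper. Both arguments begin the same way: \eqref{Q} yields \eqref{Pr} via Prop.\ \ref{QPr}, and the pairing identity collapses $(\cona\bp\conb)\rp(\ot\x\id)$ to $\x\rp\cona\bp\conb$ as an \emph{equality} (the paper computes this directly from \eqref{Pr}; you route it through Prop.\ \ref{fg-rule} with $\cona\bp\conb=\fk\id\id$ --- the same computation in disguise). The forward inclusion $(\x\rp\cona\bp\conb)\rp\a\leq\x$ is then identical in both. The divergence is in the opposite direction: the paper does \emph{not} exploit the equality there, but instead starts afresh from $\x=\id\rp\x=(\id\bp\cona\rp\b)\rp(\x\bp\conb\rp\a)$ and works forward through Prop.\ \ref{exch}, rotation, and two applications of functional distributivity for $\a\bp\b$ --- roughly eight steps. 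You observe that, because the collapse is an equality, it suffices to prove $\x\leq(\x\rp\cona\bp\conb)\rp\a$, and this follows in one rotation: $1=\conb\rp\a$ by taking converses in $1=\cona\rp\b$, so $\x=\conb\rp\a\bp\x\leq(\x\rp\cona\bp\conb)\rp\a$ by Prop.\ \ref{p8} with $\conb,\a,\x$ in the roles of $\x,\y,\z$. This is a clean shortcut that eliminates the need for Prop.\ \ref{exch} and the functionality of $\a\bp\b$ entirely; the paper's longer chain buys nothing extra here beyond exercising that machinery.
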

\begin{proof}
  To prove $(\cona\bp\conb)\rp(\ot\x\id)\rp\a\leq\x$, note first that
  \eqref{Pr} follows from \eqref{Q} by Prop.\ \ref{QPr} so
  \begin{align*}
    (\cona\bp\conb)\rp(\ot\x\id)
    &=(\id\rp\cona\bp\id\rp\conb)\rp\Big(\a\rp\x\rp\cona\bp\b\rp\conb\Big)
    &&\text{id, def $\otimes$}
    \\ &=\id\rp(\x\rp\cona)\bp\id\rp\conb&&\eqref{Pr}
    \\ &=\x\rp\cona\bp\conb&&\text{id} \intertext{Multiply by $\a$ on
      the right and get}
    (\cona\bp\conb)\rp(\ot\x\id)\rp\a&=(\x\rp\cona\bp\conb)\rp\a
    \\ &\leq\x\rp(\cona\rp\a)&&\text{mon, assoc}
    \\ &\leq\x&&\text{\eqref{Q}, mon, id}
  \end{align*}
  For the opposite inclusion, first note that $\a\in\Fn\AA$ by
  \eqref{Q}, so $\a\bp\b\in\Fn\AA$ by Prop.\ \ref{func}(ii), and
  $1=\cona\rp\b$ by \eqref{Q}, so $1 = \con1 = \con{\cona\rp\b} =
  \conb\rp\a$ by Prop.\ \ref{p5}, \eqref{6}, and \eqref{5}.  Then
  $\x\leq(\cona\bp\conb)\rp(\ot\x\id)\rp\a$ because
  \begin{align*}
    \x&=\id\rp\x&&\text{id}
    \\ &=(\id\bp\cona\rp\b)\rp(\x\bp\conb\rp\a)&&1=\cona\rp\b=\conb\rp\a
    \\&=(\id\bp(\cona\bp\conb)\rp(\a\bp\b))\rp(\x\bp\conb\rp\a)
    &&\text{Prop.\ \ref{exch}}
    \\&\leq(\id\bp(\cona\bp\conb)\rp(\a\bp\b))\rp((\conb\bp\x\rp\cona)\rp\a)
    &&\text{rot}
    \\&\leq(\cona\bp\conb)\rp(\a\bp\b)\rp(\conb\bp\x\rp\cona)\rp\a
    &&\text{mon, assoc}
    \\&=(\cona\bp\conb)\rp\Big((\a\bp\b)\rp\conb\bp(\a\bp\b)
    \rp(\x\rp\cona)\Big)\rp\a &&\text{func dist, $\a\bp\b\in\Fn\AA$}
    \\&\leq(\cona\bp\conb)\rp(\b\rp\conb\bp\a\rp\x\rp\cona)\rp\a
    &&\text{mon, assoc}
    \\&=(\cona\bp\conb)\rp(\b\rp\id\rp\conb\bp\a\rp\x\rp\cona)\rp\a&&\text{id}
    \\&=(\cona\bp\conb)\rp(\ot\x\id)\rp\a&&\text{def $\otimes$}
  \end{align*}
\end{proof}
The next three propositions are used in the proof of
Prop.\ \ref{presM}.
\begin{prop}
  \label{sub0}
  Assume \eqref{Q}. Then
  $(\cona\bp\conb)\rp\a=\id=(\cona\bp\conb)\rp\b$.
\end{prop}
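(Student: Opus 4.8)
The plan is to prove each of the two equations by antisymmetry of $\leq$ (Prop.~\ref{p1}), splitting into an easy upper bound and a slightly harder lower bound. For the upper bounds, since $\cona\bp\conb\leq\cona$ and $\cona\bp\conb\leq\conb$ by Prop.~\ref{p2}, monotonicity (Prop.~\ref{p6}) gives $(\cona\bp\conb)\rp\a\leq\cona\rp\a$ and $(\cona\bp\conb)\rp\b\leq\conb\rp\b$, and both right-hand sides lie below $\id$ by \eqref{Q}. So the inclusions $(\cona\bp\conb)\rp\a\leq\id$ and $(\cona\bp\conb)\rp\b\leq\id$ require no real work.

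I would then obtain both reverse inclusions at once from the single stronger fact
\[
  \id\leq(\cona\bp\conb)\rp(\a\bp\b),
\]
since monotonicity together with $\a\bp\b\leq\a$ and $\a\bp\b\leq\b$ (Prop.~\ref{p2}, Prop.~\ref{p6}) then yields $\id\leq(\cona\bp\conb)\rp\a$ and $\id\leq(\cona\bp\conb)\rp\b$. To prove this fact I would start from $\id=\id\bp1$ by \eqref{one}, replace $1$ by $\cona\rp\b$ using the third equation of \eqref{Q}, and then apply the second identity of Prop.~\ref{exch}, namely $\id\bp\x\rp\y=\id\bp(\x\bp\con\y)\rp(\con\x\bp\y)$, with $\x=\cona$ and $\y=\b$. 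Using \eqref{6} to rewrite $\con{\cona}$ as $\a$, this converts $\id\bp\cona\rp\b$ into $\id\bp(\cona\bp\conb)\rp(\a\bp\b)$, so that $\id=\id\bp(\cona\bp\conb)\rp(\a\bp\b)$; the desired inclusion is then immediate from Prop.~\ref{p2}. Note only the hypothesis $1=\cona\rp\b$ is used here, not its converse $1=\conb\rp\a$.

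The main obstacle, such as it is, lies in recognizing the correct use of the exchange lemma Prop.~\ref{exch}: it is precisely the device that converts the ``universal'' hypothesis $1=\cona\rp\b$ into the bounded-below-$\id$ statement $\id\bp(\cona\bp\conb)\rp(\a\bp\b)=\id$, packaging both conjugated quasiprojections into a single meet. Once that step is in place, everything else is routine monotonicity and the functionality clauses of \eqref{Q}, and the two claimed equations $(\cona\bp\conb)\rp\a=\id$ and $(\cona\bp\conb)\rp\b=\id$ follow by antisymmetry.
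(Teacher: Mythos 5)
Your proof is correct and follows essentially the same route as the paper: both start from $\id=\id\bp\cona\rp\b$, apply Prop.~\ref{exch} to rewrite this as $\id\bp(\cona\bp\conb)\rp(\a\bp\b)$, and finish with monotonicity and the functionality clauses of \eqref{Q}. The only difference is organizational — you factor out the common lower bound $\id\leq(\cona\bp\conb)\rp(\a\bp\b)$ once and derive both equations from it, whereas the paper writes one chain and notes the second is similar.
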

\begin{proof}
  By \eqref{Q}, $\cona\rp\a\leq\id$, $\conb\rp\b\leq\id$, and
  $1=\cona\rp\b$.  The first equation holds because
  \begin{align*}
    \id&=\id\bp\cona\rp\b&&1=\cona\rp\b
    \\ &=\id\bp(\cona\bp\conb)\rp(\a\bp\b)&&\text{Prop.\ \ref{exch}}
    \\ &\leq\cona\rp\a&&\text{mon}
    \\ &\leq\id
  \end{align*}
  and the second may be derived similarly, using $\conb\rp\b\leq\id$
  for the last step.
\end{proof}
\begin{prop}
  \label{pok}
  Assume \eqref{Q} and \eqref{D}.
  \begin{enumerate}
  \item
    If $1=\y\rp1$ then $(\ot\x\y)\rp\a=\a\rp\x$.
  \item
    If $1=\x\rp1$ then $(\ot\x\y)\rp\b=\b\rp\y$.
  \item
    If $1=\y\rp1$ then $(\ot\id\y)\rp\a=\a$.
  \item
    If $1=\x\rp1$ then $(\ot\x\id)\rp\b=\b$.
  \item
    $(\ot\x\id)\rp\a=\a\rp\x$ and $(\ot\id\y)\rp\b=\b\rp\y$.
  \end{enumerate}
\end{prop}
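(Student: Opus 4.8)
The plan is to observe that all five parts collapse to the first two, and that (ii) is the mirror image of (i) under the interchange $\a\leftrightarrow\b$, $\x\leftrightarrow\y$. Concretely, (iii) is the case $\x=\id$ of (i) (so that $\a\rp\x=\a\rp\id=\a$), (iv) is the case $\y=\id$ of (ii), and (v) records the two instances $\y=\id$ of (i) and $\x=\id$ of (ii); in each of these specializations the side condition required by (i) or (ii) holds automatically, since $\id\rp1=1$ by Prop.\ \ref{p5}. So the genuine content is part (i), which I would prove in full, obtaining (ii) by the symmetric computation.

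Before proving (i) I would record the consequences of \eqref{Q} that I need. By Prop.\ \ref{prop1a}, the inclusion $\cona\rp\a\leq\id$ together with $1=\cona\rp\b$ sharpens to $\cona\rp\a=\id$, and taking converses of $1=\cona\rp\b$ via \eqref{5}, \eqref{6}, and $\con1=1$ (Prop.\ \ref{p5}) gives $1=\conb\rp\a$. Then I would split the equality of (i) into two inclusions. The inclusion $(\ot\x\y)\rp\a\leq\a\rp\x$ is the easy one and uses neither \eqref{D} nor the hypothesis $1=\y\rp1$: dropping the term $\b\rp\y\rp\conb$ by Prop.\ \ref{p2} and using monotonicity gives $(\ot\x\y)\rp\a\leq(\a\rp\x\rp\cona)\rp\a=\a\rp\x\rp(\cona\rp\a)\leq\a\rp\x$, where the last step needs only $\cona\rp\a\leq\id$ from \eqref{Q}.

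The reverse inclusion $\a\rp\x\leq(\ot\x\y)\rp\a$ is where \eqref{D} and $1=\y\rp1$ enter, and it is the main obstacle: relative product does not distribute over $\bp$ on the right (the element $\a$ is functional but need not be injective), so one cannot simply split $(\ot\x\y)\rp\a$ into $\a\rp\x\rp\cona\rp\a\bp\b\rp\y\rp\conb\rp\a$. The device that sidesteps this is a single application of the rotation inequality Prop.\ \ref{p8} in the form $\x\rp\y\bp\z\leq(\z\rp\con\y\bp\x)\rp\y$, instantiated so that its three arguments become $\b\rp\y\rp\conb$, $\a$, and $\a\rp\x$; this yields directly $(\b\rp\y\rp\conb)\rp\a\bp\a\rp\x\leq(\a\rp\x\rp\cona\bp\b\rp\y\rp\conb)\rp\a=(\ot\x\y)\rp\a$. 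The point is that the left side collapses: $(\b\rp\y\rp\conb)\rp\a=\b\rp\y\rp(\conb\rp\a)=\b\rp\y\rp1=\b\rp(\y\rp1)=\b\rp1=1$, using $1=\conb\rp\a$, the hypothesis $1=\y\rp1$, and $\b\rp1=1$ from \eqref{D}; since $\a\rp\x\leq1$ by \eqref{one}, the left side is just $1\bp\a\rp\x=\a\rp\x$, giving exactly $\a\rp\x\leq(\ot\x\y)\rp\a$. Combining the two inclusions proves (i). The mirror argument for (ii) applies Prop.\ \ref{p8} with its arguments taken to be $\a\rp\x\rp\cona$, $\b$, and $\b\rp\y$, collapsing $(\a\rp\x\rp\cona)\rp\b=\a\rp\x\rp(\cona\rp\b)=\a\rp(\x\rp1)=\a\rp1=1$ via $1=\cona\rp\b$, the hypothesis $1=\x\rp1$, and $\a\rp1=1$, and using \eqref{comm} to match the Boolean product with $\ot\x\y$.
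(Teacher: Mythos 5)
Your proof is correct, but it takes a genuinely different route from the paper's at the key step. The paper proves part (i) by a single equational chain through the pairing identity: it rewrites $\a=\a\rp\id\bp\b\rp1$ (using $1=\b\rp1$) and applies \eqref{Pr} to collapse $(\a\rp\x\rp\cona\bp\b\rp\y\rp\conb)\rp(\a\rp\id\bp\b\rp1)$ to $\a\rp\x\bp\b\rp\y\rp1=\a\rp\x$; parts (ii)--(v) then follow exactly as you describe. You instead split the equality into two inclusions and obtain the hard one, $\a\rp\x\leq(\ot\x\y)\rp\a$, from a single application of the rotation inequality of Prop.\ \ref{p8}, with the left-hand side collapsing to $\a\rp\x$ via $1=\con\b\rp\a$, the hypothesis $1=\y\rp1$, and $\b\rp1=1$; the easy inclusion needs only $\con\a\rp\a\leq\id$. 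Both arguments are sound, and your instantiations check out. What your version buys is independence from the pairing identity: you use only the elementary consequences of the J-algebra axioms in \SS\ref{s16}, whereas the paper leans on \eqref{Pr}, whose derivation from \eqref{Q} (Prop.\ \ref{pair} or \ref{pair2}) is the longest computation in Part II. What the paper's version buys is brevity and uniformity: since \eqref{Pr} is already established and is the workhorse for every computation in \SS\ref{s19}--\SS\ref{s24}, a four-line direct chain is shorter than a two-inclusion argument and matches the style of the surrounding propositions. Your observation that the easy inclusion needs neither \eqref{D} nor $1=\y\rp1$ is a small bonus the paper's proof does not make visible.
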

\begin{proof}
  From \eqref{Q} we get \eqref{Pr} by Prop.\ \ref{QPr} and
  $\id=\cona\rp\a=\conb\rp\b$ by Prop.\ \ref{prop1a}.  We get
  $1=\b\rp1=\a\rp1$ from \eqref{D}.  For part (i), assume
  $1=\y\rp1$. Then
  \begin{align*}
    (\ot\x\y)\rp\a&=(\a\rp\x\rp\cona\bp\b\rp\y\rp\conb)\rp(\a\rp\id\bp\b\rp1)
    &&\text{def $\otimes$, id, $1=\b\rp1$}
    \\ &=\a\rp\x\rp\id\bp\b\rp\y\rp1 &&\text{\eqref{Pr}}
    \\ &=\a\rp\x\bp\b\rp(\y\rp1) &&\text{id, assoc}
    \\ &=\a\rp\x &&1=\y\rp1=\b\rp1
  \end{align*}
  Part (ii) has a similar proof that uses $1=\x\rp1=\a\rp1$ instead of
  $1=\y\rp1=\b\rp1$.  Parts (iii) and (iv) follow by \eqref{id} from
  parts (i) and (ii), respectively. Part (v) follows from parts (i)
  and (ii) because $1=\id\rp1$.
\end{proof}
\begin{prop}
  \label{swap}
  Assume \eqref{Q} and \eqref{D} and let
  $\P=\a\rp\conb\bp\b\rp\cona$. Then $\P\rp\a=\b$ and $\P\rp\b=\a$.
\end{prop}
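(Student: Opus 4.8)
The plan is to recognize $\P$ and the two generators $\a,\b$ as instances of the fork-style operations $\nabla$ and $\con\nabla$ introduced in \SS\ref{s19}, and then read off both equations directly from the product rule in Prop.\ \ref{fh-rule}. First I would observe that \eqref{Q} yields the pairing identity \eqref{Pr} by Prop.\ \ref{QPr}, so Prop.\ \ref{fh-rule} is applicable: $(\fk\u\v)\rp(\fkc\x\y)=\u\rp\x\bp\v\rp\y$ for all $\u,\v,\x,\y$.

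Next I would rewrite the three elements appearing in the statement in this notation. By \eqref{comm}, $\P=\a\rp\conb\bp\b\rp\cona=\fk\b\a$. Using \eqref{id}, \eqref{one}, and the Domain Condition \eqref{D} (which supplies $\a\rp1=\b\rp1=1$), I would compute $\fkc\id1=\a\rp\id\bp\b\rp1=\a\bp1=\a$ and $\fkc1\id=\a\rp1\bp\b\rp\id=1\bp\b=\b$. Thus $\a$ and $\b$ are themselves $\con\nabla$-outputs, with the Domain Condition controlling which coordinate carries the factor $1$.

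With these identifications in hand, both equations fall out of Prop.\ \ref{fh-rule}:
\[
  \P\rp\a=(\fk\b\a)\rp(\fkc\id1)=\b\rp\id\bp\a\rp1=\b\bp1=\b,
\]
\[
  \P\rp\b=(\fk\b\a)\rp(\fkc1\id)=\b\rp1\bp\a\rp\id=1\bp\a=\a,
\]
where the final simplifications again use \eqref{D}, \eqref{id}, and \eqref{one}.

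No step is a genuine obstacle along this route; the only care needed is the bookkeeping of which of $\a,\b$ plays the role of $\x$ and which the role of $\y$ in $\fkc{}{}$, i.e.\ matching the domain factor $1$ to the correct coordinate so that the two equations come out with $\a$ and $\b$ interchanged. An alternative, more computational route would apply \eqref{Pr} directly with the term $\u\rp\cona\bp\x\rp\conb$ matched to $\P$ (so $\u=\b$, $\x=\a$); but choosing $\v=\id$ and an auxiliary $\y$ leaves a residual inclusion such as $\b\leq\a\rp\conb\rp\a$ to be discharged, so the $\con\nabla$ route through Prop.\ \ref{fh-rule} is cleaner and preferable.
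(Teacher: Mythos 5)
Your proposal is correct and is essentially the paper's own proof: the paper applies the pairing identity \eqref{Pr} directly to $\P\rp\a=(\a\rp\conb\bp\b\rp\cona)\rp(\a\rp\id\bp\b\rp1)$, which is exactly your application of Prop.\ \ref{fh-rule} (itself just \eqref{Pr} in $\nabla$/$\con\nabla$ dress) with the same decomposition $\a=\a\rp\id\bp\b\rp1$ supplied by \eqref{D}. Your closing worry that the ``direct'' route leaves a residual inclusion is unfounded --- taking $\y=1$ there is precisely what the paper does and it closes cleanly --- but this does not affect your argument.
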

\begin{proof}
  By the symmetry of the situation, it suffices to prove just the
  first equation. We get \eqref{Pr} from \eqref{Q} and
  $1=\a\rp1=\b\rp1$ from \eqref{D}. Then
  \begin{align*}
    \P\rp\a&=(\a\rp\conb\bp\b\rp\cona)\rp\a &&\text{def $\P$}
    \\ &=(\a\rp\conb\bp\b\rp\cona)\rp(\a\rp\id\bp\b\rp1) &&\text{id,
      $\b\rp1=1$} \\ &=\b\rp\id\bp\a\rp1 &&\text{\eqref{Pr}} \\ &=\b
    &&\text{id, $\a\rp1=1$}
  \end{align*}
\end{proof}
\section{Properties of generators of $\F,\T,\VV,\M$}
\label{s20}
\begin{prop}
  \label{perms}
  Assume $\a,\b$ satisfy \eqref{Q}, \eqref{D}, and \eqref{U}. Let
  \begin{align*}
    \K&=\a, \qquad \L=\b, & \U&=\cona\bp\conb,
    \\ \P&=\a\rp\conb\bp\b\rp\cona,
    & \P_0&=\ot\P\id=\a\rp\P\rp\cona\bp\b\rp\conb,
    \\ \A=\R&=\a\rp\cona^2\bp\b\rp\a\rp\conb\rp\cona\bp\b^2\rp\conb,
    & \R_0&=\ot\R\id=\a\rp\R\rp\cona\bp\b\rp\conb,
    \\ \C&=\a\rp\conb^2\bp\b\rp\a\rp\cona\bp\b^2\rp\cona\rp\conb,
    & \B&=\ot\id\A=\a\rp\cona\bp\b\rp\A\rp\conb,
    \\ \pi_0&=\a\rp\cona\rp\conb\bp\b\rp\a\rp\cona\bp\b^2\rp\conb^2.
  \end{align*}
  Then $\K$, $\L$, $\U$, and $\con\U$ are functional and
  $\P$, $\P_0$, $\R$, $\R_0$, $\A$, $\B$, $\C$, and $\pi_0$ are
  permutational.
\end{prop}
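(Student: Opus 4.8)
The plan is to split the ten elements into four groups of decreasing ease. Throughout I would first record the three standing facts that drive every calculation: the pairing identity \eqref{Pr} holds for $\a,\b$ (Prop.\ \ref{QPr}, from \eqref{Q}); the equalities $\id=\cona\rp\a=\conb\rp\b$ hold (Prop.\ \ref{prop1a}); and the resolution of the identity \eqref{T}, $\id=\a\rp\cona\bp\b\rp\conb$, holds (Prop.\ \ref{prop2a}, from \eqref{D} and \eqref{U}). The operations $\nabla$, $\otimes$, $\fkc{}{}$ and the product rules Props.\ \ref{gg-rule}, \ref{fg-rule}, \ref{fh-rule} are therefore all available.

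The functional claims are immediate. $\K=\a$ and $\L=\b$ are functional by \eqref{Q}. For $\U=\cona\bp\conb$ we have $\con\U=\a\bp\b$ by con, so $\con\U\rp\U\leq\a\rp\cona\bp\b\rp\conb\leq\id$ by Prop.\ \ref{p7} and \eqref{U}, while $\U\rp\con\U\leq\cona\rp\a\bp\conb\rp\b\leq\id$ by Prop.\ \ref{p7} and \eqref{Q}; hence both $\U$ and $\con\U$ are functional. The element $\P=\a\rp\conb\bp\b\rp\cona$ is self-converse, $\con\P=\P$ by con, so it suffices to show $\P\rp\P=\id$: writing the left copy as $\b\rp\cona\bp\a\rp\conb$ and the right copy as $\a\rp\conb\bp\b\rp\cona$, these are exactly the two sides of the right-hand factorisation in \eqref{Pr}, so a single application gives $\P\rp\P=\b\rp\conb\bp\a\rp\cona=\id$ by \eqref{T}; thus $\P\in\Pm\AA$.

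The three parallel generators require no new computation. Since $\id\in\Pm\AA$ (Prop.\ \ref{func}) and $\Pm\AA$ is closed under $\otimes$ under \eqref{Q}, \eqref{D}, \eqref{U} (Prop.\ \ref{g-closed}), and since $\P,\R\in\Pm\AA$ once the core maps are settled, the elements $\P_0=\ot\P\id$, $\R_0=\ot\R\id$ and $\B=\ot\id\A=\ot\id\R$ are all permutational by Prop.\ \ref{g-closed}. This isolates the real content into the three genuine three-leaf permutations $\R=\A$, $\C$ and $\pi_0$.

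For these I would show $\con\x\rp\x=\id=\x\rp\con\x$, illustrating with $\R$. The obstacle is that in a J-algebra $\rp$ does \emph{not} distribute over $\bp$, so the ninefold products cannot be expanded termwise; instead each factor must be coerced, via Prop.\ \ref{f-dist}, into a fork $\fk{}{}$ or cofork $\fkc{}{}$ so that Prop.\ \ref{fh-rule} applies. Grouping the three monomials of $\con\R$ by their trailing letter writes $\con\R=\fk{(\a^2)}{M}$ with $M=\fk{(\a\rp\b)}{\b}$, and grouping those of $\R$ by their leading letter writes $\R=\fkc{(\cona^2)}{Q}$ with $Q=\fkc{(\conb\rp\cona)}{\conb}$; then Prop.\ \ref{fh-rule} gives $\con\R\rp\R=\a^2\rp\cona^2\bp M\rp Q$, a second application evaluates $M\rp Q$, and Prop.\ \ref{f-dist} together with two uses of \eqref{T} collapse everything to $\a\rp\cona\bp\b\rp\conb=\id$. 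The reverse product $\R\rp\con\R$ is handled dually: the same monomials of $\R$ regrouped by trailing letter give a fork $\fk{N}{(\b^2)}$ and those of $\con\R$ regrouped by leading letter give a cofork $\fkc{N'}{(\conb^2)}$ with $N=\a\rp\cona\bp\b\rp\a\rp\conb$ and $N'=\a\rp\cona\bp\b\rp\cona\rp\conb$; here the inner product $N\rp N'$ already matches the right-hand side of \eqref{Pr} and collapses in one step, after which \eqref{T} again yields $\id$. The maps $\C$ and $\pi_0$ are permutations of the same three leaves and admit the identical fork/cofork bookkeeping, so the same two-step ``pairing-then-\eqref{T}'' reduction applies, with only the monomials changed. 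The main obstacle is exactly this fork/cofork organisation of $\R,\C,\pi_0$ and their converses --- since plain distributivity is unavailable --- followed by the repeated recognition of $\a\rp\cona\bp\b\rp\conb$ to be discharged by \eqref{T}; everything else follows from the closure lemmas of \SS\ref{s17} and \SS\ref{s19}.
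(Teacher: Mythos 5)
Your proposal is correct and follows essentially the same route as the paper: functionality of $\K,\L,\U,\con\U$ from \eqref{Q} and \eqref{U}, permutationality of $\P_0,\R_0,\B$ via closure of $\Pm\AA$ under $\otimes$ (Prop.\ \ref{g-closed}), and direct verification of $\P,\R,\C,\pi_0$ by regrouping the monomials with Prop.\ \ref{f-dist} and then applying the pairing identity and \eqref{T}. Your only departure is cosmetic --- you package the regrouped terms as forks and coforks and invoke Prop.\ \ref{fh-rule} where the paper applies \eqref{Pr} twice in raw form --- but Prop.\ \ref{fh-rule} is just \eqref{Pr} in that notation, so the computations coincide step for step.
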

\begin{proof}
  \eqref{Q} says that $\a$ and $\b,$ \ie, $\K$ and $\L$, are
  functional.  By \eqref{Q} and \eqref{U}, $\U$ and $\con\U$ are
  functional because
  \begin{align*}
    \con\U\rp\U&=\con{\cona\bp\conb}\rp(\cona\bp\conb) &&\text{def
      $\U$} \\ &=(\a\bp\b)\rp(\cona\bp\conb)&&\text{con}
    \\ &\leq\a\rp\cona\bp\b\rp\conb&&\text{mon} \\ &\leq\id&&\eqref{U}
    \\ \U\rp\con\U&=(\cona\bp\conb)\rp\con{\cona\bp\conb} &&\text{def
      $\U$} \\ &=(\cona\bp\conb)\rp(\a\bp\b)&&\text{con}
    \\ &\leq\cona\rp\a\bp\conb\rp\b&&\text{mon}
    \\ &\leq\id&&\a,\b\in\Fn\AA
  \end{align*}
  The two equations \eqref{Pr} and \eqref{T} are applied many times
  below. Recall that \eqref{T} follows from \eqref{D} and \eqref{U}
  while the pairing identity \eqref{Pr} comes from \eqref{Q}.  Since
  $\a,\b\in\Fn\AA$, Prop.\ \ref{f-dist} is available and will be
  applied many times with the customary notation ``func dist''.  By
  \eqref{2}, \eqref{6}, \eqref{5}, and \eqref{4},
  \begin{align*}
    \con\P&=\P,
    \\ \con\R=\con\A&=
    \a^2\rp\cona \bp
    \a\rp\b\rp\cona\rp\conb \bp
    \b\rp\conb^2,
    \\ \con\C&=
    \b^2\rp\cona \bp
    \a\rp\cona\rp\conb \bp
    \b\rp\a\rp\conb^2,
    \\ \con{\pi_0}&=
    \b\rp\a\rp\cona \bp
    \a\rp\cona\rp\conb \bp
    \b^2\rp\conb^2.
  \end{align*}
  Since $\con\P=\P$, we need only observe that
  \begin{align*}
    \P\rp\P &=(\a\rp\conb\bp\b\rp\cona)\rp(\a\rp\conb\bp\b\rp\cona)
    \\ &=\b\rp\con\b\bp\a\rp\con\a&&\eqref{Pr} \\ &=\id&&\eqref{T}
  \end{align*}
  to see that $\P$ is permutational.  It follows by
  Prop.\ \ref{g-closed} that $\P_0=\ot\P\id$ is also permutational.
  The permutationality of $\A$ and $\R$ is established directly by
  showing that $\A\rp\con\A=\con\A\rp\A=\id,$ which implies
  $\B=\ot\A\id$ is also permutational by Prop.\ \ref{g-closed}.
  \begin{align*}
    \A\rp\con\A
    &=(\a\rp\cona^2\bp\b\rp\a\rp\conb\rp\cona\bp\b^2\rp\conb)
    \rp(\a^2\rp\cona \bp \a\rp\b\rp\cona\rp\conb \bp
    \b\rp\conb^2) &&\text{defs $\A$, $\con\A$}
    \\ &=((\a\rp\cona\bp\b\rp\a\rp\conb)\rp\cona\bp\b^2\rp\conb)\rp
    (\a\rp(\a\rp\cona\bp\b\rp\cona\rp\conb)\bp\b\rp\conb^2)
    &&\text{func dist}
    \\ &=(\a\rp\cona\bp\b\rp\a\rp\conb)\rp(\a\rp\cona\bp\b\rp\cona\rp\conb)
    \bp\b^2\rp\conb^2&&\text{\eqref{Pr}}
    \\ &=\a\rp\cona\bp\b\rp\a\rp\cona\rp\conb\bp
    \b^2\rp\conb^2&&\text{\eqref{Pr}}
    \\ &=\a\rp\cona\bp\b\rp(\a\rp\cona\bp\b\rp\conb)\rp\conb
    &&\text{func dist}
    \\ &=\a\rp\cona\bp\b\rp\id\rp\conb&&\text{\eqref{T}}
    \\ &=\a\rp\cona\bp\b\rp\conb&&\text{id}
    \\ &=\id&&\text{\eqref{T}}
  \end{align*}
  \begin{align*}
    \con\A\rp\A
    &=(\a^2\rp\cona\bp\a\rp\b\rp\cona\rp\conb\bp\b\rp\conb^2)
    \rp(\a\rp\cona^2\bp\b\rp\a\rp\conb\rp\cona\bp\b^2\rp\conb)
    &&\text{defs $\A$, $\con\A$}
    \\ &=(\a^2\rp\cona\bp(\a\rp\b\rp\cona\bp\b\rp\conb)\rp\conb)
    \rp(\a\rp\cona^2\bp\b\rp(\a\rp\conb\rp\cona\bp\b\rp\conb))
    &&\text{func dist} \\ &=\a^2\rp\cona^2 \bp
    (\a\rp\b\rp\cona\bp\b\rp\conb)\rp(\a\rp\conb\rp\cona\bp\b\rp\conb)
    &&\text{\eqref{Pr}} \\ &=\a^2\rp\cona^2 \bp
    \a\rp\b\rp\conb\rp\cona \bp \b\rp\conb &&\text{\eqref{Pr}}
    \\ &=\a\rp(\a\rp\cona \bp \b\rp\conb)\rp\cona \bp \b\rp\conb
    &&\text{func dist} \\ &=\a\rp\cona \bp \b\rp\conb
    &&\text{\eqref{T}, id} \\ &=\id &&\text{\eqref{T}}
  \end{align*}
  Now we show $\C$ and $\pi_0$ are permutational.
  \begin{align*}
    \con\C\rp\C
    &=(\b^2\rp\cona \bp
    \a\rp\cona\rp\conb \bp
    \b\rp\a\rp\conb^2)
    \rp(\a\rp\conb^2 \bp
    \b\rp\a\rp\cona \bp
    \b^2\rp\cona\rp\conb)
    &&\text{defs $\C$, $\con\C$}
    \\ &=(\b^2\rp\cona \bp
    (\a\rp\cona \bp
    \b\rp\a\rp\conb)\rp\conb)
    \rp(\a\rp\conb^2 \bp
    \b\rp(\a\rp\cona \bp
    \b\rp\cona\rp\conb))
    &&\text{func dist}
    \\ &=\b^2\rp\conb^2 \bp
    (\a\rp\cona \bp
    \b\rp\a\rp\conb)\rp
    (\a\rp\cona \bp
    \b\rp\cona\rp\conb)
    &&\text{\eqref{Pr}}
    \\ &=\b^2\rp\conb^2 \bp
    \a\rp\cona \bp
    \b\rp\a\rp\cona\rp\conb
    &&\text{\eqref{Pr}}
    \\ &=\a\rp\cona \bp
    \b\rp(\b\rp\conb \bp
    \a\rp\cona)\rp\conb
    &&\text{func dist}
    \\ &=\b\rp\conb \bp \a\rp\cona
    &&\text{\eqref{T}, id}
    \\ &=\id
    &&\text{\eqref{T}}
  \end{align*}
  \begin{align*}
    \C\rp\con\C
    &=(\a\rp\conb^2 \bp
    \b\rp\a\rp\cona \bp
    \b^2\rp\cona\rp\conb)
    \rp(\b^2\rp\cona \bp
    \a\rp\cona\rp\conb \bp
    \b\rp\a\rp\conb^2)
    &&\text{defs $\C$, $\con\C$}
    \\ &=(\b\rp\a\rp\cona \bp
    (\a\rp\conb \bp
    \b^2\rp\cona)\rp\conb)
    \rp(\a\rp\cona\rp\conb \bp
    \b\rp(\b\rp\cona \bp
    \a\rp\conb^2))
    &&\text{func dist}
    \\ &=\b\rp\a\rp\cona\rp\conb \bp
    (\a\rp\conb \bp \b^2\rp\cona)
    \rp(\b\rp\cona \bp \a\rp\conb^2)
    &&\text{\eqref{Pr}}
    \\ &=\b\rp\a\rp\cona\rp\conb \bp
    \b^2\rp\conb^2 \bp
    \a\rp\cona
    &&\text{\eqref{Pr}}
    \\ &=\b\rp(\a\rp\cona \bp
    \b\rp\conb)\rp\conb \bp
    \a\rp\cona
    &&\text{func dist}
    \\ &=\b\rp\conb \bp \a\rp\cona
    &&\text{\eqref{T}, id}
    \\ &=\id
    &&\text{\eqref{T}}
  \end{align*}
  \begin{align*}
    \con\pi_0\rp{\pi_0}
    &=(\b\rp\a\rp\cona \bp
    \a\rp\cona\rp\conb \bp
    \b^2\rp\conb^2)
    \rp(\a\rp\cona\rp\conb \bp
    \b\rp\a\rp\cona \bp
    \b^2\rp\conb^2)
    &&\text{defs ${\pi_0}$, $\con{\pi_0}$}
    \\ &=(\b\rp\a\rp\cona \bp
    (\a\rp\cona \bp
    \b^2\rp\conb)\rp\conb)
    \rp(\a\rp\cona\rp\conb \bp
    \b\rp(\a\rp\cona \bp
    \b\rp\conb^2))
    &&\text{func dist}
    \\ &=\b\rp\a\rp\cona\rp\conb \bp
    (\a\rp\cona \bp
    \b^2\rp\conb)\rp
    (\a\rp\cona \bp
    \b\rp\conb^2)
    &&\text{\eqref{Pr}}
    \\ &=\b\rp\a\rp\cona\rp\conb \bp
    \a\rp\cona \bp
    \b^2\rp\conb^2
    &&\text{\eqref{Pr}}
    \\ &=\a\rp\cona \bp
    \b\rp(\a\rp\cona \bp
    \b\rp\conb)\rp\conb
    &&\text{func dist}
    \\ &=\a\rp\cona \bp \b\rp\conb
    &&\text{\eqref{T}, id}
    \\ &=\id
    &&\text{\eqref{T}}
  \end{align*}
  \begin{align*}
    \pi_0\rp\con{\pi_0}
    &=(\a\rp\cona\rp\conb \bp
    \b\rp\a\rp\cona \bp
    \b^2\rp\conb^2)
    \rp(\b\rp\a\rp\cona \bp
    \a\rp\cona\rp\conb \bp
    \b^2\rp\conb^2)
    &&\text{defs $\pi_0$, $\con{\pi_0}$}
    \\ &=(\b\rp\a\rp\cona \bp
    (\a\rp\cona \bp
    \b^2\rp\conb)\rp\conb)
    \rp(\a\rp\cona\rp\conb \bp
    \b\rp(\a\rp\cona \bp
    \b\rp\conb^2))
    &&\text{func dist}
    \\ &=\b\rp\a\rp\cona\rp\conb \bp
    (\a\rp\cona \bp
    \b^2\rp\conb)\rp
    (\a\rp\cona \bp
    \b\rp\conb^2)
    &&\text{\eqref{Pr}}
    \\ &=\b\rp\a\rp\cona\rp\conb \bp
    \a\rp\cona \bp
    \b^2\rp\conb^2
    &&\text{\eqref{Pr}}
    \\ &=\a\rp\cona \bp
    \b\rp(\a\rp\cona \bp
    \b\rp\conb)\rp\conb
    &&\text{func dist}
    \\ &=\a\rp\cona \bp \b\rp\conb
    &&\text{\eqref{T}, id}
    \\ &=\id
    &&\text{\eqref{T}}
  \end{align*}
\end{proof}

\section{Thompson's group $\F$}
\label{s22}
\begin{prop}
  \label{Rg}
  Assume $\a$ and $\b$ satisfy \eqref{Q}, \eqref{D}, and
  \eqref{U}. Let
  \begin{align*}
    \A=\a\rp\cona^2\bp\b\rp\a\rp\conb\rp\cona\bp\b^2\rp\conb.
  \end{align*}
  Then
  \begin{align*}
    \A\rp(\ot\id\x)\rp\con\A&=\ot\id{(\ot\id\x)},
    \\ \con\A\rp(\ot\x\id)\rp\A&=\ot{(\ot\x\id)}\id.
  \end{align*}
\end{prop}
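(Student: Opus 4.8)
The plan is to prove both identities by first rewriting $\A$ and $\con\A$ in terms of the operations $\nabla$, $\otimes$, and $\con\nabla$ of \SS\ref{s19}, and then applying the composition rules of Props.\ \ref{fg-rule}, \ref{gg-rule}, and \ref{fh-rule}. These three propositions simplify exactly the products of types $\nabla\rp\otimes$, $\otimes\rp\otimes$, and $\nabla\rp\con\nabla$, respectively, and each of them is available because \eqref{Q} gives \eqref{Pr} by Prop.\ \ref{QPr}. I also use that \eqref{Q}, \eqref{D}, \eqref{U} yield $\a,\b\in\Fn\AA$, the identity \eqref{T} (Prop.\ \ref{prop2a}), func dist (Prop.\ \ref{f-dist}), and that $\A$ is permutational by Prop.\ \ref{perms}, so $\A\rp\con\A=\con\A\rp\A=\id$.

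The first step is to record the factorizations
\begin{align*}
  \A=\fk{(\ot\id\a)}{\b^2},\qquad \con\A=\fkc{(\ot\id\cona)}{\conb^2},
\end{align*}
the first obtained from the displayed formula for $\A$ by using func dist to pull $\cona$ and $\conb$ out on the right, and the second by taking the converse of the first via \eqref{4}, \eqref{5}, \eqref{6} (note $\con{\ot\id\a}=\ot\id\cona$). Thus $\A$ is in $\nabla$-form, which is the correct shape for a \emph{left} factor in Props.\ \ref{fg-rule} and \ref{fh-rule}, while $\con\A$ is in $\con\nabla$-form, the correct shape for a \emph{right} factor in Prop.\ \ref{fh-rule}. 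For the first identity the product $\A\rp(\ot\id\x)\rp\con\A$ already has the shape $\nabla\rp\otimes\rp\con\nabla$, so I would compute $\A\rp(\ot\id\x)=\fk{(\ot\id\a)}{(\b^2\rp\x)}$ by Prop.\ \ref{fg-rule}, then apply Prop.\ \ref{fh-rule} to get $(\ot\id\a)\rp(\ot\id\cona)\bp\b^2\rp\x\rp\conb^2$, and collapse the first summand to $\ot\id{(\a\rp\cona)}$ by Prop.\ \ref{gg-rule}. Expanding $\ot\id{(\a\rp\cona)}$ and the target $\ot\id{(\ot\id\x)}$ by func dist shows both equal $\a\rp\cona\bp\b\rp\a\rp\cona\rp\conb\bp\b^2\rp\x\rp\conb^2$, finishing this case.

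The second identity is the main obstacle, because the product $\con\A\rp(\ot\x\id)\rp\A$ has the ``anti-conjugation'' shape $\con\nabla\rp\otimes\rp\nabla$: reading left to right, $\con\A\rp(\ot\x\id)$ is of type $\con\nabla\rp\otimes$ and $(\ot\x\id)\rp\A$ is of type $\otimes\rp\nabla$, and neither of these is among the three simplifiable types, so no single application of Props.\ \ref{fg-rule}--\ref{fh-rule} applies in either grouping. The resolution is to exploit that $\A$ is permutational: composing the target equation on the left with $\A$ and on the right with $\con\A$, and using $\A\rp\con\A=\con\A\rp\A=\id$, turns the second identity into the equivalent assertion
\begin{align*}
  \A\rp(\ot{(\ot\x\id)}\id)\rp\con\A=\ot\x\id .
\end{align*}
Its left-hand side is now a genuine conjugation of the $\otimes$-element $\ot{(\ot\x\id)}\id$ by $\A$, hence has the good shape $\nabla\rp\otimes\rp\con\nabla$.

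I would then treat this exactly as the first case: Prop.\ \ref{fg-rule} gives $\A\rp(\ot{(\ot\x\id)}\id)=\fk{\big((\ot\id\a)\rp(\ot\x\id)\big)}{\b^2}$, Prop.\ \ref{gg-rule} collapses the inner factor to $\ot\x\a$, Prop.\ \ref{fh-rule} multiplies on the right by $\con\A$ to produce $(\ot\x\a)\rp(\ot\id\cona)\bp\b^2\rp\conb^2$, and a final use of Prop.\ \ref{gg-rule} rewrites this as $\ot\x{(\a\rp\cona)}\bp\b^2\rp\conb^2$. The only genuine computation I expect is the last matching step: expanding gives $\a\rp\x\rp\cona\bp\b\rp\a\rp\cona\rp\conb\bp\b^2\rp\conb^2$, and \eqref{T} together with func dist collapses the last two summands, since $\b\rp\a\rp\cona\rp\conb\bp\b^2\rp\conb^2=\b\rp\big((\a\rp\cona\bp\b\rp\conb)\rp\conb\big)=\b\rp\conb$, leaving $\a\rp\x\rp\cona\bp\b\rp\conb=\ot\x\id$, as required.
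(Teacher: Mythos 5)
Your proof is correct, and it takes a genuinely different route from the paper's. The paper proves the first identity by a direct longhand expansion --- essentially the same applications of \eqref{Pr} and func dist that are packaged inside Props.\ \ref{fg-rule}, \ref{gg-rule}, and \ref{fh-rule} --- and then disposes of the second identity by observing that interchanging $\a$ and $\b$ throughout (which the symmetric hypotheses permit) carries $\A$ to $\con\A$ and $\ot\x\y$ to $\ot\y\x$. Your factorizations $\A=\fk{(\ot\id\a)}{\b^2}$ and $\con\A=\fkc{(\ot\id\cona)}{\conb^2}$ make visible \emph{why} the product $\A\rp(\ot\id\x)\rp\con\A$ collapses: it has exactly the shape $\nabla\rp\otimes\rp\con\nabla$ that the three composition rules can digest. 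More substantively, your treatment of the second identity is different in kind: instead of the paper's symmetry argument you use the permutationality of $\A$ (Prop.\ \ref{perms}, which precedes this proposition, so there is no circularity) to trade the intractable shape $\con\nabla\rp\otimes\rp\nabla$ for the tractable conjugation $\A\rp(\ot{(\ot\x\id)}\id)\rp\con\A=\ot\x\id$, at the cost of one extra appeal to \eqref{T} in the final matching step. The paper's symmetry argument is shorter but lives at the meta-level of the term language; yours stays entirely inside the equational calculus and would survive in a setting where the hypotheses on $\a$ and $\b$ were not symmetric.
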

\begin{proof}
  \begin{align*}
    &\A\rp(\ot\id\x)\rp\con\A \\ &=
    (\a\rp\cona^2\bp\b\rp\a\rp\conb\rp\cona\bp\b^2\rp\conb) \rp
    (\a\rp\id\rp\cona\bp\b\rp\x\rp\conb)\rp\con\A &&\text{defs $\A$,
      $\otimes$} \\ &= ((\a\rp\cona \bp
    \b\rp\a\rp\conb)\rp\cona\bp\b^2\rp\conb) \rp
    (\a\rp\cona\bp\b\rp\x\rp\conb)\rp\con\A &&\text{func dist, id}
    \\ &=( (\a\rp\cona \bp \b\rp\a\rp\conb)\rp\cona\bp
    \b^2\rp\x\rp\conb) \rp(\a^2\rp\cona \bp \a\rp\b\rp\cona\rp\conb \bp
    \b\rp\conb^2) &&\text{\eqref{Pr}, def $\A$, con} \\ &=((\a\rp\cona
    \bp \b\rp\a\rp\conb)\rp\cona\bp \b^2\rp\x\rp\conb)
    \rp(\a\rp(\a\rp\cona \bp \b\rp\cona\rp\conb) \bp \b\rp\conb^2)
    &&\text{func dist} \\ &= (\a\rp\cona \bp
    \b\rp\a\rp\conb)\rp(\a\rp\cona \bp \b\rp\cona\rp\conb) \bp
    \b^2\rp\x\rp\conb^2 &&\eqref{Pr} \\ &=\a\rp\cona \bp
    \b\rp\a\rp\cona\rp\conb \bp \b^2\rp\x\rp\conb^2 &&\eqref{Pr}
    \\ &=\a\rp\cona\bp\b\rp(\a\rp\cona\bp\b\rp\x\rp\conb)\rp\conb
    &&\text{func dist}
    \\ &=\a\rp\id\cona\bp\b\rp(\a\rp\id\rp\cona\bp\b\rp\x\rp\conb)\rp\conb
    &&\text{id} \\ &=\a\rp\id\cona\bp\b\rp(\ot\id\x)\rp\conb &&\text{def
      $\otimes$} \\ &=\ot\id{(\ot\id\x)} &&\text{def $\otimes$}
  \end{align*}
  The second equation has a similar proof that can be obtained by
  simply interchanging $\a$ and $\b$. The conditions imposed by
  \eqref{Q} on $\a$ and $\b$ are symmetric in $\a$ and $\b$, and
  interchanging $\a$ and $\b$ in the definitions of $\A$ and $\ot\x\y$
  produces $\con\A$ and $\ot\y\x$.
\end{proof}
\begin{prop}
  \label{presF}
  Assume $\a,\b$ satisfy \eqref{Q}, \eqref{D}, and \eqref{U}. Let
  \begin{align*}
    \A&=
    \a\rp\cona^2 \bp
    \b\rp\a\rp\conb\rp\cona \bp
    \b^2\rp\conb,
    \\ \B&=
    \a\rp\cona\bp\b\rp\a\rp\cona^2\rp\conb \bp
    \b^2\rp\a\rp\conb\rp\cona\rp\conb \bp
    \b^3\rp\conb^2.
  \end{align*}
  The elements $\A$ and $\B$ satisfy the two relations \eqref{ta1} and
  \eqref{ta2} in the presentation of Thompson's group $\F$, \ie,
  \begin{align*}
    [\;\con\B\rp\A,\;\A\rp\B\rp\con\A\;]
    &=\id=[\;\con\B\rp\A,\;\A^2\rp\B\rp\con\A^2\;].
  \end{align*}
\end{prop}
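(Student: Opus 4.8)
The plan is to recognize these two relations as the relation-algebraic shadow of the classical fact that in Thompson's group $\F$ conjugation by the generators shifts the standard elements upward in their index. I would set $\X_1=\B$ and $\X_{n+1}=\ot{\id}{\X_n}$, and first record three preliminary facts: that the element $\B$ displayed in the statement is exactly $\ot{\id}{\A}$ (expand $\b\rp\A\rp\conb$ by ``func dist''), so that $\con\B=\ot{\id}{\con\A}$ by ``con'' and $\con\id=\id$; that each $\X_n$ and hence $g:=\con\B\rp\A$ is permutational, by Prop \ref{g-closed} and Prop \ref{func}; and that the two elements occurring in the commutators are $\X_2$ and $\X_3$. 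Indeed, by Prop \ref{Rg} applied first with $\x=\A$ and then with $\x=\B$,
\begin{align*}
  \A\rp\B\rp\con\A=\ot{\id}{(\ot{\id}{\A})}=\ot{\id}{\B}=\X_2,
  \qquad
  \A^2\rp\B\rp\con\A^2=\A\rp\X_2\rp\con\A=\ot{\id}{\X_2}=\X_3,
\end{align*}
so the task reduces to showing that $g$ commutes with $\X_2$ and with $\X_3$.

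Next I would establish two shift relations. The first is immediate from Prop \ref{Rg}: since every $\X_n$ has the form $\ot{\id}{(\cdot)}$, we get $\A\rp\X_n\rp\con\A=\X_{n+1}$, hence $\A\rp\X_n=\X_{n+1}\rp\A$ for $n\ge 1$ (using $\con\A\rp\A=\id$). The second realizes the same shift through conjugation by $\B$, and is where the multiplication rule of Prop \ref{gg-rule} does the work: using $\B=\ot{\id}{\A}$, $\con\B=\ot{\id}{\con\A}$ and $\X_n=\ot{\id}{\X_{n-1}}$,
\begin{align*}
  \B\rp\X_n\rp\con\B
  =(\ot{\id}{\A})\rp(\ot{\id}{\X_{n-1}})\rp(\ot{\id}{\con\A})
  =\ot{\id}{(\A\rp\X_{n-1}\rp\con\A)}
  =\ot{\id}{\X_n}=\X_{n+1},
\end{align*}
which is valid precisely when the inner shift $\A\rp\X_{n-1}\rp\con\A=\X_n$ holds, i.e. for $n\ge 2$. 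In particular $\B\rp\X_2\rp\con\B=\X_3$ and $\B\rp\X_3\rp\con\B=\X_4$, and multiplying on the left by $\con\B$ (with $\con\B\rp\B=\id$) I rewrite these as $\con\B\rp\X_3=\X_2\rp\con\B$ and $\con\B\rp\X_4=\X_3\rp\con\B$.

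With both shift relations in hand the commutations are a three-step chain. Using $\A\rp\X_2=\X_3\rp\A$ and then $\con\B\rp\X_3=\X_2\rp\con\B$,
\begin{align*}
  g\rp\X_2=\con\B\rp\A\rp\X_2=\con\B\rp\X_3\rp\A=\X_2\rp\con\B\rp\A=\X_2\rp g,
\end{align*}
and identically, using $\A\rp\X_3=\X_4\rp\A$ and $\con\B\rp\X_4=\X_3\rp\con\B$,
\begin{align*}
  g\rp\X_3=\con\B\rp\A\rp\X_3=\con\B\rp\X_4\rp\A=\X_3\rp\con\B\rp\A=\X_3\rp g.
\end{align*}
Since commutativity under $\rp$ in $\gc{Pm}(\AA)$ is exactly the statement that the corresponding commutator equals $\id$, this yields \eqref{ta1} and \eqref{ta2}.

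The computation itself is short; the genuine work is the bookkeeping that makes it short. The main point to get right is that both $\A$ and $\B$ implement the single index shift $\X_n\mapsto\X_{n+1}$, and then to stay inside the range $n\ge 2$, where conjugation by $\B$ behaves correctly—the case $n=1$ genuinely fails, since there $\X_1=\B$ merely commutes with itself instead of shifting, which is why the two relations use $\X_2,\X_3$ rather than $\B,\X_2$. Everything rests on Props \ref{Rg} and \ref{gg-rule}, which in turn require only the pairing identity \eqref{Pr} and the fork identity \eqref{T}, both guaranteed here by \eqref{Q}, \eqref{D}, and \eqref{U} (via Prop \ref{QPr}).
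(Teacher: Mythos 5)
Your proposal is correct and follows essentially the same route as the paper's proof: both rest on the identities $\B=\ot\id\A$, $\con\B=\ot\id{\con\A}$, Prop.~\ref{Rg}, and the parallel-composition rule Prop.~\ref{gg-rule}, and both reduce the commutator relations (via permutationality) to the conjugation identities $\B\rp(\A^{\n}\rp\B\rp\con\A^{\n})\rp\con\B=\A^{\n+1}\rp\B\rp\con\A^{\n+1}$ for $\n=1,2$. Your repackaging of these as ``shift relations'' $\X_\n\mapsto\X_{\n+1}$ is only a presentational variant of the paper's direct computation.
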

\begin{proof}
  $\A$ and $\B$ are permutational by Prop.\ \ref{perms}, hence
  $\A\rp\con\A=\con\A\rp\A=\id=\B\rp\con\B=\con\B\rp\B$.  This implies
  that the two relations are equivalent to equations expressing
  commutativity,
  \begin{align*}
    (\A\rp\B\rp\con\A)\rp(\con\B\rp\A) &=
    (\con\B\rp\A)\rp(\A\rp\B\rp\con\A),
    \\ (\A^2\rp\B\rp\con\A^2)\rp(\con\B\rp\A) &=
    (\con\B\rp\A)\rp(\A^2\rp\B\rp\con\A^2).
  \end{align*}
  In both equations, move the final $\A$ on the left side to the right
  side and the initial $\con\B$ on the right side to the left side,
  obtaining
  \begin{align*}
    \B\rp\A\rp\B\rp\con\A\rp\con\B &= \A^2\rp\B\rp\con\A^2, &
    \B\rp\A^2\rp\B\rp\con\A^2\rp\con\B &=\A^3\rp\B\rp\con\A^3 .
  \end{align*}
  We prove these last two equations. First note that, since
  $\a,\b\in\Fn\AA$, we have
  \begin{align}
    \label{e1}
    \B&=\ot\id\A
  \end{align}
  because
  \begin{align*}
    \B&= \a\rp\cona \bp \b\rp\a\rp\cona^2\rp\conb \bp
    \b^2\rp\a\rp\conb\rp\cona\rp\conb \bp \b^3\rp\conb^2
    \\ &=\a\rp\cona \bp \b\rp(\a\rp\cona^2 \bp \b\rp\a\rp\conb\rp\cona
    \bp \b^2\rp\conb)\rp\conb&&\text{func dist}
    \\ &=\a\rp\id\rp\cona\bp\b\rp\A\rp\conb&&\text{id, def $\A$}
    \\ &=\ot\id\A&&\text{def $\otimes$}
  \end{align*}
  By \eqref{e1} and Prop.\ \ref{Rg} with $\x=\A$,
  \begin{align}
    \label{e2}
    \A\rp\B\rp\con\A&=\ot\id\B.
  \end{align}
  Applying Prop.\ \ref{Rg} again, this time with $\x=\B$, produces
  \begin{align}
    \label{e3}
    \A^2\rp\B\rp\con\A^2&=\ot\id{(\ot\id\B)}.
  \end{align}
  By a third application of Prop.\ \ref{Rg} with $\x=\ot\id\B$,
  \begin{align}
    \label{e4}
    \A^3\rp\B\rp\con\A^3 &=\ot\id{(\ot\id{(\ot\id\B)})}.
  \end{align}
  The converse of $\B$ is
  \begin{align}
    \label{e5}
    \con\B&=\ot\id{\con\A}
  \end{align}
  because
  \begin{align*}
    \con\B&=\con{\ot\id\A} &&\eqref{e1} \\\notag
    &=\con{\a\rp\id\rp\cona\bp\b\rp\A\rp\conb} &&\text{def $\otimes$}
    \\\notag
    &=\a\rp\id\rp\cona\bp\b\rp\con\A\rp\conb&&\text{con, assoc}
    \\\notag &=\ot\id{\con\A}&&\text{def $\otimes$}
  \end{align*}
  We can then obtain the desired equations.
  \begin{align*}
    \B\rp(\A\rp\B\rp\con\A)\rp\con\B
    &=(\ot\id\A)\rp(\ot\id\B)\rp(\ot\id{\con\A}) &&\text{\eqref{e1},
      \eqref{e2} \eqref{e5}} \\\notag&=\ot\id{(\A\rp\B\rp\con\A)}
    &&\text{Prop.\ \ref{gg-rule}}
    \\\notag&=\ot\id{(\ot\id\B)}&&\text{\eqref{e2}}
    \\\notag&=\A^2\rp\B\rp\con\A^2&&\text{\eqref{e3}}
    \\ \B\rp(\A^2\rp\B\rp\con\A^2)\rp\con\B
    &=(\ot\id\A)\rp(\ot\id{(\ot\id\B)})\rp(\ot\id{\con\A})
    &&\text{\eqref{e1}, \eqref{e3}, \eqref{e5}}
    \\\notag&=\ot\id{(\A\rp(\ot\id\B)\rp\con\A)}
    &&\text{Prop.\ \ref{gg-rule}}
    \\\notag&=\ot\id{(\ot\id{(\ot\id\B)})} &&\text{Prop.\ \ref{Rg}}
    \\\notag&=\A^3\rp\B\rp\con\A^3&&\text{\eqref{e4}}
  \end{align*}
\end{proof}

\section{Thompson's group $\T$}
\label{s23}
\begin{prop}
  \label{presT}
  Assume $\a,\b$ satisfy \eqref{Q}, \eqref{D}, and \eqref{U}. Let
  \begin{align*}
    \A&=
    \a\rp\cona^2 \bp
    \b\rp\a\rp\conb\rp\cona \bp
    \b^2\rp\conb,
    \\ \B&=
    \a\rp\cona\bp\b\rp\a\rp\cona^2\rp\conb \bp
    \b^2\rp\a\rp\conb\rp\cona\rp\conb \bp
    \b^3\rp\conb^2,
    \\ \C&=
    \a\rp\conb^2 \bp
    \b\rp\a\rp\cona \bp
    \b^2\rp\cona\rp\conb.
  \end{align*}
  The elements $\A,\B,\C$ satisfy the six relations
  \eqref{ta1}--\eqref{ta6} in the presentation of Thompson's group
  $\T$.
\end{prop}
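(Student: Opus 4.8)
The plan is to reduce the six relations to direct computations in the normal form used in Props.\ \ref{perms} and \ref{presF}, in which every element is written as a join of $\rp$-products of $\a,\b,\cona,\conb$. The two relations \eqref{ta1} and \eqref{ta2} require no new work: they are exactly the two defining relations of $\F$ already proved in Prop.\ \ref{presF} for the very same $\A$ and $\B$, and since $\A,\B\in\Pm\AA$ by Prop.\ \ref{perms}, the commutators $[\con\B\rp\A,\X_2]$ and $[\con\B\rp\A,\X_3]$ are $\id$. So the genuine task is \eqref{ta3}--\eqref{ta6}.

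Because $\A,\B,\C\in\Pm\AA$ (Prop.\ \ref{perms}), I would first clear each remaining relation of inverses. Using $\X_2=\A\rp\B\rp\con\A$, $\C_2=\B\rp\C\rp\con\A$, $\C_3=\B^2\rp\C\rp\con\A^2$ together with $\A\rp\con\A=\con\A\rp\A=\id$ (and similarly for $\B,\C$), the relations become equalities between joins of $\rp$-products: \eqref{ta3} reads $\C=\B\rp\C\rp\con\A\rp\B$, \eqref{ta5} reads $\A\rp\C=\B\rp\C\rp\con\A\rp\B\rp\C\rp\con\A$, \eqref{ta4} reads $\A\rp\B\rp\con\A\rp\B\rp\C\rp\con\A=\B^2\rp\C\rp\con\A^2\rp\B$, and \eqref{ta6} reads $\C\rp\C\rp\C=\id$. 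Each is then verified termwise, and the only tools needed are the pairing identity \eqref{Pr} (available from \eqref{Q} by Prop.\ \ref{QPr}), the identity \eqref{T}, namely $\id=\a\rp\cona\bp\b\rp\conb$ (available from \eqref{D} and \eqref{U} by Prop.\ \ref{prop2a}), the functionality of $\a,\b$, and the distribution law Prop.\ \ref{f-dist} (``func dist''), exactly as in Prop.\ \ref{perms}.

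First I would record the normal forms of the auxiliary elements by multiplying out their definitions with \eqref{Pr} and func dist: explicit three- or four-term expansions of $\C_2=\B\rp\C\rp\con\A$ and $\C_3=\B^2\rp\C\rp\con\A^2$, and of $\con\C$ (obtained as in Prop.\ \ref{perms}). With these in hand, \eqref{ta3} should fall out almost immediately, since the $\con\A\rp\B$ factors in $\B\rp\C\rp\con\A\rp\B$ are absorbed and the product collapses back to the three-term form of $\C$; I expect this to be the easiest of the four. For \eqref{ta6} I would compute $\C^2$ by a single application of \eqref{Pr}, kill the mixed terms using \eqref{T}, and then multiply by $\C$ once more, again collapsing with \eqref{T} to reach $\id$; conceptually this merely records that $\C$ realizes the $3$-cycle on the three subtrees.

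The hard part will be \eqref{ta4} and \eqref{ta5}. There $\C_2$ and $\C_3$ are products of three permutational elements, so their expansions carry several summands, and forming $\C_2^2$ for \eqref{ta5} or the length-four products in \eqref{ta4} forces repeated, carefully ordered applications of \eqref{Pr} while tracking which branch (the $\a$-side or the $\b$-side) each summand lives on. The main obstacle is the bookkeeping of annihilating the many mixed cross terms and reassembling the surviving diagonal terms via \eqref{T}, which is exactly where routine but error-prone cancellation occurs. As in Prop.\ \ref{presF}, I would shorten the computation by routing products through the operations $\otimes$ and $\nabla$ and invoking Props.\ \ref{gg-rule}, \ref{fg-rule}, and \ref{Rg} wherever a factor has a deferred form $\ot\id\x$ or $\ot\x\id$, rather than expanding everything by hand; identifying such forms for $\C_2$ and $\C_3$ is the key simplification I would seek before committing to the brute-force expansion.
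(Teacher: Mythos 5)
Your plan is correct and follows essentially the same route as the paper: reduce \eqref{ta3}--\eqref{ta6} to equalities between normal-form joins of $\rp$-products and verify them by repeated use of \eqref{Pr}, \eqref{T}, and func dist, with \eqref{ta1}--\eqref{ta2} already settled by Prop.~\ref{presF}. The only differences are cosmetic: the paper proves \eqref{ta6} via $\C^2=\con\C$ and then shortens \eqref{ta5} to checking that $\A\rp\C$ is self-converse (using \eqref{ta3} and \eqref{ta6}), and the deferred-form shortcut you hope for does not materialize for $\C_2$ and $\C_3$ (their expansions are not of the form $\ot\id\x$ or $\ot\x\id$), so \eqref{ta4} is indeed done by the brute-force expansion you describe as the fallback.
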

\begin{proof}
  The relations \eqref{ta1} and \eqref{ta2} were proved in
  Prop.\ \ref{presF}. The remaining relations to be verified are
  \eqref{ta3}--\eqref{ta6}:
  \begin{align*}
    \C&=\C_2\rp\B \\ \X_2\rp\C_2&=\C_3\rp\B \\ \A\rp\C&=\C_2^2
    \\ \C^3&=\id
  \end{align*}
  where
  \begin{align*}
    \X_2&=\A\rp\B\rp\con\A&\X_3&=\A^2\rp\B\rp\con\A^2
    \\ \C_2&=\B\rp\C\rp\con\A&\C_3&=\B^2\rp\C\rp\con\A^2
  \end{align*}
  For \eqref{ta6}, we show the second power of $\C$ is its converse:
  \begin{align*}
    \C^2&=(\a\rp\conb^2\bp\b\rp\a\rp\cona\bp\b^2\rp\cona\rp\conb)
    \rp(\a\rp\conb^2\bp\b\rp\a\rp\cona\bp\b^2\rp\cona\rp\conb)
    &&\text{def $\C$}
    \\ &=(\b\rp\a\rp\cona\bp(\b^2\rp\cona\bp\a\rp\conb)\rp\conb)
    \rp(\a\rp\conb^2\bp\b\rp(\a\rp\cona\bp\b\rp\cona\rp\conb))
    &&\text{func dist}
    \\ &=\b\rp\a\rp\conb^2\bp(\b^2\rp\cona\bp\a\rp\conb)
    \rp(\a\rp\cona\bp\b\rp\cona\rp\conb)&&\text{\eqref{Pr}}
    \\ &=\b\rp\a\rp\conb^2\bp\b^2\rp\cona\bp\a\rp\cona\rp\conb
    &&\text{\eqref{Pr}}
    \\ &=\con{\b\rp\a\rp\cona\bp\b^2\rp\cona\rp\conb\bp\a\rp\conb^2}
    &&\text{con} \\ &=\con\C &&\text{def $\C$}
  \end{align*}
  Since $\C$ is permutational, $\C^3=\C^2\rp\C=\con\C\rp\C=\id$.  For
  \eqref{ta3} we calculate in turn $\B\rp\C$, $\C_2$, and
  $\C_2\rp\B$. In the last of these computations we use
  $\a\rp\cona\bp\b\rp\conb=\id$, which holds by \eqref{D} and
  \eqref{U}.  We have
  \begin{align}
    \label{BC}
    \B\rp\C &= (\b\rp\a\rp\cona \bp \b^2\rp\a\rp\conb)\rp\cona \bp
    (\a\rp\conb \bp \b^3\rp\cona)\rp\conb,
  \end{align}
  because
  \begin{align*}
    \B\rp\C &= (\a\rp\cona \bp \b\rp\a\rp\cona^2\rp\conb \bp
    \b^2\rp\a\rp\conb\rp\cona\rp\conb \bp \b^3\rp\conb^2)
    \\ &\qquad\rp(\a\rp\conb^2 \bp \b\rp\a\rp\cona \bp
    \b^2\rp\cona\rp\conb)&&\text{defs $\B,\C$}
    \\ &=(\a\rp\cona\bp(\b\rp\a\rp\cona^2 \bp
    \b^2\rp\a\rp\conb\rp\cona \bp \b^3\rp\conb)\rp\conb)
    \\ &\qquad\rp(\a\rp\conb^2
    \bp\b\rp(\a\rp\cona\bp\b\rp\cona\rp\conb)) &&\text{func dist}
    \\ &= \a\rp\conb^2 \bp (\b\rp\a\rp\cona^2 \bp
    \b^2\rp\a\rp\conb\rp\cona \bp
    \b^3\rp\conb)\rp(\a\rp\cona\bp\b\rp\cona\rp\conb) &&\eqref{Pr}
    \\ &= \a\rp\conb^2 \bp ((\b\rp\a\rp\cona \bp
    \b^2\rp\a\rp\conb)\rp\cona\bp\b^3\rp\conb)\rp
    (\a\rp\cona\bp\b\rp\cona\rp\conb) &&\text{func dist}
    \\ &=\a\rp\conb^2\bp(\b\rp\a\rp\cona\bp\b^2\rp\a\rp\conb)
    \rp\cona\bp\b^3\rp\cona\rp\conb&&\eqref{Pr}
    \\ &=(\b\rp\a\rp\cona\bp\b^2\rp\a\rp\conb)\rp\cona\bp
    (\a\rp\conb\bp\b^3\rp\cona)\rp\conb &&\text{func dist}
  \end{align*}
  and
  \begin{align}
    \label{BCA'}
    \C_2&=\b\rp\a\rp\cona \bp \b^2\rp\a\rp\cona\rp\conb \bp
    \a\rp\conb^3 \bp \b^3\rp\cona\rp\conb^2,
  \end{align}
  because
  \begin{align*}
    \C_2&=\B\rp\C\rp\con\A \\ &= ((\b\rp\a\rp\cona \bp
    \b^2\rp\a\rp\conb)\rp\cona \bp (\a\rp\conb \bp
    \b^3\rp\cona)\rp\conb) \\&\qquad \rp(\a^2\rp\cona \bp
    \a\rp\b\rp\cona\rp\conb \bp \b\rp\conb^2) &&\text{\eqref{BC}, def
      $\con\A$} \\ &= ((\b\rp\a\rp\cona \bp
    \b^2\rp\a\rp\conb)\rp\cona \bp (\a\rp\conb \bp
    \b^3\rp\cona)\rp\conb) \\&\qquad \rp(\a\rp(\a\rp\cona \bp
    \b\rp\cona\rp\conb) \bp \b\rp\conb^2) &&\text{func dist} \\ &=
    (\b\rp\a\rp\cona \bp \b^2\rp\a\rp\conb)\rp(\a\rp\cona \bp
    \b\rp\cona\rp\conb) \bp (\a\rp\conb \bp \b^3\rp\cona)\rp\conb^2
    &&\eqref{Pr} \\ &= \b\rp\a\rp\cona \bp
    \b^2\rp\a\rp\cona\rp\conb \bp (\a\rp\conb \bp
    \b^3\rp\cona)\rp\conb^2 &&\eqref{Pr} \\ &= \b\rp\a\rp\cona \bp
    \b^2\rp\a\rp\cona\rp\conb \bp \a\rp\conb^3 \bp
    \b^3\rp\cona\rp\conb^2 &&\text{func dist}
  \end{align*}
  and
  \begin{align}
    \label{BCA'B}
    \C_2\rp\B&=\C,
  \end{align}
  because
  \begin{align*}
    \C_2\rp\B &= \Big(\b\rp\a\rp\cona \bp \b^2\rp\a\rp\cona\rp\conb
    \bp \a\rp\conb^3 \bp \b^3\rp\cona\rp\conb^2 \Big) \\&\qquad
    \rp\Big( \a\rp\cona\bp\b\rp\a\rp\cona^2\rp\conb \bp
    \b^2\rp\a\rp\conb\rp\cona\rp\conb \bp \b^3\rp\conb^2 \Big)
    &&\text{\eqref{BCA'}, def $\B$} \\ &= \Big(\b\rp\a\rp\cona \bp
    (\b^2\rp\a\rp\cona \bp \a\rp\conb^2 \bp
    \b^3\rp\cona\rp\conb)\rp\conb\Big) \\&\qquad \rp\Big(\a\rp\cona
    \bp \b\rp(\a\rp\cona^2\rp\conb \bp \b\rp\a\rp\conb\rp\cona\rp\conb
    \bp \b^2\rp\conb^2)\Big) &&\text{func dist} \\ &= \b\rp\a\rp\cona
    \bp (\b^2\rp\a\rp\cona \bp \a\rp\conb^2 \bp \b^3\rp\cona\rp\conb)
    \\&\qquad \rp(\a\rp\cona^2\rp\conb \bp
    \b\rp\a\rp\conb\rp\cona\rp\conb \bp \b^2\rp\conb^2) &&\eqref{Pr}
    \\ &= \b\rp\a\rp\cona \bp (\b^2\rp\a\rp\cona \bp (\a\rp\conb \bp
    \b^3\rp\cona)\rp\conb) \\&\qquad \rp(\a\rp\cona^2\rp\conb \bp
    \b\rp(\a\rp\conb\rp\cona\rp\conb \bp \b\rp\conb^2)) &&\text{func
      dist} \\ &= \b\rp\a\rp\cona \bp \b^2\rp\a\rp\cona^2\rp\conb \bp
    (\a\rp\conb \bp \b^3\rp\cona) \\&\qquad
    \rp(\a\rp\conb\rp\cona\rp\conb \bp \b\rp\conb^2) &&\eqref{Pr} \\ &=
    \b\rp\a\rp\cona \bp \b^2\rp\a\rp\cona^2\rp\conb \bp
    \b^3\rp\conb\rp\cona\rp\conb \bp \a\rp\conb^2 &&\eqref{Pr} \\ &=
    \b\rp\a\rp\cona \bp \b^2\rp(\a\rp\cona \bp
    \b\rp\conb)\rp\cona\rp\conb \bp \a\rp\conb^2 &&\text{func dist}
    \\ &= \b\rp\a\rp\cona \bp \b^2\rp\cona\rp\conb \bp \a\rp\conb^2
    &&\a\rp\cona\bp\b\rp\conb=\id \\ &= \C &&\text{def $\C$}
  \end{align*}
  This last calculation confirms the relation \eqref{ta3}.  We can use
  \eqref{ta3} to simplify \eqref{ta5} from $\A\rp\C=\C_2^2$ to
  $\A\rp\C=\C^2\rp\con\A$ since $\C_2^2 = \C_2\rp\B\rp\C\rp\con\A =
  \C\rp\C\rp\con\A$.  Since $\C$ is permutational, \eqref{ta6} is
  equivalent to $\C^2=\con\C$, so we can simplify \eqref{ta5} further
  to $\A\rp\C=\con\C\rp\con\A=\con{\A\rp\C}$.  We confirm
  $\A\rp\C=\con{\A\rp\C}$ by calculating the form of $\A\rp\C$ and
  noticing that it is fixed by converse. Indeed, the last step holds
  because $\con{\a\rp\cona\rp\conb^2}=\b^2\rp\a\rp\cona$ and
  $\con{\b\rp\a\rp\conb^3}=\b^3\rp\cona\rp\conb$.
  \begin{align*}
    \A\rp\C &= (\a\rp\cona^2 \bp \b\rp\a\rp\conb\rp\cona \bp
    \b^2\rp\conb)\rp(\a\rp\conb^2 \bp \b\rp\a\rp\cona \bp
    \b^2\rp\cona\rp\conb) &&\text{defs $\A,\C$} \\ &= ((\a\rp\cona \bp
    \b\rp\a\rp\conb)\rp\cona \bp \b^2\rp\conb)\rp(\a\rp\conb^2 \bp
    \b\rp(\a\rp\cona \bp \b\rp\cona\rp\conb)) &&\text{func dist} \\ &=
    (\a\rp\cona \bp \b\rp\a\rp\conb)\rp\conb^2 \bp \b^2\rp(\a\rp\cona
    \bp \b\rp\cona\rp\conb) &&\eqref{Pr} \\ &= \a\rp\cona\rp\conb^2 \bp
    \b\rp\a\rp\conb^3 \bp \b^2\rp\a\rp\cona \bp \b^3\rp\cona\rp\conb
    &&\text{func dist} \\ &= \con{\A\rp\C}&&\text{con}
  \end{align*}
  We turn to \eqref{ta4}, which expands to
  $\A\rp\B\rp\con\A\rp\B\rp\C\rp\con\A=\B^2\rp\C\rp\con\A^2\rp\B$
  according to the definitions of $\X_2$, $\C_2$, and $\C_3$.
  Analyzed in parenthetical notation, the actions of the two sides of
  \eqref{ta4} are
  \begin{align*}
    0(1(2(34))) &\rrr\A (01)(2(34)) \rrr\B (01)((23)4) \rrr{\con\A}
    0(1((23)4)) \rrr\B \\ &\rrr\B 0((1(23))4) \rrr\C (1(23))(40)
    \rrr{\con\A} 1((23)(40)) \\ \\ 0(1(2(34))) &\rrr\B 0((12)(34))
    \rrr\B 0(((12)3)4) \rrr\C ((12)3)(40) \rrr{\con\A}
    \\ &\rrr{\con\A} (12)(3(40)) \rrr{\con\A} 1(2(3(40))) \rrr\B
    1((23)(40))
  \end{align*}
  Note that the actions are the same. For an algebraic proof, we first
  calculate $\X_2=\A\rp\B\rp\con\A$ in a more expanded form than
  \eqref{e2} and combine it with \eqref{BCA'} to get $\X_2\rp\C_2$,
  the left side of \eqref{ta4}. We have
  \begin{align}
    \label{ABA'}
    \X_2&=\a\rp\cona \bp (\b\rp\a\rp\cona \bp
    \b^2\rp\a\rp\cona^2\rp\conb \bp \b^3\rp\a\rp\conb\rp\cona\rp\conb
    \bp \b^4\rp\conb^2)\rp\conb
  \end{align}
  because
  \begin{align*}
    &\X_2=\A\rp\B\rp\con\A=\ot\id\B&&\eqref{e2}
    \\ &=\a\rp\id\rp\cona\bp\b\rp\B\rp\conb&&\text{def $\otimes$}
    \\ &=\a\rp\cona\bp\b\rp(\a\rp\cona \bp \b\rp\a\rp\cona^2\rp\conb
    \bp \b^2\rp\a\rp\conb\rp\cona\rp\conb \bp
    \b^3\rp\conb^2)\rp\conb&&\text{id, def $\B$} \\ &=\a\rp\cona \bp
    (\b\rp\a\rp\cona \bp \b^2\rp\a\rp\cona^2\rp\conb \bp
    \b^3\rp\a\rp\conb\rp\cona\rp\conb \bp
    \b^4\rp\conb^2)\rp\conb&&\text{func dist}
  \end{align*}
  and
  \begin{align}
    \label{ABA'BCA'}
    \X_2\rp\C_2&=\a\rp\conb^3 \bp \b\rp\a\rp\cona \bp
    \b^2\rp\a\rp\cona^2\rp\conb \bp \b^3\rp\a\rp\conb\rp\cona\rp\conb
    \bp \b^4\rp\cona\rp\conb^2
  \end{align}
  because
  \begin{align*}
    \X_2\rp\C_2 &=(\a\rp\cona \bp (\b\rp\a\rp\cona \bp
    \b^2\rp\a\rp\cona^2\rp\conb \bp \b^3\rp\a\rp\conb\rp\cona\rp\conb
    \bp \b^4\rp\conb^2)\rp\conb) \\ &\qquad \rp (\a\rp\conb^3 \bp
    \b\rp(\a\rp\cona \bp \b\rp\a\rp\cona\rp\conb \bp
    \b^2\rp\cona\rp\conb^2)) &&\text{\eqref{ABA'}, \eqref{BCA'}}
    \\ &=\a\rp\conb^3 \bp (\b\rp\a\rp\cona \bp (\b^2\rp\a\rp\cona^2
    \bp \b^3\rp\a\rp\conb\rp\cona \bp \b^4\rp\conb)\rp\conb)
    \\ &\qquad \rp(\a\rp\cona \bp \b\rp(\a\rp\cona\rp\conb \bp
    \b\rp\cona\rp\conb^2)) &&\eqref{Pr} \\ &=\a\rp\conb^3 \bp
    \b\rp\a\rp\cona \bp (\b^2\rp\a\rp\cona^2 \bp
    \b^3\rp\a\rp\conb\rp\cona \bp \b^4\rp\conb) \\ &\qquad
    \rp(\a\rp\cona\rp\conb \bp \b\rp\cona\rp\conb^2) &&\eqref{Pr}
    \\ &=\a\rp\conb^3 \bp \b\rp\a\rp\cona \bp ((\b^2\rp\a\rp\cona \bp
    \b^3\rp\a\rp\conb)\rp\cona \bp \b^4\rp\conb) \\ &\qquad
    \rp(\a\rp\cona\rp\conb \bp \b\rp\cona\rp\conb^2) &&\text{func
      dist} \\ &=\a\rp\conb^3 \bp \b\rp\a\rp\cona \bp
    (\b^2\rp\a\rp\cona \bp \b^3\rp\a\rp\conb)\rp\cona\rp\conb \bp
    \b^4\rp\cona\rp\conb^2 &&\eqref{Pr} \\ &=\a\rp\conb^3 \bp
    \b\rp\a\rp\cona \bp \b^2\rp\a\rp\cona^2\rp\conb \bp
    \b^3\rp\a\rp\conb\rp\cona\rp\conb \bp \b^4\rp\cona\rp\conb^2
    &&\text{func dist}
  \end{align*}
  For the other half of \eqref{ta4}, namely
  $\C_3\rp\B=\B^2\rp\C\rp\con\A^2\rp\B$, we calculate $\con\A^2$,
  $\B^2$, and $\B^2\rp\C$. Combining the results gives
  $\C_3=\B^2\rp\C\rp\con\A^2$, and finally we calculate $\C_3\rp\B$,
  getting the same result as \eqref{ABA'BCA'}. We prove, successively,
  the equations
  \begin{align}
    \label{AA}
    \A^2 &=(\a\rp\cona^3 \bp \b\rp\a\rp\conb\rp\cona^2 \bp
    \b^2\rp\a\rp\conb\rp\cona \bp \b^3\rp\conb) \\
    \label{AAc}
    \con\A^2 &= \a^3\rp\cona \bp \a^2\rp\b\rp\cona\rp\conb \bp
    \a\rp\b\rp\cona\rp\conb^2 \bp \b\rp\conb^3 \\
    \label{BB}
    \B^2&=\a\rp\cona \bp \b\rp\a\rp\cona^3\rp\conb \bp
    \b^2\rp\a\rp\conb\rp\cona^2\rp\conb \bp
    \b^3\rp\a\rp\conb\rp\cona\rp\conb \bp \b^4\rp\conb^2  \\
    \label{BBC} \B^2\rp\C &=\a\rp\conb^2 \bp \b\rp\a\rp\cona^3 \bp
    \b^2\rp\a\rp\conb\rp\cona^2 \bp \b^3\rp\a\rp\conb\rp\cona \bp
    \b^4\rp\cona\rp\conb \\
    \label{BBCA'A'}
    \C_3&=\b\rp\a\rp\cona \bp \b^2\rp\a\rp\cona\rp\conb \bp
    \b^3\rp\a\rp\cona\rp\conb^2 \bp \a\rp\conb^4 \bp
    \b^4\rp\cona\rp\conb^3 \\
    \label{BBCA'A'B}
    \C_3\rp\B &=\b\rp\a\rp\cona \bp \b^2\rp\a\rp\cona^2\rp\conb \bp
    \b^3\rp\a\rp\conb\rp\cona\rp\conb \bp \a\rp\conb^3 \bp
    \b^4\rp\cona\rp\conb^2
  \end{align}
  as follows.
  \begin{align*}
    \A^2 &= (\a\rp\cona^2\bp\b\rp\a\rp\conb\rp\cona\bp
    \b^2\rp\conb)\rp(\a\rp\cona^2\bp\b\rp\a\rp\conb\rp\cona\bp
    \b^2\rp\conb)&&\text{def $\A$}
    \\ &=((\a\rp\cona\bp\b\rp\a\rp\conb)\rp\cona\bp
    \b^2\rp\conb)\rp(\a\rp\cona^2\bp\b\rp(\a\rp\conb\rp\cona\bp
    \b\rp\conb))&&\text{func dist}
    \\ &=(\a\rp\cona\bp\b\rp\a\rp\conb)\rp\cona^2\bp
    \b^2\rp(\a\rp\conb\rp\cona\bp\b\rp\conb) &&\text{\eqref{Pr}}
    \\ &=(\a\rp\cona^3\bp\b\rp\a\rp\conb\rp\cona^2\bp
    \b^2\rp\a\rp\conb\rp\cona\bp\b^3\rp\conb) &&\text{func dist}
    \\ \con\A^2 &=\a^3\rp\cona\bp\a^2\rp\b\rp\cona\rp\conb\bp
    \a\rp\b\rp\cona\rp\conb^2\bp\b\rp\conb^3 &&\text{\eqref{AA}, con}
    \\ \B^2&=(\ot\id\A)\rp(\ot\id\A)&&\eqref{e1}
    \\ &=\ot\id(\A^2)&&\text{Prop.\ \ref{gg-rule}}
    \\ &=\a\rp\cona\bp\b\rp\A^2\rp\conb &&\text{def $\otimes$}
    \\ &=\a\rp\cona\bp\b\rp(\a\rp\cona^3
    \bp\b\rp\a\rp\conb\rp\cona^2\bp\b^2\rp\a\rp\conb\rp\cona
    \bp\b^3\rp\conb)\rp\conb &&\eqref{AA}
    \\ &=\a\rp\cona\bp\b\rp\a\rp\cona^3\rp\conb
    \bp\b^2\rp\a\rp\conb\rp\cona^2\rp\conb
    \bp\b^3\rp\a\rp\conb\rp\cona\rp\conb\bp\b^4\rp\conb^2 &&\text{func
      dist} \\ \B^2\rp\C &= (\a\rp\cona\bp\b\rp\a\rp\cona^3\rp\conb\bp
    \b^2\rp\a\rp\conb\rp\cona^2\rp\conb\bp
    \b^3\rp\a\rp\conb\rp\cona\rp\conb\bp\b^4\rp\conb^2)
    \\ &\qquad\rp(\a\rp\conb^2\bp\b\rp\a\rp\cona\bp
    \b^2\rp\cona\rp\conb) &&\text{\eqref{BB}, def $\C$}
    \\ &=(\a\rp\cona\bp (\b\rp\a\rp\cona^3\bp
    \b^2\rp\a\rp\conb\rp\cona^2\bp
    \b^3\rp\a\rp\conb\rp\cona\bp\b^4\rp\conb)\rp\conb)
    \\ &\qquad\rp(\a\rp\conb^2\bp\b\rp(\a\rp\cona\bp
    \b\rp\cona\rp\conb)) &&\text{func dist}
    \\ &=\a\rp\conb^2\bp(\b\rp\a\rp\cona^3\bp
    \b^2\rp\a\rp\conb\rp\cona^2\bp
    \b^3\rp\a\rp\conb\rp\cona\bp\b^4\rp\conb)
    \\ &\qquad\rp(\a\rp\cona\bp\b\rp\cona\rp\conb) &&\eqref{Pr}
    \\ &=\a\rp\conb^2\bp((\b\rp\a\rp\cona^2\bp
    \b^2\rp\a\rp\conb\rp\cona
    \bp\b^3\rp\a\rp\conb)\rp\cona\bp\b^4\rp\conb)
    \\ &\qquad\rp(\a\rp\cona\bp\b\rp\cona\rp\conb) &&\text{func dist}
    \\ &=\a\rp\conb^2\bp(\b\rp\a\rp\cona^2\bp\b^2\rp\a\rp\conb\rp\cona
    \bp\b^3\rp\a\rp\conb)\rp\cona\bp\b^4\rp\cona\rp\conb&&\eqref{Pr}
    \\ &=\a\rp\conb^2\bp\b\rp\a\rp\cona^3\bp
    \b^2\rp\a\rp\conb\rp\cona^2\bp\b^3\rp\a\rp\conb\rp\cona\bp
    \b^4\rp\cona\rp\conb &&\text{func dist}
    \\ \C_3&=\B^2\rp\C\rp\con\A^2&&\text{def $\C_3$}
    \\ &=(\a\rp\conb^2\bp\b\rp\a\rp\cona^3\bp
    \b^2\rp\a\rp\conb\rp\cona^2\bp\b^3\rp\a\rp\conb\rp\cona\bp
    \b^4\rp\cona\rp\conb)
    \\ &\qquad\rp(\a^3\rp\cona\bp\a^2\rp\b\rp\cona\rp\conb\bp
    \a\rp\b\rp\cona\rp\conb^2\bp\b\rp\conb^3) &&\text{\eqref{BBC},
      \eqref{AAc}}
    \\ &=((\b\rp\a\rp\cona^2\bp\b^2\rp\a\rp\conb\rp\cona\bp
    \b^3\rp\a\rp\conb)\rp\cona\bp (\a\rp\conb\bp \b^4\rp\cona)\rp\conb)
    \\ &\qquad\rp(\a\rp(\a^2\rp\cona\bp\a\rp\b\rp\cona\rp\conb\bp
    \b\rp\cona\rp\conb^2)\bp\b\rp\conb^3) &&\text{func dist}
    \\ &=(\b\rp\a\rp\cona^2\bp
    \b^2\rp\a\rp\conb\rp\cona\bp\b^3\rp\a\rp\conb)
    \\ &\qquad\rp(\a^2\rp\cona\bp\a\rp\b\rp\cona\rp\conb
    \bp\b\rp\cona\rp\conb^2)\bp (\a\rp\conb\bp \b^4\rp\cona)\rp\conb^3
    &&\eqref{Pr}
    \\ &=((\b\rp\a\rp\cona\bp\b^2\rp\a\rp\conb)\rp\cona\bp\b^3\rp\a\rp\conb)
    \\ &\qquad\rp(\a\rp(\a\rp\cona\bp\b\rp\cona\rp\conb)\bp
    \b\rp\cona\rp\conb^2)\bp\a\rp\conb^4\bp\b^4\rp\cona\rp\conb^3
    &&\text{func dist} \\ &= (\b\rp\a\rp\cona\bp
    \b^2\rp\a\rp\conb)\rp(\a\rp\cona\bp\b\rp\cona\rp\conb)\bp
    \b^3\rp\a\rp\cona\rp\conb^2 \\ &\qquad\bp\a\rp\conb^4\bp
    \b^4\rp\cona\rp\conb^3 &&\eqref{Pr} \\ &=\b\rp\a\rp\cona\bp
    \b^2\rp\a\rp\cona\rp\conb\bp\b^3\rp\a\rp\cona\rp\conb^2\bp
    \a\rp\conb^4\bp\b^4\rp\cona\rp\conb^3&&\eqref{Pr} \\ \C_3\rp\B &=
    (\b\rp\a\rp\cona\bp\b^2\rp\a\rp\cona\rp\conb\bp
    \b^3\rp\a\rp\cona\rp\conb^2\bp\a\rp\conb^4\bp
    \b^4\rp\cona\rp\conb^3) \\ &\qquad\rp(\a\rp\cona\bp
    \b\rp\a\rp\cona^2\rp\conb\bp\b^2\rp\a\rp\conb\rp\cona\rp\conb
    \bp\b^3\rp\conb^2) &&\text{\eqref{BBCA'A'}, def $\B$} \\ &=
    (\b\rp\a\rp\cona\bp (\b^2\rp\a\rp\cona\bp
    \b^3\rp\a\rp\cona\rp\conb\bp\a\rp\conb^3\bp
    \b^4\rp\cona\rp\conb^2)\rp\conb) \\ &\qquad\rp(\a\rp\cona\bp
    \b\rp(\a\rp\cona^2\rp\conb\bp\b\rp\a\rp\conb\rp\cona\rp\conb\bp
    \b^2\rp\conb^2)) &&\text{func dist} \\ &=\b\rp\a\rp\cona\bp
    (\b^2\rp\a\rp\cona\bp\b^3\rp\a\rp\cona\rp\conb\bp\a\rp\conb^3
    \bp\b^4\rp\cona\rp\conb^2) \\ &\qquad\rp(\a\rp\cona^2\rp\conb
    \bp\b\rp\a\rp\conb\rp\cona\rp\conb\bp \b^2\rp\conb^2)&&\eqref{Pr}
    \\ &=\b\rp\a\rp\cona\bp (\b^2\rp\a\rp\cona\bp
    (\b^3\rp\a\rp\cona\bp\a\rp\conb^2\bp \b^4\rp\cona\rp\conb)\rp\conb)
    \\ &\qquad\rp
    (\a\rp\cona^2\rp\conb\bp\b\rp(\a\rp\conb\rp\cona\rp\conb\bp
    \b\rp\conb^2))&&\text{func dist} \\ &=\b\rp\a\rp\cona\bp
    \b^2\rp\a\rp\cona^2\rp\conb\bp (\b^3\rp\a\rp\cona\bp
    \a\rp\conb^2\bp\b^4\rp\cona\rp\conb) \\ &\qquad
    \rp(\a\rp\conb\rp\cona\rp\conb\bp{}\b\rp\conb^2)&&\eqref{Pr}
    \\ &=\b\rp\a\rp\cona\bp\b^2\rp\a\rp\cona^2\rp\conb \\ &\qquad \bp
    (\b^3\rp\a\rp\cona\bp (\a\rp\conb\bp
    \b^4\rp\cona)\rp\conb)\rp(\a\rp\conb\rp\cona\rp\conb\bp
    \b\rp\conb^2) &&\text{func dist} \\ &=\b\rp\a\rp\cona\bp
    \b^2\rp\a\rp\cona^2\rp\conb\bp\b^3\rp\a\rp\conb\rp\cona\rp\conb \bp
    (\a\rp\conb\bp\b^4\rp\cona)\rp\conb^2 &&\eqref{Pr} \\ &=
    \b\rp\a\rp\cona\bp\b^2\rp\a\rp\cona^2\rp\conb\bp
    \b^3\rp\a\rp\conb\rp\cona\rp\conb\bp\a\rp\conb^3\bp
    \b^4\rp\cona\rp\conb^2 &&\text{func dist}
  \end{align*}
  Then \eqref{ta4} holds because $\C_3\rp\B=\X_2\rp\C_2$ by
  \eqref{BBCA'A'B} and \eqref{ABA'BCA'}.
\end{proof}

\section{Thompson's monoid $\M$}
\label{s24}
The generators of $\M$ are called $\R$, $\P$, $\P_0$, $\R_0$, $\U$,
$\K$, and $\L$ in Brin's notes. Thompson used $\{\P,\R,\K,\U\}$ as a
generating set in his 2004 talk and $\{\P_0,\R_0,\K,\U\}$ in 2008.  We
show next that Thompson's two sets generate the same monoid.
\begin{prop}
  \label{same}
  Assume $\a,\b$ satisfy \eqref{Q}, \eqref{D}, and \eqref{U}. The same
  monoid is generated in $\gc{Fn}(\AA)=\<\Fn\AA,\rp,\id\>$ by both
  $\{\P,\R,\K,\U\}$ and $\{\P_0,\R_0,\K,\U\}$.
\end{prop}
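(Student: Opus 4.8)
The plan is to prove the two submonoids of $\gc{Fn}(\AA)=\<\Fn\AA,\rp,\id\>$ equal by establishing mutual containment. Recall $\K=\a$, $\L=\b$, $\P_0=\ot\P\id$, and $\R_0=\ot\R\id$. One containment, $\<\P,\R,\K,\U\>\subseteq\<\P_0,\R_0,\K,\U\>$, is immediate: Prop.\ \ref{Ux0K} gives $\U\rp(\ot\x\id)\rp\a=\x$ for every $\x$, so taking $\x=\P$ and $\x=\R$ yields $\P=\U\rp\P_0\rp\K$ and $\R=\U\rp\R_0\rp\K$. Since $\K$ and $\U$ are common to both generating sets, every generator of the left monoid lies in the right one.

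For the reverse containment I would first isolate a uniqueness principle that turns ``being a deferment'' into a checkable pair of equations. For permutational $\x$ (so $\x\rp1=1$), Prop.\ \ref{pok}(v) gives $(\ot\x\id)\rp\K=\K\rp\x$ and Prop.\ \ref{pok}(iv) gives $(\ot\x\id)\rp\L=\L$. Conversely, if $g\in\Fn\AA$ satisfies $g\rp\K=\K\rp\x$ and $g\rp\L=\L$, then using \eqref{T}, $\id=\a\rp\cona\bp\b\rp\conb$, together with func dist (Prop.\ \ref{f-dist}, legitimate since $g$ is functional),
\begin{align*}
  g&=g\rp(\a\rp\cona\bp\b\rp\conb)=(g\rp\K)\rp\cona\bp(g\rp\L)\rp\conb
  =\K\rp\x\rp\cona\bp\L\rp\conb=\ot\x\id.
\end{align*}
Thus $\ot\x\id$ is the \emph{unique} functional element $g$ with $g\rp\K=\K\rp\x$ and $g\rp\L=\L$. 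Consequently, to place $\P_0=\ot\P\id$ and $\R_0=\ot\R\id$ inside $\<\P,\R,\K,\U\>$ it suffices to exhibit monoid words $w_\P,w_\R$ in $\P,\R,\K,\U$ (the element $\L=\P\rp\K$ is already such a word, by Prop.\ \ref{swap}) satisfying $w_\P\rp\K=\K\rp\P$, $w_\P\rp\L=\L$, and likewise for $\R$; each such $w$ is automatically functional by Prop.\ \ref{func}(iii), so the uniqueness principle then forces $w_\P=\P_0$ and $w_\R=\R_0$.

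The hard part is constructing these words, and it is where the genuine combinatorics of Thompson's monoid enters. A naive candidate such as $\K\rp\P\rp\U$ fails, because $\U\rp\K=\U\rp\L=\id$ by Prop.\ \ref{sub0} collapses the two branches and forces $w\rp\K=w\rp\L$, whereas $\P_0$ must act nontrivially on the left branch while fixing the right one. The fix is to route the right branch through a reassociation that does not destroy it: for instance $\U\rp\R$ carries a tree $T$ to one whose right branch is again $\b(T)$ while a recoverable copy of $\a(T)$ sits inside the left branch, after which the swap can be performed by $\P$ (and its conjugates by $\R$) and the scaffolding removed by $\K$ and further $\R$-rotations. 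I expect the resulting words to be verified exactly as in Props.\ \ref{presF} and \ref{presT}, by expanding with the pairing identity \eqref{Pr} and ``func dist'' until the two derivative equations $w\rp\K=\K\rp\P$ and $w\rp\L=\L$ (and their $\R$-analogues) drop out. Once $\P_0,\R_0\in\<\P,\R,\K,\U\>$, the two containments give equality of the submonoids.
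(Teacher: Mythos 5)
Your first containment is exactly the paper's: $\P=\U\rp\P_0\rp\K$ and $\R=\U\rp\R_0\rp\K$ via Prop.~\ref{Ux0K}. Your ``uniqueness principle'' for the reverse direction is correct and is actually a nice improvement on the paper's method: from \eqref{T} and func~dist one does get that $\ot\x\id$ is the unique $\g\in\Fn\AA$ with $\g\rp\K=\K\rp\x$ and $\g\rp\L=\L$, and Prop.~\ref{pok} confirms that $\P_0$ and $\R_0$ satisfy these equations. This would reduce the verification of a candidate word $w$ to two comparatively short identities, whereas the paper verifies its words only by tracking their action on trees in parenthetical notation.

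The genuine gap is that you never produce the words, and producing them is the entire combinatorial content of this direction of the proof. Your reduction converts ``show $\P_0\in\<\P,\R,\K,\U\>$'' into ``find a word $w$ in $\P,\R,\K,\U$ with $w\rp\K=\K\rp\P$ and $w\rp\L=\L$'' --- but existence of such a $w$ is not obvious and is not supplied by the heuristic about routing the right branch through $\U\rp\R$; as stated, that is a plan for a search, not an argument that the search succeeds. The paper exhibits the words explicitly, namely
\begin{align*}
  \P_0&=\U\rp\R\rp\P\rp\R^2\rp\K\rp\P\rp\R^2\rp\K\rp\R\rp\P\rp\R\rp\K\rp\R,
  \\ \R_0&=\U\rp\R\rp\P\rp\R^3\rp\P\rp\R\rp\K\rp\R\rp\K\rp\R^2\rp\K\rp\R^2
  \rp\K\rp\P\rp\R\rp\P\rp\R^2,
\end{align*}
and verifies them by a step-by-step computation of their action on (two copies of) an input tree. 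To complete your proof you would need either to exhibit such words and check your two derivative equations algebraically, or to import the paper's words and argue that the parenthetical verification translates into the required identities. Without that, the reverse containment is unproved.
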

\begin{proof}
  To see that $\{\P,\R,\K,\U\}$ is generated by $\{\P_0,\R_0,\K,\U\}$,
  it is enough to note that $\P=\U\rp\P_0\rp\K$ and
  $\R=\U\rp\R_0\rp\K$ by Prop.\ \ref{Ux0K}.  For the other direction,
  we present only a proof in parenthetical notation, showing for two
  equations that the action of both sides on trees is the same.  Each
  proof begins with one application of $\U$. Cyclic order is preserved
  by $\P$ and $\R$, even with $\K$ admitted, so $\U$ is needed in
  order to change cyclic order, as must be done to obtain
  $\P_0=(01)2\to(10)2$. It seems likely that $\U$ is needed for $\R_0$
  as well. In any case, it is used just once in what follows.

  Start with an input and make another copy, set in boldface type to
  make it easier to track.  Use $\R$ to eliminate the rightmost
  parenthesis until a single digit remains. Use $\P$ to flip that
  digit to the other side or delete it using $\K$, and then apply $\R$
  again to eliminate rightmost parentheses until another digit is
  isolated and then flipped or deleted.

  Start with three digits for $\P_0$, four digits for $\R_0$.  Copy to
  get six or eight, choose three or four as targets, delete the other
  three or four via the procedure described above, and see whether the
  letters are reassociated as desired. For $\R_0$, the target choice
  $\0\123$ happens to work, as shown below, but $\0123$ does not.
  First we have
  \begin{align*}
    \P_0=\U\rp\R\rp\P\rp\R^2\rp\K\rp\P\rp\R^2\rp\K\rp\R\rp\P\rp\R\rp\K\rp\R
  \end{align*}
  because
  \begin{align*}
    (01)2&\rrr\U((\0\1)\2)((01)2) \rrr\R (((\0\1)\2)(01))2 \rrr\P
    2(((\0\1)\2)(01)) \\ &\rrr\R (2((\0\1)\2))(01) \rrr\R
    ((2((\0\1)\2))0)1 \rrr\K (2((\0\1)\2))0 \\ &\rrr\P 0(2((\0\1)\2))
    \rrr\R (02)((\0\1)\2) \rrr\R ((02)(\0\1))\2 \\ &\rrr\K (02)(\0\1)
    \rrr\R ((02)\0)\1 \rrr\P \1((02)\0) \\ &\rrr\R (\1(02))\0 \rrr\K
    \1(02) \rrr\R (\10)2
  \end{align*}
  Recall $\R_0=(0(12))3\mapsto((01)2)3$.
  \begin{align*}
    \R_0=\U\rp\R\rp\P\rp\R^3\rp\P\rp\R\rp\K\rp\R\rp\K\rp\R^2\rp\K\rp\R^2
    \rp\K\rp\P\rp\R\rp\P\rp\R^2
  \end{align*}
  because
  \begin{align*}
    (0(12))3&\rrr\U\((\0(\1\2))\3\)\((0(12))3\)
    \rrr\R\(((\0(\1\2))\3)(0(12))\)3
    \\ &\rrr\P3\(((\0(\1\2))\3)(0(12))\)
    \rrr\R\(3((\0(\1\2))\3)\)\(0(12)\)
    \\ &\rrr\R\((3((\0(\1\2))\3))0\)\(12\)
    \rrr\R\(((3((\0(\1\2))\3))0)1\)2
    \\ &\rrr\P2\(((3((\0(\1\2))\3))0)1\)\rrr\R\(2((3((\0(\1\2))\3))0)\)1
    \\ &\rrr\K2\((3((\0(\1\2))\3))0\) \rrr\R\(2(3((\0(\1\2))\3))\)0
    \\ &\rrr\K2\(3((\0(\1\2))\3)\) \rrr\R\(23\)\((\0(\1\2))\3\)
    \\ &\rrr\R\((23)(\0(\1\2))\)\3\rrr\K\(23\)\(\0(\1\2)\)
    \rrr\R\((23)\0\)\(\1\2\)
    \\ &\rrr\R((23)\0)\1)\2\rrr\K((23)\0)\1\rrr\P\1((23)\0)\rrr\R(\1(23))\0
    \\ &\rrr\P \0(\1(23)) \rrr\R (\0\1)(23) \rrr\R ((\0\1)2)3
  \end{align*}
\end{proof}

\begin{prop}
  \label{presM}
  Assume $\a,\b$ satisfy \eqref{Q}, \eqref{D}, and \eqref{U}. Let
  \begin{align*}
    \K&=\a, \qquad \L=\b, & \U&=\cona\bp\conb,
    \\ \P&=\a\rp\conb\bp\b\rp\cona,
    & \P_0&=\ot\P\id=\a\rp\P\rp\cona\bp\b\rp\conb,
    \\ \R&=\a\rp\cona^2\bp\b\rp\a\rp\conb\rp\cona\bp\b^2\rp\conb,
    & \R_0&=\ot\R\id=\a\rp\R\rp\cona\bp\b\rp\conb.
  \end{align*}
  Then $\P_0$, $\R_0$, $\K$, and $\U$ satisfy the relations for $\M$
  in \SS\ref{s13a}.
\end{prop}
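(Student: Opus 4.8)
The plan is to verify the five groups of relations of \SS\ref{s13a} by rewriting the deferment notation in the parallel‑composition calculus of \SS\ref{s19} and then reducing each relation to an identity already available. First I record the tools. Since $\a,\b$ satisfy \eqref{Q}, \eqref{D}, \eqref{U}, the pairing identity \eqref{Pr} holds by Prop.\ \ref{QPr}, the equation \eqref{T} holds by Prop.\ \ref{prop2a}, and $\id=\cona\rp\a=\conb\rp\b$ by Prop.\ \ref{prop1a}. The key translations are $\x_0=\ot\x\id$, $\x_1=\ot\id\x$, $\U=\cona\bp\conb=\fk\id\id$, $\K=\a$, $\L=\b$, $\P_0=\ot\P\id$, and $\R_0=\ot\R\id$; I also use that $\P,\R$ are permutational with converses as computed in Prop.\ \ref{perms}, and that the relevant $\x$ are functional so Prop.\ \ref{f-dist} (``func dist'') applies.

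The commutativity, splitting, and reconstruction relations come straight from the $\otimes$‑rules. By Prop.\ \ref{gg-rule}, $\x_0\rp\y_1=(\ot\x\id)\rp(\ot\id\y)=\ot\x\y=(\ot\id\y)\rp(\ot\x\id)=\y_1\rp\x_0$, giving commutativity. For splitting, Prop.\ \ref{gg-rule} gives $\x_0\rp\x_1=\ot\x\x=\fkc{(\x\rp\cona)}{(\x\rp\conb)}$, so Prop.\ \ref{fh-rule} with $\U=\fk\id\id$ yields $\U\rp\x_0\rp\x_1=\x\rp\cona\bp\x\rp\conb$, which equals $\x\rp\U$ by func dist. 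Reconstruction follows: splitting turns the left side into $\x\rp\U\rp\K_0\rp\L_1$, and $\U\rp\K_0\rp\L_1=\U\rp(\ot\a\b)=\a\rp\cona\bp\b\rp\conb=\id$ by Props.\ \ref{gg-rule}, \ref{fh-rule}, and \eqref{T}, so the product collapses to $\x$.

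Among the rewriting relations, $\U\rp\K=\U\rp\L=\id$ is exactly Prop.\ \ref{sub0}. The $\P_0$‑relations reduce via $\P_0\rp\K=(\ot\P\id)\rp\a=\a\rp\P$ (Prop.\ \ref{pok}(v)) and $\P\rp\a=\b,\ \P\rp\b=\a$ (Prop.\ \ref{swap}): then $\P_0\rp\K\rp\K=\a\rp\b=\K\rp\L$ and $\P_0\rp\K\rp\L=\a\rp\a=\K\rp\K$, while $\P_0\rp\L=\L$ is Prop.\ \ref{pok}(iv) (using $1=\P\rp1$, which holds since $\P$ is permutational). The $\R_0$‑relations reduce the same way, via $\R_0\rp\K=\a\rp\R$ and $\R_0\rp\L=\L$, to the three core identities
\[
\R\rp\a^2=\a,\qquad\R\rp\a\rp\b=\b\rp\a,\qquad\R\rp\b=\b^2 .
\]
I expect these to be the main obstacle. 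The clean device is a \emph{permutational sandwich}: if $\R\rp p\le q$ and $\con\R\rp q\le p$, then $q=\R\rp\con\R\rp q\le\R\rp p\le q$, so $\R\rp p=q$. Each inclusion is a one‑line monotonicity bound obtained by discarding all but one meetand of $\R$ or $\con\R$ (closed forms from Prop.\ \ref{perms}) and cancelling with $\cona\rp\a=\conb\rp\b=\id$; e.g.\ $\R\le\a\rp\cona^2$ gives $\R\rp\a^2\le\a\rp\cona^2\rp\a^2=\a$, and $\con\R\le\a^2\rp\cona$ gives $\con\R\rp\a\le\a^2$. The pitfall to avoid is that $\rp$ does not distribute over $\bp$ on the right, so a naive term‑by‑term expansion (as in Prop.\ \ref{presT}) would be long and error‑prone; the sandwich sidesteps it.

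Finally, for the invertibility relations, $\P\rp\P=\id$ is shown in Prop.\ \ref{perms}. For the cubes, writing $\R=(\a\rp\cona\bp\b\rp\a\rp\conb)\rp\cona\bp\b^2\rp\conb$ and $\P=\a\rp\conb\bp\b\rp\cona$, a single application of \eqref{Pr} gives $\R\rp\P=(\a\rp\cona\bp\b\rp\a\rp\conb)\rp\conb\bp\b^2\rp\cona$, which is exactly $\con\C$ after func dist. Hence $(\R\rp\P)^3=(\con\C)^3=\con{\C^3}=\id$ by the relation $\C^3=\id$ proved in Prop.\ \ref{presT}. The remaining cube follows by conjugation: since $\con\P=\P$ and $\P\rp\P=\id$, we have $\P\rp\R=\P\rp(\R\rp\P)\rp\con\P$, so $(\P\rp\R)^3=\P\rp(\R\rp\P)^3\rp\con\P=\P\rp\con\P=\id$. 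Assembling these verifications establishes all five groups of relations.
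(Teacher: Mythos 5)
Your proposal is correct and verifies all five groups of relations, but for the two genuinely hard parts it takes a different and arguably slicker route than the paper. The commutativity, splitting, and reconstruction relations, and the $\U$- and $\P_0$-rewriting relations, follow the same path as the paper (Props.\ \ref{gg-rule}, \ref{sub0}, \ref{pok}, \ref{swap}), with only the cosmetic difference that you route the splitting/reconstruction computations through Prop.\ \ref{fh-rule} where the paper applies \eqref{Pr} directly. The differences are elsewhere. For the cube relations the paper gives a self-contained equational derivation showing $\P\rp\R\rp\P\rp\R=\con{\P\rp\R}$, whereas you compute $\R\rp\P=\con\C$ by one application of \eqref{Pr} and import $\C^3=\id$ from Prop.\ \ref{presT}; I checked that $\R\rp\P=(\a\rp\cona\bp\b\rp\a\rp\conb)\rp\conb\bp\b^2\rp\cona$ does equal $\con\C$ after func dist, so this works and buys you a shorter proof at the cost of a dependence on the $\T$-relations. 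For the three core identities $\R\rp\K\rp\K=\K$, $\R\rp\K\rp\L=\L\rp\K$, $\R\rp\L=\L\rp\L$, the paper expands everything with \eqref{Pr} and needs the auxiliary fact \eqref{note1}; your ``permutational sandwich'' ($\R\rp p\leq q$ and $\con\R\rp q\leq p$ imply $q=\R\rp(\con\R\rp q)\leq\R\rp p\leq q$, using $\R\rp\con\R=\id$ from Prop.\ \ref{perms}) reduces each identity to two one-line monotonicity bounds obtained by discarding meetands of $\R$ or $\con\R$ and cancelling $\cona\rp\a=\conb\rp\b=\id$, and I verified all six bounds go through. This avoids the right-distributivity trap you correctly flag and avoids \eqref{note1} entirely; it is a genuinely more economical argument, though it leans on permutationality of $\R$ where the paper's computation does not.
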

\begin{proof} The invertibility relations hold because
  $\P\rp\P=\id$ was proved in Prop.\ \ref{perms}, while
  $(\R\rp\P)^3=\id$ and $(\P\rp\R)^3=\id$ have proofs in parenthetical
  notation:
  \begin{align*}
    0(12) \rrr\R (01)2 \rrr\P 2(01) \rrr\R (20)1 \rrr\P 1(20) \rrr\R
    (12)0 \rrr\P 0(12), \\ (01)2 \rrr\P 2(01) \rrr\R (20)1 \rrr\P 1(20)
    \rrr\R (12)0 \rrr\P 0(12) \rrr\R (01)2.
  \end{align*}
  A direct equational proof is not too long in this case. We derive
  $\P\rp\R\rp\P\rp\R=\con{\P\rp\R}$, which is equivalent to both
  relations.
  \begin{align}
    \P\rp\R &=(\a\rp\conb\bp\b\rp\cona)\rp(\a\rp\cona^2 \bp
    \b\rp\a\rp\conb\rp\cona \bp \b^2\rp\conb)&&\text{defs $\P$, $\R$}
    \\\notag &=(\a\rp\conb\bp\b\rp\cona)\rp(\a\rp\cona^2 \bp
    \b\rp(\a\rp\conb\rp\cona \bp \b\rp\conb))&&\text{func dist} \\\notag
    &= \b\rp\cona^2\bp\a\rp(\a\rp\conb\rp\cona\bp\b\rp\conb)
    &&\eqref{Pr} \\\notag &=
    \b\rp\cona^2\bp\a^2\rp\conb\rp\cona\bp\a\rp\b\rp\conb
    &&\text{func dist}
  \end{align}
  \begin{align}
    \P\rp\R\rp\P
    &=(\b\rp\cona^2\bp\a^2\rp\conb\rp\cona\bp\a\rp\b\rp\conb)
    \rp(\a\rp\conb\bp\b\rp\cona) &&\text{(17), def $\P$} \\\notag
    &=((\b\rp\cona \bp \a^2\rp\conb)\rp\cona \bp \a\rp\b\rp\conb)
    \rp(\a\rp\conb \bp \b\rp\cona) &&\text{func dist} \\\notag
    &=(\b\rp\cona \bp \a^2\rp\conb)\rp\conb \bp \a\rp\b\rp\cona
    &&\eqref{Pr}
  \end{align}
  \begin{align*}
    &(\P\rp\R\rp\P)\rp\R \\ &=((\b\rp\cona \bp \a^2\rp\conb)\rp\conb \bp
    \a\rp\b\rp\cona)\rp(\a\rp\cona^2 \bp \b\rp\a\rp\conb\rp\cona \bp
    \b^2\rp\conb) &&\text{(18), def $\R$} \\ &=((\b\rp\cona \bp
    \a^2\rp\conb)\rp\conb \bp \a\rp\b\rp\cona)\rp(\a\rp\cona^2 \bp
    \b\rp(\a\rp\conb\rp\cona \bp \b\rp\conb)) &&\text{func dist}
    \\ &=\a\rp\b\rp\cona^2 \bp (\b\rp\cona \bp \a^2\rp\conb)
    \rp(\a\rp\conb\rp\cona \bp \b\rp\conb) &&\eqref{Pr}
    \\ &=\a\rp\b\rp\cona^2 \bp \b\rp\conb\rp\cona \bp \a^2\rp\conb
    &&\eqref{Pr} \\ &=\con{\P\rp\R}&&\text{(17), con}
  \end{align*}
  The commutativity relations hold for arbitrary elements $\x$ and $\y$
  of $\AA$. By the definitions of $\otimes$, $\x_0$, and $\x_1$ we have
  $\x_0=\ot\x\id$ and $\x_1=\ot\id\x$.  Clearly
  $\x_0\rp\y_1=\y_1\rp\x_0$ because, by parallel composition
  Prop.\ \ref{gg-rule},
  \begin{align*}
    \x_0\rp\y_1=(\ot\x\id)\rp(\ot\id\y)=\ot\x\y =
    (\ot\id\y)\rp(\ot\x\id)=\y_1\rp\x_0.
  \end{align*}
  The splitting relations, $\x\rp\U=\U\rp\x_0\rp\x_1$, are proved for a
  functional $\x\in\Fn\AA$ as follows.
  \begin{align*}
    \U\rp\x_0\rp\x_1&=(\cona\bp\conb)\rp(\ot\x\id)\rp(\ot\id\x)&&\text{defs}
    \\ &=(\cona\bp\conb)\rp(\ot\x\x)&&\text{Prop.\ \ref{gg-rule}}
    \\ &=(\cona\bp\conb)\rp(\a\rp\x\rp\cona\bp\b\rp\x\rp\conb)&&\text{def
      $\otimes$}
    \\ &=(\id\rp\cona\bp\id\rp\conb)\rp(\a\rp\x\rp\cona\bp\b\rp\x\rp\conb)
    &&\text{id} \\ &=\id\rp\x\rp\cona\bp\id\rp\x\rp\conb&&\eqref{Pr}
    \\ &=\x\rp\cona\bp\x\rp\conb&&\text{id}
    \\ &=\x\rp(\cona\bp\conb)&&\text{func dist, $\x\in\Fn\AA$}
    \\ &=\x\rp\U&&\text{def $\U$}
  \end{align*}
  The reconstruction relations $\x=\U\rp\x_0\rp\x_1\rp\K_0\rp\L_1$ can
  be proved for $\x\in\Fn\AA$ as follows.
  \begin{align*}
    &\U\rp\x_0\rp\x_1\rp\K_0\rp\L_1 \\ &=\x\rp\U\rp\K_0\rp\L_1
    &&\text{splitting relations}
    \\ &=\x\rp\U\rp(\ot\K\L)&&\text{commutativity relations}
    \\ &=\x\rp(\cona\bp\conb) \rp(\a\rp\a\rp\con\a\bp\b\rp\b\rp\conb)
    &&\text{defs $\U$, $\K$, $\L$, $\otimes$}
    \\ &=\x\rp(\id\rp\cona\bp\id\rp\conb)
    \rp(\a\rp(\a\rp\con\a)\bp\b\rp(\b\rp\conb)) &&\text{id, assoc}
    \\ &=\x\rp(\id\rp(\a\rp\con\a)\bp\id\rp(\b\rp\conb))
    &&\text{\eqref{Pr}}
    \\ &=\x\rp(\a\rp\con\a\bp\b\rp\conb)
    &&\text{id}
    \\ &=\x&&\a\rp\con\a\bp\b\rp\conb=\id
  \end{align*}
  The rewriting relations are
  \begin{align*}
    \U\rp\K&=\id,&\P_0\rp\K\rp\K&=\K\rp\L,&\R_0\rp\K\rp\K\rp\K&=\K\rp\K,
    \\ \U\rp\L&=\id,&\P_0\rp\K\rp\L&=\K\rp\K,
    &\R_0\rp\K\rp\K\rp\L&=\K\rp\L\rp\K,
    \\ &&\P_0\rp\L&=\L,&\R_0\rp\K\rp\L&=\K\rp\L\rp\L,
    \\ &&&&\R_0\rp\L&=\L.
  \end{align*}
  We get $\U\rp\K=(\cona\bp\conb)\rp\a=\id$ and
  $\U\rp\L=(\cona\bp\conb)\rp\b=\id$ by Prop.\ \ref{sub0}.  Note that
  $\P\rp1=1$ because $\P$ is permutational, more exactly,
  $1=\id\rp1=\P\rp\con\P\rp1=\P\rp(\con\P\rp1)\leq\P\rp1\leq1$.  It
  follows by Prop.\ \ref{pok}(iv) that $\P_0\rp\L=(\ot\P\id)\rp\b
  =\b=\L$, one of the rewriting relations.  From Prop.\ \ref{pok}(v)
  we get $\P_0\rp\K=(\ot\P\id)\rp\a=\a\rp\P=\K\rp\P$. Together with
  $\P\rp\K=\L$ and $\P\rp\L=\K$ from Prop.\ \ref{swap}, this gives two
  more relations, namely,
  \begin{align*}
    \P_0\rp\K\rp\K&=\K\rp\P\rp\K=\K\rp\L,
    \\ \P_0\rp\K\rp\L&=\K\rp\P\rp\L=\K\rp\K.
  \end{align*}
  From Prop.\ \ref{pok}(iv) and $\R\rp1=1$ we get $\R_0\rp\L=\L$,
  another one of the rewriting relations.  After deriving
  \begin{align*}
    &\R\rp\K\rp\K=\K, &&\R\rp\K\rp\L=\L\rp\K, &&\R\rp\L=\L\rp\L,
  \end{align*}
  the three remaining rewriting relations will follow by multiplying
  on the left by $\K$ and noting that $\R_0\rp\K=\K\rp\R$ by
  Prop.\ \ref{pok}(v). First we show that
  \begin{align}
    \label{rk}
    \R\rp\K&=\a\rp\cona\bp\b\rp\a\rp\conb
  \end{align}
  because
  \begin{align*}
    \R\rp\K&=(\a\rp\cona^2\bp\b\rp\a\rp\conb\rp\cona\bp\b^2\rp\conb)\rp\a
    &&\text{defs $\R$, $\K$} \\\notag
    &=((\a\rp\cona\bp\b\rp\a\rp\conb)\rp\cona\bp
    \b^2\rp\conb)\rp(\a\rp\id\bp\b\rp1)&&\text{func dist, id,
      $\b\rp1=1$} \\\notag &=((\a\rp\cona\bp\b\rp\a\rp\conb)\rp\id \bp
    \b^2\rp1) &&\eqref{Pr} \\\notag &=((\a\rp\cona\bp\b\rp\a\rp\conb)
    \bp \b\rp(\b\rp1)) &&\text{id, assoc} \\\notag
    &=\a\rp\cona\bp\b\rp\a\rp\conb&&\text{$\b\rp1=1$}
  \end{align*}
  Then $\R\rp\K\rp\K=\K$ because
  \begin{align*}
    \R\rp\K\rp\K&=(\a\rp\cona\bp\b\rp\a\rp\conb)\rp\a
    &&\text{\eqref{rk}, def $\K$}
    \\ &=(\a\rp\cona\bp\b\rp\a\rp\conb)\rp(\a\rp\id\bp\b\rp1)
    &&\text{id, $\b\rp1=1$} \\ &=\a\rp\id\bp\b\rp\a\rp1&&\eqref{Pr}
    \\ &=\a\bp\b\rp(\a\rp1)&&\text{id, assoc} \\ &=\a &&\a\rp1=1=\b\rp1
  \end{align*}
  and $\R\rp\K\rp\L=\L\rp\K$ because
  \begin{align*}
    \R\rp\K\rp\L&=(\a\rp\cona\bp\b\rp\a\rp\conb)\rp\b
    &&\text{\eqref{rk}, def $\L$}
    \\ &=(\a\rp\cona\bp\b\rp\a\rp\conb)\rp(\a\rp1\bp\b\rp\id)
    &&\text{id, $\a\rp1=1$}
    \\ &=\a\rp1\bp\b\rp\a\rp\id&&\eqref{Pr}
    \\ &=\b\rp\a&&\text{id, $\a\rp1=1$}
  \end{align*}
  For the last rewriting relation we must first note that
  \begin{align}
    \label{note1}
    1&=(\a\rp\cona\bp\b\rp\a\rp\conb)\rp1
  \end{align}
  because
  \begin{align*}
    1&=\a\rp1\bp\b\rp\a\rp1&&\text{\eqref{D}, assoc}
    \\ &=(\a\rp1\bp\b\rp\a)\rp1&&\text{Prop.\ \ref{i1}}
    \\ &=(\a\rp\cona\rp\b\bp\b\rp\a)\rp1&&\text{\eqref{Q}, assoc}
    \\ &=(\a\rp\cona\bp\b\rp\a\rp\conb)\rp1&&\text{Prop.\ \ref{icyc}}
  \end{align*}
  At last, $\R\rp\L=\L\rp\L$ because
  \begin{align*}
    \R\rp\L&=(\a\rp\cona^2\bp\b\rp\a\rp\conb\rp\cona\bp\b^2\rp\conb)\rp\b
    &&\text{defs $\R$, $\L$}
    \\ &=((\a\rp\cona\bp\b\rp\a\rp\conb)\rp\cona\bp\b^2\rp\conb)
    \rp(\b\rp\id\bp\a\rp1) &&\text{func dist, id, $\a\rp1=1$}
    \\ &=(\a\rp\cona\bp\b\rp\a\rp\conb)\rp1 \bp \b^2\rp\id &&\eqref{Pr}
    \\ &=\b^2&&\text{\eqref{note1}, id}
  \end{align*}
\end{proof}

\section*{{\bf Part III.}}
Definition \ref{tab-orig} (tabularity for relation algebras) and
Theorem \ref{TRA} in \SS\ref{s8} (tabular relation algebras are
representable) are generalized to J-algebras by Definition
\ref{tab-def} and Theorem \ref{tabrra} in Part III. The proof of
Theorem \ref{tabrra} covers both J-algebras and relation
algebras. Throughout this part is useful to remember that, by
Prop.\ \ref{jaisra}, every elementary property of J-algebras also
applies to relation algebras.
\section{Tabularity and partial representations}
\label{s25}
In Definitions \ref{tab-def} and \ref{partrep}, Lemmas \ref{LEMMA2},
\ref{LEMMA3}, \ref{LEMMA5}, and \ref{LEMMA6}, and in Theorem
\ref{tabrra}, $\A$ is the universe of the algebra $\AA$ (rather than
an element, as in \SS\ref{s12a}).
\begin{defn}
  \label{tab-def}
  An algebra $\AA\in\JA\cup\RA$ is {\bf tabular} if, for all
  $\v,\w\in\A$, $\v<\w$ implies there are $\p,\q\in\Fn\AA$ such that
  $0\neq\con\p\rp\q\leq\w$ and $\v\bp\con\p\rp\q=0$.
\end{defn}
By Prop.\ \ref{tab-prop} in \SS\ref{s8}, Definition \ref{tab-orig} and
Definition \ref{tab-def} are equivalent for relation algebras.  The
proof of Prop.\ \ref{tab-prop} uses the fact that $\v<\w$ implies
$\w\bp\min\v\neq0$ in a relation algebra. Since Definition
\ref{tab-def} is intended for J-algebras as well as relation algebras,
the condition $\v<\w$ means $\v\bp\w=\v\neq\w$ according to
Def.\,\ref{d13}. The equivalence of the two notions of tabularity
follows from the observation that $\v\bp\w=\v\neq\w$ implies
$\w\bp\min\v\neq0$ in a relation algebra.
\begin{defn}
  \label{partrep}
  For every $\AA\in\JA\cup\RA$ and $\n\in\omega$ let $\S_\n\AA$ be the
  set of sequences $\f=\<\f_0,\cdots,\f_{\n-1}\>$ such that, for all
  $\i,\j<\n$, $0\neq\f_\i\in\Fn\AA$ and $\f_\i\rp1=\f_\j\rp1$.  For
  every $\f\in\S_\n\AA$ define a \bf partial representation
  $\eu\colon\A\to\pow{\n\times\n}$ by
  $\eu(\x)=\{\<\i,\j\>:\f_\i\rp\x\geq\f_\j,\,\i,\j<\n\}.$
\end{defn}
For any $\f\in\S_\n\AA$, it is useful to note that
$\<\i,\j\>\in\eu(\x)$ iff $\f_\i\rp\x\bp\f_\j=\f_\j$. For example, if
$\<\i,\j\>\in \widehat\f(\x)$ and $\f_\i\rp\x\bp\f_\j=0$ then
$\f_\j=0$, contradicting $\f_\j\neq0$, so
\begin{align*}
  \f_\i\rp\x\bp\f_\j=0\implies\<\i,\j\>\notin\widehat\f(\x).
\end{align*}
The next lemma shows that $\eu$ has several of the properties of a
representation of $\AA$ over $\Re\n$.
\begin{lem}
  \label{LEMMA2}
  Assume $\AA\in\JA\cup\RA$, $\f\in\S_\n\AA$, and $\x,\y\in\A$. Then
  \begin{enumerate}
  \item $\x\leq\y\implies\eu(\x)\subseteq \eu(\y)$,
  \item $\eu(\x)\cap\eu(\y)\subseteq\eu(\x\bp\y)$,
  \item $\eu(\x)|\eu(\x)\subseteq \eu(\x\rp\y)$,
  \item $\eu(\con\x)=\conv{\eu(\x)}$,
  \item $\eu(0)=\emptyset$.
  \end{enumerate}
\end{lem}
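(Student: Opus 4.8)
The plan is to work throughout from the reformulation noted just before the lemma: $\<\i,\j\>\in\eu(\x)$ iff $\f_\j\leq\f_\i\rp\x$, equivalently $\f_\i\rp\x\bp\f_\j=\f_\j$. Under this phrasing parts (i), (ii), (iii), and (v) are short, while part (iv) carries the real content; it is the only place where the two hypotheses built into $\S_\n\AA$—functionality of the $\f_\i$ and the common-domain condition $\f_\i\rp1=\f_\j\rp1$—are genuinely needed.

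For part (i), if $\f_\j\leq\f_\i\rp\x$ and $\x\leq\y$ then $\f_\i\rp\x\leq\f_\i\rp\y$ by Prop.\ \ref{p6}, so $\f_\j\leq\f_\i\rp\y$ by transitivity (Prop.\ \ref{p1}). For part (ii), from $\f_\j\leq\f_\i\rp\x$ and $\f_\j\leq\f_\i\rp\y$ I obtain $\f_\j\leq\f_\i\rp\x\bp\f_\i\rp\y$ by Prop.\ \ref{p3}, and since $\f_\i\in\Fn\AA$, Prop.\ \ref{f-dist} rewrites the right side as $\f_\i\rp(\x\bp\y)$, giving $\<\i,\j\>\in\eu(\x\bp\y)$. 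For part (iii) (reading the evident typo as $\eu(\x)|\eu(\y)\subseteq\eu(\x\rp\y)$), a pair $\<\i,\k\>\in\eu(\x)|\eu(\y)$ comes with an index $\j$ satisfying $\f_\j\leq\f_\i\rp\x$ and $\f_\k\leq\f_\j\rp\y$; then $\f_\k\leq\f_\j\rp\y\leq(\f_\i\rp\x)\rp\y=\f_\i\rp(\x\rp\y)$ by Prop.\ \ref{p6}, associativity \eqref{2}, and transitivity. Part (v) is immediate: $\f_\i\rp0=0$ by \eqref{norm}, so $\f_\i\rp0\bp\f_\j=0$, and the displayed implication preceding the lemma puts $\<\i,\j\>$ outside $\eu(0)$; hence $\eu(0)=\emptyset$.

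The heart of the argument is part (iv), for which I would isolate the lemma: if $\p,\q\in\Fn\AA$ and $\p\rp1=\q\rp1$, then $\q\leq\p\rp\con\x$ iff $\p\leq\q\rp\x$. Granting it, (iv) follows, since $\<\i,\j\>\in\eu(\con\x)$ says $\f_\j\leq\f_\i\rp\con\x$, which by the lemma (with $\p=\f_\i$, $\q=\f_\j$) is equivalent to $\f_\i\leq\f_\j\rp\x$, i.e.\ $\<\j,\i\>\in\eu(\x)$, which is exactly $\<\i,\j\>\in\conv{\eu(\x)}$. To prove one direction of the lemma, assume $\q\leq\p\rp\con\x$. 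Multiplying on the left by $\con\p$ and using $\con\p\rp\p\leq\id$ (functionality of $\p$) gives $\con\p\rp\q\leq\con\x$, and taking converses (via \eqref{5}, \eqref{6}, and Prop.\ \ref{p6}) yields $\con\q\rp\p\leq\x$. Separately, the common-domain condition and Prop.\ \ref{p9} give $\p\leq\p\rp1=\q\rp1$, so $\p=\q\rp1\bp\p$, and rotation (Prop.\ \ref{p8}) rewrites this as $\p\leq\q\rp(1\bp\con\q\rp\p)=\q\rp\con\q\rp\p$. Combining, $\p\leq\q\rp\con\q\rp\p\leq\q\rp\x$. The reverse implication, and the opposite inclusion needed for the ``iff'', are the same statement applied with $\p$ and $\q$ interchanged and $\x$ replaced by $\con\x$.

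I expect the main obstacle to be precisely this lemma: lacking complementation and residuals, one cannot invoke Boolean or adjunction shortcuts, so the equivalence $\q\leq\p\rp\con\x\iff\p\leq\q\rp\x$ must be produced by hand from functionality (to cancel $\con\p\rp\p$ down to $\id$) together with the common-domain hypothesis (to reconstruct $\p$ from $\q$ through $\p\leq\q\rp\con\q\rp\p$). Once the membership condition is written as $\f_\j\leq\f_\i\rp\x$, the remaining four parts should go through routinely.
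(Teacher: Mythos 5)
Your proof is correct. Parts (i)--(iii) and (v) coincide with the paper's own arguments (and you are right that ``$\eu(\x)|\eu(\x)$'' in (iii) is a typo for ``$\eu(\x)|\eu(\y)$''). The only genuine divergence is in part (iv), where you factor out the biconditional $\q\leq\p\rp\con\x\iff\p\leq\q\rp\x$ for $\p,\q\in\Fn\AA$ with $\p\rp1=\q\rp1$, proved by combining a cancellation step ($\con\p\rp\q\leq\con\x$, hence $\con\q\rp\p\leq\x$, from $\con\p\rp\p\leq\id$) with a reconstruction step ($\p=\q\rp1\bp\p\leq\q\rp\con\q\rp\p$, from the common-domain hypothesis, Prop.~\ref{p9}, and rotation); both steps check out against the axioms available in a J-algebra. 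The paper instead runs the single chain $\f_\i=\f_\i\bp\f_\i\rp1=\f_\i\bp\f_\j\rp1=\f_\i\bp(\f_\j\bp\f_\i\rp\con\x)\rp1=\f_\i\bp(\f_\i\bp\f_\j\rp\x)\rp1\leq\f_\j\rp\x$, packaging its two uses of rotation into Props.~\ref{cyc1} and~\ref{f1}; both routes use exactly the same three ingredients (functionality, equal domains, rotation) and both obtain the opposite inclusion by symmetry, substituting $\con\x$ for $\x$. Your formulation has the minor advantage of exhibiting (iv) as an instance of a reusable adjunction-like fact about functional elements with a common domain, while the paper's version stays within the calculus of domain terms $\f_\k\rp1$ that it reuses in Lemmas~\ref{LEMMA3} and~\ref{LEMMA5}; neither argument has a gap.
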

\begin{proof}
  \begin{proof}[\rm(i)]
    Assume $\x\leq\y$ and $\<\i,\j\>\in\eu(\x)$. Then
    $\f_\i\rp\y\geq\f_\i\rp\x$ by Prop.\ \ref{p6} and
    $\f_\i\rp\x\geq\f_\j$ by the definition of $\eu$, so
    $\f_\i\rp\y\geq\f_\j$ by Prop.\ \ref{p1}, \ie,
    $\<\i,\j\>\in\eu(\y)$.
  \end{proof}
  \begin{proof}[\rm(ii)]
    Assume $\<\i,\j\>\in\eu(\x)\cap\eu(\y)$. Then
    $\f_\i\rp\x\geq\f_\j$ and $\f_\i\rp\y\geq\f_\j$ by the definition
    of $\eu$, so $\f_\i\rp\x\bp\f_\i\rp\y\geq\f_\j$ by
    Prop.\ \ref{p3}, but $\f_\i\rp\x\bp\f_\i\rp\y=\f_\i\rp(\x\bp\y)$
    by Prop.\ \ref{f-dist}, so $\<\i,\j\>\in\eu(\x\bp\y)$.
  \end{proof}
  \begin{proof}[\rm(iii)]
    Assume $\<\i,\j\>\in\eu(\x)|\eu(\y)$. Then $\<\i,\k\>\in\eu(\x)$
    and $\<\k,\j\>\in\eu(\y)$ for some $\k<\n$, so
    $\f_\i\rp\x\geq\f_\k$ and $\f_\k\rp\y\geq\f_\j$ by the definition
    of $\eu$. We obtain $\<\i,\j\>\in\eu(\x\rp\y)$ because
    $\f_\i\rp(\x\rp\y)=(\f_\i\rp\x)\rp\y\geq\f_\k\rp\y\geq\f_\j$, by
    \eqref{2} and Prop.\ \ref{p6}.
  \end{proof}
  \begin{proof}[\rm(iv)]
    Assume $\<\i,\j\>\in\eu(\con\x)$, which implies
    $\f_\j=\f_\j\bp\f_\i\rp\con\x$. Then we have
    \begin{align*}
      \f_\i&=\f_\i\bp\f_\i\rp1&&\text{Prop.\ \ref{p9}}
      \\ &=\f_\i\bp\f_\j\rp1&&\f\in\S_\n\AA
      \\ &=\f_\i\bp(\f_\j\bp\f_\i\rp\con\x)\rp1&&\f_\j
      =\f_\j\bp\f_\i\rp\con\x
      \\ &=\f_\i\bp(\f_\i\bp\f_\j\rp\x)\rp1&&\text{Prop.\ \ref{cyc1}}
      \\ &\leq\f_\j\rp\x&&\text{Prop.\ \ref{f1}, $\f_\i$ is functional}
    \end{align*}
    so $\<\j,\i\>\in\eu(\x)$, from which we get
    $\<\i,\j\>\in\conv{\eu(\x)}$.  This proves
    $\eu(\con\x)\subseteq\conv{\eu(\x)}$. Substitute $\con\x$ in this
    formula and use $\con{\con\x}=\x$ to conclude that
    $\eu(\x)\subseteq(\eu(\con\x))^{-1}$, so
    $\eu(\con\x)=\conv{\eu(\x)}$.
  \end{proof}
  \begin{proof}[\rm(v)]
    If $\<\i,\j\>\in\eu(0)$ then $\f_\j=\f_\i\rp0\bp\f_\j=0\bp\f_\j=0$
    by \eqref{norm} and \eqref{zero}, contradicting $\f\in\S_\n\AA$.
    Thus, $\eu(0)=\emptyset$.
  \end{proof}
\end{proof}
\section{Two extension lemmas}
\label{s26}
The first of the two extension lemmas in this section is used for the
relation algbraic case in the proof of Theorem \ref{tabrra}. It
involves join and therefore applies only to relation algebras.
Tabularity is not needed.
\begin{lem}
  \label{LEMMA3}
  Assume $\AA\in\RA$, $\i,\j,\m\in\omega$, $\f\in\S_\m\AA$,
  $\x,\y\in\A$, and $\<\i,\j\>\in\eu(\x+\y)$.  Then there is some
  $\g\in\S_\m\AA$ such that
  \begin{enumerate}
  \item $\<\i,\j\>\in\ev(\x)\cup\ev(\y)$,
  \item $\eu(\z)\subseteq\ev(\z)$ for all $\z\in\A$,
  \item if $\k,\ell<\m$, $\z\in\A$, and $\f_\k\rp\z\bp\f_\ell=0$, then
    $\g_\k\rp\z\bp\g_\ell=0$.
  \end{enumerate}
\end{lem}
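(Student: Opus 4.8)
The plan is to obtain $\g$ from $\f$ by cutting down the common domain of the entries $\f_0,\dots,\f_{\m-1}$ to the set of points on which $\f_\j$ already lands inside $\f_\i\rp\x$ (or, symmetrically, inside $\f_\i\rp\y$). Since $\<\i,\j\>\in\eu(\x+\y)$ says $\f_\j\leq\f_\i\rp(\x+\y)=\f_\i\rp\x+\f_\i\rp\y$ (relative product distributes over $+$, cf.\ \eqref{ra5}), Boolean distributivity in the relation algebra gives $\f_\j=(\f_\j\bp\f_\i\rp\x)+(\f_\j\bp\f_\i\rp\y)$; as $\f_\j\neq0$, at least one summand is nonzero, and by the evident symmetry of the conclusion under interchanging $\x$ and $\y$ I may assume $w:=\f_\j\bp\f_\i\rp\x\neq0$. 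I then take the domain subidentity $e:=\id\bp w\rp\con w$ and define $\g_\k:=e\rp\f_\k$ for all $\k<\m$.

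First I would verify $\g\in\S_\m\AA$. Each $\g_\k=e\rp\f_\k\leq\f_\k\in\Fn\AA$ is functional by Prop.\ \ref{func}(ii), and $\g_\k\rp1=e\rp(\f_\k\rp1)$ does not depend on $\k$ because $\f_\k\rp1=\f_\j\rp1$ for all $\k$. The substantive point is that no $\g_\k$ vanishes. That $e\neq0$ follows from the standard domain identity $e\rp w=w$ (proved from \eqref{rot} exactly as in Prop.\ \ref{p10}), since $w\neq0$. For the remaining entries I would use Prop.\ \ref{p8} to identify $\id\bp\f\rp1$ with $\id\bp\f\rp\con\f$, so that $e\leq w\rp\con w\leq\f_\j\rp\con{\f_\j}$ places $e$ below the common domain $\id\bp\f_\k\rp\con{\f_\k}$ of each $\f_\k$; a one-line application of \eqref{rot} then gives $e\leq(e\rp\f_\k)\rp\con{\f_\k}$, whence $e\rp\f_\k\neq0$.

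Conditions (ii) and (iii) are then formal consequences of left multiplication by $e$. For (ii), if $\<a,b\>\in\eu(\z)$, i.e.\ $\f_b\leq\f_a\rp\z$, then $\g_b=e\rp\f_b\leq e\rp\f_a\rp\z=\g_a\rp\z$, so $\<a,b\>\in\ev(\z)$. For (iii), $e\leq\id$ gives $\con e\rp e\leq\id$, so Prop.\ \ref{f-dist} yields $\g_\k\rp\z\bp\g_\ell=e\rp\f_\k\rp\z\bp e\rp\f_\ell=e\rp(\f_\k\rp\z\bp\f_\ell)=e\rp0=0$. For (i) it is enough to show $e\rp\f_\j\leq\f_\i\rp\x$, since then $\g_\j=e\rp\f_\j=e\rp(e\rp\f_\j)\leq e\rp\f_\i\rp\x=\g_\i\rp\x$ using $e\rp e=e$; and this inclusion follows from $e\leq w\rp\con w$ together with $\con w\rp\f_\j\leq\con{\f_\j}\rp\f_\j\leq\id$ (valid because $w\leq\f_\j\in\Fn\AA$), giving $e\rp\f_\j\leq w\rp\con w\rp\f_\j\leq w\rp\id=w\leq\f_\i\rp\x$.

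The one place demanding care is the nonvanishing of the $\g_\k$: the domain must be shrunk enough to force the decision $\<\i,\j\>\in\ev(\x)$, yet the defining requirement $0\neq\g_\k$ of $\S_\m\AA$ must survive the shrinking. Everything else reduces to monotonicity, the modular law \eqref{rot}, and functional distributivity (Prop.\ \ref{f-dist}). Note that join $+$ enters only in the opening step, which is exactly why this lemma is confined to relation algebras, as remarked before its statement.
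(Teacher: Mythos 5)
Your proof is correct and follows essentially the same route as the paper's: split $\f_\j$ by distributivity over $+$, pick a nonzero piece $w=\f_\j\bp\f_\i\rp\x$ (the paper's $\r$), and restrict every entry of $\f$ to the domain of $w$. The only difference is cosmetic---you implement the restriction as left composition with the subidentity $\id\bp w\rp\con{w}$, whereas the paper intersects with the left ideal element $\r\rp1$, and these yield the same elements since $(\id\bp\r\rp1)\rp\f_\k=\r\rp1\bp\f_\k$ in any relation algebra.
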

\begin{proof}
  We get $0\neq\f_\j$ from $\f\in\S_\m\AA$ and
  $\f_\j=\f_\i\rp(\x+\y)\bp\f_\j$ from $\<\i,\j\>\in\eu(\x+\y)$.  In a
  relation algebra, $\bp$ and $\rp$ distribute over $+$, so
  $0\neq\f_\i\rp\x\bp\f_\j+\f_\i\rp\y\bp\f_\j$, which implies that
  $\f_\i\rp\x\bp\f_\j$ and $\f_\i\rp\y\bp\f_\j$ are not both zero.  We
  choose $\r$ to be one of these two elements that is not zero. An
  explicit choice yielding $\r\neq0$ can be made by setting
  \begin{align*}
    \r=\begin{cases}
    \f_\i\rp\x\bp\f_\j &\text{if }\f_\i\rp\x\bp\f_\j\neq0
    \\
    \f_\i\rp\y\bp\f_\j &\text{if }\f_\i\rp\x\bp\f_\j=0.
    \end{cases}
  \end{align*}
  Note that $\r\leq\f_\j$ by Prop.\ \ref{p2}, hence
  $\r\rp1\leq\f_\j\rp1$ by Prop.\ \ref{p6}.  Define $\g\in\A^\m$ by
  letting $\g_\k=\r\rp1\bp\f_\k$ if $\k<\m$.  Then
  $\g_\k\leq\f_\k\in\Fn\AA$ so $\g_\k$ is functional by
  Prop.\ \ref{func}(ii), and
  \begin{align*}
    \r\rp1&=\r\rp1\bp\f_\j\rp1 &&\r\rp1\leq\f_\j\rp1
    \\ &=\r\rp1\bp\f_\k\rp1 &&\f\in\S_\m\AA \\ &=(\r\rp1\bp\f_\k)\rp1
    &&\text{Prop.\ \ref{i1}} \\ &=\g_\k\rp1 &&\text{def $\g_\k$}
  \end{align*}
  To conclude that $\g\in\S_\m\AA$ we need only show the elements of
  $\g$ are not zero.  Suppose, to the contrary, that $0=\g_\k$ for
  some $\k<\m$.  Then $\r=0$ because $\r\leq\r\rp1=\g_\k\rp1=0\rp1=0$
  by Props.\ \ref{p5} and \ref{p9}, contradicting the definition of
  $\r$. Thus, $\g\in\S_\m\AA$.

  For (i), suppose that $\r=\f_\i\rp\x\bp\f_\j$.  Then
  $\g_\j=\r\rp1\bp\f_\j=(\f_\i\rp\x\bp\f_\j)\rp1\bp\f_\j
  \leq\f_\i\rp\x$ by Prop.\ \ref{f1} since $\f_\j\in\Fn\AA$. We also
  have $\g_\j\leq\r\rp1$, so by Prop.\ \ref{i1},
  $\g_\j\leq\r\rp1\bp\f_\i\rp\x=(\r\rp1\bp\f_\i)\rp\x=\g_\i\rp\x$ and
  hence $\<\i,\j\>\in\ev(\x)$.  On the other hand, if
  $\r=\f_\i\rp\y\bp\f_\j$ then $\<\i,\j\>\in\ev(\y)$ by the same
  computation with $\y$ in place of $\x$. Thus, (i) holds.  For (ii),
  assume $\z\in\A$ and $\<\k,\ell\>\in\eu(\z)$, \ie,
  $\f_\ell\leq\f_\k\rp\z$.  Then $\<\k,\ell\>\in\ev(\z)$ because
  \begin{align*}
    \g_\ell=\r\rp1\bp\f_\ell\leq\r\rp1\bp\f_\k\rp\z=
    (\r\rp1\bp\f_\k)\rp\z=\g_\k\rp\z
  \end{align*}
  by Prop.\ \ref{i1}.  For (iii), assume $\k,\ell<\m$, $\z\in\A$, and
  $\f_\k\rp\z\bp\f_\ell=0$.  Then
  $\g_\k\rp\z\bp\g_\ell=(\r\rp1\bp\f_\k)\rp\z\bp\r\rp1\bp\f_\ell
  =\r\rp1\bp\f_\k\rp\z\bp\r\rp1\bp\f_\ell=\r\rp1\bp0=0$ by
  Prop.\ \ref{i1}.
\end{proof}
\begin{lem}
  \label{LEMMA5}
  Assume $\AA\in\JA\cup\RA$, $\AA$ is tabular, $i,j<m\in\omega$,
  $\f\in\S_\m\AA$, $\x,\y\in\A$, and $\<i,j\>\in\eu(\x\rp\y)$. Then
  there is some $\g\in\S_{\m+1}\AA$ such that
  \begin{enumerate}
  \item
    $\<i,j\>\in\ev(\x)|\ev(\y)$,
  \item
    $\eu(\z)\subseteq\ev(\z)$ for every $\z\in\A$,
  \item
    if $\k,\ell<\m$, $\z\in\A$, and $\f_\k\rp\z\bp\f_\ell=0$, then
    $\g_\k\rp\z\bp\g_\ell=0$.
  \end{enumerate}
\end{lem}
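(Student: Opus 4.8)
The plan is to build the extra point $m$ of $\g$ as a ``composition witness'' extracted from tabularity, in analogy with the disjunct chosen in Lemma~\ref{LEMMA3} for the join case. First I would set $\w=\f_i\rp\x\bp\f_j\rp\con\y$ and show $\w\neq0$. From $\<i,j\>\in\eu(\x\rp\y)$ we have $\f_j\leq(\f_i\rp\x)\rp\y$, so by rotation (Prop.~\ref{p8}) $\f_j=\f_j\bp(\f_i\rp\x)\rp\y\leq(\f_j\rp\con\y\bp\f_i\rp\x)\rp\y=\w\rp\y$; since $\f_j\neq0$ and $0\rp\y=0$ by Prop.~\ref{p5}, this forces $\w\neq0$. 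Because $0<\w$, Definition~\ref{tab-def} (tabularity, applied with $\v=0$) yields $\p,\q\in\Fn\AA$ with $0\neq\con\p\rp\q\leq\w$. Replacing $\p$ by $\p\bp\q\rp1$ and $\q$ by $\q\bp\p\rp1$ (still functional by Prop.~\ref{func}(ii), and this does not change $\con\p\rp\q$) I may assume $\p$ and $\q$ have a common domain, i.e.\ $\p\rp1=\q\rp1$.

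The key idea is then to \emph{transport} the whole sequence along $\p$: define $\g_k=\p\rp\f_k$ for $k<m$ and $\g_m=\q$. Each $\g_k$ is functional, by Prop.~\ref{func}(iii) for $k<m$ (since $\p,\f_k\in\Fn\AA$) and trivially for $\g_m=\q$. The inclusions $\con\p\rp\q\leq\w\leq\f_i\rp\x\leq\f_i\rp1$ show that the range of $\p$ lies inside the common domain $\f_j\rp1$ of the $\f_k$, which gives $\p\rp\f_k\rp1=\p\rp1$ and hence $\g_k\rp1=\p\rp1=\q\rp1=\g_m\rp1$ for all indices; the same fact forces every $\g_k$ to be nonzero, so that $\g\in\S_{\m+1}\AA$.

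For part~(i), I would check that $m$ is the required intermediate index. The containment $\con\p\rp\q\leq\f_i\rp\x$ gives $\q\leq\p\rp\con\p\rp\q\leq\p\rp\f_i\rp\x=\g_i\rp\x$, where $\q\leq\p\rp\con\p\rp\q$ holds because $\p,\q$ share a domain and $\id\bp\p\rp1\leq\p\rp\con\p$ (again Prop.~\ref{p8}); thus $\<i,m\>\in\ev(\x)$. Dually, $\con\p\rp\q\leq\f_j\rp\con\y$ is rotated and combined with the functionality of $\p$ to yield $\p\rp\f_j\leq\q\rp\y=\g_m\rp\y$, i.e.\ $\<m,j\>\in\ev(\y)$; hence $\<i,j\>\in\ev(\x)|\ev(\y)$. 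Parts~(ii) and~(iii) come essentially for free from the transport: if $\f_\ell\leq\f_k\rp\z$ then $\g_\ell=\p\rp\f_\ell\leq\p\rp\f_k\rp\z=\g_k\rp\z$ by Prop.~\ref{p6}, giving $\eu(\z)\subseteq\ev(\z)$; and if $\f_k\rp\z\bp\f_\ell=0$ then $\g_k\rp\z\bp\g_\ell=\p\rp\f_k\rp\z\bp\p\rp\f_\ell=\p\rp(\f_k\rp\z\bp\f_\ell)=\p\rp0=0$ by functional distributivity (Prop.~\ref{f-dist}).

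I expect the main obstacle to be the domain bookkeeping needed to certify $\g\in\S_{\m+1}\AA$ — in particular converting the relational statement ``the range of $\p$ lies in the common domain of the $\f_k$'' into the clean equalities $\g_k\rp1=\p\rp1$ and into the non-vanishing of each $\g_k$ — together with the fiddly rotation computation establishing $\p\rp\f_j\leq\q\rp\y$ for the second half of part~(i). The conceptual content is light, however: tabularity manufactures the tabular element $\con\p\rp\q$ below $\w$, and left composition with the single functional element $\p$ simultaneously produces the witness $\q=\g_m$ and preserves all of the inclusion and disjointness data recorded by $\eu$.
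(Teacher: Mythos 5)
Your construction is essentially the paper's: tabularity is applied to $\f_\i\rp\x\bp\f_\j\rp\con\y$ (shown nonzero by the same rotation), the new point $\g_\m$ is a restriction of $\q$, and the old points are transported to restrictions of $\p\rp\f_\k$; parts (ii) and (iii) are then handled exactly as in the paper via Prop.~\ref{p6} and Prop.~\ref{f-dist}. The one real divergence is the normalization device, and that is where all the technical content of the lemma lives. The paper sets $\r=\q\bp\p\rp\f_\i\rp\x\bp\p\rp\f_\j\rp\con\y$, proves $\r\neq0$ by a single rotation from $\con\p\rp\q\neq0$, and intersects every coordinate with $\r\rp1$; the common-domain equalities and both memberships in part (i) then fall out of Props.~\ref{f1}, \ref{i1}, and \ref{cyc1}. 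You instead normalize $\p,\q$ to a common domain first and argue that the range of $\p$ lands inside the common domain of the $\f_\k$. This route does work, but the two facts you assert without proof are precisely the work you are deferring: (a) that $\con{\p\bp\q\rp1}\rp(\q\bp\p\rp1)$ is still nonzero requires deriving $\con\p\rp\q\leq\con{\p\bp\q\rp1}\rp(\q\bp\p\rp1)$, which costs two applications of \eqref{rot} in a J-algebra (only the reverse inclusion is free); and (b) the equality $\g_\k\rp1=\p'\rp1$ needs the chain $\p'\rp1\leq\p'\rp\con{\p'}\rp\p'\rp1=\p'\rp(\con{\p'}\rp\q')\rp1\leq\p'\rp\f_\i\rp1$, using Prop.~\ref{p10} together with the common-domain equation $\p'\rp1=\q'\rp1$ and $\con{\p'}\rp\q'\leq\f_\i\rp1$. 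Both are derivable, so there is no gap, only deferred bookkeeping that the paper's $\r\rp1$ device packages more economically. One small misattribution: the step $\p'\rp\f_\j\leq\q'\rp\y$ in part (i) uses the functionality of $\f_\j$ (rotate to get $\p'\leq\q'\rp\con{\q'}\rp\p'$, note $\con{\q'}\rp\p'\leq\y\rp\con{\f_\j}$, and cancel $\con{\f_\j}\rp\f_\j\leq\id$), not the functionality of $\p$.
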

\begin{proof}
  If $0=\f_\i\rp\x\bp\f_\j\rp\con\y$ then $\f_\j=0$, contradicting
  $\f\in\S_\m\AA$, because
  \begin{align*}
    \f_\j&=\f_\i\rp(\x\rp\y)\bp\f_\j&&\text{by $\<i,j\>\in\eu(\x\rp\y)$}
    \\ &=(\f_\i\rp\x)\rp\y\bp\f_\j&&\text{assoc}
    \\ &\leq(\f_\i\rp\x\bp\f_\j\rp\con\y)\rp\y&&\text{rot}
    \\ &=0\rp\y=0&&\text{hyp, Prop.\ \ref{p5}}
  \end{align*}
  Therefore, $0<\f_\i\rp\x\bp\f_\j\rp\con\y$. Since $\AA$ is tabular,
  there are functional elements $\p,\q\in\Fn\AA$ such that
  $0\neq\con\p\rp\q<\f_\i\rp\x\bp\f_\j\rp\con\y$.  This implies
  $0\neq\q\bp\p\rp\f_\i\rp\x\bp\p\rp\f_\j\rp\con\y$ because
  \begin{align*}
    0\neq\con\p\rp\q&=\con\p\rp\q\bp\f_\i\rp\x\bp\f_\j\rp\con\y
    \\ &\leq\con\p\rp(\q\bp\p\rp(\f_\i\rp\x\bp\f_\j\rp\con\y))
    &&\text{rot, con}
    \\ &=\con\p\rp(\q\bp\p\rp(\f_\i\rp\x)\bp\p\rp(\f_\j\rp\con\y))
    &&\text{func dist, $\p\in\Fn\AA$}
    \\ &=\con\p\rp(\q\bp\p\rp\f_\i\rp\x\bp\p\rp\f_\j\rp\con\y)
    &&\text{assoc}
  \end{align*}
  Define $\r$ by $\r=\q\bp\p\rp\f_\i\rp\x\bp\p\rp\f_\j\rp\con\y$ and
  $\g\in\A^{m+1}$ by $\g_\m=\r\rp1\bp\q$ and
  $\g_\k=\r\rp1\bp\p\rp\f_\k$ for all $k<m$. Note that $\r\neq0$ and
  $\g_0,\dots,\g_\m\in\Fn\AA$ by Prop.\ \ref{func} since
  $\p,\q,\f_0,\dots,\f_{m-1}\in\Fn\AA$.  We show next that
  $\g_0,\dots,\g_\m$ all have the same domain $\r\rp1$. For $\g_\m$ we
  have
  \begin{align*}
    \g_\m\rp1&=(\r\rp1\bp\q)\rp1&&\text{def $\g_\m$}
    \\ &=\r\rp1\bp\q\rp1&&\text{Prop.\ \ref{i1}}
    \\ &=\r\rp1&&\text{by $\r\leq\q$, mon}
  \end{align*}
  For $\i<\m$, note $\r\rp1\leq\p\rp\f_\i\rp1$ because
  \begin{align*}
    \r\rp1 &\leq\p\rp\f_\i\rp\x\rp1&&\text{mon}
    \\ &=\p\rp\f_\i\rp(\x\rp1)&&\text{assoc}
    \\ &\leq\p\rp\f_\i\rp1&&\text{mon}
  \end{align*}
  so if $\k<\m$ then
  \begin{align*}
    \g_\k\rp1&=(\r\rp1\bp\p\rp\f_\k)\rp1&&\text{def $\g_\k$}
    \\ &=\r\rp1\bp\p\rp\f_\k\rp1&&\text{Prop.\ \ref{i1}}
    \\ &=\r\rp1\bp\p\rp(\f_\k\rp1)&&\text{assoc}
    \\ &=\r\rp1\bp\p\rp(\f_\i\rp1)&&\text{$\f\in\S_\n\AA$}
    \\ &=\r\rp1\bp\p\rp\f_\i\rp1&&\text{assoc}
    \\ &=\r\rp1&&\r\rp1\leq\p\rp\f_\i\rp1
  \end{align*}
  Thus, we have shown $\g_0\rp1=\cdots=\g_\m\rp1=\r\rp1$. Furthermore,
  $\g_\k\neq0$ for every $\k\leq\m$ since otherwise
  $\r\leq\r\rp1=\g_\k\rp1=0\rp1=0$, implying $\r=0$, a contradiction.
  This proves that $\g\in\S_{\m+1}$.  For a proof of
  $\<\i,\m\>\in\ev(\x)$, we note that
  \begin{align*}
    \g_\m&=\r\rp1\bp\q&&\text{def $\g_\m$}
    \\ &=\r\rp1\bp\q\bp\r\rp1&&\text{\eqref{bassoc}--\eqref{idem}}
    \\ &=\r\rp1\bp\q\bp(\q\bp\p\rp\f_\i\rp\x\bp\p\rp\f_\j\rp\con\y)\rp1
    &&\text{def $\r$}
    \\ &\leq\r\rp1\bp\p\rp\f_\i\rp\x\bp\p\rp\f_\j\rp\con\y
    &&\text{$\q\in\Fn\AA$, Prop.\ \ref{f1}}
    \\ &\leq\r\rp1\bp\p\rp\f_\i\rp\x&&\text{Prop.\ \ref{p2}}
    \\ &=(\r\rp1\bp\p\rp\f_\i)\rp\x&&\text{Prop.\ \ref{i1}}
    \\ &=\g_\i\rp\x&&\text{def $\g_\i$}
  \end{align*}
  and $\<m,j\>\in\ev(\y)$ because
  \begin{align*}
    \g_\j&=\r\rp1\bp\p\rp\f_\j&&\text{def $\g_\j$}
    \\ &=\r\rp1\bp\p\rp\f_\j\bp\r\rp1
    &&\text{\eqref{bassoc}--\eqref{idem}}
    \\ &=\r\rp1\bp\p\rp\f_\j\bp(\q\bp\p\rp\f_\i\rp\x\bp
    \p\rp\f_\j\rp\con\y)\rp1 &&\text{def $\r$}
    \\ &\leq\r\rp1\bp\p\rp\f_\j\bp(\q\bp\p\rp\f_\j\rp\con\y)\rp1&&\text{mon}
    \\ &\leq\r\rp1\bp\p\rp\f_\j\bp(\p\rp\f_\j\bp\q\rp\y)\rp1
    &&\text{Prop.\ \ref{cyc1}}
    \\ &\leq\r\rp1\bp\q\rp\y&&\text{Prop.\ \ref{f1},
      $\p\rp\f_\j\in\Fn\AA$}
    \\ &=(\r\rp1\bp\q)\rp\y&&\text{Prop.\ \ref{i1}}
    \\ &=\g_\m\rp\y&&\text{def $\g_\m$}
  \end{align*}
  From $\<i,m\>\in\ev(\x)$ and $\<m,j\>\in\ev(\y)$ we get
  $\<i,j\>\in\ev(\x)|\ev(\y)$. Thus, part (i) holds.

  For part (ii), suppose $\z\in\A$ and $\<k,\ell\>\in\eu(\z)$, \ie,
  $\f_\k\rp\z\geq\f_\ell$ and $k,\ell<\m$.  From $\f_\k\rp\z\geq
  \f_\ell$ we get $\p\rp\f_\k\rp\z\geq\p\rp\f_\ell$ by Prop.\ \ref{p6}
  and \eqref{2}, so $\g_\k\rp\z\geq\g_\ell$ because, by
  Props.\ \ref{i1} and \ref{p3}, $\g_\k\rp\z =
  (\r\rp1\bp\p\rp\f_\k)\rp\z =\r\rp1\bp\p\rp\f_\k\rp\z\geq
  \r\rp1\bp\p\rp\f_\ell=\g_\ell.$
 
  For part (iii), if $\z\in\A$, $k,\ell<\m$, and
  $\f_\k\rp\z\bp\f_\ell=0$, then $\g_\k\rp\z\bp\g_\ell=0$ because
  \begin{align*}
    \g_\k\rp\z\bp\g_\ell
    &=(\r\rp1\bp\p\rp\f_\k)\rp\z\bp\r\rp1\bp\p\rp\f_\ell&&\text{defs}
    \\ &\leq\p\rp\f_\k\rp\z\bp\p\rp\f_\ell&&\text{mon}
    \\ &=\p\rp(\f_\k\rp\z)\bp\p\rp\f_\ell&&\text{assoc}
    \\ &=\p\rp(\f_\k\rp\z\bp\f_\ell)&&\text{func dist, $\p\in\Fn\AA$}
    \\ &=\p\rp0&&\text{hyp} \\ &=0&&\eqref{norm}
  \end{align*}
\end{proof}

\section{Key lemma and main result}
\label{s27}
In the conclusion of the key lemma, note that the identity condition
involving $\rho(\id)$ need not hold.
\begin{lem}
  \label{LEMMA6}
  Assume $\AA\in\JA\cup\RA$, $\AA$ is tabular, and $\v,\w\in\A$.  If
  $\v<\w$ then there is a set $\U$ and a function
  $\rho\colon\A\rightarrow\pow{\U^2}$ such that for all $\x,\y\in\A$,
  \begin{align*}
    \rho(\v)&\neq\rho(\w), \\ \rho(0)&=\emptyset,
    \\ \rho(\x\bp\y)&=\rho(\x)\cap\rho(\y),
    \\ \rho(\x\rp\y)&=\rho(\x)|\rho(\y),
    \\ \rho(\con\x)&=\conv{\rho(\x)},
    \\ \text{if $\AA\in\RA$ then }\rho(\x+\y)&=\rho(\x)\cup\rho(\y).
  \end{align*}
\end{lem}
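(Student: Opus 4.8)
The plan is to realize $\rho$ as a limit of the finite partial representations $\eu$ of Definition~\ref{partrep}: start from a two-coordinate seed that separates $\v$ from $\w$, and then repeatedly apply the extension Lemmas~\ref{LEMMA5} and~\ref{LEMMA3} until every homomorphism identity holds. To build the seed I would use tabularity. By Definition~\ref{tab-def}, since $\v<\w$ there are $\p,\q\in\Fn\AA$ with $0\neq\con\p\rp\q\leq\w$ and $\v\bp\con\p\rp\q=0$. Writing $\s=\con\p\rp\q$ and cutting $\p,\q$ down to the common domain on which they witness $\s$ (replacing them by suitable subelements such as $\p\bp\q\rp\con\s$ and $\q\bp\p\rp\s$, exactly as in the remark following Theorem~\ref{TRA}) produces $\f^{(0)}=\<\f_0,\f_1\>\in\S_2\AA$ with $\f_1\leq\f_0\rp\w$ and $\f_0\rp\v\bp\f_1=0$; this is a short functional-element computation. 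Hence $\<0,1\>\in\widehat{\f^{(0)}}(\w)$, while $\<0,1\>\notin\widehat{\f^{(0)}}(\v)$ by the remark preceding Lemma~\ref{LEMMA2}.

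The point of Lemma~\ref{LEMMA2} is that it already gives, for any partial representation $\f$, half of each required identity: $\eu(\con\x)=\conv{\eu(\x)}$ and $\eu(0)=\emptyset$ hold outright, monotonicity~(i) yields $\eu(\x\bp\y)\subseteq\eu(\x)\cap\eu(\y)$ together with $\eu(\x)\cup\eu(\y)\subseteq\eu(\x+\y)$ and $\eu(\x)|\eu(\y)\subseteq\eu(\x\rp\y)$, and (ii) supplies the reverse inclusion for $\bp$. The only inclusions that can fail are the \emph{splitting} inclusions $\eu(\x\rp\y)\subseteq\eu(\x)|\eu(\y)$ and, in the relation-algebra case, $\eu(\x+\y)\subseteq\eu(\x)\cup\eu(\y)$. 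A single instance of the first is repaired by Lemma~\ref{LEMMA5} and of the second by Lemma~\ref{LEMMA3}. In both lemmas part~(ii) guarantees $\eu(\z)\subseteq\ev(\z)$ for all $\z$, so nothing already achieved is lost, and part~(iii) preserves every equation $\f_\k\rp\z\bp\f_\ell=0$; in particular the seed's separating equation $\f_0\rp\v\bp\f_1=0$ survives every extension.

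I would then run a saturation: along an increasing chain $\f^{(0)}\leq\f^{(1)}\leq\cdots$ of partial representations, at each successor step discharge one outstanding splitting requirement $(\x,\y,\<i,j\>)$ using Lemma~\ref{LEMMA5} or~\ref{LEMMA3}. Because of part~(ii) a requirement once discharged stays discharged, so the work is purely bookkeeping: schedule requirements so that every $(\x,\y)$ and every coordinate pair that ever appears is eventually served. Taking $\U$ to be the union of the coordinate sets and $\rho(\x)=\bigcup_\t\widehat{\f^{(\t)}}(\x)\subseteq\U^2$, each identity passes to this directed union by a standard argument. For instance, if $\<i,j\>\in\rho(\x\rp\y)$ then $\<i,j\>\in\widehat{\f^{(\t)}}(\x\rp\y)$ for some $\t$, the requirement $(\x,\y,\<i,j\>)$ is served at a later stage, and this places $\<i,j\>$ into $\eu(\x)|\eu(\y)$ and hence into $\rho(\x)|\rho(\y)$; the reverse inclusions and the commuting of $\conv{}$, $\cap$, $|$, and $\cup$ with directed unions are routine (the $\cap$, $|$, $\cup$ cases use that two witnesses occurring at stages $s$ and $t$ both occur at stage $\max(s,t)$). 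Separation is then immediate, since $\<0,1\>\in\rho(\w)$ while $\<0,1\>\notin\rho(\v)$, the latter because $\f^{(\t)}_0\rp\v\bp\f^{(\t)}_1=0$ at every stage.

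The hard part is the bookkeeping together with the limit stages. Since $\A$ may be uncountable there are in general more than countably many splitting requirements, and the genuine obstacle is that at limit stages the underlying \emph{sequences} of functional elements need not converge in $\AA$: each use of Lemma~\ref{LEMMA3} or~\ref{LEMMA5} shrinks the existing coordinates to a domain $\r\rp1$, so a fixed coordinate descends transfinitely and $\AA$ need not contain its meet. This is where the ultraproduct enters, and why the limit should not be taken as a naive transfinite chain. Instead, for each \emph{finite} set of requirements I would build, by finitely many steps of Lemmas~\ref{LEMMA5} and~\ref{LEMMA3}, a finite partial representation discharging it, and then form an ultraproduct of these finite representations over an ultrafilter on the directed set of finite requirement-sets. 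Each requirement is discharged on an ultrafilter-large (indeed cofinal) set of indices, so by the usual transfer argument the ultraproduct $\rho$ is a total homomorphism for $\bp$, $\rp$, $\con{}$, $0$ (and for $+$ in the relation-algebra case), while the separating equation, preserved by part~(iii) on every index, keeps $\<0,1\>$ in $\rho(\w)$ and out of $\rho(\v)$. Note that no claim is made about $\rho(\id)$, consistent with the statement.
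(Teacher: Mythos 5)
Your overall architecture is the paper's: a two-coordinate seed obtained from tabularity, repeated applications of Lemmas~\ref{LEMMA3} and~\ref{LEMMA5} (whose clauses (ii) and (iii) preserve earlier progress and the separating equation $\f_0\rp\v\bp\f_1=0$), and an ultraproduct to evade the failure of limit stages; your diagnosis of why a naive transfinite chain breaks down is also correct. The gap is in the final step. You index the ultraproduct by \emph{finite sets of requirements} $(\x,\y,\<\i,\j\>)$ and, at each index, apply the extension lemmas only \emph{finitely many times}, discharging exactly those triples. Then no factor satisfies the full splitting inclusion $\eu(\x\rp\y)\subseteq\eu(\x)|\eu(\y)$, universally quantified over coordinate pairs: every application of Lemma~\ref{LEMMA5} adds a fresh coordinate and thereby creates new pairs in $\eu(\x\rp\y)$ that your finitely many steps never address. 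Consequently the ``usual transfer argument'' does not apply. Concretely, to prove $\rho(\x\rp\y)\subseteq\rho(\x)|\rho(\y)$ you must choose a witness $\gamma$ factorwise for a pair $\<\alpha,\beta\>$ whose coordinates $\<\alpha_I,\beta_I\>$ \emph{vary with the index} $I$; what must lie in the ultrafilter is the set $\{I:(\x,\y,\<\alpha_I,\beta_I\>)\text{ is discharged at }I\}$, and since $\alpha,\beta$ range over arbitrary choice functions (in particular over the non-constant witnesses produced by earlier splittings), this set is not an up-set in your directed order and need not belong to the ultrafilter. The cofinality you invoke holds only for a \emph{fixed} triple, which is not what the transfer requires.

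The paper separates the two finiteness conditions. Its index set is $\F$, the finite subsets $X\subseteq\A$, and for each $X$ it runs a full $\omega$-length construction, fairly scheduled over $\omega\times\omega\times X\times X$ with every quadruple recurring infinitely often, so that the limit $\hx=\bigcup_{\n\in\omega}\widehat{\f_\n}$ satisfies the splitting identities for $\rp$ (and $+$) at \emph{every} coordinate pair whenever $\x,\y\in X$. For fixed $\x,\y$ the set of indices where the full identity holds then contains the principal filter $\II(\{\x,\y\})$, which is in the ultrafilter by construction, and the factorwise choice of witnesses goes through. Replacing ``finitely many extension steps per finite requirement-set'' by ``an $\omega$-length fair construction per finite subset of $\A$'' turns your outline into the paper's proof; the rest of what you wrote (the seed, the role of Lemma~\ref{LEMMA2} in supplying the easy halves of each identity, preservation of separation via clause (iii), and the observation that nothing need be said about $\rho(\id)$) is sound.
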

\begin{proof}
  For every finite $X\subseteq\A$ we construct a function $\hx\colon
  \A\to\pow{\omega\times\omega}$ such that for all $\x,\y\in\A$,
  \begin{enumerate}
  \item
    \label{k1} $\<0,1\>\in\hx(\w)$,
  \item
    \label{k2} $\<0,1\>\notin\hx(\v)$,
  \item
    \label{k3} $\hx(0)=\emptyset$,
  \item
    \label{k4} $\hx(\x\bp\y)=\hx(\x)\cap\hx(\y)$,
  \item
    \label{k6} $\hx(\con\x)=\conv{\hx\x}$,
  \item
    \label{k5} if $\x,\y\in X$ then $\hx(\x\rp\y)=\hx(\x)|\hx(\y)$,
  \item
    \label{k7} if $\AA\in\RA$ and $\x,\y\in X$ then
    $\hx(\x+\y)=\hx(\x)\cup\hx(\y)$.
  \end{enumerate}
  Let $\tau\colon\omega\to\omega\times\omega\times{X}\times{X}$ be an
  $\omega$-sequence in which every element of
  $\omega\times\omega\times{X}\times{X}$ occurs infinitely many times.
  This is possible because $X$ is finite so
  $\omega\times\omega\times{X}\times{X}$ is countable.  Set
  $\EE=\bigcup_{m\in\omega}\S_\m\AA$. From $\tau$ we will construct an
  $\omega$-sequence $\f\colon\omega\to\EE$ such that for all
  $\n\in\omega$,
  \begin{align}
    \label{(a)}
    &\<0,1\>\in\widehat{\f_\n}(\w), \\
    \label{(b)}
    &(\f_\n)_0\rp\v\bp(\f_\n)_1=0, \\
    \label{(c)}
    &\eu_{\n-1}(\z)\subseteq\eu_\n(\z)\text{ if $0<n$ and $\z\in\A$}.
  \end{align}
  Define $\hx\colon\A\to\pow(\omega\times\omega)$ by setting
  \begin{align*}
    \hx(\x)=\bigcup_{\n\in\omega}\widehat{\f_\n}(\x)=\bigcup_{\n\in\omega}
    \{\<\i,\j\>:(\f_\n)_\i\rp\x\bp(\f_\n)_\j=(\f_\n)_\j,\,i,j<n\}
  \end{align*}
  for every $\x\in\A$.  Next we explain why $\hx$ has the properties
  \eqref{k1}--\eqref{k6}.  Property \eqref{k1} follows from \eqref{(a)} and
  the definition of $\hx$: since every $\widehat{\f_\n}(\w)$ contains
  $\<0,1\>$ their union also contains it.  Similarly, \eqref{k2}
  follows from \eqref{(b)} and the definition of $\hx$, for if
  $\<0,1\>\in\hx(\v)$ then for some $n\in\omega$,
  $\<0,1\>\in\widehat{\f_\n}(\v)$, \ie,
  $(\f_\n)_0\rp\v\bp(\f_\n)_1=(\f_\n)_1$, but then $(\f_\n)_1=0$ by
  \eqref{(b)}, contradicting $\f_\n\in\EE$, which requires every element of
  $\f_\n$ to be non-zero.  By Lemma \ref{LEMMA2}(v), each partial
  representation $\widehat{\f_\n}$ sends $0$ to the empty set, so
  their union does and \eqref{k3} holds.  By Lemma \ref{LEMMA2}(i),
  each partial representation $\widehat{\f_\n}$ is monotonic, so $\hx$
  is also monotonic because it is the union of monotonic
  functions. This gives us one direction of \eqref{k4}, namely,
  $\hx(\x\bp\y)\subseteq\hx(\x)\cap\hx(\y)$. For the other direction,
  suppose $\<\u,\v\>\in\hx(\x)$ and $\<\u,\v\>\in\hx(\y)$.  Then there
  are $\k,\ell\in\omega$ such that $\<\u,\v\>\in\widehat{\f_\k}(\x)$
  and $\<\u,\v\>\in\widehat{\f_\ell}(\y)$.  Let $\m=\max(\k,\ell)$.
  From \eqref{(c)} it follows that $\widehat{\f_\k}(\x)\subseteq
  \widehat{\f_\m}(\x)$ and $\widehat{\f_\ell}(\y)\subseteq
  \widehat{\f_\m}(\y)$, so by Lemma \ref{LEMMA2}(ii),
  \begin{align*}
    \<\u,\v\>\in\widehat{\f_\k}(\x)\cap\widehat{\f_\ell}(\y)
    \subseteq\widehat{\f_\m}(\x)\cap\widehat{\f_\m}(\y)
    \subseteq\widehat{\f_\m}(\x\bp\y)\subseteq\hx(\x\bp\y).
  \end{align*}
  Property \eqref{k6} holds because, by Lemma \ref{LEMMA2}(iv) and the
  distributivity of converse over arbitrary unions,
  \begin{align*}
    \hx(\con\x)=\bigcup_{\n\in\omega}\widehat{\f_\n}(\con\x)
    =\bigcup_{\n\in\omega}\conv{\widehat{\f_\n}(\x)}
    =\conv{\bigcup_{\n\in\omega}\widehat{\f_\n}(\x)}=\conv{\hx(\x)}.
  \end{align*}
  One direction of \eqref{k5} holds because
  \begin{align*}
    \hx(\x)|\hx(\y)
    &=\bigcup_{\n\in\omega}\widehat{\f_\n}(\x)|\bigcup_{\ell\in\omega}
    \widehat{\f_\ell}(\y)
    \\ &=\bigcup_{\k,\ell\in\omega}\widehat{\f_\k}(\x)|\widehat{\f_\ell}(\y)
    &&\text{composition distributes}
    \\ &\subseteq\bigcup_{m\in\omega}\widehat{\f_\m}(\x)|\widehat{\f_\m}(\y)
    &&\text{\eqref{(c)}}
    \\ &\subseteq\bigcup_{m\in\omega}\widehat{\f_\m}(\x\rp\y)=\hx(\x\rp\y)
    &&\text{Lemma \ref{LEMMA2}(iii)}
  \end{align*}
  The other direction of \eqref{k5} depends on the construction of
  $\f$ and will be treated later.  Similarly, one direction of
  \eqref{k7} can be deduced now while the other depends on the
  construction of $\f$ and will be treated later.  Assume $\AA\in\RA$.
  Then $\x\leq\x+\y$ and $\y\leq\x+\y$ so by Lemma \ref{LEMMA2}(i) we
  get, for all $\n\in\omega$, $\widehat{\f_\n}(\x)\subseteq
  \widehat{\f_\n}(\x+\y)$ and $\widehat{\f_\n}(\y)\subseteq
  \widehat{\f_\n}(\x+\y)$, hence
  $\widehat{\f_\n}(\x)\cup\widehat{\f_\n}(\y)\subseteq
  \widehat{\f_\n}(\x+\y)$, so
  \begin{align*}
    \hx(\x)\cup\hx(\y) &=\bigcup_{\n\in\omega}\widehat{\f_\n}(\x)\cup
    \bigcup_{\n\in\omega}\widehat{\f_\n}(\y)
    \\ &=\bigcup_{\n\in\omega}\(\widehat{\f_\n}(\x)\cup\widehat{\f_\n}(\y)\)
    \subseteq\bigcup_{\n\in\omega}\widehat{\f_\n}(\x+\y) =\hx(\x+\y).
  \end{align*}\par
  Now we begin the construction of the sequence $\f$ of partial
  representations, starting with $\f_0$. Since $\v<\w$ and $\AA$ is
  tabular, there are functional elements $\p,\q\in\Fn\AA$ such that
  \begin{align}
    \label{vw}
    \v\bp\con\q\rp\p=0\neq\con\q\rp\p\leq\w.
  \end{align}
  Let $\x=\p\rp\con\w\bp\q$, $\y=\p\bp\q\rp\w$, and set
  $\f_0=\<\x,\y\>$.  By \eqref{6}--\eqref{rot}, from
  $0\neq\con\q\rp\p\leq\w$ we get
  \begin{align*}
    0\neq\con\q\rp\p\bp\w
    \leq(\w\rp\con\p\bp\con\q)\rp(\p\bp\q\rp\w)=\con{\x}\rp\y,
  \end{align*}
  so $\x\neq0\neq\y$ by \eqref{norm} and Prop.\ \ref{p5}. Also,
  $\x,\y\in\Fn\AA$ by Prop.\ \ref{func}(ii) and $\x\rp1=\y\rp1$ by
  Prop.\ \ref{cyc1}, so we have shown $\f_0=\<\x,\y\>\in\S_2\AA$.  For
  \eqref{(a)} we note that, by Prop.\ \ref{p8}, $(\f_0)_1=\y=\p\bp\q\rp\w
  \leq(\p\rp\con\w\bp\q)\rp\w =\x\rp\w=(\f_0)_0\rp\w$, so
  $\<0,1\>\in\widehat{\f_0}(\w)$.  Condition \eqref{(b)} holds for $\n=0$
  because
  \begin{align*}
    (\f_0)_0\rp\v\bp(\f_0)_1&\leq(\p\rp\con\w\bp\q)\rp\v\bp\p
    &&\text{mon}
    \\ &\leq(\p\rp\con\w\bp\q)\rp(\v\bp\con{\p\rp\con\w\bp\q}\rp\p)
    &&\text{rot}
    \\ &=(\p\rp\con\w\bp\q)\rp(\v\bp(\w\rp\con\p\bp\con\q)\rp\p)
    &&\text{con}
    \\ &\leq(\p\rp\con\w\bp\q)\rp(\v\bp\con\q\rp\p)
    &&\text{mon}
    \\ &=(\p\rp\con\w\bp\q)\rp0=0
    &&\text{\eqref{vw}, \eqref{norm}}
  \end{align*}
  Condition \eqref{(c)} does not apply when $\n=0$. This completes the
  initial case in the construction of the sequence
  $\f=\<\f_0,\f_1,\f_2,\cdots\>$.

  Assume $\mu\in\omega$, $\tau_\mu=\<\i,\j,\x,\y\>$, and
  $\f_0,\f_1,\f_2,\cdots,\f_{2\mu}\in\EE$ have been selected so that
  \eqref{(a)}--\eqref{(c)} hold for all $\n\leq2\mu$.  Then
  $\f_{2\mu+1}$ and $\f_{2\mu+2}$ are chosen as follows.
  \begin{enumerate}
  \item[Case 1a.]  If $\AA\in\JA$ then
    $\f_{2\mu+1}=\f_{2\mu}$.
  \item[Case 1b.]  If $\AA\in\RA$ and
    $\<\i,\j\>\notin\widehat{\f_{2\mu}}(\x+\y)$ then
    $\f_{2\mu+1}=\f_{2\mu}$.
  \item[Case 1c.]  Suppose $\AA\in\RA$ and
    $\<\i,\j\>\in\widehat{\f_{2\mu}}(\x+\y)$.  By Lemma \ref{LEMMA3}
    there is some $\g\in\EE$ such that
    \begin{enumerate}
    \item
      $\<\i,\j\>\in\ev(\x)\cup\ev(\y)$,
    \item
      $\widehat{\f_{2\mu}}(\z)\subseteq\ev(\z)$ for all $\z\in\A$,
    \item
      if $\k,\ell<m$, $\z\in\A$, and
      $(\f_{2\mu})_\k\rp\z\bp(\f_{2\mu})_\ell=0$, then
      $\g_\k\rp\z\bp\g_\ell=0$.
    \end{enumerate}
    In Case 1c, set $\f_{2\mu+1}=\g$.
  \end{enumerate}
  Note that
  $\widehat{\f_{2\mu}}(\z)\subseteq\widehat{\f_{2\mu+1}}(\z)$ for all
  $\z\in\A$, either because $\f_{2\mu}=\f_{2\mu+1}$ or by condition
  1c(b).  Therefore, \eqref{(c)} holds when $\n=2\mu+1$.  We get
  $\<0,1\>\in\widehat{\f_{2\mu}}(\w)$ and
  $(\f_{2\mu})_0\rp\v\bp(\f_{2\mu})_1=0$ from \eqref{(a)} and
  \eqref{(b)} when $\n=2\mu$.  Therefore,
  $\<0,1\>\in\widehat{\f_{2\mu+1}}(\w)$ by \eqref{(c)} with
  $\n=2\mu+1$ and $\z=\w$, and
  $(\f_{2\mu+1})_0\rp\v\bp(f_{2\mu+1})_1=0$ either because
  $\f_{2\mu}=\f_{2\mu+1}$ or by condition 1c(c) with $\k=0$, $\ell=1$,
  and $\z=\v$.  Thus, \eqref{(a)} and \eqref{(b)} hold when
  $\n=2\mu+1$.  Next, choose $f_{2\mu+2}$, assuming
  \eqref{(a)}--\eqref{(c)} hold for all $\n\leq2\mu+1$.
  \begin{enumerate}
  \item[Case 2a.]  If
    $\<\i,\j\>\notin\widehat{\f_{2\mu+1}}(\x\rp\y)$ then
    $f_{2\mu+2}=f_{2\mu+1}$.
  \item[Case 2b.]  Assume
    $\<\i,\j\>\in\widehat{\f_{2\mu+1}}(\x\rp\y)$, $\m\in\omega$, and
    $\widehat{\f_{2\mu+1}}\in\S_\m\AA$.  By Lemma \ref{LEMMA5} there
    is some $\g\in\S_{\m+1}\AA$ such that
    \begin{enumerate}
    \item
      $\<\i,\j\>\in\widehat\g(\x)|\widehat\g(\y)$,
    \item
      $\widehat{\f_{2\mu+1}}(\z)\subseteq\widehat\g(\z)$ for all
      $\z\in\A$,
    \item
      if $\k,\ell<m$, $\z\in\A$, and
      $(\f_{2\mu+1})_\k\rp\z\bp(\f_{2\mu+1})_\ell=0$, then
      $\g_\k\rp\z\bp\g_\ell=0$.
    \end{enumerate}
    In Case 2b, set $\f_{2\mu+2}=\g$.
  \end{enumerate}
  Then \eqref{(a)}--\eqref{(c)} hold when $\n=2\mu+2$ by the same
  argument given above for $\n=2\mu+1$ but with conditions 2b(b) and
  2b(c) in place of 1c(b) and 1c(c).  This completes construction of
  $\f_{2\mu+1}$ and $\f_{2\mu+2}$ from $\f_{2\mu}$.

  We can now complete the proofs of \eqref{k5} and \eqref{k7}. We did
  one direction of \eqref{k5} above. For the other direction, assume
  $\x,\y\in X$ and $\<i,j\>\in\hx(\x\rp\y)$, so
  $\<i,j\>\in\widehat{\f_\m}(\x\rp\y)$ for some $m\in\omega$.  By
  \eqref{(c)}, we have $\<\i,\j\>\in\widehat{\f_\n}(\x\rp\y)$ for all
  larger $\n\geq\m$.  The quadruple $\<\i,\j,\x,\y\>$ occurs in $\tau$
  infinitely many times, so $\tau_\mu=\<\i,\j,\x,\y\>$ and
  $\<\i,\j\>\in\widehat{\f_{2\mu}}(\x\rp\y)$ for some $\mu\geq\m$. By
  condition (a) in Case~2b we have
  $\<i,j\>\in\widehat{\f_{2\mu+2}}(\x)|\widehat{\f_{2\mu+2}}(\y)$, so
  there is a `witness' $k\in\omega$ such that
  $\<\i,\k\>\in\widehat{\f_{2\mu+2}}(\x)$ and
  $\<\k,\j\>\in\widehat{\f_{2\mu+2}}(\y)$. By the definition of $\hx$,
  $\<\i,\k\>\in\hx(\x)$ and $\<\k,\j\>\in\hx(\y)$, yielding
  $\<\i,\j\>\in\hx(\x)|\hx(\y)$.  Thus, a countable set of witnesses
  is provided by the construction. It may happen that all these
  witnesses are destined to be the same element, as might be the case
  when a functional element is involved. That equality will emerge
  only at a later stage in the proof, when we factor out by the
  equivalence relation that represents the identity element $\id$.
  All the witnesses that must be the same will end up in the same
  equivalence class.

  For the other direction of \eqref{k7}, assume $\AA\in\RA$, $\x,\y\in
  X$ and $\<i,j\>\in\hx(\x+\y)$, so $\<i,j\>\in\widehat{\f_\m}(\x+\y)$
  for some $\m\in\omega$. Again we have
  $\<\i,\j\>\in\widehat{\f_{2\mu}}(\x+\y)$ and
  $\tau_\mu=\<\i,\j,\x,\y\>$ for some $\mu\geq\m$.  By condition (a)
  in Case~1c,
  $\<i,j\>\in\widehat\f_{2\mu+1}(\x)\cup\widehat\f_{2\mu+1}(\y)$,
  hence $\<i,j\>\in\widehat\f_{2\mu+1}(\x)$ or
  $\<i,j\>\in\widehat\f_{2\mu+1}(\y)$.  By the definition of $\hx$,
  $\<\i,\j\>\in\hx(\x)$ or $\<\i,\j\>\in\hx(\y)$, yielding
  $\<\i,\j\>\in\hx(\x)\cup\hx(\y)$.

  This completes the construction of $\hx$ from the finite subset
  $\X\subseteq\A$ and the proof that \eqref{k1}--\eqref{k7} hold.

  Let $\F$ be the set of non-empty finite subsets of $A$. We will use
  $\F$ as the index set for an ultraproduct. Let $\U=\omega^\F$ be the
  set of functions that map $\F$ into $\omega$. The elements of the
  ultraproduct are equivalence classes of functions in $\U$.  For
  every finite subset $\X\in\F$ let $\II(\X)$ be the principal filter
  of finite subsets that contain $\X$.
  \begin{align*}
    \II(\X)=\{\Y:\X\subseteq\Y\in\F\}.
  \end{align*}
  The set of such filters, $\{\II(\X):\X\in\F\}$, has the finite
  intersection property (closure under finite intersections), for if
  $n\in\omega$ and $\{\X_\i:\i<\n\}\subseteq\F$ then
  $\bigcap\{\X_\i:\i<\n\}=\II(\bigcup_{\i<\n}\X_\i)$ and
  $\bigcup_{\i<\n}\X_\i\in\F$ since the union of finitely many finite
  sets is finite. A set of subsets of $\F$ with the finite
  intersection property is contained is a proper ultrafilter $\D$ on
  $\F$, so we have $\{\II(\X):\X\in\F\}\subseteq\D$.  For
  $\alpha,\beta\in\omega^\F$ and $\x\in\A$ let
  \begin{align*}
    \J(\alpha,\beta,\x)=\{\X:\X\in\F\land\<\alpha_\X,\beta_\X\>
    \in\hx(\x)\}.
  \end{align*}
  Conditions \eqref{k1}--\eqref{k7}, which hold for every $\X\in\F$,
  can be reformulated in terms of $\J$. The fact that \eqref{k1} holds
  for every $\X\in\F$ is expressed by (i$'$), \eqref{k2} is expressed
  by (ii$'$), \etc. Therefore, we have
  \begin{enumerate}
  \item[(i$'$)] $\J(\F\times\{0\},\F\times\{1\},\w)=\F$,
  \item[(ii$'$)] $\J(\F\times\{0\},\F\times\{1\},\v)=\emptyset$,
  \item[(iii$'$)] $\J(\alpha,\beta,0)=\emptyset$,
  \item[(iv$'$)]
    $\J(\alpha,\beta,\x\bp\y)=\J(\alpha,\beta,\x)\cap\J(\alpha,\beta,\y)$,
  \item[(v$'$)] $\J(\alpha,\beta,\con\x)=J(\beta,\alpha,\x)$,
  \item[(vi$'$)] for some $\gamma\in\U$,
    \begin{align*}
      \II(\{\x,\y\})\cap \J(\alpha,\beta,\x\rp\y)=\II(\{\x,\y\})\cap
      \J(\alpha,\gamma,\x)\cap\J(\gamma,\beta,\y),
    \end{align*}
  \item[(vii$'$)] if $\AA\in\RA$ then
    \begin{align*}
      \II(\{\x,\y\})\cap \J(\alpha,\beta,\x+\y)=\II(\{\x,\y\})\cap
      \(\J(\alpha,\beta,\x)\cup\J(\alpha,\beta,\y)\).
    \end{align*}
  \end{enumerate}
  Define a function $\rho\colon\A\to\pow{\U\times\U}$ by
  \begin{align*}
    \rho(\x)=\{\<\alpha,\beta\>:\alpha,\beta\in\U,\,
    \J(\alpha,\beta,\x)\in\D\}
  \end{align*}
  for every $\x\in\A$.  We will show
  \begin{enumerate}
  \item[(i$''$)] $\<\F\times\{0\},\F\times\{1\}\>\in \rho(\w)$,
  \item[(ii$''$)] $\<\F\times\{0\},\F\times\{1\}\>\notin \rho(\v)$,
  \item[(iii$''$)] $\rho(0)=0$,
  \item[(iv$''$)] $\rho(\x\bp\y)=\rho(\x)\cap \rho(\x)$,
  \item[(v$''$)] $\rho(\con\x)=\conv{\rho(\x)}$,
  \item[(vi$''$)] $\rho(\x\rp\y)=\rho(\x)|\rho(\y)$,
  \item[(vii$''$)] if $\AA\in\RA$ then $\rho(\x+\y)=\rho(\x)\cup\rho(\y)$.
  \end{enumerate}
  It is easy to check that each of these follows from its counterpart
  together with the fact that $\D$ is a proper ultrafilter and
  $\{\II(\X):\X\in\F\}\subseteq\D$.  For example, we get (iv$''$) from
  (iv$'$) because the filter $D$ is closed under intersection, (i$''$)
  from (i$'$) because $\F\in D$, and (ii$''$) from (ii$'$) because
  $\emptyset\notin D$. For (vi$''$), the second step below holds
  because $\D$ is closed under intersection and $\II(\{\x,\y\})\in\D$,
  and the third step comes from (vi$'$),
  \begin{align*}
    \rho(\x\rp\y)&=\{\<\alpha,\beta\>:\alpha,\beta\in\omega^\F,\,
    \J(\alpha,\beta,\x\rp\y)\in\D\}
    \\ &=\{\<\alpha,\beta\>:\alpha,\beta\in\omega^\F,\,
    \II(\{\x,\y\})\cap\J(\alpha,\beta,\x\rp\y)\in\D\}
    \\ &=\{\<\alpha,\beta\>:\alpha,\beta,\gamma\in\omega^\F,\,
    \II(\{\x,\y\})\cap\J(\alpha,\gamma,\x)\cap\J(\gamma,\beta,\y)\in\D\}
    \\ &=\dots=\rho(\x)|\rho(\y).
  \end{align*}
  To complete the proof of Lemma \ref{LEMMA6}, we need only note that
  $\rho(\v)\neq\rho(\w)$ by \eqref{k1} and \eqref{k2}, while the
  remaining desired properties coincide with (iii$''$)--(vii$''$).
\end{proof}
\begin{thm}
  \label{tabrra}
  If $\AA\in\JA\cup\RA$ is tabular then $\AA$ is representable.
\end{thm}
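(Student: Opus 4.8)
The plan is to realize $\AA$ as an embedding into the J-algebra (resp.\ relation algebra) of subrelations of a single equivalence relation, assembled as a disjoint union of bases, one for each pair of elements that must be told apart. The only engine needed is Lemma \ref{LEMMA6}, which separates a strict pair $\v<\w$; the single feature it does not supply is the identity condition $\rho(\id)=Id$, so the heart of the argument will be a quotient that repairs this.

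First I would reduce representability to a separation statement: it suffices to produce, for every pair of distinct elements $\x,\y\in\A$, a homomorphism into some J-algebra of subrelations of an equivalence relation whose value at $\x$ differs from its value at $\y$, since the disjoint union of all these homomorphisms (indexed by pairs) is then injective. Given $\x\neq\y$, set $\v=\x\bp\y$, so $\v\leq\x$ and $\v\leq\y$ by Prop.\ \ref{p2}, and at least one inclusion is strict, say $\v<\x$. Applying Lemma \ref{LEMMA6} to $\v<\x$ yields $\rho\colon\A\to\pow{\U^2}$ with $\rho(\v)\neq\rho(\x)$ that preserves $\bp$, $\rp$, $\con\blank$, and $0$ (and $+$ when $\AA\in\RA$). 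Since $\rho(\x\bp\y)=\rho(\x)\cap\rho(\y)$, the inequality $\rho(\v)\neq\rho(\x)$ forces $\rho(\x)\neq\rho(\y)$, so $\rho$ separates $\x$ and $\y$.

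The main obstacle is that $\rho(\id)$ need not be a diagonal, so $\rho$ is not yet a representation. Put $E=\rho(\id)$. From $\con\id=\id$ and $\id\rp\id=\id$ (Prop.\ \ref{p5}, \eqref{id}, \eqref{2}) and the preservation properties one gets $\conv E=\rho(\con\id)=E$ and $E|E=\rho(\id\rp\id)=E$, so $E$ is symmetric and transitive. Moreover $\id\rp\x=\x=\x\rp\id$ (Prop.\ \ref{p5}, \eqref{id}) gives $E|\rho(\x)=\rho(\x)=\rho(\x)|E$ for every $\x$, so each $\rho(\x)$ is a union of $E$-blocks. Restricting the base to the field $F$ of $E$, on which $E$ is a genuine equivalence relation, and passing to the quotient $B=F/E$, I would define $\bar\rho(\x)=\{\<[u],[w]\>:\<u,w\>\in\rho(\x)\}$. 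The block structure makes $\bar\rho$ well defined and shows it still preserves $\cap$, $\conv\blank$, $\emptyset$ (and $\cup$ in the $\RA$ case); the one genuinely new calculation, that composition descends, i.e.\ $\overline{R|S}=\bar R|\bar S$ for $E$-saturated $R,S$, uses precisely the saturation identity $E|S=S$ to splice witnesses, and this is the technical core. By construction $\bar\rho(\id)=Id(B^2)$, and since $\x\leq1$ gives $\rho(\x)\subseteq\rho(1)$ (monotonicity from preservation of $\bp$), every $\bar\rho(\x)$ is contained in $\E:=\bar\rho(1)$, which is an equivalence relation on $B$ (symmetric and transitive by the same reasoning using $\con1=1$ and $1\rp1=1$ from Prop.\ \ref{p5}, Prop.\ \ref{p9}, and \eqref{one}; reflexive because $E\subseteq\rho(1)$). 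Thus $\bar\rho$ is a homomorphism into the J-algebra of subrelations of $\E$ sending $\id$ to $Id(\E)$ and $1$ to $\E$, and it still separates $\x,\y$.

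Finally I would assemble. Indexing the separating homomorphisms $\bar\rho_i$ by distinct pairs, take their bases $B_i$ pairwise disjoint, let $\E=\bigcup_i\E_i$ (an equivalence relation, as the $B_i$ are disjoint), and set $h(\x)=\bigcup_i\bar\rho_i(\x)$. Disjointness annihilates all cross terms, so $h$ preserves $\cap$ and $|$ coordinatewise, while $h(\id)=\bigcup_i Id(\E_i)=Id(\E)$, $h(1)=\bigcup_i\E_i=\E$, $h(0)=\emptyset$, and $h(\con\x)=\conv{h(\x)}$; injectivity holds because for $\x\neq\y$ some index separates them, and $h(\x)\cap B_i^2=\bar\rho_i(\x)$ recovers each factor. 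Hence $h$ embeds $\AA$ into the J-algebra of subrelations of $\E$, giving representability for $\AA\in\JA$. When $\AA\in\RA$ the same $h$ also preserves $+$, and since it preserves $+,\bp,0,1$ it is forced to preserve complement as well ($h(\min\x)$ must be the $\E$-relative complement of $h(\x)$), so $h$ is a relation-algebra representation. The degenerate case $0=1$ is a one-element algebra, represented over the empty equivalence relation.
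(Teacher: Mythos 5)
Your proposal is correct and follows essentially the same route as the paper: the same key Lemma \ref{LEMMA6}, the same injectivity argument via $\v=\x\bp\y$ with $\v<\x$ or $\v<\y$, the same disjoint-union assembly over separating pairs, the same quotient by the image of $\id$ to repair the identity condition, and the same definability trick for complement in the $\RA$ case. The only difference is cosmetic ordering --- you quotient each factor by $\rho(\id)$ before taking the disjoint union, while the paper first assembles the global map $\varphi$ and then quotients once by $\E=\varphi(\id)$ --- and these are interchangeable because the disjointness of the bases prevents any cross-block identifications.
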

\begin{proof}
  By Lemma \ref{LEMMA6}, each pair $\v<\w$ gives rise to a set
  $\U^\w_\v$ and a function
  \begin{align*}
    \rho^\w_\v\colon\A\rightarrow\pow{\U^\w_\v\times\U^\w_\v}
  \end{align*}
  such that for all $\x,\y\in\A$
  \begin{align*}
    \rho^\w_\v(\v)&\neq\rho^\w_\v(\w), \\ \rho^\w_\v(0)&=\emptyset,
    \\ \rho^\w_\v(\x\bp\y)&=\rho^\w_\v(\x)\cap\rho^\w_\v(\y),
    \\ \rho^\w_\v(\x\rp\y)&=\rho^\w_\v(\x)|\rho^\w_\v(\y),
    \\ \rho^\w_\v(\con\x)&=\conv{\rho^\w_\v(\x)}, \\ \text{if
      $\AA\in\RA$ then }
    \rho^\w_\v(\x+\y)&=\rho^\w_\v(\x)\cup\rho^\w_\v(\y).
  \end{align*}
  Although Lemma \ref{LEMMA6} tells us that $\U^\w_\v=\omega^\F$ for
  every pair $\v<\w$, we will assume instead that the sets $\U^\w_\v$
  are pairwise disjoint. This can arranged by various set-theoretical
  devices or by an elaboration of the proof of Lemma \ref{LEMMA6}
  (\eg, replace $\omega^\F$ with $\omega^\F\times\{\<\v,\w\>\}$). Let
  \begin{align*}
    \UU&=\bigcup_{\v<\w}\U^\w_\v.
  \end{align*}
  This is a disjoint union because of our arrangement that if
  $\v,\w,\v',\w'\in\A$, $\v<\w$, $\v'<\w'$, and
  $\<\v,\w\>\neq\<\v',\w'\>$ then $\U^\w_\v\cap\U^{\w'}_{\v'} =
  \emptyset$.  For $\x\in\A$, let $\varphi(\x)$ be the union (another
  disjoint union) of $\rho^\w_\v(\x)$ over all pairs $\v<\w\in\A$,
  \begin{align*}
    \varphi(\x)=\bigcup_{\v<\w}\rho^\w_\v(\x).
  \end{align*}
  This gives us a function $\varphi\colon\A\to\pow{\UU^2}$.  The
  properties of $\rho^\w_\v$ can now be transferred to $\varphi$. To
  begin, $\varphi$ sends $0$ to the empty set, since
  \begin{align*}
    \varphi(0)&=\bigcup_{\v<\w}\rho^\w_\v(0)
    =\bigcup_{\v<\w}\emptyset=\emptyset.
  \end{align*}
  Converse is distributive over arbitrary unions, so we get
  \begin{align*}
    \varphi(\con\x) &=\bigcup_{\v<\w}\rho^\w_\v(\con\x)
    =\bigcup_{\v<\w}\conv{\rho^\w_\v(\x)}
    =\conv{\bigcup_{\v<\w}\rho^\w_\v(\x)}=\conv{\varphi(\x)}.
  \end{align*}
  Intersection and composition are also distributive over unions.  By
  the disjointness assumption, if $\v,\w,\v',\w'\in\A$, $\v<\w$,
  $\v'<\w'$, and $\<\v,\w\>\neq\<\v',\w'\>$ then $\emptyset =
  \rho^\w_\v(1)\cap\rho^{\w'}_{\v'}(1) =
  \rho^\w_\v(1)\rp\rho^{\w'}_{\v'}(1)$.  We have
  \begin{align*}
    \varphi(\x)|\varphi(\y) &=\(\bigcup_{\v<\w}\rho^\w_\v(\x)\)\Bigg|
    \(\bigcup_{\v'<\w'}\rho^{\w'}_{\v'}(\y)\)
    &&\text{def of $\varphi$}
    \\ &=\bigcup_{\v<\w,\;\v'<\w'}\rho^\w_\v(\x)|\rho^{\w'}_{\v'}(\y)
    &&\text{distributivity}
    \\ &=\bigcup_{\v<\w}\rho^\w_\v(\x)|\rho^\w_\v(\y)
    &&\text{disjointness assumption}
    \\ &=\bigcup_{\v<\w}\rho^\w_\v(\x\rp\y)
    &&\text{$\rho^\w_\v$ sends $\rp$ to $|$}
    \\ &=\varphi(\x\rp\y) &&\text{def of $\varphi$}
  \end{align*}
  and, simlarly, $\varphi(\x\bp\y)=\varphi(\x)\cap\varphi(\y)$.  A
  consequence of this that we will need for the injectivity of
  $\varphi$ is given by
  \begin{align*}
    \varphi(\x)\cap\rho^{\w}_{\v}(1)
    &=\(\bigcup_{\v'<\w'}\rho^{\w'}_{\v'}(\x)\)\cap\rho^{\w}_{\v}(1)
    &&\text{def of $\varphi$}
    \\ &=\bigcup_{\v'<\w'}\(\rho^{\w'}_{\v'}(\x)\cap\rho^{\w}_{\v}(1)\)
    &&\text{distributivity}
    \\ &=\rho^{\w}_{\v}(\x)\cap\rho^{\w}_{\v}(1) &&\text{disjointness
      assumption} \\ &=\rho^{\w}_{\v}(\x\bp1) &&\text{$\rho^\w_\v$
      sends $\bp$ to $\cap$} \\ &=\rho^{\w}_{\v}(\x)&&\eqref{one}
  \end{align*}
  To show $\varphi$ is injective, assume
  $\varphi(\x)=\varphi(\y)$. Note that for all pairs $\v<\w$,
  \begin{align}
    \label{note}
    \rho^\w_\v(\x)=\varphi(\x)\cap\rho^\w_\v(1)
    =\varphi(\y)\cap\rho^\w_\v(1)=\rho^\w_\v(\y)
  \end{align}
  To get a contradication, assume $\x\neq\y$ and let $\v=\x\bp\y$.
  Then either $\v<\x$ or $\v<\y$, for otherwise $\x=\v=\y$, but
  $\x\neq\y$. If $\x\bp\y=\v<\w=\x$ then we have a contradiction with
  \eqref{note}, namely,
  \begin{align*}
    \rho^\w_\v(\x)\cap\rho^\w_\v(\y)=
    \rho^\w_\v(\x\bp\y)=\rho^\w_\v(\v)\neq\rho^\w_\v(\w)
    =\rho^\w_\v(\x)=\rho^\w_\v(\y),
  \end{align*}
  and if $\x\bp\y=\v<\w=\y$ then we arrive at the same
  contradiction. Thus, $\varphi$ is injective.

  Finally, $\varphi$ can be altered to get a representation $\varphi'$
  that sends $\id$ to the identity relation on the
  $\varphi(\id)$-equivalence classes of elements.  Let
  $\E=\varphi(\id)$.  Then $\E$ is an equivalence relation on $\UU$.
  Let $\UU/\E=\{\u/\E:\u\in\UU\}$ where $\u/\E=\{\r:\<\r,\u\>\in\E\}$.
  Define a function $\h\colon\A\to\pow{\UU/\E\times\UU/\E}$ from $\A$
  to relations on $\UU/\E$ by
  \begin{equation*}
    \h(\x)=\{\<\r/\E,\s/\E\>:\<\r,\s\>\in\varphi(\x)\}
    \;\text{for}\;\x\in\A.
  \end{equation*}
  It is easy to check that $\h$ preserves $0$, $\bp$, $\rp$,
  $\con\blank$, and $+$ (if $\AA\in\RA$). One uses in an essential way
  the observation that $\E$ is a left and right identity for the
  representation of every element of $\AA$, that is,
  $\E|\varphi(\x)|E=\varphi(\x)$ for all $\x\in\A$.  Note that the
  equations $\varphi(\x)=\varphi(\y)$ and $\h(\x)=\h(\y)$ are
  logically equivalent, relative to the fact that $\E$ is an
  equivalence relation.  Consequently, $\h$ is also injective. The
  preservation of complementation in case $\AA\in\RA$ now follows from
  the fact that the complement $\min\x$ of an element $\x$ in a
  relation algebra is definable in terms of union and intersection by
  $\y=\min\x\iff\y+\x=1\land\y\bp\x=0$.
\end{proof}

\section{Conclusion}
\label{s28}
The primary open problem concerning Thompson's groups is to determine
whether $\F$ is amenable ($\T$ and $\VV$ are not). The group theory
community is evenly divided on whether $\F$ will turn out to be
amenable.  The possibility that $\F$ might not be amenable was a
motivating factor in Thompson's creation of his groups.  Thompson's
groups have been described and represented by tree diagrams (as in
Figures \ref{fog} and \ref{fog1}), parenthetical notation for actions
on trees (made precise in \SS\ref{s12a} and used in
Props.\ \ref{presT} and \ref{same}), other kinds of diagrams (forests,
strands, links, \etc), piecewise linear homeomorphisms of the unit
interval, real line, and unit circle, and many other topological and
diagrammatic methods that have yielded valuable insights and proofs of
many properties of the Thompson groups.  All these methods of group
representation suffer from the limitation that the only available
operation is composition of functions and permutations. Some papers
even need proofs to show that composition is possible.  The viewpoint
of this paper is that Thompson's groups and monoid are part of a much
larger structure (any finitely presented J-algebra with generators
satisfying \eqref{Q}, \eqref{D}, and \eqref{U}) in which functional
and permutational elements are not merely (and obviously) composable
via relative product but can also be intersected.

This viewpoint opens a new realm of diagrammatic representation,
exemplified by Figures \ref{fog2} and \ref{fog3}.  Many diagrams of
this kind were used in the production of this paper to visualize the
meanings of the rather complicated terms that appear in the proofs of
the relations in the presentations of the Thompson groups and
monoid. Diagrams like Figure \ref{fog2} were used to make clear the
meanings of the otherwise mysterious intersections $(\ot\A\id)\bp\A$
and $\A\bp\con\A$.  There is a natural correlation between terms in a
J-algebra and directed series-parallel graphs. An example appears in
Figure \ref{fog3}.  This link to graph theory and the new viewpoint on
Thompson groups (and other similarly defined groups) may conceivably
lead to the resolution of the amenability problem or other worthwhile
results.

The axioms for J-algebras are quite natural since they are the
simplest equations that hold in all relation algebras and involve only
the relative and Boolean products and the constants $0$, $1$, and
$\id$.  Some combinations of these axioms already do occur in many
papers and the entire list may have already been considered.  The
awkward axiom \eqref{mon} is sometimes avoided by postulating
(non-equationally) that $\leq$ is a partial ordering and relative
product is order-preserving. In any case, the variety of J-algebras
deserves independent study from a universal algebraic point of view.

The Boolean algebraic part of a relation algebra enables Tarski's
theorem that Q-algebras are representable.  The extension of
tabularity to J-algebras and the subsequent proof of representability
in Part III compensate for this lack of a Boolean part. It does not
seem likely that the existence of conjugated quasiprojections in a
J-algebra is sufficient for representability. Counterexamples or a
proof of representability should be found to settle this problem.  The
importance of the pairing identity portrayed in Part I is also
illustrated by its numerous applications in the derivations in Part
II. These derivations could be avoided if J-algebras are representable
whenever they have conjugated quasiprojections satisfying the Domain
and Unicity Conditions.

\def\url#1{{\tt #1}}
\providecommand{\bysame}{\leavevmode\hbox to3em{\hrulefill}\thinspace}
\providecommand{\MR}{\relax\ifhmode\unskip\space\fi MR }
\providecommand{\MRhref}[2]{%
  \href{http://www.ams.org/mathscinet-getitem?mr=#1}{#2}}
\providecommand{\href}[2]{#2}

\end{document}